\definecolor{darkgreen}{rgb}{0.0, 0.5, 0.0}
\newcommand{\R}{{\mathbb{R}}}
\newcommand{\Z}{{\mathbb{Z}}}
\newcommand{\Q}{{\mathbb{Q}}}
\newcommand{\N}{{\mathbb{N}}}
\newcommand{\C}{{\mathbb{C}}}
\newcommand{\GL}{{\mathrm{GL}}}
\newcommand{\SL}{{\mathrm{SL}}}
\newcommand{\map}{{\mathrm{Map}}}
\newcommand{\pmap}{{\mathrm{PMap}}}
\newcommand{\homeo}{{\mathrm{Homeo}}}
\newcommand{\p}{\partial}
\newcommand{\sym}{\mathrm{Sym}}
\DeclarePairedDelimiter{\set}{\{}{\}}
\newtheorem{theorem}{Theorem}[section]
\newtheorem{lemma}[theorem]{Lemma}
\newtheorem{proposition}[theorem]{Proposition}
\newtheorem{corollary}[theorem]{Corollary}
\theoremstyle{definition}
\newtheorem{definition}[theorem]{Definition}
\theoremstyle{remark}
\newtheorem{remark}[theorem]{Remark}
\newtheorem{example}[theorem]{Example}
\title{Structure and Topological Generation of Big Mapping Class Groups}
\author{Celal Can Bellek}
\date{December 2025}
\begin{document}

\maketitle

\begin{abstract}
    Big mapping class groups are the mapping class groups of infinite-type surfaces, that is, surfaces whose fundamental groups are not finitely generated. While mapping class groups of finite-type surfaces have been extensively studied, the theory of big mapping class groups is a recent and rapidly developing area of research. This thesis provides a systematic introduction to the structure and topological generation of big mapping class groups, emphasizing the key differences from the classical finite-type case. It presents a clear exposition of foundational results concerning their topological and algebraic structure, including known results on finite topological generation for a certain family of infinite-type surfaces.
\end{abstract}

\tableofcontents

\chapter{Introduction}

At the intersection of low-dimensional topology and geometric group theory lies the study of the mapping class groups of surfaces, denoted $\map(S)$. They are defined as the group of orientation-preserving self-homeomorphisms that restrict to the identity on the boundary of $S$, under the equivalence relation of isotopy. Elements of the mapping class group are called mapping classes. 

For most of the last century, research on mapping class groups focused almost exclusively on the mapping class groups of finite-type surfaces, those whose fundamental group is finitely generated, beginning with the foundational work of Dehn, Nielsen, and Baer. Finite-type surfaces are the compact surfaces, possibly with finitely many punctures. The main result of the classical theory is that the mapping class groups of finite-type surfaces are generated by finitely many mapping classes, called Dehn twists, which are the isotopy classes of certain homeomorphisms defined geometrically on annular neighborhoods of simple closed curves.   

In recent years, the mapping class groups of infinite-type surfaces: those whose fundamental group is infinitely generated, have attracted increasing attention from researchers, due to their rich and complex structure. Notable examples of infinite-type surfaces include the Loch Ness Monster surface, which has infinitely many genera, and the Cantor Tree surface, which has a Cantor set of ends. Their mapping class groups are uncountable, earning them the name "big mapping class groups".

The works of Calegari, Patel, Vlamis, Aramayona, Mann, Rafi, and others have shown that big mapping class groups have fundamentally different properties from their finite-type counterparts. The main goal of this thesis is to provide a detailed introduction to the theory of big mapping class groups, with a particular focus on their topological and algebraic structure as well as topological generating sets, those generating sets that generate a countable dense subgroup. We develop the theory from the ground up, starting with the preliminaries of classical mapping class groups, proceeding to the classification of infinite-type surfaces, and then presenting the main results concerning the topological structure and generation of big mapping class groups.

\section{Outline of the Thesis}

\begin{itemize}
    \item \textbf{Chapter 2} provides a review of the classical theory of finite-type surfaces and their mapping class groups. We introduce surfaces and their classification, define the classical mapping class group, and focus on key properties exhibited by Dehn twists, including the fact that the mapping class group is finitely generated by Dehn twists.

    \item \textbf{Chapter 3} is dedicated to the classification of infinite-type surfaces. We introduce notable examples, define the crucial concept of the space of ends, and present the complete classification of infinite-type surfaces due to Richards, which relies on this space.

    \item \textbf{Chapter 4} formally defines big mapping class groups and explores their fundamental properties as topological groups. We show that they are Polish groups, exhibit the fundamental properties of the Baire space, and are therefore homeomorphic to it.

    \item \textbf{Chapter 5} addresses the main topic of this thesis: the topological generation of big mapping class groups. We present a new type of generator, and explain an important theorem concerning the algebraic structure of the pure mapping class group.
    
    \item \textbf{Chapter 6} demonstrates that a certain family of infinite-type surfaces, those with finitely many ends accumulated by genus, can be topologically generated by finitely many elements, including, and in particular, involutions. 
\end{itemize}
\chapter{Preliminaries}

Before delving into the theory of big mapping class groups, we must first understand the classical definitions and results about finite-type surfaces and mapping class groups. This chapter lays out the required preliminary definitions and results concerning finite-type surfaces and their mapping class groups.

\section{Surfaces and their Classification}\label{sec:surfaces}

\begin{definition}
    A \emph{manifold} is a topological space that is second countable, Hausdorff, and locally Euclidean. A $2$-manifold is called a \emph{surface}. A surface is of \emph{finite-type} if its fundamental group is finitely generated, and of \emph{infinite-type} otherwise. Throughout this thesis, all surfaces are assumed to be connected and orientable, and in addition throughout this chapter, they are assumed to be of finite-type.
\end{definition}

In addition, we will only consider surfaces with compact boundary (if any). The next theorem is a classical result regarding the classification of finite-type surfaces. For a proof, we refer the reader to~\cite{leetopological}.

\begin{theorem}[Classification of Finite-Type Surfaces]\label{thm:finiteclassification}
    Any surface $S$ is determined, up to homeomorphism, by a $3$-tuple $(g, b, n)$, where $g$ is the genus, $b$ is the number of boundary components, and $n$ is the number of punctures. We denote such a surface by $S_{g,n}^{b}$.
\end{theorem}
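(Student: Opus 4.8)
The plan is to split the proof into three parts: the classification of \emph{compact} surfaces, the reduction of an arbitrary finite-type surface to the compact case by ``filling in'' its punctures, and finally a verification that the triple $(g,b,n)$ really is a complete invariant.

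\textbf{Step 1: the compact case.} For a compact surface $S$ (possibly with boundary) I would first invoke Radó's theorem that every compact surface admits a finite triangulation. Passing to the dual graph and collapsing a maximal spanning tree exhibits $S$ as the quotient of a convex polygon --- with some edges left unidentified when $\partial S \neq \varnothing$ --- under a pairing of the remaining edges, data encoded by an edge word. One then runs the classical normal-form reduction on this word: cancel adjacent inverse pairs, slide cross-cap pairs $aa$ together, and convert ``interleaved'' pairs $a \cdots a^{-1} \cdots b \cdots b^{-1}$ into commutator blocks $a b a^{-1} b^{-1}$. Because $S$ is orientable no cross-cap terms can survive, and the word is brought to the standard form recognising $S$ as a sphere with $g$ handles attached and $b$ open disks removed. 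This produces the model $S_{g,0}^{b}$.

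\textbf{Step 2: punctures.} For a general finite-type surface $S$ I would show $S \cong \widehat S \setminus \{p_1, \dots, p_n\}$, where $\widehat S$ is a compact surface and the $p_i$ are finitely many interior points. Exhaust $S$ by an increasing union of compact subsurfaces $K_1 \subseteq K_2 \subseteq \cdots$ with $\bigcup_i K_i = S$; since $\pi_1(S)$ is finitely generated, the maps $\pi_1(K_i) \to \pi_1(S)$ are surjective for $i$ large, and a spine argument then shows that past some stage each complementary region of $K_i$ in $S$ is either a half-open collar of a boundary circle or a once-punctured disk. Hence $S$ has only finitely many ends, all isolated and planar, and filling them in gives a compact surface $\widehat S$ with $\partial \widehat S = \partial S$. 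Applying Step 1 to $\widehat S$ yields the model $S_{g,n}^{b}$.

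\textbf{Step 3: invariance, and the main obstacle.} The number $b$ of boundary components is visibly a homeomorphism invariant, and so is the number $n$ of punctures, since these are exactly the isolated planar ends of $S$. Finally $\chi(S_{g,n}^{b}) = 2 - 2g - b - n$ (a disk contributes $1$, each handle $-2$, each boundary circle $-1$, each puncture $-1$), and $\chi$ is a homeomorphism invariant; thus $g$ is recovered from $\chi$, $b$, and $n$, and the triple is complete. I expect the delicate point to be the tameness argument in Step 2 --- showing that finite generation of $\pi_1$ genuinely rules out wild ends and forces the end set to be finite and planar --- together with the careful case bookkeeping in the polygonal normal-form reduction of Step 1; the triangulability input (Radó) and the homeomorphism-invariance of $\chi$ and of $\pi_1$ I would simply cite.
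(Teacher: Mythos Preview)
Your outline is a faithful sketch of the classical proof and the pieces fit together correctly; the one genuinely soft spot you already flagged yourself, namely the passage in Step~2 from ``$\pi_1(S)$ finitely generated'' to ``$S$ is a compact surface minus finitely many interior points,'' does require a careful argument (essentially that a surface with finitely generated $\pi_1$ deformation retracts onto a compact subsurface, so its complement is a finite union of annuli and once-punctured disks), but the strategy you describe is exactly how that goes.

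There is nothing to compare, however: the paper does not prove Theorem~\ref{thm:finiteclassification} at all. It states the result and immediately defers to an external reference with the sentence ``For a proof, we refer the reader to~\cite{leetopological}.'' So your proposal is not merely consistent with the paper's approach --- it supplies content the paper deliberately omits. The route you chose (Rad\'o triangulation, polygon normal form, end analysis for punctures, Euler characteristic for invariance) is precisely the standard one treated in sources such as the cited text, so had the paper included a proof it would almost certainly have looked like yours.
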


By Theorem~\ref{thm:finiteclassification}, it is easy to see that the cardinality of the set of all surfaces, up to homeomorphism, is the same as the cardinality of the $3$-tuple $(b,g,n)$. Since 
\[
|(b,g,n)| = |\mathbb{N} \times \mathbb{N} \times \mathbb{N}| = |\mathbb{N}| = \aleph_0,
\]
we see that there are countably infinitely many homeomorphism classes of surfaces.

\section{The Mapping Class Group}

We denote by $\homeo(S, \p S)$ the self-homeomorphisms of a surface $S$ that restrict to the identity function on the boundary $\p S$. Being a function space, it is natural to endow it with the \emph{compact-open topology}, generated by the subbasis consisting of sets 
\[
B(K,U) = \set{ f\in \homeo(S,\p S)\, | \, f(K)\subset U}
\] where $K\subseteq S$ is compact and $U \subseteq S$ is open. That is, the open sets are arbitrary unions of finite intersections of the subbasic sets $B(K,U)$.

The subspace of $\homeo(S,\p S)$ that consists of all the orientation-preserving homeomorphism is denoted by $\homeo^+(S,\p S)$, and the subspace consisting of homeomorphisms isotopic to the identity is denoted by $\homeo_0(S,\p S)$. Note that since the elements of $\homeo(S,\p S)$ fix the boundary pointwise, we need to consider isotopies relative to the boundary in the presence of boundaries. 

\begin{definition}\label{def:mapgr}
    Let $S$ be a surface. The \emph{mapping class group} of $S$, denoted by $\map(S)$, is the group of isotopy classes of elements of $\homeo^+(S,\p S)$. That is,
    \begin{align*}
         \map(S) = \pi_0(\homeo^+(S, \p S)) &= \homeo^+(S, \p S) / \sim \\
         &=\homeo^+(S, \p S) / \homeo_0(S,\p S) 
    \end{align*}
\end{definition}

Elements of $\mathrm{Map}(S)$ are called \emph{mapping classes}. When the context is clear, we will abuse notation by identifying a homeomorphism $f\in \homeo^+(S,\p S)$ with its corresponding mapping class $[f] \in \map(S)$.

The functor $\pi_0$ assigns to a topological space $X$ the set of all isotopy classes of maps $f : S^0 \cong \{0,1\} \longrightarrow X$, that is, the path-connected components of $X$. Unlike the fundamental group $\pi_1$, this set does not have a canonical group structure. However, since $\homeo^+(S, \p S)$ is a topological group, see Definition~\ref{def:topgroup}, $\map(S)$ naturally inherits both a group structure and a topology, which justifies its name as the mapping class group. 

\begin{definition}\label{def:topgroup}
    Let $G$ be a group. Let $\tau$ be a topology on $G$ such that the maps $\mu: G\times G \longrightarrow G$ and $\iota: G\longrightarrow G$ that are defined by, 
    \[
      \mu(g_1,g_2) = g_1g_2 \hspace{2em} \iota(g) = g^{-1}
    \]
    are continuous. Then $G$ endowed with $\tau$ is called a \emph{topological group}.
\end{definition}

\begin{remark}\label{rmk:cont}
    Note that if the maps $\mu$ and $\iota$ are continuous with respect to $\tau$, then the map $\bar\mu: G \times G \longrightarrow G$ defined by \[
    \bar\mu(g_1,g_2) = g_1g_2^{-1}
    \] is also continuous. Similarly, if $\bar\mu$ is a continuous map, then $\mu$ and $\iota$ are necessarily continuous. This means we can as well define topological groups by requiring only that $\bar\mu$ to be continuous.
\end{remark}

From Definition~\ref{def:topgroup} it is clear that topological groups are precisely those topological spaces that admit continuous group operations, or equivalently, those groups that can be endowed with a topology that makes the group operations continuous. The most basic example of a topological group is $\R$ with its usual addition. A less-obvious example of a topological group is the general linear group $\GL(2,\R)$ considered as a subspace of $\R^4$ with the subspace topology, where elements of $\GL(2,\R)$ are identified with $4$-tuples of real numbers. While not included in our definition, topological groups are usually required to be Hausdorff. The next theorem provides a necessary and sufficient condition for a topological group to be Hausdorff.

\begin{lemma}\label{lem:grhaus}
    Let $G$ be a topological group. Then $G$ is Hausdorff if and only if the set $\set{1_G}$ is closed.  
\end{lemma}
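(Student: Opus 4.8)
The plan is to prove both directions of the equivalence, using the continuity of the group operations (in the form of $\bar\mu$ from Remark~\ref{rmk:cont}) as the main tool.

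First I would handle the easy direction: if $G$ is Hausdorff, then every singleton is closed, since Hausdorff spaces are $T_1$. In particular $\set{1_G}$ is closed, and this requires no special structure beyond the topology.

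For the converse, suppose $\set{1_G}$ is closed. The key observation is that for each fixed $g \in G$, left translation $L_g : G \to G$, $x \mapsto gx$, is a homeomorphism: it is continuous as a restriction of $\mu$ to $\set{g}\times G$, and its inverse $L_{g^{-1}}$ is continuous for the same reason. Hence translating $\set{1_G}$ shows that every singleton $\set{g}$ is closed, so $G$ is $T_1$. To upgrade $T_1$ to Hausdorff, I would use the continuous map $\bar\mu : G\times G \to G$, $\bar\mu(g_1,g_2) = g_1g_2^{-1}$, from Remark~\ref{rmk:cont}. Since $\set{1_G}$ is closed in $G$, its preimage $\bar\mu^{-1}(\set{1_G}) = \set{(g_1,g_2) : g_1 = g_2}$, which is precisely the diagonal $\Delta \subseteq G\times G$, is closed in the product topology. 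A standard point-set fact is that a topological space is Hausdorff if and only if its diagonal is closed in the square, so this gives that $G$ is Hausdorff.

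The main obstacle — really the only subtlety — is making sure the "closed diagonal iff Hausdorff" fact is invoked cleanly, and correctly identifying $\bar\mu^{-1}(\set{1_G})$ with the diagonal (which holds because $g_1 g_2^{-1} = 1_G$ exactly when $g_1 = g_2$). Everything else is a routine application of continuity of translations and of $\bar\mu$. One could alternatively give a direct separation argument: given $g \neq h$, the point $gh^{-1} \neq 1_G$, and by continuity of $\bar\mu$ together with the openness of $G \setminus \set{1_G}$ one can pull back to find disjoint neighborhoods of $g$ and $h$; but the diagonal argument is cleaner and I would present that.
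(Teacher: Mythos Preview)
Your proposal is correct and follows essentially the same approach as the paper: both use continuity of $\bar\mu(g_1,g_2)=g_1g_2^{-1}$ to identify $\bar\mu^{-1}(\{1_G\})$ with the diagonal and then invoke the closed-diagonal characterization of Hausdorff spaces. Your intermediate step establishing $T_1$ via translations is harmless but unnecessary, since the diagonal argument already yields Hausdorff directly.
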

\begin{proof}
    The forwards direction is trivial, since in any Hausdorff space, singleton sets are closed. 
    
    For the backwards direction, assume that $\set{1_G}$ is closed. Recall the classical result from basic topology that a topological space $X$ is Hausdorff if and only if the diagonal 
    \[\Delta_X = \set{(x,x)\in X \times X \, \mid \,x\in X}\] 
    is closed in the product topology. By Remark~\ref{rmk:cont}, the map 
    \[
\mu \colon G \times G \longrightarrow G, \quad \mu(x,y) = xy^{-1}
\]
    is continuous. Note that
    \[
    \mu^{-1}(1_G) = \Delta_G,
    \]
    because $xy^{-1}=\mathds{1}_G$ implies $x = y$. By assumption, $\set{\mathds{1}_G}$ is closed, so  the continuity of $\mu$ implies that $\Delta_G$ is closed and we are done.
\end{proof}

The most important example of a topological group in the study of mapping class groups, other than the mapping class group itself, is the space $\homeo(S,\p S)$. It is known that the space $\homeo(S,\p S)$ is a topological group. Moreover, the space $\homeo(X)$ is a topological group when the topological space $X$ is locally compact, Hausdorff and locally connected~\cite{arens}.

\begin{lemma}
    The $\homeo_0(S, \p)$ is a normal subgroup of $\homeo(S,\p S)$.
\end{lemma}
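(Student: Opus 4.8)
The plan is to verify the two defining properties of a normal subgroup: that $\homeo_0(S,\p S)$ is a subgroup, and that it is invariant under conjugation. Both follow from the fact that isotopy (relative to the boundary) is compatible with composition and inversion. First I would recall that $f \in \homeo_0(S,\p S)$ means there is a path $t \mapsto f_t$ in $\homeo(S,\p S)$ with $f_0 = \mathrm{id}$ and $f_1 = f$; equivalently, $f$ lies in the path-component of the identity. I would prefer to phrase the whole argument in terms of path-components, since that makes the algebra essentially automatic.

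For the subgroup property: given $f, g \in \homeo_0(S,\p S)$ with isotopies $f_t$ and $g_t$ from the identity, the composition $t \mapsto f_t \circ g_t$ is a path (continuity of composition is exactly the statement that $\homeo(S,\p S)$ is a topological group, which we may assume) from $\mathrm{id}$ to $f \circ g$, so $fg \in \homeo_0(S,\p S)$; similarly $t \mapsto (f_t)^{-1}$ is a path from $\mathrm{id}$ to $f^{-1}$ by continuity of inversion, so $f^{-1} \in \homeo_0(S,\p S)$. Since $\mathrm{id} \in \homeo_0(S,\p S)$ trivially, it is a subgroup. (Alternatively: the path-component of the identity in any topological group is always a subgroup, and one could simply cite this.)

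For normality: let $h \in \homeo(S,\p S)$ be arbitrary and $f \in \homeo_0(S,\p S)$ with isotopy $f_t$ from $\mathrm{id}$ to $f$. Then $t \mapsto h \circ f_t \circ h^{-1}$ is a path in $\homeo(S,\p S)$ — it is continuous because it is a composition of the continuous multiplication map with the fixed maps $h$ and $h^{-1}$ — starting at $h \circ \mathrm{id} \circ h^{-1} = \mathrm{id}$ and ending at $h f h^{-1}$. Hence $h f h^{-1} \in \homeo_0(S,\p S)$, which is exactly normality. One small point worth stating explicitly: each $h \circ f_t \circ h^{-1}$ does restrict to the identity on $\p S$, since $f_t$ does and $h$ fixes $\p S$ pointwise, so the conjugated isotopy genuinely stays in $\homeo(S,\p S)$ (and respects the boundary condition for isotopies).

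There is no real obstacle here; the only thing to be careful about is invoking the correct ambient fact, namely that $\homeo(S,\p S)$ is a topological group (equivalently, that composition and inversion are continuous in the compact-open topology), which was recorded in the paragraph preceding the statement via~\cite{arens}. Everything else is the standard observation that the identity component of a topological group is a normal subgroup. I would also remark, for the reader's benefit, that this lemma is precisely what makes the quotient in Definition~\ref{def:mapgr} a group, so that $\map(S)$ is well-defined as a group.
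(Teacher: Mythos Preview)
Your proposal is correct and follows essentially the same approach as the paper: for normality, both you and the paper conjugate the given isotopy by $h$ (the paper writes it explicitly as $G(x,t) = g \circ H(g^{-1}(x),t)$, you phrase it as the path $t \mapsto h f_t h^{-1}$). The paper actually skips the subgroup verification entirely and only checks closure under conjugation, so your treatment is the more complete of the two; your added remark that each $h f_t h^{-1}$ restricts to the identity on $\partial S$ is also a detail the paper leaves implicit.
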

\begin{proof}
    We shall show that $gfg^{-1}\in\homeo_0(S, \p)$ for any $f \in\homeo_0(S, \p) $ and $g\in \homeo(S, \p)$. Since $f$ is isotopic to the identity, we have an isotopy
    \[
    H \colon S\times [0,1] \longrightarrow S \quad \text{where} \quad H(x,0) = f(x) \text{ and } H(x,1) = x.
    \]
    Define 
    \[
    G \colon S \times [0,1] \longrightarrow S \quad \text{by} \quad G(x,t) = (g\circ H\circ g^{-1})(x,t).
    \]
    It is clear that $G(x,0)$ is the homeomorphism $g\circ f\circ g^{-1}$ and $G(x,1)$ is equal to the identity, which means that $\homeo_0(S, \p S) \trianglelefteq \homeo(S,\p S)$.
\end{proof}

Recall from Definition~\ref{def:mapgr} that the mapping class group is defined as the quotient 
\[
\homeo^+(S, \p S) / \homeo_0(S,\p S).
\]
Since $\homeo_0(S,\p S)$ is a normal subgroup, the mapping class group inherits its topology and group structure from $\homeo^+(S,\p S)$, becoming a topological group. Moreover, the quotient map 
\[
\pi\colon \homeo^+(S,\p S) \longrightarrow \map(S)
\]
is open and thus $\map(S)$ has a basis 
\[
\mathcal{B} = \set{\pi(B(K,U))\,\mid\, B(K,U) \text{ is a subbasic element of $\homeo(S,\p S)$}}.
\]

Although mapping class groups are topological groups, their topologies are rather uninteresting for finite-type surfaces, as they are discrete.

\begin{definition}
    A point $x$ in a topological space $X$ is called \emph{isolated} if there exists a neighborhood $U\subset X$ of $x$ such that $U \cap y = \emptyset$ for all $y\in X$.
\end{definition}

A nice property of topological groups is that translation by an element of the group is a homeomorphism, that is, any neighborhood $U\subset G$ of a point $g\in G$ can be written as $U= g\,U'$, where $U'\subset G$ is a neighborhood of the identity. We use this fact in the next theorem.

\begin{lemma}\label{lem:discrete}
    A topological group $G$ is discrete if and only if the point $\{\mathds{1}_G\}$ is isolated.
\end{lemma}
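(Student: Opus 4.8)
The plan is to prove the two implications separately, in each case arguing directly from the definitions and leaning on the observation recorded just before the statement: in a topological group, left translation by a fixed element is a self-homeomorphism.

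The forward direction is immediate. If $G$ is discrete, then every subset of $G$ is open; in particular $\{\mathds{1}_G\}$ is open, and being open is exactly what it means for $\mathds{1}_G$ to be an isolated point.

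For the converse, I would start from the hypothesis that $\mathds{1}_G$ is isolated, i.e. that $\{\mathds{1}_G\}$ is an open subset of $G$, and then bootstrap this up to every singleton. Fix an arbitrary $g \in G$ and consider the left-translation map $L_g \colon G \to G$, $L_g(h) = gh$. It is continuous, being the composite of $h \mapsto (g,h)$ with the multiplication $\mu$, and its two-sided inverse $L_{g^{-1}}$ is continuous for the same reason; hence $L_g$ is a homeomorphism. Applying $L_g$ to the open set $\{\mathds{1}_G\}$ shows that $\{g\} = L_g(\{\mathds{1}_G\})$ is open. Since $g$ was arbitrary, every singleton in $G$ is open, and then every subset $A \subseteq G$ is open, being the union $\bigcup_{a \in A} \{a\}$ of open sets. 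Thus $G$ carries the discrete topology.

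I do not expect any genuine obstacle here: the only point that uses the group structure is the passage from "$\{\mathds{1}_G\}$ open" to "$\{g\}$ open for every $g$", which is precisely the homogeneity supplied by translations being homeomorphisms; the remainder is routine point-set bookkeeping.
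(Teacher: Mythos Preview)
Your proof is correct and follows essentially the same approach as the paper: both directions use that left translation by a group element is a homeomorphism (the paper records this fact in the paragraph immediately preceding the lemma and then simply writes $gU$ for the translated neighborhood, whereas you spell out why $L_g$ is a homeomorphism). The only cosmetic difference is that you phrase ``isolated'' as ``$\{\mathds{1}_G\}$ is open'' and conclude by showing every subset is a union of open singletons, while the paper works with an isolating neighborhood $U$ of the identity and translates it; the content is the same.
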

\begin{proof}
    We begin with the forwards direction. Suppose $G$ is discrete. Then, any element $g\in G$ has a neighborhood $U\subset G$ such that $U \cap g' = \emptyset$ for all $g'\in G$. Therefore, $1_G$ has a neighborhood that is disjoint from every element in $G$ except itself, that is, it is isolated.
    
    For the backwards direction, suppose $1_G$ is isolated, so that it has a neighborhood $U\subset G$ with $U \cap g' = \emptyset$ for all $g'\in G$. It follows that any $g\in G$ has a neighborhood $gU$ with $gU \cap g' = \emptyset$ for all $g' \in G$ and hence, $g$ is isolated. Since $g$ was arbitrary, $G$ is discrete.
\end{proof}

Checking for convergence from the definition of compact-open topology is quite cumbersome, but fortunately, there is a rather useful characterization of convergence in the compact-open topology.

\begin{proposition}[{\cite[Theorem 7.11]{kelley}}]\label{prop:compactopenconvergence}
    Let $S$ be any surface, $f\in \homeo(S,\p S)$ and $\set{f_n}_{n\in\N}$ be a sequence with $f_n \in \homeo(S,\p S)$. Then $f_n\rightarrow f$ in the compact-open topology if and only if for every compact subset $K \subseteq S$, $f_n|_K\rightarrow f|_K$ uniformly. We say in this case that $f_n$ \emph{converges compactly} to $f$. 
\end{proposition}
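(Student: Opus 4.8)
The plan is to recognize this as the standard identification of the compact-open topology with the topology of uniform convergence on compact sets, which is available here because the target space is metrizable. First I would record that a surface, being second countable, Hausdorff, and locally Euclidean, is metrizable by the Urysohn metrization theorem; fix once and for all a metric $d$ on $S$ inducing its topology. Observe also that nothing in the argument uses that $f$ and the $f_n$ are homeomorphisms fixing $\p S$: the statement is really a statement about continuous self-maps of $S$, so I would carry out the proof inside the space of continuous maps $S \to S$ with the compact-open topology and afterwards restrict to the subspace $\homeo(S,\p S)$.

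For the ``only if'' direction, fix a compact $K \subseteq S$ and an $\varepsilon > 0$. Using continuity of $f$ together with compactness of $K$, I would cover $K$ by finitely many open sets $W_1,\dots,W_m$ such that each image $f(W_i)$ has diameter less than $\varepsilon/2$. The key move is to pass to a \emph{compact} refinement: choosing a Lebesgue number $\delta$ for this cover of the compact metric space $K$ and setting $K_i = \{x\in K : d(x, K\setminus W_i)\ge \delta\}$, one obtains compact sets $K_i \subseteq W_i$ with $\bigcup_i K_i = K$. Put $U_i = \{y\in S : d(y, f(K_i)) < \varepsilon/4\}$, an open set of diameter less than $\varepsilon$ containing $f(K_i)$. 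Then $N = \bigcap_{i=1}^m B(K_i, U_i)$ is a compact-open neighborhood of $f$, and for every $g \in N$ and every $x \in K$, choosing $i$ with $x \in K_i$ gives $g(x), f(x) \in U_i$, hence $d(g(x), f(x)) < \varepsilon$; thus $\sup_{x\in K} d(g(x), f(x)) \le \varepsilon$ for all $g\in N$. Since $f_n \to f$ in the compact-open topology, $f_n \in N$ for large $n$, and as $\varepsilon$ and $K$ were arbitrary this yields compact convergence.

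For the ``if'' direction, suppose $f_n|_K \to f|_K$ uniformly for every compact $K$, and let $B(K,U)$ be a subbasic neighborhood of $f$, so that $f(K) \subseteq U$. Since $f(K)$ is compact and $S\setminus U$ is closed and disjoint from it, $\varepsilon := d(f(K), S\setminus U) > 0$ (with the convention $\varepsilon = 1$ when $U = S$). Uniform convergence on $K$ then supplies an $N$ with $\sup_{x\in K} d(f_n(x), f(x)) < \varepsilon$ for $n \ge N$, which forces $f_n(K) \subseteq U$, i.e. $f_n \in B(K,U)$ for $n \ge N$. A general basic neighborhood of $f$ is a finite intersection $\bigcap_{j=1}^r B(K_j,U_j)$, so taking the maximum of the finitely many resulting thresholds completes this direction.

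I expect the only genuine obstacle to be the compact-refinement step in the first direction: turning an open cover of $K$ into a subordinate cover by \emph{compact} sets $K_i$, so that the $B(K_i,U_i)$ are legitimate subbasic open sets of the compact-open topology. This is precisely where compactness and metrizability of $K$ enter, via the Lebesgue number lemma; everything else is bookkeeping with the triangle inequality. A minor point worth flagging afterwards is that the phrase ``$f_n|_K \to f|_K$ uniformly'' a priori depends on the choice of $d$, yet the equivalence just established shows it does not, since convergence in the compact-open topology makes no reference to any metric.
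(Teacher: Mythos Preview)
Your argument is correct. The paper does not actually prove this proposition: it is stated as a citation to Kelley, Theorem~7.11, with no proof given, so there is nothing in the paper to compare your approach against. What you have written is the standard proof that, for a metrizable target, the compact-open topology coincides with the topology of compact convergence, and all the steps check out---in particular the compact-refinement manoeuvre via a Lebesgue number is exactly what is needed to produce legitimate subbasic sets $B(K_i,U_i)$, and the positive-distance argument in the converse direction is clean. Your closing remark that the notion of ``uniform on compacta'' is metric-independent, because it has just been identified with compact-open convergence, is a nice touch and worth keeping.
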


With this characterization, we can show that $\map(S)$ is discrete.

\begin{proposition}\label{prop:discretegroup}
    The mapping class group $\map(S)$ is a discrete topological group.
\end{proposition}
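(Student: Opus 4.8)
The plan is to invoke Lemma~\ref{lem:discrete}, which reduces the problem to showing that the identity mapping class is isolated in $\map(S)$. Since $\map(S)$ carries the quotient topology from $\homeo^+(S,\p S)$ and the quotient map $\pi$ is open with basis $\mathcal{B}$ consisting of the images of subbasic sets $B(K,U)$, it suffices to exhibit a compact $K\subseteq S$ and an open $U\subseteq S$ such that $[f]=[\mathds{1}_S]$ for every $f$ with $f(K)\subset U$; equivalently, to find a neighborhood of $\mathds{1}_S$ in $\homeo^+(S,\p S)$ all of whose elements are isotopic to the identity.

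The key step is the following: for a finite-type surface $S$, there is a finite collection of essential simple closed curves and arcs $\gamma_1,\dots,\gamma_k$ that \emph{fill} $S$, in the sense that cutting $S$ along them leaves only disks and boundary-parallel annuli. First I would take $K$ to be a compact neighborhood of $\gamma_1\cup\cdots\cup\gamma_k$ together with $\p S$, and $U$ to be a suitably chosen open neighborhood of $K$ so that the condition $f(K)\subset U$ forces $f(\gamma_i)$ to be isotopic to $\gamma_i$ for each $i$ (and $f$ to fix $\p S$). The standard Alexander-method argument (as in Farb--Margalit) then shows that a homeomorphism fixing $\p S$ and fixing the isotopy classes of a filling system of curves/arcs, with the right orientation data, is isotopic to the identity. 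Hence every $f$ in the basic neighborhood $\pi(B(K,U))$ of $[\mathds{1}_S]$ equals $[\mathds{1}_S]$, so $\{[\mathds{1}_S]\}$ is open, i.e.\ isolated.

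An alternative, more self-contained route avoids the Alexander method: first handle the case of the closed disk $S_{0,0}^1$, where the Alexander trick gives $\homeo^+(D^2,\p D^2)=\homeo_0(D^2,\p D^2)$ outright, so $\map(D^2)$ is trivial, hence discrete. For a general finite-type $S$, one uses that $\map(S)$ is countable (a fact that follows, for instance, from its action on the countable set of isotopy classes of simple closed curves, or from it being finitely generated by Dehn twists as recalled in the introduction) together with the fact — provable along the lines above — that the identity has \emph{some} neighborhood containing only finitely many mapping classes; a countable group in which the identity has such a neighborhood must have $\{\mathds{1}\}$ isolated once one also knows the group is $T_1$, which holds by Lemma~\ref{lem:grhaus} since mapping class groups are Hausdorff.

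The main obstacle is making precise the claim that $f(K)\subset U$ forces $f(\gamma_i)\simeq\gamma_i$: one must choose $U$ carefully (e.g.\ as a regular neighborhood of $K$ that deformation retracts onto it, intersected with the complement of the cores of the complementary pieces) so that $f(K)\subset U$ genuinely constrains $f$ up to isotopy, and then cite or reprove the Alexander method to conclude isotopy to the identity. The rest — translating between the neighborhood of $\mathds{1}_S$ in $\homeo^+(S,\p S)$, its image under the open map $\pi$, and the isolatedness criterion of Lemma~\ref{lem:discrete} — is routine.
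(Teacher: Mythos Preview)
Your main approach and the paper's share the same core idea: reduce via Lemma~\ref{lem:discrete} to showing the identity is isolated, and then use the Alexander method (Proposition~\ref{prop:alex}) applied to a finite filling system $\gamma_1,\dots,\gamma_k$. The difference is in packaging. You attempt a \emph{direct} construction of a basic neighborhood $\pi(B(K,U))$ (or rather a finite intersection $\pi\bigl(\bigcap_i B(\gamma_i,N_i)\bigr)$ --- a single subbasic set will not do) that collapses to $\{[\mathds{1}_S]\}$, whereas the paper argues by \emph{contradiction via sequences}: if the identity were not isolated, take $[f_n]\to[\mathds{1}_S]$ with $[f_n]\neq[\mathds{1}_S]$, invoke Proposition~\ref{prop:compactopenconvergence} to get uniform convergence on the compact set $\bigcup_i\gamma_i$, conclude that $f_n$ eventually fixes each $\gamma_i$ up to isotopy, and derive a contradiction from the Alexander method. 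The sequential route buys exactly what you flag as your ``main obstacle'': it avoids having to engineer $U$ so that $f(K)\subset U$ \emph{rigorously} forces $f(\gamma_i)\simeq\gamma_i$, since uniform closeness on a compact curve does this automatically for large $n$.

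Your alternative route is not sound as stated: countability of $\map(S)$ together with $T_1$ and ``some neighborhood of the identity contains only finitely many classes'' does not by itself force $\{[\mathds{1}_S]\}$ to be open --- a finite $T_1$ space need not be discrete unless it is Hausdorff, and even then you are essentially re-proving the direct argument. Drop that paragraph and stick with the Alexander-method line; either your direct construction (once you pin down $U$, e.g.\ by taking annular/bigon neighborhoods $N_i$ of each $\gamma_i$ and intersecting the $B(\gamma_i,N_i)$) or the paper's sequential repackaging will do.
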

\begin{proof}
    By Lemma~\ref{lem:discrete}, it suffices to show that the identity in $\map(S)$ is isolated. Assume for a contradiction that it is not. Then for every open neighborhood $U\subset \map(S)$ of the identity, $U$ contains a nontrivial mapping class. In particular, there exists a sequence of mapping classes $\set{\hat{f_n}}_{n\in \N}$ converging to the identity, together with a sequence of representatives  $\set{f_n}$ converging to some $f\in \homeo_0(S,\p S)$.
    
   By Proposition~\ref{prop:compactopenconvergence}, $f_n\rightarrow f$ uniformly on every compact subset of $S$ and by the Alexander method, see Proposition~\ref{prop:alex}, there exists a finite collection of simple closed curves and essential arcs
    \[
    \Gamma = \set{\gamma_1,\dots,\gamma_k}
    \]
    such that any homeomorphism fixing elements of $\Gamma$ pointwise is isotopic to the identity. 
    
    Since $\Gamma$ is compact, $f_n$ converges uniformly to $f$ on $\Gamma$. This means for sufficiently large $n\in \N$, $f_n(\gamma_i)$ is arbitrarily close to $\gamma_i$ for all $i=1,\dots,k$. Therefore, for such $n$, we can assume, up to isotopy, that $f_n$ fixes the elements of $\Gamma$ pointwise. 
    
    By the Alexander Method, for sufficiently large $n$, $\hat{f_n}$ is isotopic to the identity, contradicting our assumption that they are all nontrivial. This shows that the identity is isolated and $\map(S)$ is discrete.
\end{proof}

We list below the mapping class groups of some important finite-type surfaces:
    \begin{itemize}
        \item The mapping class group of the disk is trivial.
        \item The mapping class group of the once punctured disk is trivial.
        \item The mapping class group of the $2$-sphere is trivial.
        \item The mapping class group of an annulus is $\Z$, generated by the Dehn twist shown in Figure~\ref{fig:twist}.
        \item The mapping class group of the $2$-torus is the special linear group $\SL(2,\Z)$, generated by the two Dehn twists about the curves shown in Figure~\ref{fig:twisttorus}.
    \end{itemize}

\section{The Alexander Method}

One of the most important tools in the study of mapping class groups is the \emph{Alexander method}, which gives a condition to determine whether a mapping class is the identity or not. 

We say a collection of curves \emph{fills} $S$ if the complement of the union of them is a disjoint union of disks and once punctured disks. Since mapping class groups of the disk and the once punctured disk are trivial, a mapping class is trivial if it fixes such a collection. The Alexander method makes this more precise. This is a good time to give the formal definitions for closed curves and arcs. We begin with closed curves.

\begin{definition}
    A \emph{closed curve} in $S$ is a map $\gamma \colon S^1 \rightarrow S$ where $S^1$ denotes the circle. A closed curve is \emph{simple} if it is a homeomorphism onto its image. A closed curve is \emph{essential} if it is not isotopic to a point, a puncture or a boundary component. A closed curve is \emph{peripheral} if it is isotopic to some boundary component. We identify a closed curve with its image in $S$.
\end{definition}

We define arcs similarly.

\begin{definition}
    An \emph{arc} in $S$ is a map $\alpha\colon [0,1]\rightarrow S$. An arc is \emph{simple} if it is a homeomorphism onto its image in its interior. An arc is \emph{proper} if its endpoints are on punctures or boundary components. An arc is \emph{essential} if it is not isotopic to a puncture or a boundary component.
\end{definition}

\begin{proposition}[{\cite[Proposition 2.8]{primer}}]\label{prop:alex}
Let $S$ be a finite type surface, and let $\phi \in \homeo^+(S, \p S)$. There exists a collection
\[
\gamma_1,\dots,\gamma_n
\]
of essential simple closed curves and simple proper arcs in $S$ with the following properties:

\begin{enumerate}
    \item The $\gamma_i$ are pairwise in minimal position.
    \item The $\gamma_i$ are pairwise nonisotopic.
    \item For distinct $i,j,k$, at least one of $\gamma_i\cap \gamma_j$, $\gamma_i\cap \gamma_k$, or $\gamma_j\cap \gamma_k$ is empty.
\end{enumerate}

\begin{enumerate}[label=(\arabic*)]
    \item If there is a permutation $\sigma$ of $\set{1,\dots,n}$ so that $\phi(\gamma_i)$ is isotopic to $\gamma_{\sigma(i)}$ relative to $\p S$ for each $i$, then $\phi(\cup\gamma_i)$ is isotopic to $\cup\gamma_i$ relative to $\p S$.
    If we regard $\cup\gamma_i$ as a (possibly disconnected) graph $\Gamma$ in $S$, with vertices at the intersection points and at the endpoints of arcs, then the composition of $\phi$ with this isotopy gives an automorphism $\phi_*$ of $\Gamma$.
    \item Suppose now that $\set{\gamma_i}$ fills $S$. If $\phi_*$ fixes each vertex and each edge of $\Gamma$, with orientations, then $\phi$ is isotopic to the identity. Otherwise, $\phi$ has a nontrivial power that is isotopic to the identity.
\end{enumerate}

\end{proposition}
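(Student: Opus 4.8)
The plan is to deduce both enumerated claims from a single \emph{normalization} statement: after composing $\phi$ with an ambient isotopy of $S$ fixing $\p S$ pointwise, one may assume that $\phi$ carries $\cup\gamma_i$ to itself and sends each $\gamma_i$ onto $\gamma_{\sigma(i)}$ as a set. Granting this, the promised graph automorphism $\phi_*$ of $\Gamma$ is just the induced map: $\phi$ permutes the intersection points and the endpoints of arcs, which are precisely the vertices of $\Gamma$, so $\phi_*$ is a genuine automorphism of the finite graph $\Gamma$; and claim (1) is immediate, since the ambient isotopy witnesses that $\phi(\cup\gamma_i)$ is isotopic to $\cup\gamma_i$ relative to $\p S$.

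\textbf{Normalization.} This is essentially a change-of-coordinates statement for curve systems, which I would prove by induction on $n$. The base case $n=1$ is the isotopy extension theorem: the isotopy from $\phi(\gamma_1)$ to $\gamma_{\sigma(1)}$ relative to $\p S$ extends to an ambient isotopy of $S$ relative to $\p S$, and absorbing it into $\phi$ yields $\phi(\gamma_1)=\gamma_{\sigma(1)}$. For the inductive step, assume $\phi(\gamma_i)=\gamma_{\sigma(i)}$ for all $i<j$. Since $\phi$ is a homeomorphism, $\phi(\gamma_j)$ is in minimal position with every $\gamma_{\sigma(i)}=\phi(\gamma_i)$ by hypothesis (1), so there are no bigons between them; and $\phi(\gamma_j)$ is isotopic to $\gamma_{\sigma(j)}$, so by the bigon criterion one removes the bigons between $\phi(\gamma_j)$ and $\gamma_{\sigma(j)}$ one innermost bigon at a time, each removal effected by an isotopy supported in a small disk, until $\phi(\gamma_j)=\gamma_{\sigma(j)}$. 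The crucial point — and this is exactly where hypothesis (3) enters — is that each such innermost bigon can be taken disjoint from all previously normalized curves $\gamma_{\sigma(i)}$, $i<j$, so these isotopies fix the curves already put in place and preserve minimal position with every $\gamma_k$. Iterating over $j$ completes the normalization; arcs are treated identically, with the usual care at endpoints lying on punctures or on $\p S$.

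\textbf{From normalization to (2).} Suppose now $\set{\gamma_i}$ fills $S$ and that $\phi_*$ fixes every vertex and every oriented edge of $\Gamma$. Let $N$ be a closed regular neighborhood of $\Gamma$ in $S$. A homeomorphism of a finite graph that fixes each vertex and each oriented edge is isotopic to the identity relative to its vertices, and this isotopy thickens to an isotopy of $N$ relative to $\p N$; so after one more isotopy we may assume $\phi|_N$ is the identity. Because $\set{\gamma_i}$ fills, the components of $S\setminus\mathrm{int}(N)$ are disks and once-punctured disks, and $\phi$ restricts on each to a homeomorphism fixing its boundary pointwise. By the Alexander lemma — equivalently, the triviality of the mapping class groups of the disk and of the once-punctured disk recorded above — $\phi$ is isotopic relative to the boundary to the identity on each such piece, and assembling these isotopies shows $\phi$ is isotopic to the identity on $S$. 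Finally, if $\phi_*$ does \emph{not} fix $\Gamma$ pointwise, then since $\mathrm{Aut}(\Gamma)$ is finite there is $k\geq 1$ with $\phi_*^{\,k}=\mathrm{id}$, and applying the previous paragraph to $\phi^{k}$ shows that $\phi$ has a nontrivial power isotopic to the identity.

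\textbf{Main obstacle.} The only step carrying real content is the normalization, and within it the delicate point is that the bigon-removal isotopies for $\phi(\gamma_j)$ can be chosen to avoid the already-normalized curves; this is precisely what hypotheses (1) and (3) are engineered to guarantee, and verifying that innermost bigons disjoint from the other curves always exist, and that pushing across them preserves minimal position with every curve, is the technical heart of the proof. Everything downstream — the rigidity of graph automorphisms on the regular neighborhood $N$ and the Alexander lemma on the complementary disks — is routine.
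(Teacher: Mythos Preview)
The paper does not supply its own proof of this proposition; it is quoted from \cite[Proposition 2.8]{primer} and left unproved. Your outline is correct and follows exactly the argument given in that reference: inductive normalization via innermost-bigon pushes (with hypothesis (3) guaranteeing the bigons miss the already-placed curves), then the Alexander lemma on the complementary disks and once-punctured disks, with the non-fixing case handled by the finiteness of $\mathrm{Aut}(\Gamma)$.
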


\section{The Pure Mapping Class Group}\label{sec:puremap}

Let $S$ be a finite-type surface with $n$ punctures. Although they keep the set of punctures fixed, each mapping class act on the set of punctures by some permutation. Therefore, we can define a surjective map 
\[
\pi \colon \map(S) \longrightarrow \sym_n
\]
given by the action of a mapping class on the set of punctures, where $\sym_n$ is the symmetric group on $n$ letters. This map is well-defined, since isotopic homeomorphisms induce the same action on the set of punctures and it is clear that this is a group homomorphism. The kernel of this group homomorphism is an important normal subgroup of $\map(S)$.

\begin{definition}
The \emph{pure mapping class group}, denoted $\pmap(S)$, is the the kernel of the group homomorphism $\pi \colon \map(S) \longrightarrow \sym_n$. Equivalently, it is the subgroup of $\map(S)$ consisting of those mapping classes that fix the punctures of $S$ pointwise. An element of $\pmap(S)$ is called a \emph{pure mapping class}.
\end{definition}

The relation between $\map(S)$ and $\pmap(S)$ is encapsulated nicely by the short exact sequence
\[
0 \longrightarrow \pmap(S) \longrightarrow \map(S) \longrightarrow \sym_n \to 0 
\] 
where the first map is the inclusion of a mapping class in $\pmap(S)$ into $\map(S)$.

\begin{remark}
    When $S$ has no punctures, the pure mapping class group coincides with the entire mapping class group.
\end{remark}

The above short exact sequence tells that in order to generate the entire mapping class group in the presence of punctures, mapping classes whose images under $\pi$ generate $\sym_n$ need to be added to the generating set of $\pmap(S)$. An example for such mapping classes are \emph{half-twists}. However, punctured surfaces are outside the scope of this thesis and thus we skip half-twists. 

\section{Dehn Twists}

The most important mapping classes in the study of mapping class groups are undoubtedly the \emph{Dehn twists}. We follow~\cite{primer} for the construction of Dehn twists.

\begin{definition}
    Consider the annulus $A= S^1\times [0,1]$, where $S^1$ denotes the circle and embed it in $\R^2$ by the map $(\theta,t)\mapsto(\theta,t+1)$. The map $T\colon A\rightarrow A$ defined by
    \[
    T(\theta,t) = (\theta + 2\pi t, t)
    \]
    is called the \emph{twist map} of $A$.
\end{definition}

\begin{remark}
    We defined the twist map as a left twist. Similarly, a right twist is given by $(\theta,t) \mapsto (\theta - 2\pi t, t)$. Choosing between a right twist and left twist is a matter of convention.
\end{remark}

\begin{figure}[htbp]
  \centering
  \includegraphics[width=0.5\textwidth]{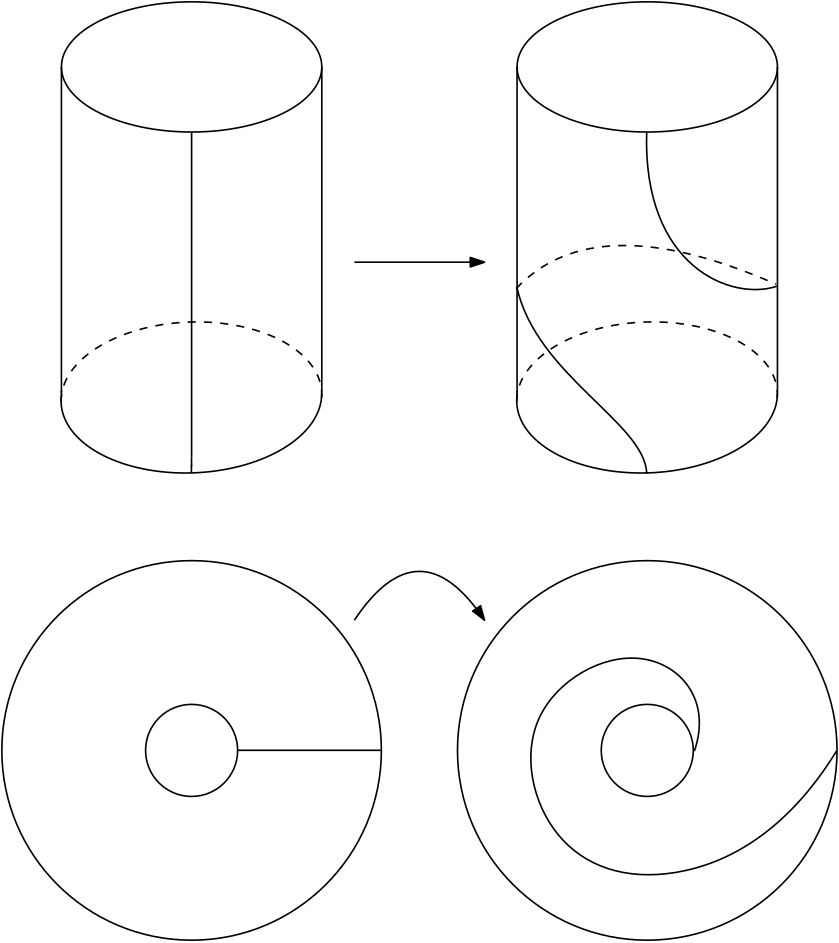}
  \caption{Two views of the twist map on $A$ }
  \label{fig:twist}
\end{figure}

We shall define a Dehn twist to be the twist map applied on an annular neighborhood of a simple closed curve. 

\begin{definition}
    Let $\alpha$ be a simple closed curve on a surface $S$. Let $N$ be an annular neighborhood of $\alpha$ and $\phi\colon A \rightarrow N$ to be an orientation-preserving embedding of $N$ into $A$. A \emph{Dehn twist} about $\alpha$, denoted by $T_a$ is defined by
    \[
    T_\alpha(x) = \begin{cases}
        \phi \circ T \circ \phi^{-1}(x) &  \text{if } x \in N, \\
        x & \text{if }x\in S\setminus N.
    \end{cases}
    \]
\end{definition}

\begin{remark}
    A Dehn twist is obtained by removing an annular neighborhood of $\alpha$, applying the twist map and gluing it back.
\end{remark}

Let $a$ be an isotopy class of a simple closed curve in $S$ and $\alpha$ be a representative of $a$. We write $T_a$ for the mapping class of $T_\alpha$. While the homeomorphism $T_\alpha$ depends on the representative $\alpha$ and the annular neighborhood $N$, the mapping class $[T_\alpha]$, denoted by $T_a$, does not, and depends only on the isotopy class of simple closed curves. This follows from the fact that any two annular neighborhoods are homeomorphic and that representatives of isotopy classes of simple closed curves are isotopic, which implies that the corresponding Dehn twists are isotopic. 

\begin{figure}[htbp]
  \centering
  \includegraphics[width=\textwidth]{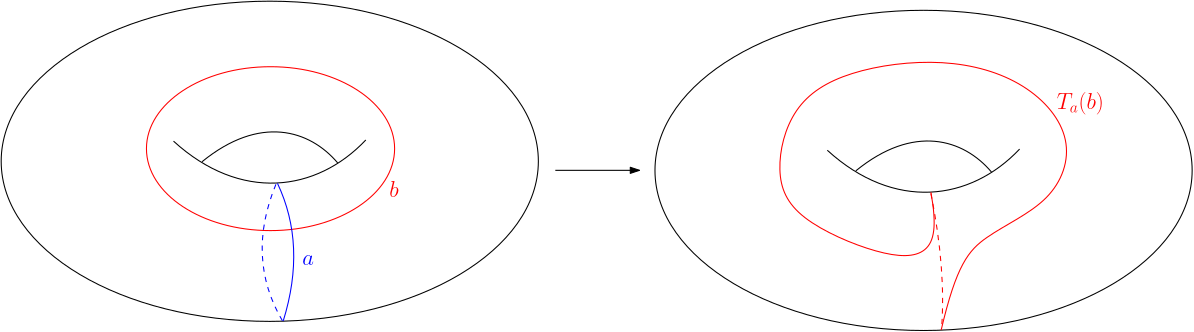}
  \caption{A Dehn twist on the torus}
  \label{fig:twisttorus}
\end{figure}

\begin{remark}
    Dehn twists are pure mapping classes, since they fix the set of punctures pointwise, i.e., they act trivially on the set of punctures.
\end{remark}

\begin{proposition}\label{prop: dehnprop}
    Let $a,b$ be two isotopy classes of simple closed curves and $f\in \mathrm{Map}(S)$. Then, Dehn twists about $a$ and $b$ satisfy the following properties:
    \begin{enumerate}[label=(\roman*)]
        \item $i(a,b) = 0$ $\Longleftrightarrow$ $T_a(b) = b$ and $T_b(a)=a$ $\Longleftrightarrow$ $T_aT_b = T_bT_a$.
        \item $T_a = T_b \Longleftrightarrow a=b$
        \item $T_{f(a)}=fT_af^{-1}$. (Conjugation property)
        \item $f$ commutes with $T_a$ $\Longleftrightarrow$ $f(a)=a$.
    \end{enumerate}
\end{proposition}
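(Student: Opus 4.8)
The four parts are interdependent, so the plan is to prove them in the order (iii), then the ``easy'' half of (i), then (ii), then the remaining half of (i), and finally (iv). I would begin with the conjugation property (iii), which is purely formal. Pick a representative homeomorphism $F$ of $f$, a representative simple closed curve $\alpha$ of $a$, and an orientation-preserving parametrization $\phi\colon A\to N$ of an annular neighborhood $N$ of $\alpha$. Then $F(N)$ is an annular neighborhood of $F(\alpha)$, parametrized by $F\circ\phi$, so $T_{F(\alpha)}$ may be built from this data; checking the cases $x\in F(N)$ and $x\notin F(N)$ separately shows $T_{F(\alpha)}=F\,T_\alpha\,F^{-1}$ as homeomorphisms, and passing to isotopy classes gives $T_{f(a)}=fT_af^{-1}$.

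For (i), I would first dispatch the implications that do not rely on (ii). If $i(a,b)=0$, realize $\alpha$ and $\beta$ disjointly; an annular neighborhood of $\alpha$ disjoint from $\beta$ shows $T_a(b)=b$, and symmetrically $T_b(a)=a$, while disjoint annular neighborhoods of both curves exhibit $T_a$ and $T_b$ as homeomorphisms with disjoint support, hence $T_aT_b=T_bT_a$. The one direction with genuine geometric content is the converse $T_a(b)=b\Rightarrow i(a,b)=0$; for this I would invoke the intersection-number estimate of~\cite{primer}: if $i(a,b)=n\ge 1$ then $i(T_a^k(b),b)\ge|k|\,n^2>0$ for every $k\ne 0$, so $T_a(b)$ cannot be isotopic to $b$. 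This yields $i(a,b)=0\Longleftrightarrow (T_a(b)=b \text{ and } T_b(a)=a)$.

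With (iii) and this partial form of (i) in hand, I would prove (ii); the backward direction is the well-definedness of $T_a$ noted in the text. For the forward direction, suppose $T_a=T_b$. If $i(a,b)\ge 1$ then $T_a(b)\ne b=T_b(b)$, a contradiction, so $i(a,b)=0$ and we may realize $\alpha,\beta$ disjointly; if $a\ne b$ they are disjoint and nonisotopic. Inside the component $S'$ of $S\setminus\beta$ containing $\alpha$, the curve $\alpha$ is essential and non-peripheral (it is essential in $S$, and a boundary component of $S'$ is either a component of $\p S$ or a copy of $\beta$, none isotopic to $\alpha$), so $S'$ is complex enough to contain a simple closed curve $c$ with $i(\alpha,c)\ne 0$; then $c$ is disjoint from $\beta$, so $T_b(c)=c\ne T_a(c)$ by the equivalence above, contradicting $T_a=T_b$. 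Closing the loop in (i) is now immediate: if $T_aT_b=T_bT_a$ then $T_{T_a(b)}=T_aT_bT_a^{-1}=T_b$ by (iii), so $T_a(b)=b$ by (ii), and symmetrically $T_b(a)=a$; combined with the earlier implications this gives the full chain of equivalences. Property (iv) is then formal: $f(a)=a$ gives $fT_af^{-1}=T_{f(a)}=T_a$ by (iii), and conversely $fT_af^{-1}=T_a$ gives $T_{f(a)}=T_a$, whence $f(a)=a$ by (ii).

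I expect the real obstacle to be the implication $T_a(b)=b\Rightarrow i(a,b)=0$. Proving it without the machinery of~\cite{primer} requires the bigon criterion together with a careful, hands-on analysis of how twisting along $a$ forces intersections with $b$ that cannot be removed by isotopy; everything else reduces either to unwinding the definition of the twist map on an annulus (properties (iii), (iv), and the ``$\Leftarrow$'' parts) or to the change-of-coordinates and complexity bookkeeping used to produce the test curve $c$ in (ii).
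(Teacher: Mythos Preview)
The paper states this proposition without proof, treating it as standard background drawn from~\cite{primer}; there is no in-text argument to compare against. Your plan is essentially the standard Farb--Margalit treatment and is correct.

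Two minor points are worth flagging if you intend to insert this into the thesis. First, the intersection formula you invoke for the hard direction of (i), namely $i(T_a^k(b),b)=|k|\,i(a,b)^2$, is stated in the paper only \emph{after} this proposition (as Proposition~\ref{prop:twistintersection}); there is no circularity, since its proof in~\cite{primer} is independent of the properties listed here, but you should either reorder or cite~\cite{primer} directly to avoid a forward reference. Second, in your argument for (ii), the existence of the test curve $c$ with $i(\alpha,c)\ne 0$ inside the cut surface $S'$ deserves a sentence of justification --- this is the change-of-coordinates principle, or a short direct construction depending on whether $\alpha$ is separating in $S'$ --- and you should remark that intersection numbers computed in $S'$ agree with those in $S$ by the bigon criterion. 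Neither issue is a genuine gap in the mathematics, only in the exposition.
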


For the next property of Dehn twists, we need to define the \emph{geometric intersection number} of two simple closed curves.

\begin{definition}
    Let $a$ and $b$ be isotopy classes of simple closed curves in $S$. The \emph{geometric intersection number} of $a$ and $b$ is 
    \[
    i(a,b) = \set{\min|\alpha\cap \beta| \, \mid \, [\alpha]=a \text{ and }[\beta]=b}.
    \]
\end{definition}

Note that the geometric intersection number of two simple closed curves is $0$ if and only if they are disjoint up to isotopy.

\begin{proposition}[{\cite[Proposition 3.2]{primer}}]\label{prop:twistintersection}
    Let $a$ and $b$ be arbitrary isotopy classes of essential simple closed curves on a surface and let $k\in \mathbb{Z}$. Then
    \[
    i(T_a^k(b),b) = |k|i(a,b)^2.
    \]
\end{proposition}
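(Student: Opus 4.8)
The plan is to compute the intersection number from an explicit model and then certify that the model realises the minimum via the bigon criterion. If $i(a,b)=0$ then $T_a$ fixes $b$ by Proposition~\ref{prop: dehnprop}(i), so both sides of the identity vanish and there is nothing to prove; assume henceforth $n := i(a,b)\ge 1$. First I would fix representatives $\alpha\in a$ and $\beta\in b$ in minimal position, so that $|\alpha\cap\beta|=n$, and choose the annular neighbourhood $N$ supporting the Dehn twist $T_\alpha$ small enough that $\beta\cap N$ consists of exactly $n$ disjoint spanning arcs of $N$, one through each point of $\alpha\cap\beta$. Since $T_\alpha^k$ is the identity on $S\setminus N$, the curve $T_\alpha^k(\beta)$ agrees with $\beta$ outside $N$, while inside $N$ each spanning arc of $\beta$ is replaced by an arc that winds monotonically $|k|$ times around the core $\alpha$ before exiting at the same endpoint.

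Next I would count $|T_\alpha^k(\beta)\cap\beta|$ in this model to get the upper bound. After a small perturbation of $T_\alpha^k(\beta)$ rel $\partial N$, the two curves are disjoint on $S\setminus N$, so all intersections occur inside $N$. There we have $n$ spanning arcs of $\beta$ meeting $n$ winding arcs of $T_\alpha^k(\beta)$; a single winding arc crosses a single spanning arc in exactly $|k|$ points, and distinct arcs contribute independently, so $|T_\alpha^k(\beta)\cap\beta| = |k|\cdot n\cdot n = |k|\,n^2$. This already gives $i(T_a^k(b),b)\le |k|\,i(a,b)^2$.

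The substantive step, and the main obstacle, is the reverse inequality: showing that $T_\alpha^k(\beta)$ and $\beta$ bound no bigon, so that the bigon criterion upgrades the count above to the exact value of $i(T_a^k(b),b)$. Here I would exploit that $T_\alpha^k(\beta)$ and $\beta$ are each in minimal position with $\alpha$ — the latter by hypothesis, the former because $T_\alpha^k$ is a homeomorphism fixing $\alpha$ setwise. Given a hypothetical bigon $D$ with one side an arc of $T_\alpha^k(\beta)$ and the other an arc of $\beta$, I would examine $D\cap\alpha$ and pass to an innermost arc of intersection: if its endpoints lie on the same side of $\partial D$ one extracts a bigon between $\alpha$ and one of the two curves, contradicting minimal position; in the mixed case one uses that inside $N$ the winding arcs meet $\beta$ monotonically (so no bigon is confined to $N$), together with an analysis of the $\alpha$-arcs cut off by a sub-disk meeting $\partial N$, to rule out the offending sub-disk. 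Organising this innermost-disk induction cleanly is where the real work lies; it is essentially the argument behind the corresponding statement in~\cite{primer}, whereas the arc count is routine once minimality is established.
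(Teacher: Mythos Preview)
The paper does not supply its own proof of this proposition; it simply records the statement and cites \cite[Proposition~3.2]{primer}. So there is nothing in the paper to compare your argument against --- your proposal already goes further than the thesis does.

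As for the content of your sketch: your strategy is the standard one from \cite{primer}, and the outline is sound. The upper bound via the explicit annular model and the reduction of the lower bound to a bigon analysis are both correct in shape. Two small points worth tightening. First, when you perturb $T_\alpha^k(\beta)$ off $\beta$ outside $N$, the push-off direction interacts with the sign of $k$, and a careless choice can produce $|k|n^2 \pm n$ intersection points rather than exactly $|k|n^2$; you should either specify the push-off or note that either count still gives the correct upper bound, with the discrepancy absorbed by the bigon step. Second, in your bigon argument the ``mixed'' case --- an innermost arc of $\alpha$ in $D$ with one endpoint on each side of $\partial D$ --- is where the argument in \cite{primer} actually invokes more than innermost-disk bookkeeping: one needs that the twisted arcs in $N$ are monotone in the $S^1$-direction, so that any sub-bigon confined to $N$ is visibly impossible. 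You gesture at this, but since you are writing a proof rather than citing one, that step deserves to be spelled out rather than deferred back to the reference.
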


One of the most important relations between Dehn twists is the \emph{lantern relation}. A compact subsurface is \emph{essential} if its complement has no component homeomorphic to a disk or an annulus.

\begin{proposition}[{\cite[Section 5.1]{primer}}]\label{prop:lantern}
    Let $a,b,c,d,x,y$ and $z$ be simple closed curves as shown in Figure~\ref{fig:lantern} on the surface $S_0^4$. Then the following relation holds in $\map(S_0^4)$:
    \[
    T_aT_bT_cT_d = T_xT_yT_z
    \]
    
\begin{figure}[htbp]
  \centering
  \includegraphics[width=0.5\textwidth]{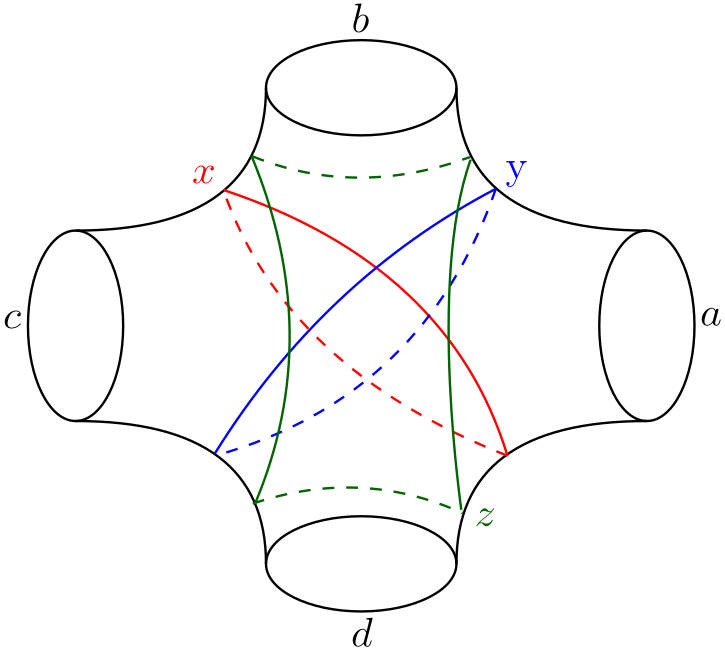}
  \caption{The simple closed curves $a,b,c,d,x,y,z$ on $S_0^4$}
  \label{fig:lantern}
\end{figure}
    
    Moreover, any surface $S$ with $\chi(S) \leq -2$, where $\chi(S)$ denotes the Euler characteristic of $S$, contains an essential subsurface $S'$ homeomorphic to $S_0^4$, so the lantern relation holds for all surfaces with at least two genus. The embedded copy of $S_0^4$ is called an \emph{embedded lantern}.
\end{proposition}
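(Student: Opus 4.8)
The plan is to establish the relation first on $S_0^4$ itself and then push it forward to an arbitrary surface $S$ through an inclusion of subsurfaces.

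For the core case, set
\[
\phi = T_a T_b T_c T_d \, T_z^{-1} T_y^{-1} T_x^{-1} \in \map(S_0^4),
\]
so that the claim is exactly $\phi = 1$, and I would prove this with the Alexander method (Proposition~\ref{prop:alex}). First fix the standard picture: realize $S_0^4$ in the plane as a large disk with three small open round disks removed, let $a,b,c$ be the three inner boundary circles and $d$ the outer one, and let $x,y,z$ be the three simple closed curves each enclosing exactly two of the inner holes, as in Figure~\ref{fig:lantern}; take the twist representatives to be supported in thin, pairwise almost-disjoint annular neighbourhoods of these seven curves. Next choose a filling arc system: three disjoint essential simple proper arcs joining the four boundary circles in a tree pattern, so that cutting $S_0^4$ along them yields a single disk. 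Then trace the image of this arc system under $\phi$: each Dehn twist modifies the arcs in a way that is completely explicit in the planar picture, and the point is to check that the net effect of the ordered product is to return the arc system to itself, fixing every arc, every endpoint, and the induced graph with its orientations. Granting this, the filling case of Proposition~\ref{prop:alex} gives $\phi \simeq \mathrm{id}$. I expect this arc-chasing step — in particular keeping honest track of how many times, and with which sign, each twist wraps each arc — to be the only real work; everything else is bookkeeping.

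A more algebraic variant of the same step, worth recording as a cross-check, is to cap the three boundary circles $a,b,c$ with once-punctured disks. This yields a surjection $\map(S_0^4)\to \map(S_{0,3}^1)\cong B_3$ with kernel $\langle T_a,T_b,T_c\rangle$, under which $T_a,T_b,T_c$ become trivial, $T_d$ becomes the full twist on three strands, and $T_x,T_y,T_z$ become the twists about curves enclosing the three pairs of punctures; the identity then reduces to the braid identity ``full twist $=$ product of the three pairwise twists'' in $B_3$, a short computation with $\sigma_1,\sigma_2$. To upgrade this back to the honest equality in $\map(S_0^4)$ one applies the analogous capping maps for the other triples of boundary components to see that the discrepancy, which a priori lies in $\langle T_a,T_b,T_c\rangle\cong\mathbb{Z}^3$, is in fact trivial.

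Finally, to transport the relation to a general $S$: by the classification of finite-type surfaces (Theorem~\ref{thm:finiteclassification}), $\chi(S)\le -2$ means $2g+b+n\ge 4$, and a short change-of-coordinates argument — treating the cases of large genus, of genus together with a boundary component or puncture, and of planar surfaces with at least four ends — produces an embedded subsurface $S'\subseteq S$ homeomorphic to $S_0^4$, which can moreover be taken essential whenever the complementary surface can be arranged without disk or annulus components. The inclusion $S'\hookrightarrow S$, extended by the identity outside $S'$, induces a homomorphism $\map(S')\to\map(S)$ carrying each Dehn twist about a curve in $S'$ to the Dehn twist about its image; applying it to the identity from the core case yields $T_aT_bT_cT_d=T_xT_yT_z$ in $\map(S)$, essentiality ensuring that the seven curves stay essential and pairwise non-isotopic so that the relation is a genuine one.
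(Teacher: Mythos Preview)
The paper does not actually prove this proposition; it is stated with a citation to \cite[Section 5.1]{primer} and no argument is given. Your proposal is correct and in fact reproduces the standard proof from that reference: Farb--Margalit verify the relation on $S_0^4$ by choosing a filling system of arcs and checking (by explicit pictures) that $T_xT_yT_z$ and $T_aT_bT_cT_d$ have the same effect on each arc, then invoke the Alexander method, exactly as in your first approach; the transport to a general surface via the inclusion-induced homomorphism is likewise the standard move. Your braid-group cross-check is a nice addition not in the Primer's treatment, though as you note the ``upgrade back'' step requires some care with the central $\mathbb{Z}^3$.
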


Another relation that is of importance to us is the \emph{braid relation}.

\begin{proposition}
    Let $a$ and $b$ be two isotopy classes of simple closed curves with $i(a,b) = 1$. Then
    \[
    T_aT_bT_a = T_bT_aT_b.
    \]
\end{proposition}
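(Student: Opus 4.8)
\section*{Proof proposal}

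The plan is to reduce the braid relation to a statement about Dehn twists on the one‑holed torus, and there to deduce it from the classification of simple closed curves together with the conjugation property of Proposition~\ref{prop: dehnprop}.

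First I would fix minimal‑position representatives $\alpha\in a$, $\beta\in b$ with $|\alpha\cap\beta| = i(a,b) = 1$, and let $N$ be a regular neighborhood of $\alpha\cup\beta$. By the change‑of‑coordinates principle, $N$ is homeomorphic to the one‑holed torus $S_{1,0}^{1}$, and it is an essential subsurface of $S$. Both $T_\alpha$ and $T_\beta$ are supported in $N$, so $T_aT_bT_a$ and $T_bT_aT_b$ are represented by homeomorphisms supported in $N$; since extending by the identity outside $N$ induces a well‑defined map $\map(N)\to\map(S)$ carrying $T_a,T_b$ to $T_a,T_b$, it suffices to prove $T_aT_bT_a = T_bT_aT_b$ in $\map(S_{1,0}^{1})$.

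Next I would establish the geometric identity $T_aT_b(a) = b$ as isotopy classes of curves on $S_{1,0}^{1}$. Here I would use that on the one‑holed torus, essential nonperipheral simple closed curves are classified up to isotopy by their primitive class in $H_1(S_{1,0}^{1};\Z)\cong\Z^2$, taken up to sign. Choosing $\{[a],[b]\}$ as a basis and using that a Dehn twist $T_c$ acts on homology by $x\mapsto x+\langle x,[c]\rangle[c]$ (with $\langle\cdot,\cdot\rangle$ the algebraic intersection pairing, $\langle[a],[b]\rangle=\pm1$), a direct computation gives $T_aT_b([a]) = \pm[b]$. Since $T_aT_b(a)$ is an essential nonperipheral simple closed curve with primitive homology class $\pm[b]$, it is isotopic to $b$. (Alternatively, one can simply exhibit the isotopy in an explicit picture of $\alpha$ and $\beta$.)

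Finally, I would apply the conjugation property, Proposition~\ref{prop: dehnprop}(iii), with $f = T_aT_b$. This gives
\[
T_{f(a)} \;=\; f\,T_a\,f^{-1} \;=\; T_aT_bT_aT_b^{-1}T_a^{-1},
\]
while $f(a) = T_aT_b(a) = b$ forces $T_{f(a)} = T_b$. Hence $T_b = T_aT_bT_aT_b^{-1}T_a^{-1}$, and multiplying on the right by $T_aT_b$ yields $T_bT_aT_b = T_aT_bT_a$, which is the desired relation.

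The main obstacle is the identity $T_aT_b(a) = b$: it is the one genuinely geometric input, and making it rigorous requires either the classification of simple closed curves on the one‑holed torus (together with the observation that $T_aT_b(a)$ is essential and nonperipheral, so the classification applies) or a careful explicit isotopy argument. Everything else is formal manipulation using results already stated.
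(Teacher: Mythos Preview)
Your proof is correct and is essentially the standard argument (as in Farb--Margalit). Note, however, that the paper does not actually prove this proposition: it simply states the braid relation and then, in the paragraph following it, \emph{derives} the identity $T_aT_b(a)=b$ as a consequence via the conjugation property and Proposition~\ref{prop: dehnprop}(ii). Your argument runs in the opposite logical direction: you first establish $T_aT_b(a)=b$ independently (by reducing to the one-holed torus and invoking the homological classification of nonperipheral simple closed curves there), and then feed this into the conjugation property to obtain the braid relation. Both directions are valid; your approach has the advantage of being self-contained rather than relying on an unproved statement, at the cost of needing the geometric input that primitive homology classes up to sign classify nonperipheral simple closed curves on $S_{1,0}^{1}$.
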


The braid relation implies that 
\[
(T_aT_b)T_a(T_b^{-1}T_a^{-1}) = T_b.
\]
By the conjugation property,
\[
T_{T_aT_b(a)} = T_b,
\]
and thus by Property $(iv)$ of Propositon~\ref{prop: dehnprop} implies that
\[
T_aT_b(a) =b.
\]
By a similar argument we also have that
\[
T_bT_a(b) = a.
\]
We shall call this property the braid relation as well.

\section{Generating the Pure Mapping Class Group of Finite-Type Surfaces}

First, Dehn proved that the pure mapping class of finite-type surfaces are generated by $2g(g-1)$ Dehn twists~\cite{dehn}. Lickorish then reduced the required number of Dehn twist generators to $3g-1$~\cite{lickorish}, which was reduced even further by Humphries to $2g+1$, and he showed that this is the minimal number of generators for $g\geq 2$~\cite{humphries}.

\begin{theorem}\label{thm:humphries}
    For $g\geq2$, the mapping class group $\map(S_g)$ is generated by $2g+1$ Dehn twists about nonseparating simple closed curves, called the Humphries generators, shown in Figure~\ref{fig:humphries}.

    \begin{figure}[htbp]
      \centering
      \includegraphics[width=0.8\textwidth]{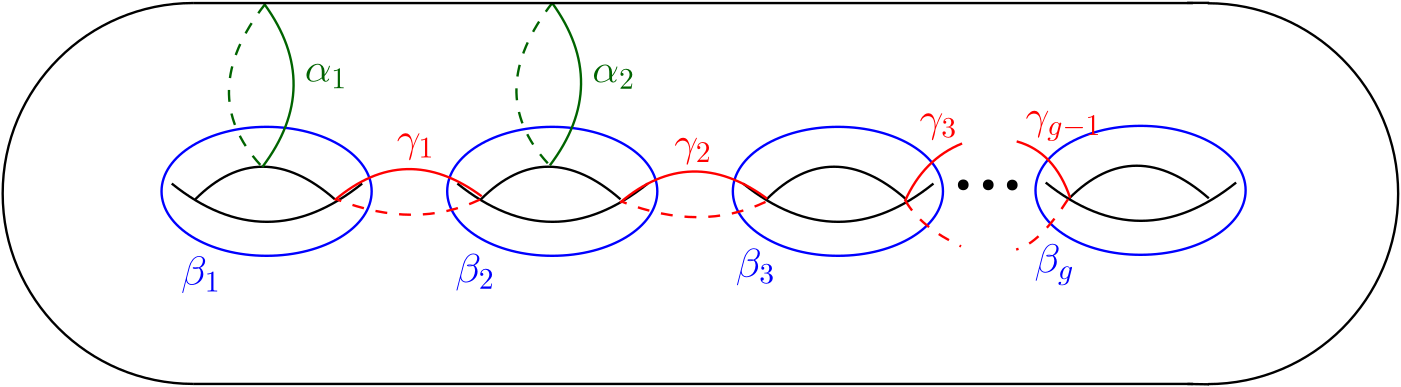}
      \caption{The Humphries generators}
      \label{fig:humphries}
     \end{figure}
\end{theorem}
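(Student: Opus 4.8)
The plan is to obtain the Humphries generators by pruning a larger known generating set, namely Lickorish's collection of $3g-1$ Dehn twists, using the lantern relation (Proposition~\ref{prop:lantern}) together with the conjugation property $T_{f(a)}=fT_af^{-1}$ of Proposition~\ref{prop: dehnprop}(iii). Concretely, fix the standard Lickorish curves on $S_g$: the ``handle meridians'' $a_1,\dots,a_g$, the ``handle longitudes'' $b_1,\dots,b_g$, and the connecting curves $c_1,\dots,c_{g-1}$ each encircling two consecutive handles, so that $\map(S_g)=\langle T_{a_i},T_{b_i},T_{c_j}\rangle$. In a standard labelling the Humphries generators are exactly this collection with all but two of the longitudes discarded, say $b_3,\dots,b_g$ removed; since $g+2+(g-1)=2g+1$ this is the right count, so it suffices to prove that $T_{b_k}$ lies in the subgroup $H\le\map(S_g)$ generated by the Humphries twists for every $k\ge 3$.

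I would prove this by induction on $k$, the cases $k\le 2$ being trivial. For the inductive step, the idea is to place an embedded lantern — a copy of $S_0^4$, which exists by Proposition~\ref{prop:lantern} since the two-handle subsurface spanned by the $(k-1)$st and $k$th handles has Euler characteristic $\le -2$ — so that among its seven curves $a,b,c,d,x,y,z$ one curve, say $d$, is $b_k$, while the other six are each either a Humphries curve or the image of a Humphries curve under a word in $T_{a_{k-1}},T_{a_k},T_{b_{k-1}},T_{c_{k-1}}$. Writing the lantern relation as $T_aT_bT_cT_d=T_xT_yT_z$ gives $T_{b_k}=T_c^{-1}T_b^{-1}T_a^{-1}T_xT_yT_z$; by the conjugation property each factor on the right-hand side is a conjugate of a Humphries twist by an element of $H$, using the inductive hypothesis $T_{b_{k-1}}\in H$, so $T_{b_k}\in H$. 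Realizing the six auxiliary curves as $H$-images of standard curves is done via the change-of-coordinates principle and the braid relations $T_aT_b(a)=b$ of Section on Dehn twists, which allow one to slide one Humphries curve onto another by a word in Humphries twists.

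The main obstacle is precisely this last piece of bookkeeping: one must draw the seven lantern curves explicitly on the two-handle subsurface, check that the lantern's complement is essential, and verify one-by-one that each of the six curves other than $b_k$ is the image of an already-available curve under an explicit product of Humphries twists, tracking the action of those twists on curves. This is elementary but delicate, and choosing the correct lantern configuration is the crux of the whole argument; I would isolate it as a single lemma (``if $T_{b_1},\dots,T_{b_{k-1}}\in H$ then $T_{b_k}\in H$'') proved once with a labelled figure, after which Theorem~\ref{thm:humphries} follows immediately by induction. An alternative route — inducting directly on genus by cutting $S_g$ along a separating curve and invoking a Birman-type exact sequence to relate $\map(S_g)$ to the mapping class group of a lower-genus surface with boundary — is available, but the lantern reduction from Lickorish's set is cleaner and keeps the combinatorics localized. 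Finally, the minimality assertion, that no $2g$ Dehn twists generate $\map(S_g)$ when $g\ge 2$, is a rigidity statement of a genuinely different flavour from the construction above; I would either present Humphries' original obstruction argument separately or simply cite it, since it plays no role in establishing the generation claim.
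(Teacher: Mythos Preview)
The paper does not actually prove this theorem: immediately after the statement it says only that it ``can be proved by induction on $g$, by showing that finitely many Dehn twists generate $\pmap(S_g)$ and using relations between them to simplify the generator set, which is nontrivial and we omit it.'' So there is no proof in the paper to compare against, only a one-line hint.

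Your outline is the standard reduction (as in \cite{primer}, Theorem~4.14): take Lickorish's $3g-1$ twists as known and use an embedded lantern to express each superfluous $T_{b_k}$ in terms of the Humphries twists, inducting on $k$. This is correct in structure and is in fact more detailed than what the paper provides. The paper's hint (``induction on $g$'') points toward the alternative you mention at the end --- building the generation statement itself genus by genus via a Birman-type sequence --- rather than your primary lantern-reduction route, but both are valid and you flag the alternative yourself. Your honest identification of the bookkeeping obstacle (realizing the six auxiliary lantern curves as $H$-images of Humphries curves) is exactly where the work lies; separating that into a lemma with a figure, as you propose, is how \cite{primer} handles it. Citing Humphries for the minimality claim, as you suggest, is also what the paper effectively does by attributing the $2g+1$ bound to \cite{humphries} without further argument.
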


Theorem~\ref{thm:humphries} can be proved by induction on $g$, by showing that finitely many Dehn twists generate $\pmap(S_g)$ and using relations between them to simplify the generator set, which is nontrivial and we omit it. 

In the presence of boundary components and punctures, the Humphries generators are not enough to generate the pure mapping class group. In that case, $b+n-1$ many Dehn twists about nonseparating curves need to be added to the Humphries generators, where $b$ is the number of boundary components and $n$ is the number of punctures.

\begin{theorem}[{\cite[Corollary 4.16]{primer}}]\label{thm:bdarygen}
    For $g\geq 2$, pure mapping class group $\pmap(S^b_{g,n})$ is generated by $2g+b+n$ Dehn twists about nonseparating simple closed curves, shown in Figure~\ref{fig:bdrygen}.
    \begin{figure}[htbp]
      \centering
      \includegraphics[width=0.8\textwidth]{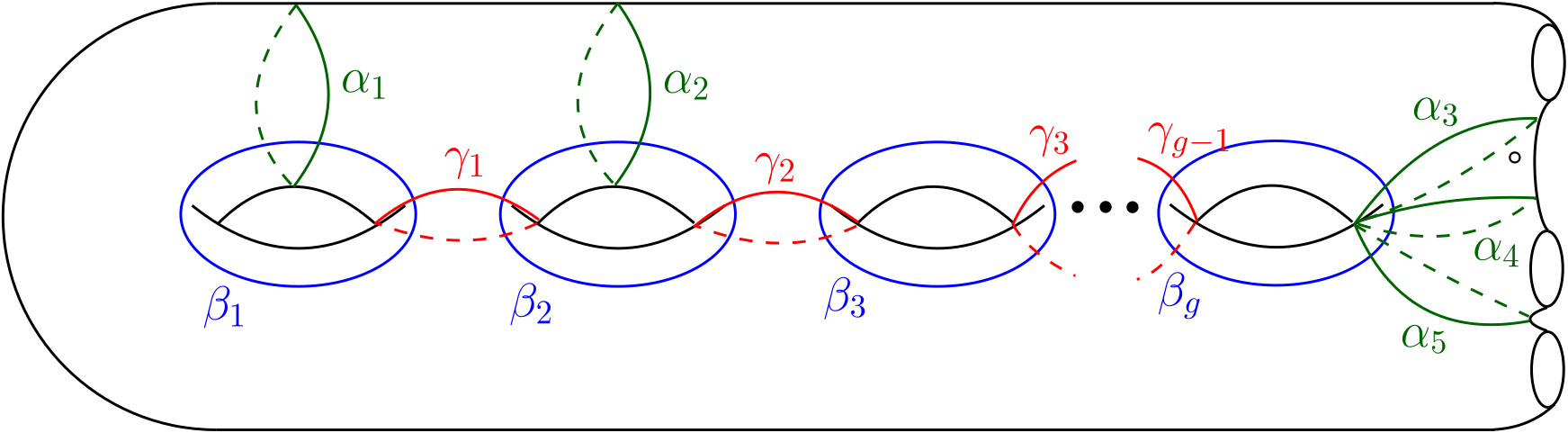}
      \caption{The generators for finite-type surfaces with boundary components and punctures}
      \label{fig:bdrygen}
     \end{figure}
\end{theorem}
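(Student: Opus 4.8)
The plan is to argue by induction on $b+n$, taking $b=n=0$ as the base case, which is exactly Theorem~\ref{thm:humphries}. For the inductive step I would remove a single puncture or boundary component using the appropriate short exact sequence. For a puncture one has the Birman exact sequence
\[
1 \longrightarrow \pi_1(S_{g,n-1}^{b}) \longrightarrow \pmap(S_{g,n}^{b}) \longrightarrow \pmap(S_{g,n-1}^{b}) \longrightarrow 1,
\]
in which the first map is the point-pushing homomorphism and the second is induced by filling in the puncture; for a boundary component one has the capping homomorphism obtained by gluing a once-marked disk along that boundary curve $\beta$, which realizes $\pmap(S_{g,n}^{b})$ as a central extension of $\pmap(S_{g,n+1}^{b-1})$ with kernel $\langle T_\beta\rangle\cong\Z$. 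In both cases the quotient group is generated, by the inductive hypothesis, by Dehn twists about nonseparating simple closed curves, and since filling a puncture or capping a boundary component carries nonseparating curves to nonseparating curves (and conversely such curves lift), these generators lift to Dehn twists of the required type in $S_{g,n}^{b}$.

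It remains to produce generators for the kernel. In the point-pushing case one uses the identity $\mathrm{Push}(\gamma)=T_{a}T_{b}^{-1}$, where $a$ and $b$ are the two boundary curves of an annular neighborhood of a simple loop $\gamma$; choosing the generators of $\pi_1(S_{g,n-1}^{b})$ to be simple loops whose associated curves $a,b$ are nonseparating, the point-pushing subgroup lies inside the subgroup generated by Dehn twists about nonseparating curves. In the capping case the kernel is generated by the single twist $T_\beta$, which is a priori about a separating curve; but since $g\geq 2$ we have $\chi(S_{g,n}^{b})\leq -2$, so by the lantern relation (Proposition~\ref{prop:lantern}) $T_\beta$ can be rewritten as a product of Dehn twists about nonseparating curves. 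At this point one has shown that $\pmap(S_{g,n}^{b})$ is generated by finitely many Dehn twists about nonseparating simple closed curves --- but this generating set is far larger than $2g+b+n$.

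The heart of the matter, and the step I expect to be the main obstacle, is then to reduce this redundant set to the $2g+b+n$ curves depicted in Figure~\ref{fig:bdrygen}: one must show that a single extra nonseparating Dehn twist --- adjoined to the Humphries configuration for every puncture or boundary component beyond the first --- already suffices. This is carried out in the same spirit as the reduction omitted in Theorem~\ref{thm:humphries}, through repeated use of the braid relation (to trade a twist for a twist about the image of its curve under twists already in the set), the chain relations, and the lantern relation (to absorb separating twists, including the boundary twists $T_\beta$, into nonseparating ones), all arranged so that the curve configuration stays in the model position of the figure. Two points I would treat with particular care are the verification that the point-pushing generators can genuinely be absorbed without spending a generator when passing from a closed or one-holed surface to one with more holes --- equivalently, the base of this secondary induction, that $\pmap(S_{g,1})$ and $\pmap(S_{g}^{1})$ are already generated by the $2g+1$ nonseparating Humphries-type twists, so that the first puncture or boundary component is ``free'' --- and the bookkeeping ensuring that each subsequent puncture or boundary component costs exactly one new twist, matching the count $2g+b+n$.
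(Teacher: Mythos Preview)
The paper does not actually prove this theorem; it is stated with a citation to \cite[Corollary~4.16]{primer} and no argument is given in the thesis itself. So there is no in-paper proof to compare against, and your outline is essentially the standard argument from the cited reference: induct on $b+n$ with Theorem~\ref{thm:humphries} as the base case, peel off one puncture or boundary component at a time via the Birman or capping exact sequence, and then perform the reduction to the specific curve configuration.

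Your plan is correct in its broad strokes. Two minor remarks. First, in the capping step you propose to handle the kernel generator $T_\beta$ via an embedded lantern; this works, but in the primer the cleaner bookkeeping is to cap every boundary component to a puncture first (so only the Birman sequence really does work), and then the boundary twists are recovered at the end from the point-pushes rather than from a separate lantern argument. Second, your instinct about where the difficulty lies is exactly right: the exact sequences only give ``generated by nonseparating Dehn twists,'' and the genuine content --- pinning down the specific $2g+b+n$ curves of Figure~\ref{fig:bdrygen}, including the fact that the first puncture or boundary is free --- is the reduction step, which the thesis explicitly declines to carry out even in the closed case.
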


Theorem~\ref{thm:humphries} and Theorem~\ref{thm:bdarygen} stand in stark contrast to the case of infinite-type surfaces but Theorem~\ref{thm:bdarygen} is especially important for the study of a certain family of infinite-type surfaces, as we will explore in the coming chapters. 

\section{The Extended Mapping Class Group}\label{sec:extended}

The mapping class group can be extended to include orientation-reversing mapping classes. This group is aptly called, the \emph{extended mapping class group}, and is denoted by $\map^\pm(S)$. It is clear that orientation-reversing mapping classes do not form a group, since the identity homeomorphism is orientation-preserving. 

Consider the map
\[
p\colon \map^\pm(S) \longrightarrow \Z_2
\]
that sends orientation-preserving mapping classes to $0$ and orientation-reversing mapping classes to $1$. It can be checked that $p$ is a homeomorphism and we get the short exact sequence
\[
0 \longrightarrow \map(S) \longrightarrow \map^\pm(S) \longrightarrow \Z_2 \longrightarrow 0.
\]
The kernel of $p$ is the mapping class group and therefore this short exact sequence splits:
\[
\map^\pm(S) = \map(S) \rtimes \Z_2.
\]
Since $\map(S) \trianglelefteq \map^\pm (S)$, and $\map^\pm(S)/\map(S) \cong \Z_2$, the mapping class group $\map(S) $is an index two subgroup of the extended mapping class group $\map^\pm(S)$ and
\[
\map^\pm(S) = \map(S) \sqcup f\map(S),
\]
where $f$ is any orientation-reversing mapping class. It immediately follows that the cardinality of orientation-preserving and orientation-reversing mapping classes are equal.
\chapter{The Classification of Infinite-Type Surfaces}

To study the structure of big mapping class groups, it is important to understand infinite-type surfaces, those surfaces whose fundamental groups are not finitely generated. Unlike their finite-type counterparts, which are compact (with the exception of surfaces with finitely many punctures), infinite-type surfaces are always non-compact. Intuitively, this is because they can have infinitely many directions that expand to infinity. We make this formal in Section~\ref{sec:spaceofends}, by introducing the concept of the \emph{space of ends}.

\section{Infinite-Type Surfaces}

Before we get into the classification of infinite-type surfaces, we give some examples of important infinite-type surfaces. These surfaces have infinitely many genera and/or different \emph{ends}, see Definition~\ref{def:ends}.

\begin{example}\label{ex:infsurf}
    Let us give examples of important infinite-type surfaces:
    \begin{enumerate}[label=(\alph*)]
    \item \textbf{The Loch Ness Monster surface}: The surface with infinitely many genera and one end. This surface is the limit of surfaces with finitely many genera, no punctures and one boundary component, i.e., every finite genus surface with no punctures and one boundary component can be embedded into this space.
    \item \textbf{The Jacob's Ladder surface}: The surface with infinitely many genera and two ends. This surface is the limit of surfaces with finitely many genera, no punctures and two boundary components, i.e., every finite genus surface with no punctures and two boundary components can be embedded into this space.
    \item \textbf{The Cantor Tree surface}: The surface obtained by removing a Cantor set from a sphere. It has uncountably many ends, all of which are accumulation points, but has no genus.
    \item \textbf{The Blooming Cantor Tree surface}: The surface whose space of ends is the Cantor set, all of which are \emph{accumulated by genus}, see Definition~\ref{def:accubygenus}.
    \item \textbf{The Flute surface}: The surface $\C \setminus \Z$. This surface has infinitely many ends, one of which is an accumulation point. We can also consider this surface as the sphere with north pole and countably infinitely many points that are accumulating towards the north pole, removed.
    \end{enumerate}
\end{example}

\begin{figure}[htbp]
    \centering

    \begin{subfigure}[b]{0.3\textwidth}
        \centering
        \adjustbox{valign=m}{\includegraphics[width=\linewidth, height=4cm, keepaspectratio]{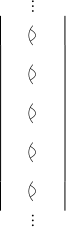}}
        \caption{The Jacob's Ladder}
    \end{subfigure}
    \hfill
    \begin{subfigure}[b]{0.3\textwidth}
        \centering
        \adjustbox{valign=m}{\includegraphics[width=\linewidth, height=4cm, keepaspectratio]{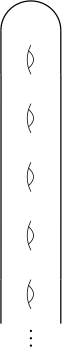}}
        \caption{The Loch Ness Monster}
    \end{subfigure}
    \hfill
    \begin{subfigure}[b]{0.3\textwidth}
        \centering
        \adjustbox{valign=m}{\includegraphics[width=\linewidth, height=4cm, keepaspectratio]{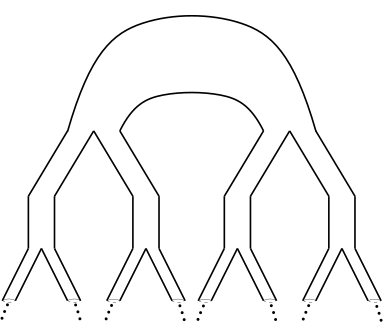}}
        \caption{The Cantor Tree}
    \end{subfigure}

    \vspace{1em} 

    \begin{subfigure}[b]{0.3\textwidth}
        \centering
        \adjustbox{valign=m}{\includegraphics[width=\linewidth, height=4cm, keepaspectratio]{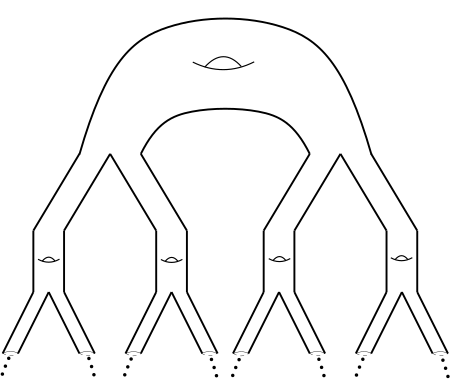}}
        \caption{The Blooming Cantor Tree}
    \end{subfigure}
    \hspace{0.1\textwidth} 
    \begin{subfigure}[b]{0.3\textwidth}
        \centering
        \adjustbox{valign=m}{\includegraphics[width=\linewidth, height=4cm, keepaspectratio]{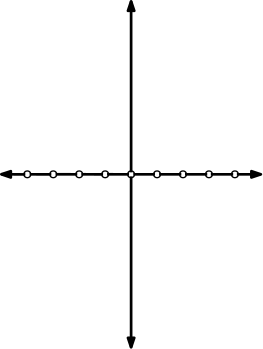}}
        \caption{The Flute}
    \end{subfigure}

    \caption{Examples of remarkable infinite-type surfaces}
    \label{fig:infsurfaces}
\end{figure}

\section{Cantor Spaces}

To understand the classification of infinite-type surfaces, it is crucial to understand \emph{Cantor spaces}. The Cantor set, denoted $C$, is traditionally defined inductively: start with the interval $[0,1]$, and remove the middle thirds of each remaining interval at each step. For this reason, $C$ is often called the middle-third Cantor Set. 
Every point $x\in [0,1]$ has a ternary expansion, that is for any $x\in [0,1]$, 
\[
x = \sum_{n=1}^{\infty} a_n3^{-n} \quad \text{where} \quad a_n \in \set{0,1,2}.
\]
Since $C$ is defined by inductively removing the middle-thirds of intervals, a point $x$ is in $C$ if and only if its ternary expansion contains only $0$s and $2$s, that is,
\[
x = \sum_{n=1}^{\infty} a_n3^{-n} \quad \text{where} \quad a_n \in \set{0,2}.
\]
This observation gives us another characterization of $C$, that is, 
\[
C = \left\{ x \in [0,1] \,\middle|\, x = \sum_{n=1}^{\infty} a_n 3^{-n},\ a_n \in \{0,2\} \right\}.
\]

We denote by $\{0,1\}^\N \cong 2^\N$ the countably infinite product of the discrete two point set $\set{0,1}$. Equivalently, it is the set of all sequences with binary terms. Endowed with the product topology, $2^\N$ is compact by the Tychonoff's Theorem [\citenum{kechrisdescriptive}, Proposition 4.1], which states that an arbitrary product of compact spaces is compact. This space is a \emph{Cantor space}, i.e., it is homeomorphic to the Cantor set. 

\begin{theorem}
    The space $2^\N$ is a Cantor space.
\end{theorem}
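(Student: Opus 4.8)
The plan is to exhibit an explicit homeomorphism between $2^\N$ and the middle-third Cantor set $C$, using the ternary-expansion description of $C$ recalled above, and then to upgrade the resulting continuous bijection to a homeomorphism by a soft compactness argument. Concretely, I would define
\[
\Phi \colon 2^\N \longrightarrow C, \qquad \Phi\big((a_n)_{n\in\N}\big) = \sum_{n=1}^{\infty} \frac{2a_n}{3^{n}}.
\]
Because each $a_n \in \{0,1\}$, the ternary digit $2a_n$ lies in $\{0,2\}$, so by the characterization $C = \{\sum a_n 3^{-n} : a_n \in \{0,2\}\}$ the image of $\Phi$ is contained in $C$; and $\Phi$ is surjective since, by that same characterization, every point of $C$ has a ternary expansion with digits in $\{0,2\}$, which is $\Phi$ of the corresponding binary sequence.

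First I would verify injectivity. If $(a_n)$ and $(b_n)$ are distinct, let $N$ be the least index with $a_N \neq b_N$; the two tails beyond $N$ are each bounded above by $\sum_{n > N} 2\cdot 3^{-n} = 3^{-N}$, whereas the leading discrepancy at position $N$ contributes $2\cdot 3^{-N}$. Since $2\cdot 3^{-N} > 3^{-N}$, the two series cannot sum to the same value, so $\Phi$ is injective. Next I would check continuity: if two sequences agree on their first $N$ coordinates, their images differ by at most $\sum_{n > N} 2\cdot 3^{-n} = 3^{-N}$, which is exactly the statement that a basic cylinder of the product topology maps into a small interval, so $\Phi$ is (uniformly) continuous.

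Finally I would invoke the topological upgrade. The space $2^\N$ is compact by Tychonoff's theorem, as already noted, and $C \subseteq [0,1]$ is Hausdorff; a continuous bijection from a compact space to a Hausdorff space is automatically a homeomorphism. Indeed, a closed subset of $2^\N$ is compact, so its $\Phi$-image is compact, hence closed in $C$; thus $\Phi$ is a closed map and $\Phi^{-1}$ is continuous. This yields $2^\N \cong C$, which is the claim.

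The main obstacle — indeed essentially the only subtle point — is the injectivity step, since it is precisely here that one must exploit the restriction of digits to $\{0,2\}$ rather than all of $\{0,1,2\}$: the estimate $2\cdot 3^{-N} > \sum_{n > N} 2\cdot 3^{-n}$ is exactly what eliminates the expansion ambiguity that would otherwise collapse distinct binary sequences. An alternative route that avoids all estimates is to appeal to Brouwer's characterization theorem: every nonempty, compact, metrizable, perfect, totally disconnected space is homeomorphic to $C$; one would then check that $2^\N$ is metrizable (for instance via $d(x,y) = 2^{-\min\{n : x_n \neq y_n\}}$), perfect (every basic clopen cylinder contains more than one point), and totally disconnected (any two distinct points are separated by a clopen cylinder). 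I would present the explicit map, as it is elementary and self-contained given what has been set up.
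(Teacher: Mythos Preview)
Your proposal is correct and follows essentially the same approach as the paper: define the explicit map $(a_n) \mapsto \sum_{n\ge 1} 2a_n\,3^{-n}$, argue bijectivity from the ternary-digit characterization of $C$, and upgrade to a homeomorphism via the compact-to-Hausdorff argument. You are in fact more careful than the paper, which asserts bijectivity without the injectivity estimate and omits the continuity verification entirely.
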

\begin{proof}
    Define
    \[
    f \colon 2^\N \longrightarrow C
    \]
    by 
    \[
        f(x_1,x_2,\dots) = \sum_{n=1}^{\infty} \frac{2x_n}{3^n}.
    \]
    From the ternary expansion characterization of $C$, it is clear that $f(x) \in C$ and that $f$ is bijective. Since $2^\N$ is compact and $C$ is Hausdorff, $f$ is a homeomorphism onto $C$ and we are done.
\end{proof}

The Cantor set exhibits important and interesting topological properties:
\begin{itemize}
    \item \textbf{Perfect}: A topological space is \emph{perfect} if it has no isolated points.
    \item \textbf{Compact}: A topological space is \emph{compact} if every open cover of the space has a finite subcover.
    \item \textbf{Metrizable}: A topological space is \emph{metrizable} if it can be embedded in a metric space, or equivalently, it admits a metric.
    \item \textbf{Totally disconnected}: A topological space is \emph{totally disconnected} if its only connected subsets are singletons.
    \item \textbf{Zero-dimensional}: A topological space is \emph{zero-dimensional} if it is Hausdorff and has a basis consisting of clopen (both open and closed) sets.
\end{itemize}

In fact, by a theorem of Brouwer, the Cantor set is unique up to homeomorphism among non-empty topological spaces that satisfy these properties~\cite{kechrisdescriptive}. That is, any non-empty, perfect, totally-disconnected, compact, and metrizable topological space is a Cantor space.

\section{Baire Spaces}\label{sec:baire}
Although not used in the classification of infinite-type surfaces, this is a good time to mention the \emph{Baire space} $\N^\N$, since it shares some similarities with Cantor spaces. The Baire space is the set of all sequences with natural number terms under the product topology. It is uncountable, being a countably infinite product of countably infinite sets and unlike the Cantor set, it is not compact, as the natural numbers are not compact under the discrete topology. 

The Baire space exhibits important and interesting topological properties:
\begin{itemize}
    \item \textbf{Completely metrizable}: A topological space is \emph{completely metrizable} if it admits a complete metric.
    \item \textbf{Polish}: A topological space is called \emph{Polish} if it is separable and completely metrizable.
    \item Every compact subset of $\N^\N$ has empty interior.
    \item \textbf{Zero-dimensional}
\end{itemize}

Remarkably, by a theorem of Alexandrov and Urysohn, $\N^\N$ is unique up to homeomorphism among non-empty topological spaces, that satisfy these properties~\cite{kechrisdescriptive}. That is, any non-empty, Polish, zero-dimensional topological space for which all compact subsets has empty interior is homeomorphic to the Baire space. Notably, 
\[
\R\setminus\Q \cong \N^\N,
\]
since irrational numbers under the subspace topology also satisfy these properties.

\section{The Space of Ends}\label{sec:spaceofends}

Informally, an end of a topological space represents a distinct way in which the space goes off to infinity. We formalize this by introducing the concept of \emph{exiting sequences}. A subset of a topological space $X$ is \emph{relatively compact} if its closure is compact in $X$.

\begin{definition}[\cite{aramayonavlamis2020}]
    Let $X$ be a topological space. A sequence $\set{U_n}_{n\in\N}$ of connected open subsets of $X$ is called an \emph{exiting sequence} if
    \begin{enumerate}[label=(\roman*)]
    \item $U_n \subset U_m$ whenever $m < n$, 
    \item $U_n$ is not relatively compact for any $n\in \N$, 
    \item $U_n$ has compact boundary for all $n\in \N$, and 
    \item any relatively compact subset of $X$ is disjoint from all but finitely many $U_n$.
    \end{enumerate}
\end{definition}

From this definition, see Figure~\ref{fig:exseq}, it is clear that two exiting sequences may go off to infinity in the same "direction", meaning that the terms of each sequence eventually lie within the terms of the other. Motivated by this, it is natural to put an equivalence relation on exiting sequences. We now state the required equivalence relation:

\begin{definition}\label{def:equivalence}
    Let $\set{U_n}_{n\in\N}$ and $\set{V_n}_{n\in\N}$ be two exiting sequences of a topological space X. If every element of $\set{U_n}$ is eventually contained in an element of $\set{V_n}$ and vice versa, then the two exiting sequences are equivalent.
\end{definition}

\begin{proposition}\label{prop:equivalence}
    The relation defined in Definition~\ref{def:equivalence} is an equivalence relation.
\end{proposition}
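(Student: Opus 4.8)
The plan is to verify the three defining properties of an equivalence relation — reflexivity, symmetry, and transitivity — directly from Definition~\ref{def:equivalence}. Symmetry is immediate, since the condition in the definition (every element of one sequence is eventually contained in an element of the other, \emph{and vice versa}) is already stated symmetrically in the two sequences. So the substance of the proof lies in reflexivity and transitivity.

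For reflexivity, given an exiting sequence $\set{U_n}_{n\in\N}$, I need to show each $U_n$ is eventually contained in some $U_m$. This follows from property (i) of an exiting sequence: $U_n \subset U_m$ whenever $m < n$, so for a fixed $U_n$, every term $U_k$ with $k > n$ satisfies $U_k \subset U_n$; hence the sequence $\set{U_n}$ is eventually contained in $U_n$ itself. (One should be slightly careful about the direction of the nesting in (i), which is the reverse of what one might naively expect, but it works in our favor here.)

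For transitivity, suppose $\set{U_n} \sim \set{V_n}$ and $\set{V_n} \sim \set{W_n}$. I want to show that every $U_n$ is eventually contained in some $W_m$ (and then the reverse inclusion follows by the same argument with the roles swapped). Fix $U_n$. By the first equivalence, there is an $N_1$ such that $U_k \subseteq V_{j}$ for some index $j$ whenever $k \geq N_1$; in particular, pick a single $V_{j_0}$ containing all sufficiently late $U_k$'s — actually it is cleaner to chase one term at a time: $U_n$ is eventually contained in $\set{V_n}$ means there is an index $j$ with $U_k \subseteq V_j$ for all large $k$, but the cleanest formulation is that for each $n$ there exists $m$ with $U_\ell \subseteq V_m$ for all $\ell$ large. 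Then by the second equivalence, $V_m$ is eventually contained in some $W_p$, i.e.\ $V_m \subseteq W_p$ holds — here I use that $\set{V_n} \sim \set{W_n}$ gives $V_m$ eventually inside a $W_p$, but since the $V$'s are themselves nested (property (i)), once $V_m \subseteq W_p$ we are done combining the inclusions $U_\ell \subseteq V_m \subseteq W_p$ for all large $\ell$. Composing these containments shows $\set{U_n}$ is eventually contained in $W_p$, as desired.

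The main subtlety — and the only place requiring genuine care — is bookkeeping the quantifiers and the nesting direction in property (i) of an exiting sequence, so that "$U$ eventually inside $V$" and "$V$ eventually inside $W$" genuinely compose to "$U$ eventually inside $W$". There is no topological obstacle here; the nesting property (i) of exiting sequences is exactly what makes the naive composition of inclusions legitimate, since it lets us pass from "some term of $\set{V_n}$ contains the relevant $U_\ell$" to a statement about a fixed $V_m$ whose later behavior (or whose own containment in a $W_p$) we can control. I expect the whole argument to be short and essentially formal once the quantifier structure is written out carefully.
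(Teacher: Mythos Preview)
Your proposal is correct and follows essentially the same approach as the paper: verify reflexivity via property~(i), observe symmetry is built into the definition, and chain inclusions for transitivity. The only difference is that the paper reads ``every element of $\{U_n\}$ is eventually contained in an element of $\{V_n\}$'' as ``for each $i$ there exists $j$ with $U_i \subset V_j$'', which makes transitivity a one-line composition of inclusions without any need to invoke property~(i); your more cautious reading of ``eventually'' introduces extra quantifier bookkeeping, but property~(i) renders the two readings equivalent, so nothing is lost.
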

\begin{proof}
    Let $\set{U_n}_{n\in\N}$, $\set{V_n}_{n\in\N}$ and $\set{W_n}_{n\in\N}$ be three exiting sequences.
    \begin{enumerate}[label=(\roman*)]
        \item(Reflexivity) By the first property of exiting sequences, $\set{U_n} \sim \set{U_n}$. 
        \item (Symmetry) Assume $\set{U_n} \sim \set{V_n}$. Then for all $i\in\N$, there exists $j\in \N$ such that $U_i\subset V_j$, and vice versa. Thus, $\set{V_n} \sim \set{U_n}$.
        \item (Transitivity) Assume $\set{U_n} \sim \set{V_n}$ and $\set{V_n} \sim \set{W_n}$. Then for all $i_1\in\N$, there exists $j_1\in \N$ such that $U_{i_1}\subset V_{j_1}$ and for all $i_2\in\N$, there exists $j_2\in\N$ such that $V_{i_2}\subset W_{j_2}$. Therefore, for all $n\in \N$, there exists $m\in \N$ such that $U_{n}\subset W_{m}$. By the same argument, every $W_n$ lies within some $U_m$, so $\set{U_n} \sim \set{W_n}$.
    \end{enumerate}
\end{proof}

\begin{figure}[htbp]
  \centering
  \includegraphics[width=0.8\textwidth]{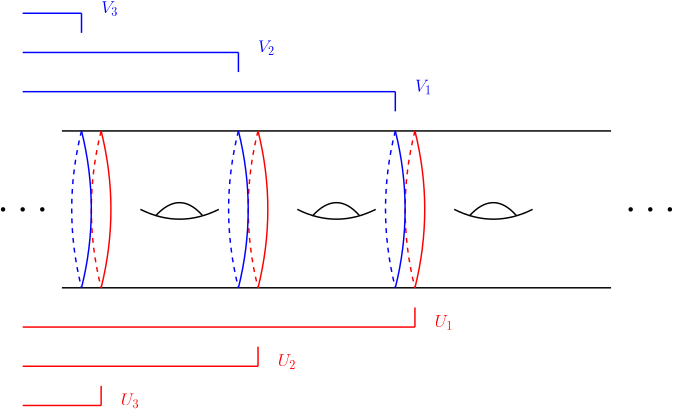}
  \caption{Two exiting sequences that define the same end on the Jacob's Ladder surface}
  \label{fig:exseq}
\end{figure}

We can properly define the \emph{Set of Ends} of a topological space $X$.

\begin{definition}
    Let X be a topological space. The equivalence classes of all exiting sequences under the relation given in Definition~\ref{def:equivalence} are called \emph{ends} of $X$. The set of all ends of $X$ is called the \emph{Set of Ends} of $X$ and is denoted by $\mathrm{Ends}(X)$.
\end{definition}

\begin{example} Let us give some examples of the ends of some topological spaces: 
\begin{itemize}
    \item Compact surfaces have no ends, since there are no subsets that are not relatively compact.
    \item The real line $\mathbb{R}$ has two ends, $\set{\infty,-\infty}$, represented by the sequences $\set{(-\infty, n)}_{n\in \N}$ and $\set{(n, \infty)}_{n\in \N}$ respectively. 
    \item The real plane $\R^2$ has one end at infinity, represented by the sequence $\set{\mathbb{R}^2 \setminus \overline{B(0,n)}}_{n \in \N}$.
    \item The real plane with a single puncture at the origin has two ends, one at infinity and one at the origin, represented by the sequences $\set{\mathbb{R}^2 \setminus \overline{B(0,n)}}_{n \in \N}$ and  $\set{\overline{B(0,\frac{1}{n})} \setminus \set{(0,0)}}_{n \in \N}$. Note here that each $\overline{B(0,\frac{1}{n})} \setminus \set{(0,0)}$ is relatively compact, since its closure inside $\mathbb{R}^2 \setminus \{(0,0)\}$ is itself, even though its closure in $\mathbb{R}^2$ is $\overline{B(0,\frac{1}{n})}$.
\end{itemize}
\end{example}

Note that for any exiting sequence $\{U_n\}_{n\in \mathbb{N}}$ and for any $m\in \mathbb{N}$, we can replace $U_m$ by uncountably many connected open sets contained in $U_{m-1}$ and containing $U_{m+1}$. This means that all ends of $X$ has uncountably many representatives, which makes working with this definition hard. Fortunately for surfaces (and manifolds in general), there is a convenient way to resolve this issue, using \textit{exhaustion by compact subspaces}:

\begin{definition}
    An \emph{exhaustion by compact subspaces} of a topological space X is a sequence of compact subspaces $\set{K_n}_{n\in\N}$ such that $K_n\subset \mathrm{int}({K}_{n+1})$ for all $n\in \N$ and $X=\bigcup_{n=0}^\infty K_n$.
\end{definition}

\begin{remark}
    Note that a topological space can also be exhausted by open sets. Consider the sequence $\set{B_n}_{n\in\N}$ in $\R^2$ where each $B_n$ is the open ball centered at the origin with radius $n$. Clearly, $\mathbb{R}^2=\bigcup_nB_n$. However, we are not interested in such exhaustions. For this reason, in this thesis, all exhaustions are assumed to be by compact subsets.
\end{remark}

\begin{example}
    Consider the sequence of surfaces $\set{K_n}_{n\in\N}$ with $K_n = S^3_{1+3n}$. The union $\bigcup_{n=0}^\infty K_n$ is the surface with three ends accumulated by genus. Note that $K_n$ is compact and $K_n \subset \mathrm{int}(K_{n+1})$ for all $n\in\N$. Therefore $\{K_n\}_{n\in\N}$ exhausts the surface with three ends accumulated by genus.
\end{example}

If $X$ admits an exhaustion, by fixing an exhaustion of $S$, we have a canonical choice of a representative of an end $e\in \mathrm{Ends}(X)$ given as follows. Fix an exhaustion $\{K_n\}_{n\in\mathbb{N}}$ of $X$, choose as the representative for an end $e\in \mathrm{Ends}(X)$, the sequence $\{U_n\}_{n\in \mathbb{N}}$ where each $U_n$ is a connected component of $X\setminus K_n$ and $U_n \subset U_{n+1}$ for all $n\in \mathbb{N}$.  

Surfaces, in virtue of being manifolds, are locally compact and Hausdorff. It is a classical result that locally compact Hausdorff spaces admit an exhaustion by compact subspaces~\cite{leetopological}. Now, we state the version of the definition for an end of $X$ that we will be using throughout the rest of this thesis~\cite{propertiesvlamispatel}.  

\begin{definition}\label{def:ends}
    Let X be a topological space that admits an exhaustion. Fix an exhaustion $\set{K_n}_{n\in\N}$ of $X$. Let $\set{U_n}_{n\in\N}$ be a sequence of connected components of $X\setminus K_n$ such that each $U_n \subset U_{n+1}$ for all $n\in \N$.
    Then, $\set{U_n}_{n\in \N}$ is called an \emph{end} of X.
\end{definition}

In light of Definition~\ref{def:ends}, one may wonder whether different exhaustions of $X$ determine different sets of ends. Since each end is an equivalence class of exiting sequences, it is clear that the choice of exhaustion only determines the choice of representatives for ends. That is, every exhaustion determines the same set of ends. Assume from now on that an exhaustion $\set{K_n}_{n\in\N}$ is fixed. 

 Let $U$ be a connected component of $X\setminus K_n$ for some $n\in \N$. We denote by $U^*$ the set of all ends contained in $U$, i.e., all ends such that $U_n\subset U$ for infinitely many $n\in \N$. We now give $\mathrm{Ends}(X)$ a topology to make it a topological space called the \emph{Space of Ends} of X.  If $U$ is an open subset of $X$ with compact boundary, and $e\in \mathrm{Ends}(X)$ is contained in $U^*$, then $U$ is said to be a \emph{neighborhood} of $e$. 

\begin{proposition}
    The collection of sets 
    \[
    \mathcal{B} = \set{U^* \, \mid \, U \text{ is a component of } S \setminus K_i \text{ for some } i\in \N}
    \]
    forms a basis for a topology on $\mathrm{Ends}(S)$.
\end{proposition}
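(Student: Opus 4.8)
The plan is to verify the two defining axioms of a basis: first, that $\mathcal{B}$ covers $\mathrm{Ends}(S)$; and second, that whenever $B_1,B_2\in\mathcal{B}$ and $e\in B_1\cap B_2$, there is $B_3\in\mathcal{B}$ with $e\in B_3\subseteq B_1\cap B_2$. Both will follow from a single bookkeeping lemma comparing the sets $U^*$ with the canonical representatives determined by the fixed exhaustion $\set{K_n}$, so I would prove that lemma first and then read off the axioms.

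The lemma I would establish is: for a component $U$ of $S\setminus K_i$, one has $e\in U^*$ if and only if $U_i=U$, where $\set{U_n}$ is the canonical representative of $e$ (so $U_n$ is a component of $S\setminus K_n$, the sequence is nested, and it is uniquely determined by $e$ once the exhaustion is fixed). The ``if'' direction is immediate, since then $U_n\subseteq U_i=U$ for all large $n$. For ``only if'', membership $e\in U^*$ produces some $n\ge i$ with $U_n\subseteq U$; the canonical nesting gives $U_n\subseteq U_i$ as well, so $U_n\subseteq U\cap U_i$, and since $U$ and $U_i$ are both components of $S\setminus K_i$ this forces $U=U_i$. The two point-set inputs here are the elementary observation that $K_i\subseteq K_j$ implies every component of $S\setminus K_j$ lies inside a unique component of $S\setminus K_i$ (this is what makes the canonical chain well defined and gives ``$U_n\subseteq U_i$''), together with the fact that membership in $U^*$ does not depend on which exiting sequence represents $e$, which holds because equivalent exiting sequences are mutually cofinal.

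Granting the lemma, the covering axiom is immediate: for $e$ with canonical representative $\set{U_n}$, the set $U_0^*$ belongs to $\mathcal{B}$ and contains $e$ (if $S$ is compact there are no ends and the statement is vacuous). For the intersection axiom, write $B_1=U^*$ with $U$ a component of $S\setminus K_i$ and $B_2=V^*$ with $V$ a component of $S\setminus K_j$, and assume $i\le j$ without loss of generality. If $e\in U^*\cap V^*$, the lemma forces $U_i=U$ and $U_j=V$ for the canonical representative $\set{U_n}$ of $e$; since $U_j\subseteq U_i$ we get $V\subseteq U$, and hence $V^*\subseteq U^*$ (any $e'\in V^*$ has its level-$j$ component equal to $V\subseteq U$, so its level-$i$ component, the unique one containing $V$, is $U$, i.e. $e'\in U^*$). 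Thus $U^*\cap V^*=V^*$ and $B_3:=B_2$ works. Incidentally this also shows that any two members of $\mathcal{B}$ are either disjoint or nested, which makes the generated topology quite transparent.

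The main obstacle is really just the lemma, i.e. the careful bookkeeping: pinning down existence and uniqueness of the canonical representative relative to the chosen exhaustion, checking that $U^*$ is representative-independent, and using that refining the exhaustion refines the decomposition into components. None of these steps is deep, but they are where the actual argument lives; once they are in place, both basis axioms drop out as above.
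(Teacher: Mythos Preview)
Your argument is correct and follows essentially the same route as the paper: verify the two basis axioms using the nesting of components coming from the exhaustion. The paper simply asserts ``without loss of generality $U\subset V$'' and then exhibits a term $W_N^*$ of the canonical representative inside $U^*\cap V^*$, whereas you package the same nesting observation into the lemma $e\in U^*\iff U_i=U$ and deduce the slightly sharper conclusion $U^*\cap V^*=V^*$; the underlying idea is identical.
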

\begin{proof}
    We need to prove two things:
    \begin{enumerate}[label=(\roman*)]
        \item $\mathcal{B}$ covers $\mathrm{Ends}(X)$:\\
        Let $ e\in \mathrm{Ends}(X)$ be arbitrary. By definition, $e = \set{U_n}_{n\in \N}$, and $U_1 \in S \setminus K_i$ for some $i\in \N$. Therefore, $e\in U_1^*\subset \mathcal{B}$ and $\mathcal{B}$ covers $\mathrm{Ends}(X)$.
        \item For every $U,V\in\mathcal{B}$ and for all $x\in U\cap V$ there exists a $W\in\mathcal{B}$ such that $x\in W\subseteq U\cap V$: \\
        Let $U$ be a connected component of $S\setminus K_i$, $V$ be a connected component of $S\setminus K_j$ for some $i,j \in \N$ and $e=\set{W_n}_{n\in \N}\in \mathrm{Ends}(X)$ be such that $e\in U^*\cap V^*$. Then, there exists $N_1,N_2\in \N$ such that for all $n\geq N_1$, $W_n \subset U$ and for all $n\geq N_2$, $W_n \subset V$. Since $e \in U^*$ and $e\in V^*$, without loss of generality we may assume $U\subset V$. Set $N = \max\{N_1,N_2\}$ so that for all $n \geq N$, $W_n \subset U = U \cap V$ and thus $e\in W_N^*\subset (U\cap V)^* = U^*\cap V^*$.
    \end{enumerate}
\end{proof}

\begin{remark}\label{rmk:endopen}
    Suppose $\mathrm{Ends}(X)$ has infinitely many ends. Then for some end $e= \set{U_n}_{n\in\N}$, $U^*_n$ must contain ends other than $e$ for infinitely many $n\in \N$, otherwise there would be only finitely many ends. Therefore, at least one end is a limit point, which shows that $\mathrm{Ends}(X)$ is not discrete and that the ends are not open when there are infinitely many ends.
\end{remark}
The space of ends exhibit some interesting properties, which will be familiar to the reader.

\begin{theorem}\label{thm:tdscc}
    For any surface $S$, the space of ends $\text{Ends}(S)$ is a second-countable, Hausdorff, totally disconnected, and compact topological space.
\end{theorem}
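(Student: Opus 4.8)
The plan is to fix once and for all a \emph{well-chosen} exhaustion and then read off all four properties from the combinatorics of the complementary pieces. First I would replace the given exhaustion $\{K_n\}_{n\in\N}$ by one in which each $K_n$ is a compact subsurface of $S$ with boundary a finite disjoint union of circles; such exhaustions exist because $S$ is a second-countable, locally compact, locally connected Hausdorff space (one also sees this by triangulating $S$). The crucial consequence is that each $S\setminus K_n$ is an open subsurface whose component set $\pi_0(S\setminus K_n)$ is \emph{finite}, every component having compact boundary. Since the terms of any exiting sequence representing an end $e$ are connected, nested, and eventually contained in $S\setminus K_n$, they eventually all lie in a single component of $S\setminus K_n$; call it the \emph{level-$n$ piece} $U_n(e)$. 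This yields, for each $n$, a finite partition
\[
\mathrm{Ends}(S) \;=\; \bigsqcup_{U\in\pi_0(S\setminus K_n)} U^{*},
\]
in which $U^{*}\cap V^{*}=\emptyset$ whenever $U\neq V$ are components of the same $S\setminus K_n$, and $U^{*}=\bigsqcup\{V^{*}: V\in\pi_0(S\setminus K_{n+1}),\ V\subset U\}$ for each such $U$.

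Second countability is then immediate: the basis $\mathcal B$ from the preceding proposition is $\bigcup_n\{U^{*}:U\in\pi_0(S\setminus K_n)\}$, a countable union of finite sets. Next I would observe that each basic set $U^{*}$ is \emph{clopen}: it is open by definition, and its complement is $\bigcup\{V^{*}:V\in\pi_0(S\setminus K_n),\ V\neq U\}$ by the partition above, a union of basic open sets. So $\mathrm{Ends}(S)$ has a basis of clopen sets. Hausdorffness follows: given distinct ends $e\neq f$, there is a level $n$ with $U_n(e)\neq U_n(f)$ (otherwise their canonical representatives coincide), and $U_n(e)^{*}$, $U_n(f)^{*}$ are disjoint neighbourhoods. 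Total disconnectedness is then the standard consequence: any subset containing two distinct points $e,f$ is split by the clopen set $U_n(e)^{*}$ into two nonempty relatively open pieces, so it cannot be connected.

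The remaining — and, I expect, most delicate — property is compactness, since it needs a global rather than a pointwise argument; it suffices to show that every cover by basic sets has a finite subcover. Suppose $\mathcal U\subseteq\mathcal B$ is a cover with no finite subcover. By the level-$0$ partition some $U_0^{*}$ is not finitely covered; refining through the finite partition $U_0^{*}=\bigsqcup_{V\subset U_0}V^{*}$, some $U_1\subset U_0$ with $U_1\in\pi_0(S\setminus K_1)$ has $U_1^{*}$ not finitely covered; iterating produces a nested sequence $U_0\supset U_1\supset\cdots$ with each $U_n^{*}$ nonempty and not finitely covered. This sequence is the canonical representative of an end $e$, so $e\in W^{*}$ for some $W^{*}\in\mathcal U$ with $W\in\pi_0(S\setminus K_m)$; but $e\in W^{*}$ forces $W=U_m(e)=U_m$, so $U_m^{*}=W^{*}$ lies in a single member of $\mathcal U$ — a contradiction. (Equivalently one can identify $\mathrm{Ends}(S)$ with the inverse limit $\varprojlim_n\pi_0(S\setminus K_n)$ of finite discrete sets and invoke Tychonoff, exactly as in the proof that $2^{\N}$ is compact.) The main obstacle is really the reduction step — securing an exhaustion for which $\pi_0(S\setminus K_n)$ is finite with each complementary component having compact boundary; once that finiteness is available, all four properties fall out of the partition structure.
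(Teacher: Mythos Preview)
Your proof is correct and follows essentially the same route as the paper: both arguments establish that the basic sets $U^{*}$ are clopen, deduce Hausdorffness and total disconnectedness from this, and prove compactness by a K\"onig-type descent constructing an end from a nested sequence of ``bad'' complementary pieces. You are in fact more careful than the paper on the one point you flag as the main obstacle---the paper simply asserts that $S\setminus K_n$ has finitely many components ``since $K_n$ is compact,'' whereas you explicitly arrange this by passing to an exhaustion by compact bordered subsurfaces.
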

\begin{proof}
    (i) \textbf{Second-Countable}: A surface is second-countable, so it cannot have an uncountable number of connected components. Thus, for any exhaustion $\set{K_n}_{n\in\N}$, the number of components of each $S \setminus K_n$ is at most countable. The basis $\mathcal{B}$ is a countable union of these countable sets and is therefore countable.

    (ii) \textbf{Hausdorff}: Let $e_1 = \set{U_n}_{n\in\N}$ and $e_2 = \set{V_n}_{n\in\N}$ be two distinct ends. Therefore, there exists some $N \in \mathbb{N}$ such that for all $n \ge N$, $U_n \neq V_n$. It follows that $e_1\in U_n^*$ but $e_2 \notin U_n^*$ and $e_1 \notin V_n^*$ but $e_2 \in V_n^*$ for all $n\geq N$. The sets $U_N^*$ and $V_N^*$ are then disjoint open neighborhoods of $e_1$ and $e_2$, respectively, which shows that $\mathrm{Ends}(S)$ is Hausdorff.

    (iii) \textbf{Zero-dimensional}: We claim elements of $\mathcal{B}$ are clopen. They are open by definition, so it remains to show that they are closed. Let $U^*\in \mathcal{B}$. Then $U$ is a connected component of $S\setminus K_i$ for some $i\in \mathbb{N}$. Let $e\in \mathrm{Ends}(S) \setminus U^*$. By definition, there is a connected component $V \in S \setminus K_j$ for some $j \in \N$ with $U \cap V = \emptyset$. Hence, $V^*$ is a basic neighborhood of $e$ contained in $\mathrm{Ends}(S) \setminus U^*$. Since $e$ was arbitrary, $\mathrm{Ends}(X) \setminus U^*$ is open, which implies that $U^*$ is closed. Therefore, every element of $\mathcal{B}$ is clopen. Since we have already shown that $\mathrm{Ends}(S)$ is Hausdorff, it is zero-dimensional.

    (iv) \textbf{Totally Disconnected}: Assume for a contradiction that $\mathrm{Ends}(S)$ is not totally disconnected. Let $U$ be a connected subset of $\mathrm{Ends}(S)$ with $|U| > 1$. Let $e_1\not=e_2\in U$. Since $\mathrm{Ends}(S)$ is zero-dimensional, there is a clopen subset $V\subset \mathrm{Ends}(S)$ such that $e_1\in V$ and $e_2\notin V$. Observe that $U\cap V$ and $U \cap (\mathrm{Ends}(S)\setminus V)$ are both open and that $(U\cap V) \cap (U \cap(\mathrm{Ends}(S)\setminus V))= \emptyset$. Note that $U = (U \cap V) \cup (U\cap (\mathrm{Ends}(S)\setminus V))$, which contradicts the fact that $U$ is connected. Therefore, the only connected subsets of $\mathrm{Ends}(S)$ are singleton sets.

    (v) \textbf{Compact}: When $S$ has finitely many ends, this is trivial. So let $S$ have infinitely many ends and assume for a contradiction that $\mathrm{Ends}(S)$ is not compact, and let $\set{\mathcal{O}_\alpha}_{\alpha \in \Lambda}$ be an open cover with no finite subcover. Let $\set{K_n}_{n\in\N}$ be an exhaustion of $S$. Observe that since $K_n$ is compact, $S\setminus K_n$ has finitely many components for all $n\in\N$, say $U_{n,i}$. Because $\set{\mathcal{O}_\alpha}$ has no finite subcover, there must be for all $n$, some $i_n\in \N$ such that $U_{n,i_n}^*$ cannot be covered by finitely many $\mathcal{O}_\alpha$, otherwise we would have a finite subcover. Observe that the sequence $\set{U_{n,i_n}}_{n\in \N}$ defines an end $e$ of $S$. Because $\set{\mathcal{O}_\alpha}$ is a cover, $e\in \mathcal{O}_{\alpha_0}$ for some $\alpha_0$. Since $\mathcal{O}_{\alpha_0}$ is open and contains $e$, and since ends are not open, $\mathcal{O}_{\alpha_0}$ contains $U_{N,i_N}^*$ for some $N\in \N$, contradicting the fact that $U_{n,i_n}^*$ cannot be covered by finitely many $\mathcal{O}_\alpha$ for all $n$ and that $\set{\mathcal{O}_\alpha}$ has no finite subcover. Therefore, $\mathrm{Ends}(S)$ is compact.
\end{proof}

The space $S\ \cup \ \mathrm{Ends}(S)$ is a compactification of $S$ called the \emph{Freudenthal Compactification}. Theorem~\ref{thm:tdscc} can also be proved by working inside the Freudenthal Compactification. That approach is treated in~\cite{ahlforsriemann}; therefore, we provide a more direct proof in this thesis, which is due to the author. 

\begin{remark}
    A topological space $X$ with two ends can be embedded in a topological space $Y$ such that both ends of $X$ map to the single end of $Y$ in the Freudenthal compactification. Recall that $\R$ has two ends that correspond to $\pm \infty$ and its Freudenthal compactification is the extended real line $\R\cup \{+ \infty, - \infty\}$. Also recall that $\R^2$ has one end at infinity and its Freudenthal compactification is the Riemann sphere $S^2$. Note that if we embed $\R$ into $\R^2$ at $y=0$, both $+\infty$ and $-\infty$ are mapped to the point $\infty$ in the Freudenthal compactification. However, $\R$ still has two ends as an embedded submanifold of $\R^2$. This is because the terms of exiting sequences need to be connected, which means we cannot use open sets of $\R^2$ to define the ends of $\R$.
\end{remark}

Next corollary is actually a special case of a result in descriptive set theory that every zero-dimensional Polish space is homeomorphic to a closed subset of the Cantor set[\citenum{kechrisdescriptive}, Theorem 7.8]. We give a proof in the case of $\mathrm{Ends}(S)$ for the sake of completeness.

\begin{corollary}\label{crl:cantor}
    For any surface $S$, the space of ends $\text{Ends}(S)$ is homeomorphic to a closed subset of the Cantor set $\mathcal{C}$.
\end{corollary}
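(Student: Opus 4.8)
The plan is to exhibit $\mathrm{Ends}(S)$ as a closed subspace of $2^{\N}$, which is homeomorphic to the Cantor set by the theorem proved above. By Theorem~\ref{thm:tdscc} the space $\mathrm{Ends}(S)$ is second-countable, Hausdorff, totally disconnected and compact, and part (iii) of the proof of that theorem shows moreover that it admits a countable basis $\mathcal{B} = \set{B_n}_{n\in\N}$ consisting of clopen sets (the sets $U^*$). If $\mathrm{Ends}(S) = \emptyset$ there is nothing to prove, so assume it is nonempty.

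First I would define the map
\[
\phi \colon \mathrm{Ends}(S) \longrightarrow 2^{\N}, \qquad \phi(e) = \bigl(\mathds{1}_{B_n}(e)\bigr)_{n\in\N},
\]
whose $n$-th coordinate records whether or not $e$ lies in $B_n$. Continuity of $\phi$ is immediate: since $2^{\N}$ carries the product topology, it suffices that each coordinate function $e \mapsto \mathds{1}_{B_n}(e)$ be continuous, and this holds because $B_n$ is clopen, so the preimages of the two points of $\set{0,1}$ are $B_n$ and its complement, both open.

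Next I would check that $\phi$ is injective. Given distinct ends $e_1 \neq e_2$, the Hausdorff property yields disjoint open neighborhoods, and since $\mathcal{B}$ is a basis there is some $B_n$ with $e_1 \in B_n$ and $e_2 \notin B_n$; hence the $n$-th coordinates of $\phi(e_1)$ and $\phi(e_2)$ differ. Thus $\phi$ is a continuous injection from a compact space into the Hausdorff space $2^{\N}$, so it is a homeomorphism onto its image, and the image $\phi(\mathrm{Ends}(S))$, being compact, is closed in $2^{\N}$. Composing with the homeomorphism $2^{\N} \cong \mathcal{C}$ from the theorem that $2^{\N}$ is a Cantor space then identifies $\mathrm{Ends}(S)$ with a closed subset of $\mathcal{C}$.

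The argument is essentially routine; the one point deserving care is that the basis $\mathcal{B}$ genuinely consists of clopen sets and genuinely separates points, i.e., that $\mathrm{Ends}(S)$ is not merely totally disconnected but zero-dimensional. That, however, is exactly what part (iii) of the proof of Theorem~\ref{thm:tdscc} establishes, so no extra work is needed. (Alternatively one could invoke the Urysohn metrization theorem to see $\mathrm{Ends}(S)$ is a compact metrizable zero-dimensional space and then quote the general descriptive set theory embedding theorem into the Cantor set, but the explicit map above is cleaner and self-contained.)
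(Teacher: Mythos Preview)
Your proof is correct and is essentially the same as the paper's: define the map into $2^{\N}$ via the characteristic functions of a countable clopen basis, check continuity coordinatewise and injectivity via Hausdorffness, then use that a continuous injection from a compact space to a Hausdorff space is a closed embedding. Your write-up is in fact slightly more careful (you handle the empty case and make the composition with $2^{\N}\cong\mathcal{C}$ explicit), but the argument is the same.
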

\begin{proof}
    Since $\mathrm{Ends}(S)$ is zero-dimensional, it has a basis consisting of clopen sets. Let \[\{B_1,B_2,B_3,...\}\] be such a basis. We can define a map $f: \mathrm{Ends}(S) \to 2^{\mathbb{N}}$ by sending an end $e$ to the sequence of its characteristic functions on the basis elements: \[f(e) = (\chi_{B_1}(e), \chi_{B_2}(e), \dots).\] This map is injective, since if the image of two ends is the same sequence, then they must share all of their neighborhoods, i.e., there is no open neighborhood separating the two, and since $\mathrm{Ends}(S)$ is Hausdorff, they must be equal. The map is also continuous, as characteristic functions defined on clopen sets are continuous. A continuous injection from a compact space to a Hausdorff space is a homeomorphism onto its image, and since $\mathrm{Ends}(S)$ is compact, its image is a closed subset of $C$.
\end{proof}

\section{Classification of Infinite-Type Surfaces}

As can be seen in Example~\ref{ex:infsurf}, some infinite-type surfaces have ends with infinitely many genera, while others do not. This motivates the next definition, which is required to classify infinite-type surfaces.

\begin{definition}\label{def:accubygenus}
    An end $e = \set{U_n}_{n\in \N}$ is \emph{accumulated by genus} or \emph{non-planar} if $U_n$ has infinitely many genera for all $n\in \N$. An end is $\emph{planar}$ if only finitely many $U_n$ has genera, i.e., there is a neighborhood $U\subseteq S$ of it that can be embedded into the Euclidean plane. The subspace of ends accumulated by genus is denoted by $\mathrm{Ends}_{np}(S)$.
\end{definition}

\begin{remark}
    By Definition~\ref{def:accubygenus}, a puncture is a planar end that is not an accumulation point in $\mathrm{Ends}(S)$.
\end{remark}

We are now ready to state the generalized classification theorem, which extends Theorem~\ref{thm:finiteclassification} to infinite-type surfaces:

\begin{theorem}[\cite{richardsclassification}]\label{thm:classification}
    Any surface $S$ is determined, up to homeomorphism, by the quadruple \[(b,g,\mathrm{Ends}_{np}(S),\mathrm{Ends}(S))\] where $b\in \N \cup \set{\infty}$ is the number of compact boundary components and $g\in \N\cup \{\infty\}$ is the number of genera.
\end{theorem}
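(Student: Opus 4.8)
\emph{Strategy and reduction to the boundaryless case.} The plan is to follow Richards \cite{richardsclassification}: encode a surface combinatorially by an exhaustion by compact subsurfaces, let the finite-type classification (Theorem~\ref{thm:finiteclassification}) handle all local identifications, and prove uniqueness by a back-and-forth construction of a homeomorphism. First I would cap each boundary circle of $S$ with a closed disk to obtain a surface $\hat S$ without boundary; this changes neither $g$ nor $\mathrm{Ends}(S)$ nor $\mathrm{Ends}_{np}(S)$, since only compact pieces are added. Then $S$ is recovered from $\hat S$ by deleting the interiors of $b$ disjoint closed disks, so it suffices to show both that $\hat S$ is determined by $(g,\mathrm{Ends}_{np},\mathrm{Ends})$ and that any two disjoint families of $b$ closed disks in $\hat S$ are interchanged by a homeomorphism of $\hat S$ (routine from Theorem~\ref{thm:finiteclassification} when $b$ is finite, and an instance of the machinery below otherwise).

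\emph{Normalized exhaustions.} Surfaces are locally compact and Hausdorff, hence admit an exhaustion $K_1 \subset K_2 \subset \cdots$ by compact connected subsurfaces with $K_i \subset \mathrm{int}(K_{i+1})$ and $\hat S = \bigcup_i K_i$; after an isotopy each boundary circle of each $K_i$ may be taken essential. Repeatedly invoking Theorem~\ref{thm:finiteclassification}, one refines the exhaustion so that every layer $L_i = \overline{K_{i+1}\setminus K_i}$ is a disjoint union of annuli, one-holed tori, and pairs of pants — i.e., passing from $K_i$ to $K_{i+1}$ only adds one unit of genus at a time or splits one complementary component into two. To $\{K_i\}$ I then associate the rooted tree whose level-$i$ vertices are the components of $\hat S \setminus \mathrm{int}(K_i)$, with an edge joining $V$ to each component $V' \subseteq V$ of $\hat S \setminus \mathrm{int}(K_{i+1})$, each edge labelled by the genus ($0$ or $1$) of the corresponding piece of $L_i$. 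Then the ends of $\hat S$ are precisely the infinite branches, $\mathrm{Ends}(\hat S)$ carries the inverse-limit topology generated by the clopen basis of Theorem~\ref{thm:tdscc}, an end lies in $\mathrm{Ends}_{np}(\hat S)$ iff infinitely many of its edges are labelled $1$, and $g$ is the number of edges labelled $1$. Conversely every such labelled tree is realized by gluing the named pieces, which gives the existence half of the statement once one records the compatibility constraints (namely $\mathrm{Ends}_{np}$ closed in the compact metrizable totally disconnected space $\mathrm{Ends}$, and $\mathrm{Ends}_{np}=\emptyset$ whenever $g<\infty$).

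\emph{Uniqueness.} Given $\hat S, \hat S'$ with equal $g$ and a homeomorphism $\phi\colon \mathrm{Ends}(\hat S) \to \mathrm{Ends}(\hat S')$ carrying $\mathrm{Ends}_{np}(\hat S)$ onto $\mathrm{Ends}_{np}(\hat S')$, I would build a homeomorphism $\hat S \to \hat S'$ by induction over normalized exhaustions. With $K_i \cong K_i'$ already constructed, the fact that $\phi$ is a homeomorphism of zero-dimensional compact spaces means that any clopen partition of one side subdivides into the $\phi$-image of a clopen partition of the other (Theorem~\ref{thm:tdscc}); using this I choose $K_{i+1}, K_{i+1}'$ whose complementary components are in $\phi$-compatible bijection and extend the homeomorphism across the matched layers by Theorem~\ref{thm:finiteclassification}. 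Since each compact subset of $\hat S$ lies inside some $K_i$, these nested homeomorphisms assemble into a homeomorphism $\hat S \to \hat S'$.

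\emph{The main obstacle.} The delicate point is reconciling this with the \emph{placement of genus}: $\phi$ controls only where \emph{infinite} genus accumulates, whereas \emph{finite} genus may be distributed differently along the two exhaustions. The remedy is that genus is mobile — a one-holed torus is unique up to homeomorphism (Theorem~\ref{thm:finiteclassification}) and can be slid along the surface and absorbed into an adjacent piece — so before matching the $(i+1)$-st layer one first applies a homeomorphism of $\hat S$ supported off $K_i$ that redistributes the finite genus in each complementary component to agree with the $\phi$-corresponding component of $\hat S'$; this is exactly possible because $\phi$ matches the non-planar ends, hence the components that eventually carry infinite genus. Carrying the end data and the genus data along together at every finite stage, and verifying that these genus adjustments stay coherent in the limit, is the technical heart of the proof.
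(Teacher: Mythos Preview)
The paper does not supply its own proof of this theorem; it is stated with a citation to Richards~\cite{richardsclassification} and used as a black box thereafter. There is therefore nothing in the paper to compare your argument against.

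That said, your outline is a faithful sketch of Richards' original strategy: reduce to the boundaryless case, choose a normalized compact exhaustion, encode the surface as a labelled tree whose infinite branches recover $(\mathrm{Ends},\mathrm{Ends}_{np})$, and then run a back-and-forth argument matching the exhaustions layer by layer using the finite-type classification. You have also correctly identified the genuine subtlety, namely that $\phi$ only constrains where \emph{infinite} genus accumulates, so finite handles must be redistributed at each stage before the layers can be matched. One point to be careful with if you flesh this out: the claim that ``any two disjoint families of $b$ closed disks in $\hat S$ are interchanged by a homeomorphism'' is not quite routine when $b=\infty$, since an infinite family of disks could accumulate on the end space in different ways; you need the disk centers to form a discrete set in $\hat S$, and then the argument folds into the same back-and-forth machinery rather than being a separate lemma.
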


\begin{remark}
    Note that by Theorem~\ref{thm:classification}, two surfaces $S$ and $S'$ with $b_1 = b_2$ and $g_1 =g_2$ are homeomorphic if and only if there exists a homeomorphism 
    \[
    f: \mathrm{Ends}(S) \longrightarrow\mathrm{Ends}(S')
    \]
    such that $f(\mathrm{Ends}_{np}(S)) = \mathrm{Ends}_{np}(S')$ 
\end{remark}

\begin{remark}\label{rem:class}
    Surfaces with non-compact boundary components, that is, surfaces with boundary components that are homeomorphic to $\R$, are not within the scope of Theorem~\ref{thm:classification}, as it cannot be exhausted by finite-type surfaces.
\end{remark}

By Corollary~\ref{crl:cantor}, we know that $\mathrm{Ends}(S)$ is homeomorphic to a closed subset of the Cantor Set. A natural question then is whether there is a surface such that the space of ends is homeomorphic to an arbitrary closed subset of the Cantor Set or not. The next theorem of Richards answers this in the affirmative, that is, for every closed subset $A$ of the Cantor Set, there is a surface $S$ such that $\mathrm{Ends}(S) \cong A$.

\begin{theorem}[\cite{richardsclassification}]
    Let $A\subseteq B$ be any two closed subsets of the Cantor Set. Then there exists a surface $S$ such that $\mathrm{Ends}(S) \cong B$ and $\mathrm{Ends}_{np}(S) \cong A$.
\end{theorem}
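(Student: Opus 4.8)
Assume first that $B\neq\emptyset$; the case $B=\emptyset$ forces $A=\emptyset$ and is handled by $S=S^2$. The plan is to realize $B$ as the space of ends of a genus-zero surface and then graft handles onto it accumulating precisely onto $A$. Since $B$ is a nonempty closed subset of the Cantor set it is compact, metrizable and totally disconnected, and the Cantor set embeds in a tame arc $I\subset S^2$; thus I may view $B$ as a closed subset of $S^2$ with empty interior. Using total disconnectedness and compactness of $B$, choose for each $n\in\N$ a finite family $\set{V_{n,1},\dots,V_{n,m_n}}$ of pairwise disjoint open topological disks in $S^2$, of diameter $<1/n$, with pairwise disjoint closures, with boundary circles disjoint from $B$, covering $B$, and refining the level-$(n-1)$ family (for the middle-thirds model, thin neighborhoods of those level-$n$ intervals that meet $B$). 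Put $\Sigma=S^2\setminus B$.

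Second, I would check that $\mathrm{Ends}(\Sigma)\cong B$. The compact sets $K_n=S^2\setminus\bigcup_j V_{n,j}$ satisfy $K_n\subseteq\mathrm{int}(K_{n+1})$ and exhaust $\Sigma$; because $\partial V_{n,j}$ misses $B$, the set $B\cap V_{n,j}=B\cap\overline{V_{n,j}}$ is compact and clopen in $B$, and $V_{n,j}\setminus B$ is connected since a totally disconnected closed set cannot separate a disk. Hence the components of $\Sigma\setminus K_n$ are exactly the sets $V_{n,j}\setminus B$, and tracking an end $\set{U_k}_{k\in\N}$ of $\Sigma$ through the nested refinements assigns to it the unique point of $\bigcap_k(B\cap\overline{U_k})$; this gives a bijection $\mathrm{Ends}(\Sigma)\to B$ that is a homeomorphism because it matches the clopen basis $\set{(V_{n,j}\setminus B)^*}$ with the clopen basis $\set{B\cap V_{n,j}}$ of $B$.

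Third, I would install the genus. As $A$ is closed in $S^2$ and $B$ has empty interior, pick distinct points $p_1,p_2,\dots\in\Sigma$ that form a closed discrete subset of $\Sigma$ with accumulation set in $S^2$ equal to $A$: for each $m$ cover $A$ by finitely many $1/m$-balls, choose one point of $\Sigma$ in each, and take the union over $m$; any accumulation point lies within $1/m$ of $A$ for all $m$, hence in $A$, every point of $A$ is accumulated, and no accumulation point lies in $\Sigma$ as it would lie in $A\subseteq B$. Now delete from $\Sigma$ a locally finite family of pairwise disjoint open disks $D_k\ni p_k$, each with $\overline{D_k}$ compact and contained in every $V_{n,j}$ containing $p_k$ (only finitely many such, since the $V_{n,j}$ shrink to $B$ while $p_k\notin B$), and glue a one-holed torus to each new boundary circle; call the result $S$. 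Enlarging each $K_n$ to also contain the finitely many handles sitting over disks it meets produces an exhaustion of $S$ whose complementary components correspond, with the same nesting, to those of $\Sigma$, so $\mathrm{Ends}(S)\cong\mathrm{Ends}(\Sigma)\cong B$. Under this identification the genus contained in the neighborhood $V_{n,j}\setminus B$ equals $\#\set{k:p_k\in V_{n,j}}$, so an end $e$ is accumulated by genus iff every basic neighborhood of $e$ contains infinitely many $p_k$, iff $e\in A$; therefore $\mathrm{Ends}_{np}(S)\cong A$, as required.

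The main obstacle is not the surface $S$ itself but the two point-set facts underpinning the identifications. First, one must verify that a tamely embedded, closed, totally disconnected $B\subseteq S^2$ admits the separating families $\set{V_{n,j}}$ above and that, because each $\partial V_{n,j}$ is a Jordan curve missing $B$, the sets $V_{n,j}\setminus B$ are exactly the complementary components of $K_n$. Second, one must make precise that grafting a locally finite family of compact one-holed tori along embedded circles induces a nesting-preserving bijection between the complementary components of $K_n$ and those of its enlargement, so the space of ends is genuinely unaffected. Both are routine once carefully stated, but they are where the argument must be written with care. Finally, the degenerate cases are recorded separately: take $S=\Sigma$ when $A=\emptyset$, and $S=S^2$ when $B=\emptyset$.
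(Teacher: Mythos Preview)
The paper does not supply its own proof of this theorem; it is stated with a citation to Richards and then used without argument. So there is nothing in the paper to compare your construction against.

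That said, your construction is the standard one and is correct. Realizing $B$ as a tame closed subset of $S^2$, exhausting $\Sigma=S^2\setminus B$ by the complements of shrinking disk-covers of $B$, and then attaching a locally finite family of one-holed tori accumulating exactly on $A$ is essentially Richards' own argument. The two ``obstacles'' you flag at the end are genuine but, as you note, routine: the first follows from the Jordan curve theorem plus the fact that a closed totally disconnected subset of a disk does not separate it, and the second is immediate once you observe that each compact handle is attached inside a single complementary component of some $K_n$, so passage to the enlarged exhaustion does not alter the nesting pattern of complementary components. Your handling of the degenerate cases $A=\emptyset$ and $B=\emptyset$ is also fine.
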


Recall from Section~\ref{sec:surfaces} that there are countably infinitely many homeomorphism classes of finite-type surfaces. A natural question, then, is what the cardinality of the homeomorphism classes of infinite-type surfaces is.

\begin{lemma}[\cite{reichbach}]\label{lem:closedcantor}
    There are uncountably many closed subsets of the Cantor set.
\end{lemma}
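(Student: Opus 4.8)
The plan is to bound the number of closed subsets of the Cantor set $C$ from below by exhibiting an injection of an uncountable set into the collection of such subsets, and the cheapest source of closed sets is the collection of singletons. Since $C$ is metrizable, and in particular Hausdorff, every singleton $\set{x}$ with $x \in C$ is closed. The assignment $x \mapsto \set{x}$ is evidently injective, so the family of closed subsets of $C$ has cardinality at least $|C|$.

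It then remains only to recall that $C$ is uncountable. Using the homeomorphism $C \cong 2^{\N}$ established earlier in this chapter, this is Cantor's diagonal argument: given any sequence $(s_n)_{n \in \N}$ of elements of $2^{\N}$, the element $t \in 2^{\N}$ with $t_n = 1 - (s_n)_n$ differs from every $s_n$ in its $n$-th coordinate, so no countable list can exhaust $2^{\N}$. Combining this with the previous paragraph yields uncountably many closed subsets of $C$.

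I do not expect any genuine obstacle: the statement is essentially a repackaging of two standard facts (points are closed in a Hausdorff space, and $C$ is uncountable), both of which are immediate consequences of properties of $C$ already recorded above. If one wanted the sharper statement that there are exactly $2^{\aleph_0}$ closed subsets, an upper bound would follow from second countability --- a closed set is the complement of an open set, every open set is a union of members of a fixed countable basis, so there are at most $2^{\aleph_0}$ open sets and hence at most $2^{\aleph_0}$ closed sets --- and combined with the lower bound $|C| = 2^{\aleph_0}$ this gives equality. For the lemma as stated, however, the singleton argument alone suffices, and it is the version I would write out.
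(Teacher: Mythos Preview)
Your argument is correct for the lemma exactly as stated: singletons are closed in a Hausdorff space, $C$ is uncountable, done. The paper does not give its own proof of this lemma; it simply records the statement with a citation to \cite{reichbach} and moves on, so there is nothing to compare against at the level of argument.

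One point worth flagging, though it is about the paper rather than your proof: the lemma is invoked immediately afterwards to conclude that there are uncountably many \emph{homeomorphism classes} of infinite-type surfaces. For that application one needs uncountably many closed subsets of $C$ that are pairwise non-homeomorphic, and your singleton family collapses to a single homeomorphism type. The result cited from \cite{reichbach} is presumably this stronger statement (uncountably many homeomorphism types of closed subsets of $C$), which is genuinely nontrivial and is not captured by the lemma as phrased. Your proof is entirely adequate for what is written, but be aware that the literal statement is weaker than what the surrounding text actually uses.
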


By Theorem~\ref{thm:classification} and Lemma~\ref{lem:closedcantor}, there are uncountable homeomorphism classes of infinite-type surfaces, in sharp contrast to the finite-type case.
\chapter{Structure of the Big Mapping Class Groups}\label{chp:bigmap}

Armed with the classification of infinite-type surfaces, we now turn our attention to their mapping class groups. For an infinite-type surface $S$, $\mathrm{Map}(S)$ is called a big mapping class group. Similar to the finite-type case, big mapping class groups inherit a topology via the quotient topology of the compact-open topology of $\homeo(S,\p S)$. However, in stark contrast to the finite case, mapping class groups of infinite-type surfaces are uncountable, and their topology is not discrete, hence the name big mapping class groups. Indeed, our main aim in this chapter is to show that big mapping class groups are homeomorphic to the Baire Space.

\section{The Alexander Method for Infinite-Type Surfaces}

The Alexander method is an invaluable tool for the study of mapping class groups of finite-type surfaces. Fortunately, thanks to the work of Hernandez-Morales-Valdez~\cite{infinitealexander}, the Alexander method can be extended to infinite-type surfaces with slight adjustments:

\begin{theorem}[The Infinite Alexander Method]\label{thm:infalex}
    Let $S$ be an infinite-type surface. There exists a collection of essential arcs and simple closed curves $\Gamma$ on $S$ such that any $f \in \mathrm{Homeo}^+(S,\partial S)$ that preserve the isotopy classes of the elements of $\Gamma$ is isotopic to the identity.
\end{theorem}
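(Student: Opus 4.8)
The plan is to build the collection $\Gamma$ by exhausting $S$ with finite-type subsurfaces and patching together the finite-type Alexander collections from Proposition~\ref{prop:alex} across the exhaustion, then upgrading ``fixes each $\Gamma$-curve up to isotopy'' to ``isotopic to the identity'' via a limiting argument. First I would fix an exhaustion $S = \bigcup_{n} K_n$ by finite-type subsurfaces with $K_n \subset \operatorname{int}(K_{n+1})$, each $K_n$ having compact boundary, chosen so that every complementary component of $K_n$ is non-compact (so no curve gets ``trapped'' in a disk). On each $K_n$ apply Proposition~\ref{prop:alex} to obtain a finite filling collection $\Gamma_n$ of essential simple closed curves and proper arcs in minimal position; arrange the arcs of $\Gamma_n$ to end on $\partial K_n$ and then extend them outward through the collar region $K_{n+1}\setminus \operatorname{int}(K_n)$ so that they become essential arcs/curves of $S$ (or close them up into simple closed curves crossing the collar), and add curves parallel to the boundary components of each $K_n$. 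Set $\Gamma = \bigcup_n \Gamma_n$, taking care that $\Gamma$ is locally finite, pairwise in minimal position, and that $\Gamma \cap K_n$ fills $K_n$ for every $n$ — this is the bookkeeping-heavy part of the construction.

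Next I would run the rigidity argument. Suppose $f \in \homeo^+(S,\partial S)$ preserves the isotopy class of every element of $\Gamma$. Because $\Gamma$ is locally finite and each $K_n$ is filled by the finitely many elements of $\Gamma$ meeting it, one isotopes $f$ so that it fixes $\Gamma \cap K_1$ pointwise (as a graph, with orientations) using the finite-type Alexander method on $K_1$; then, working outward and only modifying $f$ on $S \setminus K_1$, one inductively adjusts $f$ to fix $\Gamma \cap K_n$ pointwise for each $n$ while keeping the earlier adjustments intact. The key point making the induction well-posed is that the isotopies at stage $n$ are supported in $S \setminus K_{n-1}$ (they only move things in the newly revealed collar), so the infinite concatenation of these isotopies converges — any compact set is eventually untouched, exactly as in Proposition~\ref{prop:compactopenconvergence} — and yields a homeomorphism isotopic to $f$ that fixes all of $\Gamma$, hence fixes a filling graph of $S$; since complementary pieces are disks and once-punctured disks, this homeomorphism is isotopic to the identity. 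Here one must also rule out the ``nontrivial power isotopic to identity'' alternative in Proposition~\ref{prop:alex}(2) by including enough curves (e.g. the boundary-parallel ones and curves pinning down each end) so that no nontrivial graph automorphism survives.

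The main obstacle I anticipate is the convergence/limiting step: unlike the finite-type case there is no single application of the Alexander method, so one must carefully organize the infinitely many local isotopies so their composition is a genuine isotopy of $S$ (continuity at ``time $1$'' and properness), and simultaneously ensure the outward extensions of arcs through the collars do not create unwanted intersections or isotopies that would spoil minimal position or the filling property at later stages. A secondary subtlety is handling ends accumulated by genus and punctures — the collection $\Gamma$ must have curves detecting each topological feature (boundary components, punctures, genus in each complementary region) so that fixing $\Gamma$ really does pin down $f$ up to isotopy near infinity; this is where the hypothesis that $\Gamma$ consists of \emph{essential} arcs and curves, together with local finiteness, does the real work. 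Since the paper cites~\cite{infinitealexander} for this theorem, I would follow their construction for these delicate points rather than reprove them from scratch.
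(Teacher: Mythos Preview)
The paper does not give its own proof of this theorem: it is stated as a result of Hern\'andez--Morales--Valdez and simply cited from~\cite{infinitealexander}, so there is nothing in the paper to compare your proposal against. Your outline --- exhaust $S$ by finite-type pieces, apply the finite Alexander method on each piece, and concatenate the resulting compactly supported isotopies so that any compact set is eventually fixed --- is indeed the shape of the argument in the cited reference, and you have correctly identified the two genuine difficulties (arranging local finiteness/minimal position of the global collection, and proving that the infinite composition of isotopies converges to an honest isotopy); as you yourself note at the end, for a complete proof you would defer to~\cite{infinitealexander} rather than redo those details here.
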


In particular, any $f\in \homeo^+(S,\p S)$ fixing every isotopy class of simple closed curves in $S$ is isotopic to the identity.

\section{The Curve Graph}

Another important tool for the study of big mapping class groups, is the \emph{curve graph} of a surface $S$, also called the complex of curves, which we denote by $\mathcal{C}(S)$.

The \emph{curve graph} $\mathcal{C}(S)$ is a graph, whose vertices are the isotopy classes of nontrivial non-peripheral simple closed curves on $S$, where there is an edge between two vertices if their geometric intersection number is zero, i.e., they have disjoint representatives.

\begin{definition}\label{def:aut}
    Let $\mathcal{G}=(V,E)$ be a graph with vertex set $V$ and edge set $E$. The \emph{automorphism group} of $\mathcal{G}$ is the group
    \[
    \mathrm{Aut}(\mathcal{G}) = \set{g \colon V \rightarrow V \, \mid \text{g is bijective and } (u,v) \in E \iff (g(u),g(v)) \in E}.
    \]
\end{definition}

The image of a simple closed curve under a homeomorphism is again a simple closed curve. Moreover, homeomorphisms preserve geometric intersection numbers. Therefore, it is clear that a mapping class $f\in \map^\pm(S)$ induces an automorphism $\bar{f}\in \mathrm{Aut}(\mathcal{C}(S))$. Moreover through the combined efforts of Ivanov~\cite{ivanov97}, Korkmaz~\cite{korkmaz99} and Luo~\cite{luo2000}, it is known in the case of finite-type surfaces, except the twice-punctured torus and surfaces with $3g+b-4<1$, that $\mathrm{Aut}(\mathcal{C}(S))$ is naturally isomorphic to $\map^\pm(S)$. In the case of infinite-type surfaces, Hernandez-Morales-Valdez~\cite{hernandezmorales} and Bavard-Dowdall-Rafi~\cite{bavardkafi} proved the following theorem independently, which extends the result to infinite-type surfaces:

\begin{theorem}\label{thm:simplicialaut}
    For any infinite-type surface $S$ with empty boundary, $\mathrm{Aut}(\mathcal{C}(S))$ is naturally isomorphic to $\mathrm{Map}^\pm(S)$.
\end{theorem}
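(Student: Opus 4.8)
\emph{Proof strategy.} The plan is to prove that the natural homomorphism
\[
\Phi \colon \map^\pm(S) \longrightarrow \mathrm{Aut}(\mathcal{C}(S)), \qquad [f] \longmapsto \bigl(a \mapsto f(a)\bigr),
\]
is a bijection. Well-definedness and the homomorphism property are routine: a homeomorphism carries essential simple closed curves to essential simple closed curves and preserves geometric intersection numbers, hence carries disjoint curves to disjoint curves and so induces a graph automorphism of $\mathcal{C}(S)$, while isotopic homeomorphisms induce the same map on isotopy classes. For injectivity, suppose $\Phi([f]) = \mathrm{id}_{\mathcal{C}(S)}$, so $f$ preserves the isotopy class of every simple closed curve of $S$. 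If $f$ is orientation-preserving, the Infinite Alexander Method (Theorem~\ref{thm:infalex}, together with the remark following it) gives $[f] = 1$ directly. To rule out an orientation-reversing class in $\ker\Phi$, fix an essential finite-type subsurface $\Sigma \subset S$ whose complexity exceeds the low-complexity exceptions, isotope $f$ so that $f(\Sigma) = \Sigma$ (possible since $f$ fixes the isotopy classes of $\partial\Sigma$), and observe that $f|_\Sigma$ acts trivially on $\mathcal{C}(\Sigma)$; by the Ivanov--Korkmaz--Luo isomorphism $\mathrm{Aut}(\mathcal{C}(\Sigma)) \cong \map^\pm(\Sigma)$ the class $[f|_\Sigma]$ is trivial, hence orientation-preserving, so $f$ is orientation-preserving on a nonempty open set and thus, by connectedness of $S$, everywhere --- reducing to the previous case.

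Surjectivity is the main content. Fix a principal exhaustion $S = \bigcup_{n} \Sigma_n$ by essential finite-type subsurfaces $\Sigma_1 \subset \Sigma_2 \subset \cdots$, each of complexity beyond the exceptional range, so that $\mathcal{C}(\Sigma_n)$ embeds in $\mathcal{C}(S)$ as the full subgraph on curves contained in $\Sigma_n$. Let $\phi \in \mathrm{Aut}(\mathcal{C}(S))$. The crucial combinatorial rigidity step is to show that $\phi$ preserves, purely in terms of the adjacency relation of $\mathcal{C}(S)$, the property that a finite collection of curves fills a finite-type subsurface, together with the topological type of that subsurface; the standard device is to analyse iterated ``perpendicular'' sets $A^{\perp}, A^{\perp\perp}, \dots$ and to characterise combinatorially when such data pins down a finite-type filled subsurface. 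Granting this, $\phi$ sends the set of curves lying in $\Sigma_n$ onto the set of curves lying in some essential finite-type subsurface $\Sigma_n'$, with the $\Sigma_n'$ nested and exhausting $S$ (applying the same to $\phi^{-1}$), and $\phi$ restricts to a graph isomorphism $\mathcal{C}(\Sigma_n) \xrightarrow{\ \sim\ } \mathcal{C}(\Sigma_n')$. By the finite-type theorem of Ivanov, Korkmaz and Luo, each such isomorphism is induced by a homeomorphism $f_n \colon \Sigma_n \to \Sigma_n'$, unique up to isotopy, with $f_n(a)$ isotopic to $\phi(a)$ for every curve $a \subset \Sigma_n$.

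The remaining step --- which I expect to be the main obstacle --- is to assemble the $f_n$ into a single homeomorphism of $S$. Since $f_{n+1}|_{\Sigma_n}$ and $f_n$ both realise $\phi$ on $\mathcal{C}(\Sigma_n)$, they agree up to isotopy of $\Sigma_n'$ inside $\Sigma_{n+1}'$; the task is to upgrade this tower of ``agree up to isotopy'' statements into a coherent system --- that is, to choose the representatives $f_n$ and ambient isotopies so that $f_{n+1}$ genuinely restricts to $f_n$ on $\Sigma_n$ --- and then to set $f := \lim_n f_n$ on $S = \bigcup_n \Sigma_n$ and verify that $f$ is a well-defined homeomorphism (bijective, open, with open inverse, using that the $\Sigma_n$ exhaust $S$). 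The subtlety is that the complements $S \setminus \Sigma_n$ are noncompact, so consistency must be checked ``at infinity'' as well; this is handled by a Mittag-Leffler / diagonal argument on the inverse system of mapping classes of the $\Sigma_n$, using precisely the uniqueness up to isotopy of each $f_n$. By construction $f$ induces $\phi$ on every $\mathcal{C}(\Sigma_n)$, and since each curve of $S$ lies in some $\Sigma_n$ we obtain $\Phi([f]) = \phi$. Hence $\Phi$ is a bijective homomorphism, so an isomorphism; since it is by definition the natural map, the theorem follows.
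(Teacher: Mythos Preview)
The paper does not actually prove this theorem: it is stated as a known result, attributed to Hern\'andez--Morales--Valdez and to Bavard--Dowdall--Rafi, and then used as a black box in the subsequent discussion of the topology of big mapping class groups. So there is no ``paper's own proof'' to compare against.

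That said, your outline is essentially the strategy those references carry out, and the shape of the argument is right: injectivity via the infinite-type Alexander method, surjectivity by exhausting $S$ by finite-type pieces, invoking Ivanov--Korkmaz--Luo on each piece, and then assembling. Two comments on where the real work sits. First, the step you label ``crucial combinatorial rigidity'' --- that an automorphism of $\mathcal{C}(S)$ respects the property of a finite curve system filling a finite-type subsurface of a given topological type --- is not a formality; it is where most of the content of the cited papers lives, and it requires a careful purely combinatorial characterisation (in terms of links and iterated orthogonals in $\mathcal{C}(S)$) of when a curve system cuts out a finite-type region. Your sentence ``Granting this'' hides the bulk of the proof. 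Second, in the assembly step the uniqueness you invoke for each $f_n$ is only up to the centre of $\map^\pm(\Sigma_n)$, and the isotopy extension needed to make $f_{n+1}|_{\Sigma_n}$ literally equal to $f_n$ must be chosen compatibly with boundary components; this is doable, but deserves more than a one-line Mittag--Leffler gesture. None of this is wrong in outline --- it is the correct route --- but as written it is a sketch of a sketch rather than a proof.
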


Theorem~\ref{thm:simplicialaut} is especially important to study the topology of big mapping class groups.  

\section{The Topology of Big Mapping Class Groups}

Similarly to the mapping class groups of finite-type surfaces, big mapping class groups inherit a topology coming from the compact-open topology on $\homeo(S,\p S)$, but the similarities end there. In stark contrast to Proposition~\ref{prop:discretegroup}, big mapping class groups are not discrete.

\begin{proposition}\label{prop:indiscretegroup}
    Let $S$ be an infinite-type surface. Then, $\map(S)$ with its quotient topology is Hausdorff. Moreover, $\map(S)$ is not discrete.
\end{proposition}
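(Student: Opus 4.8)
The plan is to prove the two assertions by different routes. For Hausdorffness, by Lemma~\ref{lem:grhaus} it is enough to show that $\set{[\mathrm{id}]}$ is closed in $\map(S)$, and since $\pi\colon\homeo^+(S,\p S)\to\map(S)$ is a quotient map this amounts to showing that $\pi^{-1}([\mathrm{id}])=\homeo_0(S,\p S)$ is closed in $\homeo^+(S,\p S)$. The structural input is the corollary of the Infinite Alexander Method (Theorem~\ref{thm:infalex}) stated right after it: a homeomorphism is isotopic to the identity if and only if it fixes the isotopy class of every simple closed curve. Together with the obvious converse, this gives
\[
\homeo_0(S,\p S)=\bigcap_{c}A_c,\qquad A_c:=\set{f\in\homeo^+(S,\p S)\mid f(c)\text{ is isotopic to }c},
\]
where $c$ ranges over isotopy classes of essential simple closed curves, so it suffices to show each $A_c$ is closed.

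To see that $A_c$ is closed I would argue it is sequentially closed (recall $\homeo(S,\p S)$ is metrizable, so this suffices). If $f_n\in A_c$ and $f_n\to f$, then, $c$ being compact, $f_n|_c\to f|_c$ uniformly by Proposition~\ref{prop:compactopenconvergence}; since $S$ is a manifold (hence an ANR) and $f(c)$ is compact, for large $n$ the maps $f_n|_c$ and $f|_c$ are freely homotopic, while $f_n(c)\simeq c$ forces $f_n|_c$ to be freely homotopic to $c$. Hence $f(c)$ is freely homotopic to $c$, and since $f$ is a homeomorphism, $f(c)$ is an \emph{essential} simple closed curve freely homotopic to $c$. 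As freely homotopic essential simple closed curves on a surface are isotopic, $f(c)\simeq c$, i.e. $f\in A_c$. Thus $\homeo_0(S,\p S)=\bigcap_c A_c$ is closed, $\set{[\mathrm{id}]}$ is closed, and by Lemma~\ref{lem:grhaus}, $\map(S)$ is Hausdorff.

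For non-discreteness I would invoke Lemma~\ref{lem:discrete} and exhibit a sequence of nontrivial mapping classes converging to $[\mathrm{id}]$, so that $[\mathrm{id}]$ is not isolated. Fix an exhaustion $K_1\subset K_2\subset\cdots$ of $S$ by compact subsurfaces. Since $S$ has infinite type, the classification (Theorem~\ref{thm:classification}) forces $\overline{S\setminus K_m}$ to have infinite genus or infinitely many ends for every $m$ (a compact $K_m$ cannot absorb either), and a short argument then produces an essential, non-peripheral simple closed curve $c_m\subset S\setminus K_m$ — for instance a nonseparating curve in a handle of $\overline{S\setminus K_m}$, or a curve splitting off two ends inside a component with infinitely many ends. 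Let $f_m:=T_{c_m}$ be a Dehn twist supported in a closed annular neighbourhood $N_m\subset S\setminus K_m$ of $c_m$, so that $f_m$ restricts to the identity on $K_m$. For any compact $L\subseteq S$ we have $L\subseteq K_m$ for all large $m$, hence $f_m|_L=\mathrm{id}|_L$ eventually; thus $f_m\to\mathrm{id}$ in the compact-open topology by Proposition~\ref{prop:compactopenconvergence}, and by continuity of $\pi$, $[f_m]\to[\mathrm{id}]$ in $\map(S)$. But $[f_m]=T_{c_m}\neq[\mathrm{id}]$: since $c_m$ is essential there is a simple closed curve $b$ with $i(c_m,b)>0$, whence $i(T_{c_m}(b),b)=i(c_m,b)^2>0$ by Proposition~\ref{prop:twistintersection}, so $T_{c_m}(b)\neq b$. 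Therefore $[\mathrm{id}]$ is not isolated and, by Lemma~\ref{lem:discrete}, $\map(S)$ is not discrete.

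I expect the main obstacle to be the Hausdorff half, and within it the closedness of each $A_c$: this is where one must combine the Infinite Alexander Method — the genuinely infinite-type ingredient, guaranteeing that isotopy classes of simple closed curves already detect all of $\homeo_0$ — with the point-set fact that sending a fixed essential curve to something isotopic to it is a closed condition, which itself rests on compactness of $c$, the ANR property of $S$, and the equivalence of free homotopy and isotopy for essential simple closed curves. The non-discreteness half is comparatively routine once one records that an infinite-type surface has essential curves escaping every compact set, which is immediate from the classification of infinite-type surfaces.
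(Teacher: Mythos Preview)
Your proof is correct and follows essentially the same route as the paper: reduce Hausdorffness via Lemma~\ref{lem:grhaus} to closedness of $\homeo_0(S,\p S)$, verify this sequentially using compact convergence together with the Infinite Alexander Method, and establish non-discreteness by exhibiting nontrivial mapping classes supported outside the terms of a compact exhaustion. Your version is somewhat more explicit in places---the decomposition $\homeo_0=\bigcap_c A_c$, the ANR step for closedness of each $A_c$, and the intersection-number check that the Dehn twists $T_{c_m}$ are nontrivial---but the underlying strategy is identical.
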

\begin{proof}
By Lemma~\ref{lem:grhaus} it suffices to show that the singleton set $\set{1_{\map(S)}}$ is closed to establish that $\map(S)$ is Hausdorff.
    Since $\map(S)$ is endowed with the quotient topology, this is the same as showing that $\mathrm{Homeo}_0(S, \p S)$ is closed in $\mathrm{Homeo}^+(S, \p S)$.\\
    Let $\set{f_n}_{n\in\N}$ be a sequence converging to $f\in \homeo^+(S,\p S)$ with $f_n \in \homeo_0(S,\p S)$ for all $n\in \N$. Since each $f_n$ is isotopic to the identity, we can assume, up to isotopy, that $f_n$ fixes every isotopy class of simple closed curves in $S$. By Proposition~\ref{prop:compactopenconvergence}, $f_n|_{K}$ converges to $f|_K$ uniformly on every compact subset $K\subset S$. It follows that $f$ fixes the isotopy class of every simple closed curve, since they all fall in some compact set $K$, where $f_n$ converges uniformly to $f$. By Theorem~\ref{thm:infalex} (the Alexander Method for infinite-type surfaces), $f$ is isotopic to the identity, which implies that $\homeo_0(S, \p S)$ is closed.

    To show that $\map(S)$ is not discrete, fix an exhaustion $\set{K_n}_{n\in\N}$. We shall construct a sequence of nontrivial mapping classes that converge to the identity inside $\map(S)$. Since $S$ is of infinite type, the complement $S \setminus K_n$ is not a disjoint union of disks, once-punctured disks or spheres, and thus supports some non-trivial mapping class $[f_n]$. Since they are the identity on the boundary, each $[f_n]$ can be extended to $S$ to be the identity on $K_n$.
    We claim that the sequence $\set{[f_n]}$ converges to the identity. Indeed, for any compact subset $K$ of $S$, there exists $N\in \N$ such that $K\subseteq K_N$. By construction, $f_n|_{K_N}$ is the identity, up to isotopy, for all $n\geq N$. It follows that $f_n$ converges uniformly to the identity on $K$, which proves the claim. Hence, the identity is not isolated in $\map(S)$, which by Proposition~\ref{prop:discretegroup} implies that $\map(S)$ is not discrete.
\end{proof}

Since working with the quotient topology is often challenging, we instead work with the permutation topology. Originally introduced by Maurer, see~\cite{möller}, this topology was later applied by Aramayona, Patel, and Vlamis~\cite{aramayonapatelvlamis} to analyze the structure of $\map(S)$. Moreover, Aramayona and Vlamis~\cite{aramayonavlamis2020} state that the mapping class group of an infinite-type surface is homeomorphic to the Baire space proving an outline for a proof. We adopt their approach and provide full proofs for the arguments they sketched or left to the reader, for the sake of completeness. 

\begin{definition}
    Let $\mathcal{G}=(V,E)$ be a graph. For any finite subset $A \subset \mathcal{G}$, define
\[
U(A) = \set{g \in \mathrm{Aut}(\mathcal{G}) \ | \ g(a) = a \ \text{for all} \ a \in A }
\]
the point-wise stabilizers of $A$. The \emph{permutation topology} on $\mathrm{Aut}(\mathcal{G})$ is the topology generated by the $\mathrm{Aut}(\mathcal{G})$-translates of $U(A)$ for all finite $A\subset \mathcal{G}$, that is, sets of the form
\[
\set{g\,U(A) \mid g \in \mathrm{Aut}(\mathcal{G}), A \subset \mathcal{G} \text{ is finite}}.
\]
\end{definition}

\begin{lemma}\label{lem:sepa}
    For a graph $\mathcal{G} = (V,E)$ with countable $V$, $\mathrm{Aut}(\mathcal{G})$ with the permutation topology is separable.
\end{lemma}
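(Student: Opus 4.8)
The plan is to produce an explicit countable dense subset of $\mathrm{Aut}(\mathcal{G})$ by a counting argument on finite ``patterns''. First I would unwind the generating sets: for a finite set $A$ of \emph{vertices} and $g\in\mathrm{Aut}(\mathcal{G})$, one checks at once that
\[
g\,U(A)=\set{k\in\mathrm{Aut}(\mathcal{G}) \mid k(a)=g(a)\ \text{for all}\ a\in A},
\]
and that a nonempty finite intersection $\bigcap_i g_i\,U(A_i)$ equals $g\,U(\bigcup_i A_i)$ for any $g$ agreeing with each $g_i$ on $A_i$ (the intersection being empty if no such $g$ exists). Hence the cosets $g\,U(A)$ with $A\subseteq V$ finite are closed under finite intersection and form a basis for a topology, and I claim this is exactly the permutation topology. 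The one subtlety is that the definition permits $A$ to contain edges of $\mathcal{G}$, and an automorphism fixing an edge setwise need only fix or transpose its endpoints. I would dispose of this by setting $V(A)$ to be the vertices appearing in $A$ together with the endpoints of its edges: then $U(V(A))\subseteq U(A)$, each $U(A)$ is a subgroup, so $U(A)=\bigcup_{g\in U(A)} g\,U(V(A))$ is open in the topology generated by the finite-vertex-set cosets; since those generators are themselves among the original generators, the two topologies coincide, and from now on I work only with finite $A\subseteq V$.

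Now the counting. Since $V$ is countable, the collection of pairs $(A,\phi)$ with $A\subseteq V$ finite and $\phi\colon A\to V$ a function is countable, being a countable union (over the countably many finite subsets $A$) of the countable function spaces $V^A$. For each such pair for which $\phi$ extends to some element of $\mathrm{Aut}(\mathcal{G})$, fix one extension $g_{A,\phi}$, and let $D$ be the resulting countable set of automorphisms.

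Finally, density: given a nonempty basic open set $g\,U(A)$ with $A\subseteq V$ finite, the pair $(A,g|_A)$ is realized (by $g$ itself), so $g_{A,g|_A}\in D$, and this element agrees with $g$ on $A$, hence lies in $g\,U(A)$. Thus $D$ meets every nonempty basic open set, so $\overline{D}=\mathrm{Aut}(\mathcal{G})$ and $\mathrm{Aut}(\mathcal{G})$ is separable. Everything past the first paragraph is pure bookkeeping; the only place any genuine care is required is the reduction of the generating family to finite sets of \emph{vertices}, which is what I expect to be the main (mild) obstacle.
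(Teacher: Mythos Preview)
Your argument is correct. The first-paragraph reduction to finite \emph{vertex} subsets is slightly over-cautious---in context the paper intends $A\subset V$, since its automorphisms are defined as bijections of $V$---but the reduction is sound and does no harm. The core of your proof (choose one extension $g_{A,\phi}$ for each finite partial map $\phi\colon A\to V$ that is realised by some automorphism, and hit the basic open set $g\,U(A)$ with $g_{A,\,g|_A}$) is clean and entirely valid.

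The paper takes a different route: it proposes for $D$ the set of automorphisms with \emph{finite support}, and given $g\,U(A)$ writes down an explicit $h$ supported on $A\cup g(A)$ that is meant to agree with $g$ on $A$. That explicit formula is actually fragile---it can be ill-defined on $A\cap g(A)$, and even when it is a well-defined bijection there is no reason it should preserve the edge relation outside $A$---so your choice-based approach, which never attempts to build $h$ by hand and allows the chosen representatives to have infinite support, is the more robust of the two. The trade-off is a mild nonconstructivity on your side; what you gain is an argument that works verbatim for any closed subgroup of $\mathrm{Sym}(V)$.
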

\begin{proof}
    Let $g\in \mathrm{Aut}(\mathcal{G})$ be arbitrary, $A\subset V$ finite, and define
\[
D := \{ g \in \mathrm{Aut}(\mathcal{G}) \mid g(v) \neq v \text{ for only finitely many } v \in V \}.
\]

Let $F = A \cup g(A)$ and define
\[
h(v) = \begin{cases}
   g(v) & \text{if } v \in A, \\
   a & \text{if } v =g(a) \text{ for some }a \in A \\
   v & \text{ if } v \notin F
\end{cases}
\]
Then $h$ is an automorphism because $g$ is an automorphism, so $h \in g\,U(A) \cap D$.  

Since $g$ and $A$ were arbitrary, $D$ intersects every basic open set, so it is dense. Moreover, $D$ is countable because each element of $D$ has finite support and since $V$ is countable, there are countably many finite subgraphs of $\mathcal{G}$.

Thus $D$ is a countable dense subset of $\mathrm{Aut}(\mathcal{G})$, proving that it is separable.

\end{proof}

    Let $\set{x_i}_{n \in \N}$ be an enumeration of $V$. Consider the map 
    \[
    d\colon \mathrm{Aut}(\mathcal{G)}\times \mathrm{Aut}(\mathcal{G)} \longrightarrow \R
    \]
    given by
    \[
    d(g,g') = \inf_{\N \cup \set{0}}\set{2^{-n}\, \mid \, g(x_i) =g'(x_i) \text{ for all }i<n}.
    \]
    It is easy to see that $d$ is a metric. However, it is not complete. Consider the sequence $\set{g_n}_{n\in \N}$ on a complete graph $\mathcal{G}$ with $g_n \in \mathcal{G}$, where each $g_n$ is defined by
    \[
    g_n(x_i) = \begin{cases}
        x_{i+1} & \text{if }  i<n, \\
        x_1 & \text{if } i= n, \\
        x_i & \text{if }i>n. 
    \end{cases}
    \]
    Each $g_n$ is clearly an automorphism. Pick $\epsilon>0$. Then there exists $N\in \N$ such that $\frac{1}{2^N}<\epsilon$. Observe that for any $n,m\in \N$ with $m>n>N$,
    \[
    g_n(x_i) = g_m(x_i) \text{ for all } i< N.
    \]
    It follows that $d(g_n,g_m) < \frac{1}{2^N}< \epsilon$ and thus $\set{g_n}$ is Cauchy.
    
    Let $s$ be the map defined by $s(x_i) = x_{i+1}$ for all $n$. Pick $\epsilon >0$ and choose $N$ such that $\frac{1}{2^N}<\epsilon$. Then for any $n>N$,
    \[
    d(s,g_n) = \frac{1}{2^n} < \frac{1}{2^N} < \epsilon,
    \]
    which means that $\set{g_n}$ converges to $s$. However, $s$ is not an automorphism, since there exists no $x_i$ such that $s(x_i) = x_1$. This is because the sequence $\set{g^{-1}_n}_{n\in \N}$ is not Cauchy. Indeed, $g_n^{-1}(x_1) = x_n$ and $g_m^{-1}(x_1) = x_m$, which implies that $d(g^{-1}_n,g^{-1}_m) = 1$ for all $n$ and $m$. The next lemma gives a necessary and sufficient condition for a Cauchy sequence to converge to an automorphism.

\begin{lemma}\label{lem:iff}
    Let $\mathcal{G} = (V,E)$ be a graph with countable $V$ and $d$ be the metric defined above. A Cauchy sequence $\set{g_n}_{n\in \N}$ with $g_n \in \mathcal{G}$ converges to an element $g \in \mathrm{Aut}(\mathcal{G})$ if and only if the sequence $\set{g^{-1}_n}$ is Cauchy.
\end{lemma}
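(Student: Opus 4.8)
The plan is to reduce everything to a single bookkeeping observation about the metric $d$: a sequence $\{f_n\}$ in $\mathrm{Aut}(\mathcal{G})$ is Cauchy if and only if, for every index $i$, the values $f_n(x_i)$ are eventually constant, and $\{f_n\}$ converges to a function $f\colon V\to V$ if and only if moreover $f(x_i)$ equals that eventual value for each $i$. (Both equivalences are immediate from the fact that $d(g,g')<2^{-k}$ means exactly that $g$ and $g'$ agree on $x_1,\dots,x_k$.) Consequently, a Cauchy sequence $\{g_n\}$ always admits a well-defined pointwise limit $g(x_i):=\lim_n g_n(x_i)$, and the entire content of the lemma is the question of \emph{which} Cauchy sequences have an automorphism as this pointwise limit — the shift-map example preceding the statement shows that $g$ need not even be surjective in general.

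For the forward direction, suppose $g_n\to g$ with $g\in\mathrm{Aut}(\mathcal{G})$. To see that $\{g_n^{-1}\}$ is Cauchy it suffices, by the observation above, to show $g_n^{-1}(x_i)$ is eventually constant for each fixed $i$; I would in fact show $g_n^{-1}(x_i)\to g^{-1}(x_i)$. Fix $k$ and choose $M$ so large that the $k$ distinct elements $g^{-1}(x_1),\dots,g^{-1}(x_k)$ all lie among $x_1,\dots,x_M$. Since $g_n\to g$, for all large $n$ the map $g_n$ agrees with $g$ on $x_1,\dots,x_M$; then for $i\le k$, writing $g^{-1}(x_i)=x_j$ with $j\le M$, we get $g_n(x_j)=g(x_j)=x_i$, hence $g_n^{-1}(x_i)=x_j=g^{-1}(x_i)$. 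So $\{g_n^{-1}\}$ stabilises pointwise, hence is Cauchy. (Alternatively, once one notes that $d$ induces the permutation topology, this is just continuity of inversion in the topological group $\mathrm{Aut}(\mathcal{G})$; I prefer the hands-on version.)

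For the converse, assume both $\{g_n\}$ and $\{g_n^{-1}\}$ are Cauchy, and set $g(x_i):=\lim_n g_n(x_i)$ and $h(x_i):=\lim_n g_n^{-1}(x_i)$, both well-defined by the observation. The key step is to check that $g$ and $h$ are mutually inverse bijections of $V$: given $i$, pick $n$ large enough that simultaneously $g_n^{-1}(x_i)=h(x_i)$ and $g_n$ agrees with $g$ at the vertex $h(x_i)$; then $g(h(x_i))=g_n\bigl(g_n^{-1}(x_i)\bigr)=x_i$, and symmetrically $h(g(x_i))=x_i$. Hence $g$ is a bijection. That $g$ preserves adjacency is then routine: for any $i,j$, choosing $n$ so that $g_n$ agrees with $g$ on $x_i$ and on $x_j$, the equivalence $(x_i,x_j)\in E\iff(g_n(x_i),g_n(x_j))\in E$ becomes $(x_i,x_j)\in E\iff(g(x_i),g(x_j))\in E$. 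So $g\in\mathrm{Aut}(\mathcal{G})$, and $g_n\to g$ by the very construction of $g$.

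The only genuinely load-bearing step is the surjectivity of the pointwise limit $g$ in the converse, and that is precisely where the hypothesis that $\{g_n^{-1}\}$ is Cauchy is used: its Cauchyness is what lets us define the candidate inverse $h$ and verify $g\circ h=h\circ g=\mathrm{id}_V$. Injectivity of $g$, preservation of edges, and the convergence $g_n\to g$ are all straightforward consequences of pointwise stabilisation; the only care required is to choose, at each stage, an index $n$ large enough that the finitely many pointwise identities being invoked all hold at once.
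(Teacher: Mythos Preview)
Your proof is correct and follows essentially the same route as the paper's: reduce Cauchyness and convergence to eventual pointwise stabilisation, then in the converse define the pointwise limits $g$ and $h$ of $\{g_n\}$ and $\{g_n^{-1}\}$ and verify they are mutual inverses. Your write-up is in fact tighter than the paper's in two places: in the forward direction you explicitly choose $M$ large enough that all of $g^{-1}(x_1),\dots,g^{-1}(x_k)$ appear among $x_1,\dots,x_M$ (the paper's index bookkeeping is looser here), and in the converse you include the edge-preservation check, which the paper omits.
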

\begin{proof}
    For the forwards direction, suppose that $\set{g_n}$ is Cauchy and converges to $g\in \mathrm{Aut}(\mathcal{G})$. Since $g$ is an automorphism, $g^{-1}$ exists, and for all $i\in \N$, we can write $x_i = g(x_j)$ for some $j\in \N$. Let $\epsilon > 0$ and choose $N\in \N$ such that $\frac{1}{2^N}<\epsilon$. Since $\set{g_n}$ converges to $g$, we have $d(g_n,g)< \epsilon$ and $g_n(x_i) = g(x_i)$ for all $n\geq N$. Hence,
    \[
    g^{-1}(x_i) = g^{-1}(g(x_j)) = x_j = g^{-1}_n(g_n(x_j)) = g^{-1}_n(x_i) \text{ for all } i<n.
    \]
    which implies that $\set{g^{-1}_n}$ converges to $g^{-1}$ and is therefore Cauchy.

    For the backwards direction, suppose $\set{g_n}$ and $\set{g^{-1}_n}$ are both Cauchy. Let $\epsilon > 0$ and choose $N$ such that $\frac{1}{2^N}<\epsilon$. Then, for all $n,m \in \N$ with $m>n>N$, we have $d(g_n,g_m) < \epsilon$ and $d(g_n^{-1},g_m^{-1}) < \epsilon$. Hence,
    \[
    g_n(x_i) = g_m(x_i) \quad \text{and} \quad g^{-1}_n(x_i)=g_m^{-1}(x_i)
    \]
    for all $i<n$.
    It follows that the pointwise limits $g=\lim_{n\rightarrow\infty} g_n$ and $g'=\lim_{n\rightarrow\infty} g_n^{-1}$ exist. We claim that $g'(g(x_i)) = x_i$ and that $g(g'(x_i)) = x_i$ for all $i$. Indeed, for all $i$, there exists $N$ such that 
    \[
    g'(g(x_i)) = g^{-1}_N(g_N(x_i)) = x_i.
    \]
    Similarly, $g(g'(x_i)) = x_i$. It follows that $g'$ is the inverse of $g$ and that $g\in \mathrm{Aut}(\mathcal{G})$.
\end{proof}
    
A topological group is a \emph{Polish group} if its underlying topological space is Polish.

\begin{proposition}\label{prop:polish}
    For a graph $\mathcal{G} = (V,E)$ with countable $V$, $\mathrm{Aut}(\mathcal{G})$ with the permutation topology is Polish.
\end{proposition}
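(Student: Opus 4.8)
The plan is to verify the two defining properties of a Polish group separately: separability and complete metrizability. Separability has already been established in Lemma~\ref{lem:sepa}, so the remaining task is to exhibit a complete metric on $\mathrm{Aut}(\mathcal{G})$ compatible with the permutation topology. The metric $d$ introduced above is compatible but not complete, as the explicit counterexample with the shift map shows; the obstruction is precisely that a $d$-Cauchy sequence may converge pointwise to a non-surjective map. Lemma~\ref{lem:iff} isolates exactly this failure. So the strategy is to fix the metric by building in control of the inverses.

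First I would define a new metric $\rho$ on $\mathrm{Aut}(\mathcal{G})$ by
\[
\rho(g,g') = d(g,g') + d(g^{-1},(g')^{-1}).
\]
I would check that $\rho$ is a metric: symmetry and the triangle inequality follow immediately from those of $d$, and $\rho(g,g')=0$ forces $d(g,g')=0$, hence $g=g'$. Next I would verify that $\rho$ induces the same topology as $d$ on $\mathrm{Aut}(\mathcal{G})$. Since $d \leq \rho$, every $d$-open set is $\rho$-open. For the reverse inclusion, I would show that inversion $\iota\colon g\mapsto g^{-1}$ is $d$-continuous on $\mathrm{Aut}(\mathcal{G})$: if $g_n\to g$ in $d$ with all terms genuine automorphisms and $g$ an automorphism, then by the forward direction of Lemma~\ref{lem:iff} (applied to the Cauchy sequence $\{g_n\}$) the inverses converge to $g^{-1}$; continuity of $\iota$ then makes $g\mapsto d(g^{-1},(g')^{-1})$ continuous, so $\rho$-balls are $d$-open. (One should also note $d$ itself is compatible with the permutation topology: a basic neighbourhood $gU(A)$ of $g$ contains a $d$-ball around $g$ once $A$ is contained in an initial segment $\{x_1,\dots,x_{n-1}\}$ of the enumeration, and conversely each $d$-ball is such a coset.)

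Then I would prove $\rho$ is complete. Let $\{g_n\}$ be $\rho$-Cauchy. Then both $\{g_n\}$ and $\{g_n^{-1}\}$ are $d$-Cauchy, since $d\le\rho$ on each coordinate. By Lemma~\ref{lem:iff}, $\{g_n\}$ converges in $d$ to some $g\in\mathrm{Aut}(\mathcal{G})$, and moreover its inverses converge to $g^{-1}$ (this is shown inside the proof of that lemma). Hence $d(g_n,g)\to 0$ and $d(g_n^{-1},g^{-1})\to 0$, so $\rho(g_n,g)\to 0$, giving convergence in $\mathrm{Aut}(\mathcal{G})$. Thus $(\mathrm{Aut}(\mathcal{G}),\rho)$ is a complete metric space whose topology is the permutation topology, so $\mathrm{Aut}(\mathcal{G})$ is completely metrizable; combined with separability from Lemma~\ref{lem:sepa}, it is Polish.

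The main obstacle, and the step requiring the most care, is the compatibility check — confirming that passing from $d$ to $\rho$ does not enlarge the topology, which amounts precisely to the $d$-continuity of inversion on the subspace $\mathrm{Aut}(\mathcal{G})$ of genuine automorphisms. This is where Lemma~\ref{lem:iff} does the real work, and it must be invoked correctly: the lemma is about Cauchy sequences, so one has to phrase continuity sequentially (legitimate here since $d$ is a metric) and be careful that the hypothesis ``$\{g_n\}$ Cauchy and convergent to an automorphism'' is met before concluding the inverses converge. Everything else — that $\rho$ is a metric, that $d\le\rho$, that $\rho$-Cauchy implies both coordinates $d$-Cauchy — is routine.
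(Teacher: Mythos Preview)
Your proposal is correct and follows essentially the same approach as the paper: define the symmetrized metric $\rho(g,g')=d(g,g')+d(g^{-1},g'^{-1})$, use Lemma~\ref{lem:iff} to show $\rho$-Cauchy sequences converge in $\mathrm{Aut}(\mathcal{G})$, and invoke Lemma~\ref{lem:sepa} for separability. You are actually more careful than the paper on one point: the paper relegates the compatibility of $\rho$ with the permutation topology to a one-line remark after the proof (``follows from the definitions''), whereas you correctly identify this as requiring the $d$-continuity of inversion and extract it from the forward direction of Lemma~\ref{lem:iff}.
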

\begin{proof}
    Recall from Section~\ref{sec:baire} that a Polish space is a separable and completely metrizable space. By Lemma~\ref{lem:sepa}, $\mathrm{Aut}(\mathcal{G})$ is separable. Consider the metric given by
    \[
    d'(g,g') = d(g,g') + d(g^{-1},g'^{-1}).
    \]
    Let $\set{g_n}_{n\in \N}$ be a Cauchy sequence with respect to $d'$. Therefore, for all $\epsilon > 0$, there exists $N\in \N$ such that for all $n,m\in \N$ with $n,m>N$, we have $d'(g_n,g_m) < \epsilon$. Note that
    \[
    d(g_n,g_m) \leq  d(g_n,g_m) + d(g^{-1}_n,g_m^{-1}) = d'(g_n,g_m) < \epsilon,
    \]
    and
    \[
    d(g^{-1}_n,g_m^{-1}) \leq  d(g_n,g_m) + d(g^{-1}_n,g_m^{-1}) = d'(g_n,g_m) < \epsilon.
    \]
     It follows that $\set{g_n}$ and $\set{g_n^{-1}}$ are both Cauchy. By Lemma~\ref{lem:iff}, $\set{g_n}$ converges to an automorphism $g\in \mathrm{Aut}(\mathcal{G})$ and thus, $d'$ is a complete metric. Since $\mathrm{Aut}(\mathcal{G})$ is separable and completely metrizable, it is Polish.
\end{proof}

It turns out that the metric topology given by the metric $d'$ is the permutation topology, which follows from the definitions.

\begin{proposition}\label{prop:compactemptyinter}
   For an infinite-type surface $S$, every compact subset of $\map^\pm(S)$ has empty interior.
\end{proposition}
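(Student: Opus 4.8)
The plan is to carry out the argument in the permutation topology, identifying $\map^\pm(S)$ with $\mathrm{Aut}(\mathcal{C}(S))$ via Theorem~\ref{thm:simplicialaut}, and to exploit the powers of a well-chosen Dehn twist. First I would note that the sets $gU(A)$, for $g\in\mathrm{Aut}(\mathcal{C}(S))$ and $A\subseteq\mathcal{C}(S)$ finite, actually form a basis: if $h\in g_1U(A_1)\cap g_2U(A_2)$, then this intersection equals $hU(A_1\cup A_2)$, using $U(A_1)\cap U(A_2)=U(A_1\cup A_2)$. Moreover each $U(A)$ is a subgroup whose complement is a union of its cosets, so $U(A)$, and hence every translate $gU(A)$, is clopen. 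Consequently, if a compact set $K\subseteq\map^\pm(S)$ had nonempty interior, then $\mathrm{int}(K)$ would contain some $gU(A)$; being closed, this $gU(A)$ would be a compact subset of $K$, and translating by $g^{-1}$ (a homeomorphism) would show $U(A)$ itself is compact. So it suffices to prove that $U(A)$ is non-compact for every finite $A\subseteq\mathcal{C}(S)$.

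Next I would produce an essential, non-peripheral simple closed curve $\gamma$ disjoint (up to isotopy) from every curve in $A$. Fix a compact exhaustion $\{K_n\}_{n\in\N}$ of $S$; since $A$ is finite, $\bigcup A\subseteq K_m$ for some $m$, and one may enlarge $K_m$ to a compact, connected, finite-type subsurface $\Sigma$ with boundary a finite union of essential circles containing $\bigcup A$. Because $\partial\Sigma$ has finitely many components, $S\setminus\Sigma$ has finitely many components; since $S$ is of infinite type and a finite gluing of finite-type surfaces along finitely many curves is again of finite type, at least one component $S''$ of $S\setminus\Sigma$ is of infinite type. An infinite-type surface contains essential non-peripheral simple closed curves, so choose such a $\gamma\subset S''$ (which remains essential and non-peripheral in $S$). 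Since $\gamma$ is disjoint from $\Sigma\supseteq\bigcup A$, we have $i(\gamma,\alpha)=0$ for every $\alpha\in A$, so $T_\gamma^k(\alpha)=\alpha$ for all $\alpha\in A$ and all $k\in\Z$ by Proposition~\ref{prop: dehnprop}(i); hence $T_\gamma^k\in U(A)$ for every $k$.

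Finally I would show the sequence $\{T_\gamma^k\}_{k\geq 1}\subseteq U(A)$ has no convergent subsequence. As $\gamma$ is essential and non-peripheral, $T_\gamma$ is a nontrivial mapping class, so by the Infinite Alexander Method (Theorem~\ref{thm:infalex}) it fails to fix some isotopy class of simple closed curve $\beta$, i.e. $T_\gamma(\beta)\neq\beta$; by Proposition~\ref{prop: dehnprop}(i) this forces $i(\gamma,\beta)\neq 0$. By Proposition~\ref{prop:twistintersection}, $i(T_\gamma^k(\beta),\beta)=k\,i(\gamma,\beta)^2$, which is strictly increasing in $k$, so the curves $T_\gamma^k(\beta)$ with $k\geq 1$ are pairwise distinct. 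But convergence $T_\gamma^{k_j}\to g$ in the permutation topology would require $T_\gamma^{k_j}(\beta)=g(\beta)$ for all large $j$, which is impossible. Hence $U(A)$ is not sequentially compact; being a subspace of the Polish (hence metrizable) group $\mathrm{Aut}(\mathcal{C}(S))$ of Proposition~\ref{prop:polish}, it is therefore not compact, and we are done. The main obstacle is the middle step: exhibiting the curve $\gamma$, which is exactly where infinite-type-ness enters and which requires the bookkeeping that $\Sigma$ can be chosen with finitely many boundary components so that an infinite-type piece survives outside it.
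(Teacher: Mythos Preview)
Your proof is correct and follows essentially the same route as the paper: reduce to showing that $U(A)$ is non-compact by exhibiting a curve $\gamma$ disjoint from $A$ and observing that the powers $T_\gamma^k$ lie in $U(A)$ but have no accumulation point. The only notable difference is in the non-convergence step---you argue combinatorially via the intersection-number formula $i(T_\gamma^k(\beta),\beta)=|k|\,i(\gamma,\beta)^2$ to show the $T_\gamma^k(\beta)$ are pairwise distinct, whereas the paper switches to the compact-open description and argues that $(T_c^n)|_{K_0}$ fails to converge uniformly on a compact $K_0$ meeting $c$; your version is arguably cleaner since it stays entirely within the permutation topology, and you are more careful than the paper in justifying the existence of $\gamma$ (the paper just asserts it).
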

\begin{proof}
    Suppose for a contradiction that $K\subset \map^\pm(S)$ is compact and has nonempty interior. Therefore, $K$ contains a basic open set. After translation by some mapping class, we may assume without loss of generality that the identity lies in $K$, thus $U(A)\subset K$ for some finite $A\subset \mathcal{C}(S)$. Let $c$ be an essential simple closed curve on $S$ disjoint from the elements of $A$. Then, all powers of $T_c$ is contained in $U(A)$. Hence,
    \[
    T=\set{T_c^n \,\mid\, n\in \N} \subset U(A) \subset K.
    \]
    We claim that $T$ has no accumulation points in $\mathrm{Map}^{\pm}(S)$.  
Indeed, for any compact subset $K_0 \subset S$ that intersects $c$, the restrictions $(T_c^n)|_{K_0}$ do not converge uniformly to any orientation-preserving homeomorphism as $n \to \infty$, which implies that they do not converge to an orientation-preserving homeomorphism in the compact-open topology.  
  
Hence, no subsequence of $T_c^n$ converges, and $T$ has no limit points.

However, $K$ is compact and Hausdorff, and thus every infinite subset of $K$ must have an accumulation point.  
This contradicts the fact that $T$ has no accumulation points. Since $K$ was arbitrary, every compact subset of $\mathrm{Map}^{\pm}(S)$ has empty interior.
\end{proof}

The fact that any open set in a topological group can be translated to contain the identity follows from the fact that translation is a homeomorphism in topological groups.

\begin{lemma}\label{lem:contgr}
    A group homomorphism between topological groups is continuous if and only if it is continuous at the identity element.
\end{lemma}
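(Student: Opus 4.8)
The plan is to dispose of the forward direction in one line and then concentrate on the converse. If $\phi \colon G \to H$ is continuous everywhere, then in particular it is continuous at $1_G$, so that implication is immediate.

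For the converse, I would start by recording the two facts the argument rests on: since $\phi$ is a homomorphism we have $\phi(1_G) = 1_H$, and, as noted just before the statement, in any topological group left translation by a fixed element is a homeomorphism. Now fix an arbitrary $g \in G$ and let $W \subseteq H$ be any neighborhood of $\phi(g)$. Translating in $H$, the set $\phi(g)^{-1} W$ is a neighborhood of $1_H = \phi(1_G)$.

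Then I would invoke continuity at the identity to get a neighborhood $U \subseteq G$ of $1_G$ with $\phi(U) \subseteq \phi(g)^{-1} W$, and translate back in $G$: the set $gU$ is a neighborhood of $g$. For any $x \in gU$, write $x = gu$ with $u \in U$ and use the homomorphism property once:
\[
\phi(x) = \phi(g)\phi(u) \in \phi(g)\,\phi(g)^{-1} W = W .
\]
Hence $\phi(gU) \subseteq W$, which is continuity of $\phi$ at $g$; since $g$ was arbitrary, $\phi$ is continuous.

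There is no real obstacle in this argument. The only points that require a moment's care are using the homeomorphism property of translation in both $G$ and $H$ — once to pull a neighborhood of $\phi(g)$ back to a neighborhood of $1_H$, and once to push a neighborhood of $1_G$ forward to a neighborhood of $g$ — and remembering that $\phi(1_G) = 1_H$ so that the hypothesis "continuous at the identity" is being applied at the right point. The homomorphism hypothesis itself enters exactly once, in the displayed identity $\phi(gu) = \phi(g)\phi(u)$.
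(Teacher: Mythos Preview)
Your proof is correct and follows essentially the same approach as the paper: translate the neighborhood of $\phi(g)$ in $H$ back to a neighborhood of $1_H$, use continuity at the identity to get a neighborhood of $1_G$, translate forward in $G$, and apply the homomorphism property once. The only cosmetic difference is that you write the translated neighborhood explicitly as $\phi(g)^{-1}W$, whereas the paper phrases it as ``there exists $V$ with $f(x)V \subseteq U$''; these are the same step.
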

\begin{proof}
    The forwards direction is trivial, since any continuous map is continuous at every point. For the backwards direction, assume that $f\colon X\rightarrow Y$ be a group homomorphism that is continuous at the identity $1_X$. Let $x\in X$ be arbitrary and $U$ be an open neighborhood of $f(x)$. Since translation on a topological group is a homeomorphism, there exists an open neighborhood $V$ of $1_Y$ such that
    \[
    f(x)V \subseteq U.
    \]
    By continuity at the identity, $f^{-1}(V)$ is an open neighborhood of $1_X$, and thus $xf^{-1}(V)$ is an open neighborhood of $x$. Finally for any $y\in xf^{-1}(V)$, there is some $z\in f^{-1}(V)$ such that $y = xz$. Using the homomorphism property
    \[
    f(y) = f(xz) = f(x)f(z) \in  f(x)V \subseteq U,
    \]
    which implies that $f(xf^{-1}(V))\subseteq U$ and thus $f$ is continuous at $x$.    
    
\end{proof}

\begin{proposition}\label{prop:permutmap}
    Let $S$ be a borderless infinite-type surface. Then the permutation topology on $\map^\pm(S)$ coming from $\mathrm{Aut}(\mathcal{C}(S))$ agrees with the quotient topology.
\end{proposition}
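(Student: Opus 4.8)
\section*{Proof proposal}

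The plan is to compare the two topologies at the identity and then invoke Lemma~\ref{lem:contgr} (applied to the identity homomorphism in each direction), since both are group topologies on $\map^\pm(S)$. Throughout I use the identification $\map^\pm(S)\cong\mathrm{Aut}(\mathcal C(S))$ of Theorem~\ref{thm:simplicialaut}, and the fact (Proposition~\ref{prop:compactopenconvergence}) that the compact--open topology on $\homeo^\pm(S,\p S)$ is the topology of uniform convergence on compact sets, so that, fixing a metric $d$ inducing the topology of $S$, the sets
\[
W_{K,\epsilon}=\set{f\in\homeo^\pm(S,\p S)\ \mid\ \sup_{x\in K} d(f(x),x)<\epsilon}
\]
(over compact $K\subset S$ and $\epsilon>0$) form a neighborhood basis of the identity in $\homeo^\pm(S,\p S)$. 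As $\pi$ is open and surjective, the sets $\pi(W_{K,\epsilon})$ are then a neighborhood basis of the identity in the quotient topology, while the point stabilizers $U(A)$ with $A\subset\mathcal C(S)$ finite are one in the permutation topology. So I must show each $U(A)$ contains some $\pi(W_{K,\epsilon})$ and conversely.

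First I would treat the easy inclusion, that the quotient topology refines the permutation topology. Fix pairwise minimal-position representatives $\alpha_1,\dots,\alpha_k$ of the curves in $A$ and set $K=\alpha_1\cup\dots\cup\alpha_k$. Choosing $\epsilon>0$ small enough that the $\epsilon$-neighborhood $N_\epsilon(\alpha_i)$ is an embedded annular neighborhood of $\alpha_i$ with core $\alpha_i$, any $f\in W_{K,\epsilon}$ sends $\alpha_i$ to a simple closed curve that is freely homotopic in $N_\epsilon(\alpha_i)$ to its core (the geodesic homotopy inside convex $\epsilon$-balls realizes this homotopy), hence $f(\alpha_i)$ is isotopic to $\alpha_i$ in $S$; thus $\pi(f)\in U(A)$ and $\pi(W_{K,\epsilon})\subseteq U(A)$. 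Equivalently, $\pi$ is continuous into the permutation topology.

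The substantive inclusion is the reverse: every $\pi(W_{K_0,\epsilon_0})$ contains some $U(A)$, i.e.\ a mapping class fixing enough isotopy classes of simple closed curves has a representative equal to the identity on $K_0$. Here I would enlarge $K_0$ to a compact \emph{essential} subsurface $\Sigma$ with $K_0\subset\mathrm{int}\,\Sigma$ and no complementary disks, once-punctured disks, annuli, or compact pieces (possible since $S$ is of infinite type), and take $A$ to consist of the components of $\p\Sigma$, a finite filling collection of simple closed curves in $\Sigma$, and finitely many further simple closed curves encoding a filling family of arcs in $\Sigma$ (boundaries of regular neighborhoods of the arcs together with the relevant boundary components). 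Given $[f]\in U(A)$: since $[f]$ fixes each (essential, pairwise non-isotopic) component of $\p\Sigma$, one may isotope $f$ so that $f(\Sigma)=\Sigma$ and $f|_{\p\Sigma}=\mathrm{id}$; since $\Sigma$ is essential in $S$, $f|_\Sigma\in\homeo^+(\Sigma,\p\Sigma)$ then fixes the isotopy classes in $\map(\Sigma)$ of all curves of $A$ lying in $\Sigma$; and the choice of $A$ together with the finite-type Alexander method (Proposition~\ref{prop:alex}, where $f|_{\p\Sigma}=\mathrm{id}$ is used to pin the arc-surrogate curves and the orientations of the edges of the filling graph) forces $f|_\Sigma$ to be isotopic to $\mathrm{id}_\Sigma$ rel $\p\Sigma$. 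Extending this isotopy by the identity on $S\setminus\Sigma$ yields a representative of $[f]$ equal to the identity on $\Sigma\supseteq K_0$, so $[f]\in\pi(W_{K_0,\epsilon_0})$ and $U(A)\subseteq\pi(W_{K_0,\epsilon_0})$. I expect \emph{this} to be the main obstacle: passing from ``fixes isotopy classes in $S$'' to ``trivial in $\map(\Sigma)$'' requires choosing $\Sigma$ carefully enough that isotopies in $S$ between curves contained in $\Sigma$ can be pushed into $\Sigma$, and the permutation topology records only simple closed curves, so the arcs demanded by the Alexander method must be re-encoded as boundaries of neighborhoods — it is at this point that surface geometry, rather than soft point-set topology, is doing the work.

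Finally, I would record a shortcut: once the easy inclusion is in hand, the identity map $(\map^\pm(S),\tau_{\mathrm{quot}})\to(\map^\pm(S),\tau_{\mathrm{perm}})$ is a continuous bijective homomorphism of Polish groups — $\tau_{\mathrm{perm}}$ is Polish by Proposition~\ref{prop:polish}, and $\tau_{\mathrm{quot}}$ is Polish because $\homeo_0(S,\p S)$ is closed in the Polish group $\homeo^+(S,\p S)$ by Proposition~\ref{prop:indiscretegroup} — so by the open mapping theorem for Polish groups it is a homeomorphism, which sidesteps the hard inclusion altogether. If the thesis instead derives ``$\map(S)$ is Polish'' from the present proposition, one uses the direct argument of the previous paragraph; otherwise the open-mapping route is the cleanest finish.
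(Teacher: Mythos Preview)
Your direct argument is essentially the paper's proof: compare neighborhoods of the identity, handle the easy inclusion via annular neighborhoods of the curves in $A$, and for the reverse inclusion enlarge $K_0$ to a compact subsurface and invoke the finite-type Alexander method there. You are in fact more careful than the paper on one point it glosses over: the Alexander collection on a bordered finite-type surface generally involves arcs, and $U(A)$ only records isotopy classes of curves, so your device of encoding arcs by the boundary curves of their regular neighborhoods (and your remark about promoting isotopies in $S$ to isotopies in $\Sigma$) fills a small gap the paper leaves implicit.

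Your open-mapping shortcut is a genuinely different and cleaner finish that the paper does not take. As you anticipate, in this thesis the Polishness of $\map^\pm(S)$ under the quotient topology is deduced \emph{from} the present proposition (via Proposition~\ref{prop:polish} and Proposition~\ref{prop:permutmap} together in Theorem~\ref{thm:mapbaire}), so invoking it here would be circular as written; but your observation that one could instead establish Polishness of the quotient topology directly (closedness of $\homeo_0$ in the Polish group $\homeo^+(S,\p S)$) and then apply the automatic-continuity/open-mapping theorem is a legitimate alternative organization that bypasses the hard inclusion entirely.
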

\begin{proof}
    
We need to show that any basic open neighborhood in the permutation topology is contained in some basic open neighborhood in the compact-open topology and vice versa. It is enough to show this for basic open neighborhoods of the identity, since translation is a homeomorphism in topological groups. 

Take a basic open neighborhood $U(A)$ of the identity in the permutation topology for some finite $A \subset V$. By definition, 
\[
U(A) = \set{g \in \map^\pm(S) \, \mid \, g([\gamma_i]) = [\gamma_i] \, \text{for }i \in \set{1,2,\dots,n}}
\]
for some $n\in \N$. Let $U_i$ be an open annular neighborhood of $\gamma_i$. Observe that the elements of $B(\gamma_i,U_i)$ fix the isotopy class of $\gamma_i$. The finite intersection 
\[
W = \bigcap_{i=1}^n B(\gamma_i,U_i)
\]
is a basic neighborhood of the identity in the compact-open topology since it is a finite intersection of subbasic neighborhoods, and every element $f \in W$ fixes the isotopy class of every $\gamma_i$, which implies that $W\subseteq V$.

For the other direction, let $W$ be a basic open neighborhood of the identity in the compact-open topology. Then,
\[
W = \bigcap_{i=1}^n B(K_i,U_i)
\]
for some compact $K_i$ and open $U_i$ with $K_i \subset U_i$. Let $K$ be a compact subset of $S$ containing every $K_i$. By the Alexander method (Proposition~\ref{prop:alex}), there is a finite collection of simple closed curves and essential arcs $\set{\gamma_i}^n_{i\in\N}$ on $K$ such that any mapping class fixing the isotopy class of every $\gamma_i$ is isotopic to the identity on $K$. Therefore the elements of the set
\[
U = \set{g \in \map^\pm(S) \, \mid \, g([\gamma_i]) = [\gamma_i] \, \text{for }i \in \set{1,2,\dots,m}}
\]
are identity, up to isotopy, on $K$. It follows that $U\subseteq W$, since for any $f\in V$, $f(K_i) \subset U_i$. The set $U$ is a basic open neighborhood of the identity in the permutation topology, which finishes the proof.
\end{proof}

Take an arbitrary $g \in \mathrm{Aut}(\mathcal{G})\setminus U(A)$. Then, $g(a) \not= a$ for some $a\in A$. Observe that $gU(\set{a}) \subseteq \mathrm{Aut}(\mathcal{G})\setminus U(A)$, that is, $g$ has an open neighborhood contained in $\mathrm{Aut}(\mathcal{G})\setminus U(A)$. Since $g$ was arbitrary, $\mathrm{Aut}(\mathcal{G})\setminus U(A)$ is open, and therefore $U(A)$ is closed. This shows that the basis of the permutation topology consists of clopen sets. 

\begin{proposition}\label{prop:zero}
    For an infinite-type surface $S$ with empty boundary, $\map^\pm(S)$ is zero-dimensional.
\end{proposition}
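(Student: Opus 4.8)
The plan is to reduce the statement to the permutation topology on $\mathrm{Aut}(\mathcal{C}(S))$, where essentially all the needed facts are already available. First I would invoke Theorem~\ref{thm:simplicialaut} to identify $\map^\pm(S)$ with $\mathrm{Aut}(\mathcal{C}(S))$ as groups, and Proposition~\ref{prop:permutmap} to identify the quotient topology on $\map^\pm(S)$ with the permutation topology coming from $\mathrm{Aut}(\mathcal{C}(S))$. Since zero-dimensionality is a purely topological property, it then suffices to show that $\mathrm{Aut}(\mathcal{C}(S))$ equipped with the permutation topology is zero-dimensional. Here I would remark that the vertex set of $\mathcal{C}(S)$ is countable, because $S$ is second countable and hence admits only countably many isotopy classes of simple closed curves, so that the earlier results stated for $\mathrm{Aut}(\mathcal{G})$ with $\mathcal{G}$ countable, in particular Proposition~\ref{prop:polish}, apply to $\mathcal{C}(S)$.

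Recall that, by the definition in the text, a space is zero-dimensional if it is Hausdorff and has a basis of clopen sets. For the Hausdorff part I would cite Proposition~\ref{prop:polish}: $\mathrm{Aut}(\mathcal{C}(S))$ is Polish, in particular metrizable, hence Hausdorff. Alternatively one argues directly: if $g \neq g'$ in $\mathrm{Aut}(\mathcal{C}(S))$, they disagree on some vertex $x_i$, and then $g\,U(\set{x_i})$ and $g'\,U(\set{x_i})$ are disjoint basic open neighbourhoods, since every element of the first sends $x_i$ to $g(x_i)$ while every element of the second sends $x_i$ to $g'(x_i)\neq g(x_i)$.

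For the clopen basis, the work has already been carried out in the paragraph preceding the statement: given a finite $A \subset V$ and any $g \in \mathrm{Aut}(\mathcal{C}(S)) \setminus U(A)$, there is some $a \in A$ with $g(a) \neq a$, whence $g\,U(\set{a})$ is an open neighbourhood of $g$ disjoint from $U(A)$; thus $U(A)$ is closed, hence clopen. Since left translation by a fixed group element is a homeomorphism of a topological group, every basic open set $g\,U(A)$ is clopen as well. Therefore the defining basis of the permutation topology consists of clopen sets, and combining this with the Hausdorff property yields that $\mathrm{Aut}(\mathcal{C}(S))$, and hence $\map^\pm(S)$, is zero-dimensional.

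I do not expect a genuine obstacle: the proof is an assembly of Theorem~\ref{thm:simplicialaut}, Proposition~\ref{prop:permutmap}, Proposition~\ref{prop:polish}, and the clopen-basis observation already recorded just above the statement. The only points requiring a word of care are making explicit that the permutation-topology basis transports, under the homeomorphism of Proposition~\ref{prop:permutmap}, to a basis of the quotient topology on $\map^\pm(S)$, and noting the countability of $V(\mathcal{C}(S))$ so that Proposition~\ref{prop:polish} is legitimately applicable.
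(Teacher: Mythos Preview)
Your proposal is correct and follows essentially the same route as the paper: use the clopen-basis observation recorded just before the statement together with Proposition~\ref{prop:permutmap} to obtain a clopen basis for $\map^\pm(S)$, then add Hausdorffness. The only cosmetic difference is that the paper cites Proposition~\ref{prop:indiscretegroup} for Hausdorffness, whereas you obtain it via Proposition~\ref{prop:polish} (Polish $\Rightarrow$ metrizable $\Rightarrow$ Hausdorff) or by the direct separating argument; either is fine.
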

\begin{proof}
    We proved that the basis of the permutation topology consists of clopen sets. By Proposition~\ref{prop:permutmap}, the permutation topology agrees with the quotient topology and thus $\map^\pm(S)$ has a basis consisting of clopen sets. Moreover, by Proposition~\ref{prop:indiscretegroup}, it is Hausdorff and therefore zero-dimensional.
\end{proof}

We now arrive at the main result of this chapter.

\begin{theorem}\label{thm:mapbaire}
    Let $S$ be an infinite-type surface. Then the extended mapping class group $\map^\pm (S)$ is homeomorphic to the Baire space $\N^\N$.
\end{theorem}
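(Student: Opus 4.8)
The plan is to deduce the homeomorphism from the Alexandrov--Urysohn characterization of the Baire space recalled in Section~\ref{sec:baire}: a nonempty topological space is homeomorphic to $\N^\N$ if and only if it is Polish, zero-dimensional, and every one of its compact subsets has empty interior. Thus the whole proof reduces to checking these three properties for $\map^\pm(S)$, together with the trivial remark that $\map^\pm(S)\neq\emptyset$ since it contains the identity.

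First I would pin down the topology. By Theorem~\ref{thm:simplicialaut} the natural map $\map^\pm(S)\to\mathrm{Aut}(\mathcal{C}(S))$ is an isomorphism; since $S$ is second countable its fundamental group is countable, so there are only countably many isotopy classes of simple closed curves, i.e.\ the vertex set of $\mathcal{C}(S)$ is countable. Hence Proposition~\ref{prop:polish} applies and shows that $\mathrm{Aut}(\mathcal{C}(S))$ with the permutation topology is Polish. By Proposition~\ref{prop:permutmap} the permutation topology transported to $\map^\pm(S)$ coincides with the quotient topology coming from $\homeo^+(S,\p S)$, so $\map^\pm(S)$ with its usual topology is Polish. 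Zero-dimensionality is then exactly Proposition~\ref{prop:zero}, and the statement that every compact subset has empty interior is exactly Proposition~\ref{prop:compactemptyinter}. Assembling nonemptiness, the Polish property, zero-dimensionality, and the empty-interior property and feeding them into the Alexandrov--Urysohn characterization yields a homeomorphism $\map^\pm(S)\cong\N^\N$.

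Because all three topological hypotheses are supplied by the preceding propositions, there is no substantive obstacle left; the main points requiring care are bookkeeping ones. One should confirm that the hypothesis of empty boundary, under which Theorem~\ref{thm:simplicialaut}, Proposition~\ref{prop:permutmap}, and Proposition~\ref{prop:zero} were stated, is in force (or treat the bordered case by a separate argument), and double-check that $\mathcal{C}(S)$ really does have countably many vertices so that Proposition~\ref{prop:polish} is applicable. The conceptual work having already been done, what remains is simply to state the characterization theorem precisely and verify that each of its clauses has been established.
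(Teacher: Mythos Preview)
Your approach is essentially identical to the paper's: both reduce the theorem to the Alexandrov--Urysohn characterization and verify its hypotheses via Propositions~\ref{prop:polish}, \ref{prop:permutmap}, \ref{prop:zero}, and \ref{prop:compactemptyinter}. The one point you flag but do not resolve---the bordered case---is handled in the paper by embedding $S$ in a borderless infinite-type surface $S'$, observing that $\map^\pm(S)$ is closed in $\map^\pm(S')$, and using that closed subgroups of Polish groups are Polish (zero-dimensionality and the empty-interior property then follow as before).
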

\begin{proof}
    Recall from Section~\ref{sec:baire} that the Baire space is the unique nonempty, zero-dimensional, Polish topological space for which every compact subset has empty interior. When $S$ has no boundary, $\map^\pm(S)$ is Polish, zero-dimensional and any compact subset of $\map^\pm(S)$ has empty interior by Propositions~\ref{prop:polish}, \ref{prop:compactemptyinter}, \ref{prop:permutmap}, and \ref{prop:zero}. Therefore, 
    \[
    \mathrm{Map}^\pm(S) \cong \N^\N.
    \]
    When $S$ has boundary, we can embed $S$ in a larger surface $S'$ without boundary. In that case, $\map^\pm(S)$ is closed in $\map^\pm(S')$, which follows directly from the definitions. Since closed subgroups of Polish groups are Polish, $\map^\pm(S)$ is Polish. Again by Proposition~\ref{prop:compactemptyinter}, the interior of any compact subset $K \subset \map^\pm(S)$ is empty. Combining this with the fact that $\map^\pm(S)$ is zero-dimensional gives us
    \[
    \mathrm{Map}^\pm(S) \cong \N^\N.
    \]
    
\end{proof}

As an immediate corollary, we have that $\map(S)$ is homeomorphic to the Baire space.

\begin{corollary}
    Let $S$ be an infinite-type surface. Then the mapping class group $\map(S)$ is homeomorphic to $\N^\N$.
\end{corollary}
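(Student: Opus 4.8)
The plan is to apply the same characterization of the Baire space used in the proof of Theorem~\ref{thm:mapbaire}: by the Alexandrov--Urysohn theorem it suffices to check that $\map(S)$ is nonempty, Polish, zero-dimensional, and that every compact subset of it has empty interior. The key structural fact is that, by Section~\ref{sec:extended}, $\map(S)$ is an index-two subgroup of $\map^\pm(S)$, namely the kernel of the orientation homomorphism $p\colon \map^\pm(S)\to \Z_2$; since the orientation behavior of a homeomorphism is locally constant in the compact-open topology — hence in the quotient topology on $\map^\pm(S)$ — the map $p$ is continuous, so $\map(S)$ is a \emph{clopen} subgroup of $\map^\pm(S)$. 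Because $\map^\pm(S)\cong \N^\N$ by Theorem~\ref{thm:mapbaire}, each of the four required properties then transfers to $\map(S)$ with little extra work.

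Concretely I would proceed as follows. First, $\map(S)$ is nonempty, since it contains the identity. Second, $\map(S)$ is closed in the Polish group $\map^\pm(S)$, and a closed subgroup of a Polish group is Polish — precisely the fact already invoked at the end of the proof of Theorem~\ref{thm:mapbaire} — so $\map(S)$ is Polish. Third, $\map(S)$ is zero-dimensional: it is a subspace of the zero-dimensional space $\map^\pm(S)$ (Proposition~\ref{prop:zero}), and restricting a basis of clopen sets to a subspace yields a basis of clopen sets, while a subspace of a Hausdorff space is Hausdorff. Fourth, if $K\subseteq \map(S)$ is compact, then $K$ is compact in $\map^\pm(S)$, so it has empty interior in $\map^\pm(S)$ by Proposition~\ref{prop:compactemptyinter}; since $\map(S)$ is open in $\map^\pm(S)$, the interior of $K$ computed in the subspace $\map(S)$ coincides with its interior computed in $\map^\pm(S)$, which is empty. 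By the characterization of $\N^\N$ recalled in Section~\ref{sec:baire}, we conclude $\map(S)\cong \N^\N$. (An equally short alternative is to bypass the subgroup bookkeeping entirely and rerun Propositions~\ref{prop:polish}, \ref{prop:compactemptyinter} and \ref{prop:zero} verbatim with $\map^\pm(S)$ replaced by $\map(S)$ and $\mathrm{Aut}(\mathcal{C}(S))$ replaced by its subgroup of orientation-preserving automorphisms; the separability, completeness via Lemma~\ref{lem:iff}, non-accumulation of Dehn-twist powers, and clopenness of point-stabilizers all go through unchanged.)

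The only genuinely new point — and hence the main, though minor, obstacle — is verifying that $\map(S)$ is clopen in $\map^\pm(S)$, i.e.\ that $p$ is continuous. Here the compact-open picture is the convenient one: given a mapping class $f$ and a filling Alexander-type collection of curves contained in a compact $K\subseteq S$, every homeomorphism in the basic neighborhood $B(K,U)$ (for $U$ a sufficiently small neighborhood of $f(K)$) has the same effect on orientation as $f$, so $p$ is locally constant and therefore continuous. Once this is established, the remaining steps are routine consequences of the results of Chapter~\ref{chp:bigmap}.
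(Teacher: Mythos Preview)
Your proposal is correct and follows essentially the same route as the paper: show that $\map(S)$ is a clopen index-two subgroup of $\map^\pm(S)$, then transfer Polishness, zero-dimensionality, and the empty-interior-of-compacts property from $\map^\pm(S)$ to conclude via Alexandrov--Urysohn. If anything, you are slightly more careful than the paper --- you explicitly verify zero-dimensionality and justify clopenness via continuity of the orientation homomorphism $p$, whereas the paper simply asserts that $\map(S)$ and its coset are closed ``from the definitions.''
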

\begin{proof}
    
    Recall from Section~\ref{sec:extended} that $\map(S)$ is an index $2$ subgroup of $\map^\pm(S)$ and that,
    \[
    \map^\pm(S) = \map(S) \sqcup f\,\map(S),
    \]
    for an orientation reversing mapping class $f$. It follows from the definitions that $\map(S)$ is a closed subgroup of $\map^\pm(S)$ and therefore Polish. Similarly, $f\,\map(S)$ is also closed, and thus $\map(S)$ is open. The fact that $\map(S)$ is open in $\map^\pm(S)$ implies that their open sets coincide and since $\map(S)$ is closed, any compact $K\subset \map(S)$ is compact in $\map^\pm(S)$. It follows from Theorem~\ref{thm:mapbaire} that the interior of any compact $K$ is empty, and 
    \[
    \map(S) \cong \N^\N.
    \]
    \end{proof}

\chapter{Generating the Big Mapping Class Groups}\label{chp:gen}

 In Chapter~\ref{chp:bigmap}, we established that unlike the mapping class groups of finite-type surfaces, big mapping class groups are uncountable and indiscrete. This means that they cannot be generated by even a countably infinite generating set, let alone a finite one. This motivates the question concerning the nature of generating sets for big mapping class groups. Our aim in this chapter is to explain the answer to this question. In order to do this, we need the concept of topological generation. 

\section{Topological Generation}\label{sec:topogen}

A topological group $X$ is said to be \emph{topologically generated} by a subset $I$ if there exists a countable subset $I\subset X$ such that $I$ is dense in $X$, that is, $\bar I = X$. Since big mapping class groups are Polish, they are separable. A separable topological space $X$ necessarily has a countable dense subset, therefore, even though there are no countable generating sets for big mapping class groups, they are topologically generated by a countable set.

Recall the definition of the pure mapping class group for finite-type surfaces in Section~\ref{sec:puremap}. We can generalize the construction to the infinite-type case. Define the map

\[
\pi \colon \map(S) \longrightarrow \homeo(\mathrm{Ends}(S),\mathrm{Ends}_{\mathrm{np}}(S))
\]
given by the action of a mapping class on the space of ends. It can be checked that $\pi$ is well-defined and is a group homomorphism. The fact that $\pi$ is onto can be obtained by adapting Richards' proof of Theorem~\ref{thm:classification}. The kernel of $\pi$ is the pure mapping class group. We extend our definition to the general case.

\begin{definition}\label{def:puremap}
    The \emph{pure mapping class group}, denoted $\pmap(S)$, is the kernel of the group homomorphism $\pi \colon \map(S) \longrightarrow\homeo(\mathrm{Ends}(S),\mathrm{Ends}_{\mathrm{np}}(S))$. Equivalently, it is the subgroup of $\map(S)$ consisting of those mapping classes that act trivially on the space of ends. Elements of $\pmap(S)$ are called \emph{pure mapping classes}.
\end{definition}

We also have the following short exact sequence: 

\[
0\longrightarrow \mathrm{PMap}(S) \longrightarrow \mathrm{Map}(S) \longrightarrow \mathrm{Homeo}(\mathrm{Ends}(S),\mathrm{Ends}_{\mathrm{np}}(S)) \longrightarrow 0
\]
where the first map is the inclusion of $\pmap(S)$ into $\map(S)$.

\begin{remark}
    Note that when $\mathrm{Ends}(S)$ consists of finitely many ends of the same type, 
    \[
    \mathrm{Homeo}(\mathrm{Ends}(S),\mathrm{Ends}_{\mathrm{np}}(S)) = \mathrm{Sym}_n.
    \]
    When it consists of finitely many planar ends, then the short exact sequence is the same one in Section~\ref{sec:puremap}.
\end{remark}

\begin{remark}
    The pure mapping class group is equal to the mapping class group if and only if $|\mathrm{Ends}(S)| \leq 1$ or $|\mathrm{Ends}(S)|=2$ and $|\mathrm{Ends}_{\mathrm{np}}(S)| = 1$.
\end{remark}

It follows from the short exact sequence above that $\map(S)$ is topologically generated by $\pmap(S)$ together with any choice of lifts of generators for the relevant homeomorphism group of the ends. This motivates our focus on the topological generation of the pure mapping class group.

\begin{definition}\label{def:approx}
    Let $S$ be an infinite-type surface and let $f\in \homeo(S,\p S)$. We say that $f$ is \emph{approximated by} the sequence $\set{f_n}_{n\in\N}$ if $f_n\rightarrow f$ in the compact-open topology. In particular, we say that $f$ is approximated by Dehn twists if each $f_n$ is a product of Dehn twists.
\end{definition}

\begin{remark}
    By Proposition~\ref{prop:compactopenconvergence}, Definition~\ref{def:approx} states that $f$ is approximated by the sequence $\{f_n\}$ if $f_n\rightarrow f$ compactly.
\end{remark}

Recall from Theorem~\ref{thm:bdarygen}, that Dehn twists generate the pure mapping class group for finite-type surfaces. However, this result does not extend to infinite-type surfaces because of the existence of mapping classes in $\pmap(S)$ that cannot be approximated by Dehn twists. This was first shown by Patel and Vlamis~\cite{propertiesvlamispatel}. One such example of a mapping class on the Jacob's Ladder surface is the mapping class that shifts every embedded copy of $S^2_1$ to the right by one genus, whose action on homology classes is depicted in Figure~\ref{fig:hshift}. Consider the separating simple closed curve $\beta$ and an arbitrary finite-type subsurface $F\subset S$ containing $\beta$ whose boundary components are separating and the two components of $S\setminus F$ each intersect one end. Then $F\setminus \beta$ has two components. Call the component on the right $U$. For any compactly supported mapping class $g$, $U$ and $g(U)$ have the same number of genera and contain the same boundary components. Therefore, by Proposition~\ref{prop:twistintersection}, $\beta$ and $g(\beta)$ either intersect, or are isotopic. Note that $\beta$ and $f(\beta)$ in Figure~\ref{fig:hshift} bound a genus $1$ surface, which implies that they are disjoint but not isotopic, so no compactly supported mapping class agrees with $f$ on $F$. It follows that $f$ cannot be approximated by Dehn twists and that Dehn twists are not always enough to topologically generate $\pmap(S)$. The mapping class depicted in Figure~\ref{fig:hshift} is called a \emph{handle shift}, see Section~\ref{sec:hshift}.

\begin{figure}[htbp]
  \centering
  \includegraphics[width=0.75\textwidth]{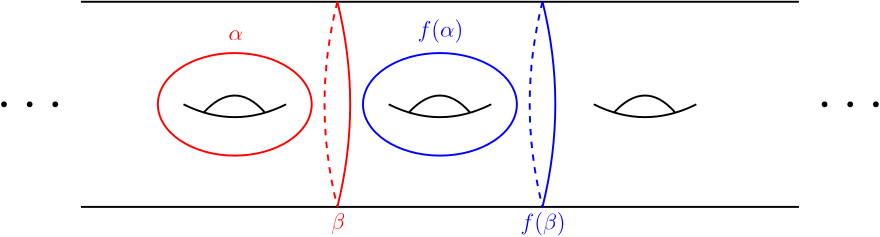}
  \caption{Action of a handle shift on two different homology classes on the Jacob's Ladder surface}
  \label{fig:hshift}
\end{figure}

 The addition of handle shifts to a topological generating set is not always required. In fact, there is a necessary and sufficient condition for when handle shifts are required, as stated in the next theorem.

\begin{proposition}[{\cite[Proposition 6.2]{propertiesvlamispatel}}]\label{prop:dehngen}
    Let $S$ be a surface. Then, $\pmap(S)$ is topologically generated by Dehn twists if and only if $|\mathrm{Ends}_{\mathrm{np}}(S)| \leq 1$.
\end{proposition}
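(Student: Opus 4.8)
The plan is to establish the two directions separately, and I expect the forward direction (extracting a topological generating set from the Dehn twists) to carry the real weight.

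For the implication $|\mathrm{Ends}_{np}(S)| \le 1 \Rightarrow \pmap(S)$ \emph{is topologically generated by Dehn twists}: since Dehn twists are pure and $\pmap(S)$ is closed in $\map(S)$ --- being the kernel of the continuous homomorphism of Definition~\ref{def:puremap} into a Hausdorff group --- it suffices to show that $\langle\text{Dehn twists}\rangle$ is dense in $\pmap(S)$. By Theorem~\ref{thm:bdarygen}, $\langle\text{Dehn twists}\rangle$ is exactly the group $\map_c(S)$ of mapping classes supported in some finite-type subsurface: any class supported in such an $L$ lies in $\pmap(L)$ and is therefore a product of Dehn twists about curves in $L$. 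So the task reduces to: given $f \in \pmap(S)$ and a finite-type subsurface $K \subseteq S$, find $h \in \map_c(S)$ with $h|_K$ isotopic to $f|_K$; carrying this out along an exhaustion $K_1 \subseteq K_2 \subseteq \cdots$ of $S$ then produces products of Dehn twists converging compactly to $f$, which by Proposition~\ref{prop:permutmap} is convergence in $\map(S)$.

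The heart of that step is a change-of-coordinates principle for infinite-type surfaces. Because $f$ fixes $\mathrm{Ends}(S)$ pointwise, it carries each complementary component $W$ of $K$ to a complementary component $f(W)$ of $f(K)$ that is homeomorphic to $W$ and contains exactly the same ends of $S$. The claim to prove is that, when $|\mathrm{Ends}_{np}(S)| \le 1$, one can then choose a finite-type ``frame'' $L \supseteq K \cup f(K)$ together with a homeomorphism $\psi$ of $S$ supported in $L$ such that $\psi|_K$ is isotopic to $f|_K$, and set $h = [\psi]$. This is exactly where the hypothesis is used: at most one complementary component of $K$ has infinite genus --- the one containing the unique non-planar end, if such an end exists --- and all the others are finite-genus, so the genus bookkeeping across $\partial L$ and the matching of the complementary pieces of $K$ and of $f(K)$ inside $L$ can be carried through. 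The only mechanism by which it could fail is $f$ redistributing genus between two distinct infinite-genus complementary regions, which is precisely the handle-shift phenomenon that having at most one non-planar end rules out. I expect this change-of-coordinates lemma --- reconciling the finite combinatorial data inside $L$ with the end-and-genus data at infinity --- to be the main obstacle; everything around it is bookkeeping.

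For the implication $|\mathrm{Ends}_{np}(S)| \ge 2 \Rightarrow \pmap(S)$ \emph{is not topologically generated by Dehn twists}, I would make the obstruction sketched after Definition~\ref{def:approx} precise. Pick distinct non-planar ends $e^-, e^+$; since $\mathrm{Ends}(S)$ is Hausdorff and totally disconnected there is a separating simple closed curve $\beta$ with $e^-$ in one complementary side $\Omega^-$ and $e^+$ in the other side $\Omega^+$, and both $\Omega^{\pm}$ then have infinite genus. Take $f \in \pmap(S)$ to be a handle shift along a ladder of handles running between $e^-$ and $e^+$ and crossing $\beta$, arranged so that $f(\beta)$ is disjoint from $\beta$, not isotopic to $\beta$, and cobounds with $\beta$ a genus-$1$ subsurface $P \subseteq \Omega^+$ with no punctures; such an $f$ exists precisely because both sides of $\beta$ have infinite genus. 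Suppose for contradiction that $f \in \overline{\langle\text{Dehn twists}\rangle}$. Fix a finite-type subsurface $F$ with $\beta, f(\beta) \subseteq \mathrm{int}(F)$; then there is a product of Dehn twists $g$ with $g|_F$ isotopic to $f|_F$, hence $g(\beta)$ isotopic to $f(\beta)$, and we may pick the representative $g$ so that $g(\beta) = f(\beta)$ exactly. The class $g$ is supported in a finite-type subsurface $C$, which after enlargement we take to be principal (essential, separating, pairwise non-isotopic boundary) and to contain $F \cup P$; then $g(C) = C$, and since $g$ is pure it maps $\Omega^-$ onto the $e^-$-side of $g(\beta) = f(\beta)$, namely $\Omega^- \cup_\beta P$. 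Consequently $g$ restricts to a homeomorphism $\Omega^- \cap C \to (\Omega^- \cup_\beta P) \cap C$. But with $\beta$, $f(\beta)$ and $\partial C$ all separating, genus is additive over complementary pieces, so $\Omega^- \cap C$ has genus equal to the number of handles of $C$ on the $e^-$-side of $\beta$, while $(\Omega^- \cup_\beta P) \cap C$ has that number plus one, and the numbers of boundary components and of punctures agree in the two surfaces --- contradicting Theorem~\ref{thm:finiteclassification}. Hence no such $g$ exists, so Dehn twists do not topologically generate $\pmap(S)$.
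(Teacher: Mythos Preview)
Your backward direction ($|\mathrm{Ends}_{np}(S)| \ge 2$) is correct and essentially complete: it makes precise the genus-count obstruction the paper sketches in the paragraph preceding the Proposition, and the bookkeeping with the principal subsurface $C$ goes through as you describe. For the forward direction you have correctly isolated the key step --- a change-of-coordinates lemma letting you match $f|_K$ by a compactly supported map --- and the reason it works under the hypothesis (only one complementary region can carry infinite genus, so no genus gets redistributed across $\partial L$). You honestly flag that lemma as the remaining obstacle; it is indeed the substance of the argument, and carrying it out requires some care with Richards' classification to ensure the complementary pieces of $L$ and of $\psi(L)$ are pairwise homeomorphic rel their ends.

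The paper, however, takes a genuinely different route: it does not argue directly at all but defers the Proposition to a corollary of Theorem~\ref{thm:cohomology}. When $|\mathrm{Ends}_{np}(S)| \le 1$, every separating curve in $\hat S$ bounds a compact subsurface, so $H_1^{sep}(\hat S,\Z) = 0$; then $A_S$ is trivial and $\pmap(S) = \overline{\pmap_c(S)}$ (see the Remark following Theorem~\ref{thm:cohomology}). For $|\mathrm{Ends}_{np}(S)| \ge 2$, the homomorphisms $\varphi_v$ of Proposition~\ref{prop:properties} vanish on $\overline{\pmap_c(S)}$ but take nonzero values on handle shifts cut by $v$, so the closure of the Dehn twists is proper. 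Your approach is more elementary and closer in spirit to the original argument of Patel--Vlamis in~\cite{propertiesvlamispatel}; the paper's cohomological route trades directness for structural payoff, since the same machinery delivers the full semidirect-product decomposition of Theorem~\ref{thm:bigmap}.
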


We skip the proof of Proposition~\ref{prop:dehngen}, since it is a corollary of Theorem~\ref{thm:cohomology}, which is a much stronger result that we prove later in this chapter.

Although Dehn twists topologically generate $\pmap(S)$ only when there are less than two ends accumulated by genus, they always topologically generate a subgroup of $\pmap(S)$. Dehn twists are compactly supported, as they are defined in an annular neighborhood of some simple closed curve on $S$. This motivates the next definition.

\begin{definition}\label{def:compmap}
    Let $S$ be an infinite-type surface. The subgroup generated by Dehn twists, denoted by $\pmap_\mathrm{c}(S)$, is called the \emph{compactly supported mapping class group}. Elements of $\pmap_\mathrm{c}(S)$ are called compactly supported mapping classes.
\end{definition}

 We can consider the compactly supported mapping class group as the limit of the subgroups of $\pmap(S)$ that are topologically generated by Dehn twists over every essential subsurface, that is, 
\[
\pmap_\mathrm{c}(S) = \varinjlim(\pmap(X))
\]
over every essential subsurface $X$ of $S$. Suppose the complement $S\setminus X$ contains a disk component $D$. Since the mapping class group of a disk is trivial, extending $X$ to $X\cup D$ does not introduce any new mapping classes. By induction, we can fill all disk components in this manner, obtaining essential surfaces. Similarly, if $S\setminus X$ contains an annulus component $A$, the Dehn twist generating $\map(A)$ is isotopic to a Dehn twist about a boundary component of $X$ in $X\cup A$. Consequently, we can glue any annulus components of $S\setminus X$ to $X$ without the need to add any new mapping classes. This shows that it suffices to restrict our attention to essential subsurfaces.

\begin{remark}\label{thm:dehngen2}
    In light of the definition of $\mathrm{PMap}_\mathrm{c}(S), $Proposition~\ref{prop:dehngen} states that $\mathrm{PMap}(S) = \overline{\mathrm{PMap}_\mathrm{c}(S)}$ if and only if $|\mathrm{Ends}_{\mathrm{np}}(S)| \leq 1$.
\end{remark}

\section{Handle shifts}\label{sec:hshift}

Consider the surface obtained from $\R \times [-1,1]$ by removing disks of radius $\frac{1}{4}$ centered at each point $(n,0)$ for $n\in \Z$ and attaching a torus with $1$ boundary component along the boundary of each removed disk. This surface is called the \emph{model surface of a handle shift} and denoted by $\Sigma$. 

\begin{remark}
    By Remark~\ref{rem:class}, the model surface $\Sigma$ is not in the scope of Theorem~\ref{thm:classification}, as the boundary components $\mathbb{R}\times \{-1\}$ and $\mathbb{R}\times \{1\}$ are homeomorphic to $\R$. Fortunately, we do not need to classify $\Sigma$ for our purposes.
\end{remark}

The model surface $\Sigma$ is a surface with two ends accumulated by genus, 
\[
+\infty = \set{[0,1]\times [n,\infty)}_{n\in \N} \quad \text{and} \quad -\infty = \set{[0,1]\times (-\infty,n]}_{n\in\N},
\]
where of course the attached handles are contained in the relevant intervals. \\
The model surface $\Sigma$ can be embedded into any infinite-type surface $S$ with at least one end accumulated by genus, even if it is not immediately clear how for surfaces with only one end accumulated by genus, see Section~\ref{sec:lochness}. It is clear that any homeomorphism 
\[
f\colon\Sigma \longrightarrow\Sigma
\]
induces a homeomorphism 
\[
\bar f\colon S \longrightarrow S
\]
by extending $f$ to be identity on $S\setminus \Sigma$.

There is a homeomorphism of $\Sigma$ that is of particular importance to the topological generation of big mapping class groups, first introduced in~\cite{propertiesvlamispatel} by Patel and Vlamis. Define 
\[h\colon\Sigma \longrightarrow\Sigma\] by 
\[
h(x,y) = \begin{cases}
    (x+1,y) & \mathrm{if} \ y \in [-\frac{1}{2},\frac{1}{2}], \\
    (x+2-2y,y) & \mathrm{if} \ y \in [\frac{1}{2},1], \\
    (x+2+2y,y) & \mathrm{if} \ y \in [-1,-\frac{1}{2}] \\
\end{cases}
\]
 on $\R\times [-1,1]$. Each torus attached along the boundary of a removed disk moves together with that boundary.

This homeomorphism $h$, called a \emph{handle shift}, shifts every attached torus to the position of the next one on the right, hence the name. Any induced homeomorphism $\bar h$ on an infinite-type surface $S$, as well as its mapping class is also called an handle shift.

\begin{figure}[htbp]
  \centering
  \includegraphics[width=0.8\textwidth]{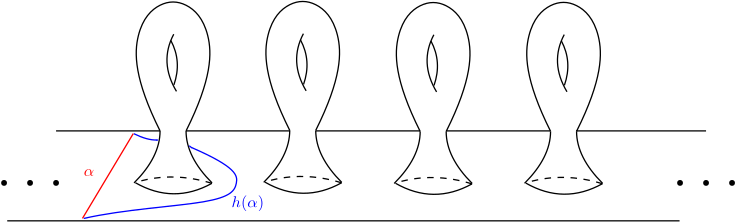}
  \caption{The model surface $\Sigma$ and the image of an arc under the handle shift $h$}
  \label{fig:modelhandleshift}
\end{figure}

Consider a handle shift 
\[
\bar h \colon S\longrightarrow S
\]
induced by 
\[
h \colon \Sigma \longrightarrow\Sigma
\]
where $S$ is any surface with at least two ends accumulated by genus. Let $e_+,e_-\in \mathrm{Ends}(S)$ be the two ends corresponding to the ends $\pm \infty$ of $\Sigma$. Assume without loss of generality that $h$ shift every genus by one to the right. Then $e_+$ is called the \emph{attracting end} of $\bar h$ and $e_-$ is called the \emph{repelling end}. The attracting and repelling ends are determined by how $\Sigma$ is embedded in $S$.

\begin{remark}
    Some authors use the phrases "handle shift acting on $e_-$ and $e_+$", or "handle shift taking $e_-$ to $e_+$" to specify which end is attracting and which is repelling. This convention should not be confused with the action of the handle shift on the space of ends. Since ends are equivalence classes of exiting sequences, the image of any exiting sequence under a handle shift lies in the same equivalence class. That is, handle shifts act trivially on the space of ends and therefore are pure mapping classes. To avoid any confusion, we shall use the phrase "handle shift from $e_-$ to $e_+$" to indicate which end is repelling and which is attracting.
\end{remark}

\section{Structure of Big Mapping Class Groups}

Recall from Remark~\ref{thm:dehngen2} that for all $S$ with $|\mathrm{Ends}_{\mathrm{np}}(S)| \leq 1$, we have $\mathrm{PMap}(S) = \overline{\mathrm{PMap}_\mathrm{c}(S)}$. The next theorem covers every other case,

\begin{proposition}[{\cite[Proposition 6.2]{propertiesvlamispatel}}]\label{prop:generate}
    Let $S$ be a surface with at least $2$ ends accumulated by genus. Then the set of Dehn twists together with the set of handle shifts topologically generate $ \mathrm{PMap}(S)$.
\end{proposition}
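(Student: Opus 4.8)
The goal is to prove that the subgroup $G\leq\pmap(S)$ generated by all Dehn twists and all handle shifts is dense; since the closure of a subgroup of a topological group is again a subgroup, it suffices to check that every basic open neighborhood of every $f\in\pmap(S)$ contains a point of $G$. (We assume $S$ has empty boundary; the bordered case reduces to this one exactly as in the proof of Theorem~\ref{thm:mapbaire}.) By Proposition~\ref{prop:permutmap} the topology on $\pmap(S)$ is induced by the permutation topology, so a basic neighborhood of $f$ has the form $f\,U(A)$ with $A$ a finite set of isotopy classes of simple closed curves. Collect the curves of $A$ into a connected finite-type subsurface and enlarge it to a connected finite-type subsurface $W\subset S$ chosen so that $\mathrm{genus}(W)\geq 2$; every component of $\partial W$ is separating in $S$; every component of $S\setminus W$ is either planar or a one-holed surface of infinite genus; and at least two components of $S\setminus W$ are accumulated by genus (possible because $S$ has at least two ends accumulated by genus and $\mathrm{Ends}(S)$ is Hausdorff by Theorem~\ref{thm:tdscc}). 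Any mapping class $\phi$ for which $\phi|_W$ is isotopic to $f|_W$ agrees with $f$ on every curve of $A$, hence lies in $f\,U(A)$; so the proposition reduces to the following statement: for every $f\in\pmap(S)$ and every subsurface $W$ as above, there exists $\phi\in G$ with $\phi|_W\simeq f|_W$.

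To construct $\phi$, first fix a connected finite-type subsurface $V_0\supseteq W\cup f(W)$ with separating boundary and with all complementary components planar or one-holed of infinite genus, and list the infinite-genus components of $S\setminus V_0$ as $N_1,\dots,N_r$ with $r\geq 2$. For each $i$, let $d_i$ be the genus of the component of $V_0\setminus f(W)$ adjacent to $N_i$ minus the genus of the component of $V_0\setminus W$ adjacent to $N_i$. Because $\mathrm{genus}(W)=\mathrm{genus}(f(W))$ and all relevant boundary curves are separating, a genus count inside $V_0$ gives $\sum_i d_i=0$. A handle shift supported on an embedded copy of the model surface $\Sigma$ (Section~\ref{sec:hshift}) running from $N_j$ to $N_i$ changes the defect vector $(d_1,\dots,d_r)$ by $e_i-e_j$, where $e_1,\dots,e_r$ is the standard basis of $\Z^r$; since these differences span the zero-sum sublattice of $\Z^r$, a suitable product $H$ of handle shifts realizes $(d_1,\dots,d_r)$. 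Enlarging $V_0$ to a finite-type subsurface $V$ that also contains the copies of $\Sigma$ used, we may assume (after an isotopy) that $H(W)\subseteq V$ and that, for every infinite-genus component $N_i$ of $S\setminus V$, the components of $V\setminus H(W)$ and of $V\setminus f(W)$ adjacent to $N_i$ have the same genus.

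At this stage $H(W)$ and $f(W)$ are essential subsurfaces of the finite-type surface $V$ with equal genus, the same number of boundary components, and complements in $V$ whose corresponding components are pairwise homeomorphic — here one uses that $f$ is a pure mapping class, so it pairs each planar complementary region with one having a homeomorphic end space. By the classification of finite-type surfaces (Theorem~\ref{thm:finiteclassification}) and the change-of-coordinates principle there is a homeomorphism $c$ of $S$ supported in $V$ with $c(H(W))=f(W)$; composing with a further compactly supported homeomorphism we may also assume $c\circ H$ and $f$ agree on $\partial W$. Then $f\circ(c\circ H)^{-1}$ restricts to a self-homeomorphism of the finite-type surface $f(W)$ fixing $\partial f(W)$ pointwise, so, extended by the identity, it is a compactly supported mapping class $c'$. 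By Theorem~\ref{thm:bdarygen} each of $c$ and $c'$ is a product of Dehn twists, while $H$ is a product of handle shifts; hence $\phi:=c'\circ c\circ H$ lies in $G$, and $\phi|_W\simeq f|_W$ by construction. This gives $\phi\in G\cap f\,U(A)$, so $f\in\overline{G}$, and therefore $\pmap(S)=\overline{G}$.

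The main obstacle is the middle step: engineering the handle shifts so that they both kill the genus defect and keep $H(W)$ inside a single finite-type subsurface. The point is to fix the bounded data $(d_1,\dots,d_r)$ using $V_0$ first, and only afterwards enlarge to $V$, so that the handle shifts move a bounded number of handles across a bounded distance. A secondary difficulty is bookkeeping: arranging $W$ and $V$ so that every complementary component is planar or one-holed of infinite genus requires absorbing the finite-genus parts of complementary regions into the subsurface, and one must then verify that once the genus defect vanishes the two subsurfaces really are interchanged by a homeomorphism of $V$ rel $\partial V$ — this is the point at which purity of $f$ is used, to pair up the planar complementary pieces with homeomorphic ones. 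Finally, the hypotheses "$\mathrm{genus}(W)\geq 2$" and "at least two ends accumulated by genus" are precisely what allow us to invoke Theorem~\ref{thm:bdarygen} and to embed the model surface $\Sigma$; the case of infinitely many ends accumulated by genus goes through verbatim, the zero-sum sublattice argument being unaffected.
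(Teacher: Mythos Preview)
Your argument is essentially correct and follows the original Patel--Vlamis strategy: approximate $f$ on a large finite-type subsurface $W$ by first cancelling the ``genus defect'' between $W$ and $f(W)$ with a product of handle shifts, and then correcting with a compactly supported mapping class via change of coordinates. The paper, by contrast, does not argue directly at all: it defers the proof to Theorem~\ref{thm:cohomology} (equivalently Theorem~\ref{thm:bigmap}), from which the proposition is an immediate corollary once one knows $\pmap(S)=\overline{\pmap_{\mathrm c}(S)}\rtimes\prod_i\langle h_i\rangle$. Your route is more elementary and self-contained, avoiding the cohomological homomorphisms $\varphi_v$ and the semidirect-product splitting; the paper's route is shorter but presupposes the heavier structural result. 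A minor point worth tightening in your write-up: the assertion $\sum_i d_i=0$ uses implicitly that every component of $V_0\setminus W$ (and of $V_0\setminus f(W)$) lying in a \emph{planar} component of $S\setminus W$ has genus~$0$, so that all genus of $V_0\setminus W$ is accounted for by the $N_i$; and the change-of-coordinates step requires the pairing of complementary components to respect which boundary curves of $V$ they meet, which is exactly what purity of $f$ guarantees. Both facts are true, but making them explicit would strengthen the exposition.
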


Analogous to Proposition~\ref{prop:dehngen}, we omit the proof of Proposition~\ref{prop:generate} as it follows directly from Theorem~\ref{thm:cohomology}.

A surface may admit uncountably many distinct handle shifts; this is particularly evident in cases where the surface possesses uncountably many ends accumulated by genus, such as the Blooming Cantor Tree surface. Surprisingly, this holds true even when a surface has finitely many ends accumulated by genus, see Remark~\ref{rmk:typeremark}. However, most of them are redundant and in fact, only a countable set of handle shifts is enough to topologically generate $\pmap(S)$, as shown by Aramayona-Patel-Vlamis~\cite{aramayonapatelvlamis}. Moreover, they showed that for every surface, the pure mapping class group is equal to the semidirect product of $\overline{\pmap_\mathrm{c}(S)}$ and a direct product of countably many handle shifts.

\begin{theorem}[{\cite[Corollary 6]{aramayonapatelvlamis}}]\label{thm:bigmap}
    Let $S$ be a surface. Then,
    \[
    \mathrm{PMap}(S) = \overline{\mathrm{PMap}_\mathrm{c}(S)} \rtimes \prod^\infty_{i=1}\langle h_i\rangle
    \]
    where each $h_i$ is a handle shift.
\end{theorem}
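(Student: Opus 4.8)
The plan is to realize $\pmap(S)$ as a topological semidirect product by constructing an explicit countable family of handle shifts $\{h_i\}_{i\in\N}$ whose images generate a complement to $\overline{\pmap_{\mathrm c}(S)}$, and then to verify the three ingredients needed: (i) $\overline{\pmap_{\mathrm c}(S)}$ is normal in $\pmap(S)$; (ii) the subgroup $H=\prod_i\langle h_i\rangle$ embeds as a closed subgroup meeting $\overline{\pmap_{\mathrm c}(S)}$ trivially; and (iii) together they topologically generate $\pmap(S)$, so that every element factors (continuously) as a product. The starting point is Proposition~\ref{prop:generate}: when $|\mathrm{Ends}_{\mathrm{np}}(S)|\le 1$ we are already done by Remark~\ref{thm:dehngen2} with the empty family of handle shifts, so the content is the case $|\mathrm{Ends}_{\mathrm{np}}(S)|\ge 2$.

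First I would set up the combinatorial bookkeeping for handle shifts. Fix an exhaustion and think of the ends accumulated by genus; since $\mathrm{Ends}_{\mathrm{np}}(S)$ is a closed, hence separable, subset of a Cantor set, choose a countable dense set of such ends and organize the handles of $S$ into a countable "tree" of embedded copies of the model surface $\Sigma$ running between pairs of these ends. This yields a countable list $h_1,h_2,\dots$ of handle shifts such that any handle shift of $S$ is, up to an element of $\pmap_{\mathrm c}(S)$, a product of finitely many $h_i^{\pm 1}$ — this is exactly the reduction of "uncountably many handle shifts" to "countably many" that Aramayona–Patel–Vlamis carry out, and it should be stated as a lemma and proved by a direct cut-and-paste argument comparing two handle shifts with the same attracting/repelling ends (they differ by a compactly supported map, by the genus/intersection argument already used in the excerpt to show handle shifts are not compactly supported). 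I would also record the elementary fact that $h_i$ acts trivially on $\mathrm{Ends}(S)$, so $h_i\in\pmap(S)$.

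Next, normality of $\overline{\pmap_{\mathrm c}(S)}$: $\pmap_{\mathrm c}(S)$ is normal in $\pmap(S)$ because conjugating a compactly supported map by any homeomorphism gives a compactly supported map (its support is the image of the original support), and the closure of a normal subgroup in a topological group is normal since conjugation is continuous. For the splitting I would define the homomorphism $\Phi\colon\pmap(S)\to H$ by sending $f$ to the "net handle-flux" of $f$ across the chosen system of separating curves — concretely, for each $h_i$ there is a separating curve $c_i$ and $f$ displaces a definite (finite) number of handles across $c_i$; assemble these integers into an element of $\prod_i\langle h_i\rangle\cong\prod_i\Z$. One checks $\Phi$ is a continuous homomorphism (continuity: the flux across a fixed curve only depends on $f$ restricted to a compact region, so it is locally constant), that $\Phi(h_i)=h_i$, and that $\ker\Phi$ is precisely $\overline{\pmap_{\mathrm c}(S)}$ — the inclusion $\overline{\pmap_{\mathrm c}(S)}\subseteq\ker\Phi$ is clear since compactly supported maps have zero flux across all but finitely many $c_i$ and can be arranged to have zero flux across any given finite set, hence zero flux in the limit; the reverse inclusion $\ker\Phi\subseteq\overline{\pmap_{\mathrm c}(S)}$ is the substantive step, proved by using $\Phi$ to cancel the handle-shifting part of $f$ and then invoking that a pure mapping class with "trivial flux everywhere" is a compact-open limit of compactly supported maps (this is where Proposition~\ref{prop:generate} and the Alexander method for infinite-type surfaces, Theorem~\ref{thm:infalex}, enter, exhausting $S$ and correcting $f$ on larger and larger finite-type pieces). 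Given such a $\Phi$, the sequence $1\to\overline{\pmap_{\mathrm c}(S)}\to\pmap(S)\xrightarrow{\Phi} H\to 1$ splits via the obvious section $H\hookrightarrow\pmap(S)$, and since $\Phi$ is continuous with continuous section, the semidirect product decomposition is a decomposition of topological groups.

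The main obstacle is the equality $\ker\Phi=\overline{\pmap_{\mathrm c}(S)}$, and within that, the reverse inclusion. One must show that any pure mapping class with vanishing flux across every $c_i$ can genuinely be approximated in the compact-open topology by compactly supported mapping classes, which requires simultaneously controlling infinitely many ends of possibly different types; this is precisely the hard half of Proposition~\ref{prop:generate}/\,Theorem~\ref{thm:cohomology}, and the cleanest route is to cite it rather than reprove it. A secondary subtlety is checking that $H=\prod_i\langle h_i\rangle$ really injects and is closed in $\pmap(S)$ — for this I would observe that distinct elements of $\prod_i\Z$ have distinct fluxes, so $\Phi|_H$ is a topological isomorphism onto $H$ with the product topology, giving injectivity, and closedness follows because $H$ is the image of the continuous section of the continuous retraction $\Phi$ (equivalently, $H=\Phi^{-1}(H)\cap(\text{section image})$, a closed set).
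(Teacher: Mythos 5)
Your proposal is correct and follows essentially the same route as the paper: your ``net handle-flux'' homomorphism is exactly the paper's family $\varphi_v$ (see Remark~\ref{rmk:varphi}) assembled into the map $\Psi\colon A_S\to H^1_{sep}(\hat{S},\Z)\cong\prod_i\Z$, your key identity $\ker\Phi=\overline{\pmap_{\mathrm c}(S)}$ is Theorem~\ref{thm:zero}, and your splitting via the section $H\hookrightarrow\pmap(S)$ is the paper's $\kappa\circ\Psi$. The only cosmetic difference is that the paper organizes the fluxes through a basis of $H^1_{sep}(\hat{S},\Z)$ realized by pairwise-disjoint separating curves (rather than a dense set of ends), which is precisely what guarantees $\Phi(h_i)=h_i$, $\varphi_{v_j}(h_i)=0$ for $j\neq i$, and hence the independence and commutativity of the $h_i$.
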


Theorem~\ref{thm:bigmap} is actually a corollary to a theorem about the first integral cohomology group of $\map(S)$. To understand the structure of big mapping class groups, it is important to understand Theorem~\ref{thm:cohomology} which we will work our way towards for the remainder of this chapter. The subgroup of $H_1(S,\Z)$ generated by separating homology classes are denoted by $H_1^{sep}(S,\Z)$ .

\begin{remark}\label{rmk:sep}
    When $S$ has one end accumulated by genus and no planar ends and no boundary components, $H_1^{sep}(S,\Z)$ is trivial, since any separating simple closed curve is the boundary of some subsurface $S_{g}$. If $S$ has more than one end accumulated by genus, then a separating simple closed curve may fail to be trivial in homology, since it may not always bound a compact subsurface. 
    \begin{figure}[htbp]
  \centering
  \includegraphics[width=0.8\textwidth]{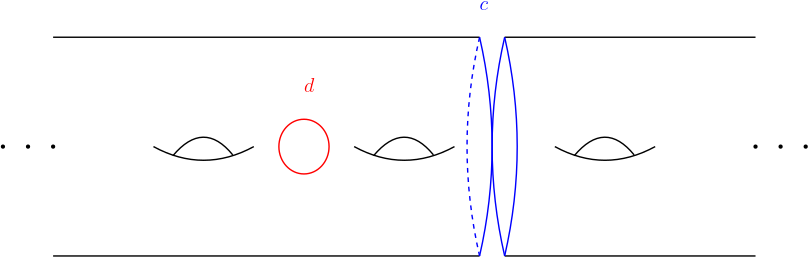}
  \caption{A non-trivial element $c$ and a trivial element $d$ in $H_1^{sep}(S,\Z)$ on the Jacob's Ladder surface}
  \label{fig:sep}
\end{figure}
\end{remark}

The subgroup of $H^1(S,\Z)$ that is isomorphic to $\mathrm{Hom}(H_1^{sep}(S,\Z),\Z)$ by the universal coefficient theorem for cohomology is denoted by $H^1_{sep}(S,\Z)$ . For details on the universal coefficient theorem and cohomology groups, see~\cite{hatcher}.

\begin{lemma}\label{thm:compurenormal}
    The group $\overline{\pmap_\mathrm{c}(S)}$ is a normal subgroup of $\pmap(S)$.
\end{lemma}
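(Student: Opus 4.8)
The plan is to show that $\overline{\pmap_{\mathrm c}(S)}$ is normal in $\pmap(S)$ by first establishing that the (not-yet-closed) subgroup $\pmap_{\mathrm c}(S)$ is normal in $\pmap(S)$, and then invoking the general topological fact that the closure of a normal subgroup of a topological group is again normal. For the first part, recall from Definition~\ref{def:compmap} that $\pmap_{\mathrm c}(S)$ is generated by Dehn twists, so it suffices to check that conjugates of Dehn twists lie in $\pmap_{\mathrm c}(S)$. But this is exactly the conjugation property, Proposition~\ref{prop: dehnprop}(iii): for any $f \in \pmap(S)$ and any isotopy class $a$ of simple closed curves, $f T_a f^{-1} = T_{f(a)}$, which is again a Dehn twist, hence lies in $\pmap_{\mathrm c}(S)$. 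Since conjugation by $f$ is a group automorphism, it carries a product of Dehn twists to a product of Dehn twists, so $f\,\pmap_{\mathrm c}(S)\,f^{-1} \subseteq \pmap_{\mathrm c}(S)$ for every $f \in \pmap(S)$, i.e.\ $\pmap_{\mathrm c}(S) \trianglelefteq \pmap(S)$.

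For the second part, I would use that in any topological group $G$, conjugation $c_g \colon x \mapsto gxg^{-1}$ is a homeomorphism of $G$ for each fixed $g$, because it is the composition of the continuous multiplication maps (Definition~\ref{def:topgroup}). If $N \trianglelefteq G$, then $c_g(N) = N$, and since homeomorphisms commute with closure, $c_g(\overline N) = \overline{c_g(N)} = \overline N$. Applying this with $G = \pmap(S)$ (which is a topological group, being a subgroup of the topological group $\map(S)$) and $N = \pmap_{\mathrm c}(S)$ gives $g\,\overline{\pmap_{\mathrm c}(S)}\,g^{-1} = \overline{\pmap_{\mathrm c}(S)}$ for all $g \in \pmap(S)$, which is precisely the assertion that $\overline{\pmap_{\mathrm c}(S)} \trianglelefteq \pmap(S)$.

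There is no serious obstacle here; the statement is essentially formal once one has the conjugation property for Dehn twists and the closure-of-normal-subgroup lemma. The one point that deserves a sentence of care is that the closure is taken inside $\pmap(S)$ (equivalently inside $\map(S)$, since $\pmap(S)$ is a closed subgroup, being the kernel of the continuous homomorphism $\pi$ of Definition~\ref{def:puremap}), so that $\overline{\pmap_{\mathrm c}(S)}$ is genuinely a subgroup of $\pmap(S)$ and the normality statement is meaningful; this is immediate from the fact that the closure of a subgroup of a topological group is a subgroup. I would also remark in passing that the same argument shows $\overline{\pmap_{\mathrm c}(S)}$ is in fact characteristic in $\map(S)$ restricted to $\pmap(S)$, but only the stated normality is needed for what follows.
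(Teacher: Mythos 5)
Your proof is correct and follows essentially the same two-step strategy as the paper: first establish that $\pmap_{\mathrm c}(S)$ itself is normal in $\pmap(S)$, then pass to the closure using the continuity of conjugation in a topological group. The only difference is cosmetic: for the first step the paper argues via compact supports (the conjugate of a mapping class supported on a compact set $K$ is supported on the compact set $g^{-1}(K)$), whereas you invoke the conjugation property $fT_af^{-1}=T_{f(a)}$ of Dehn twists directly from Definition~\ref{def:compmap}; both are valid one-line arguments.
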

\begin{proof}
We start by showing $\pmap_\mathrm{c}(S)$ is a normal subgroup of $\pmap(S)$. Let $f \in \pmap_\mathrm{c}(S)$ and $g \in \pmap(S)$. 
Suppose $f$ is supported on a compact subset $K \subset S$. 
Since $f$ acts trivially on $S \setminus K$, the conjugate $g^{-1}fg$ is supported on $g^{-1}(K)$, which is again compact. 
Hence, $g^{-1}fg \in \pmap_\mathrm{c}(S)$, and therefore 
$\pmap_\mathrm{c}(S) \trianglelefteq \pmap(S)$.

To show that $\overline{\pmap_\mathrm{c}(S)}$ is normal in $\pmap(S)$, recall the classical fact of topology that for a continuous function $f\colon X\rightarrow Y$, and $A\subset X$
\[
f(\overline{A}) \subseteq \overline{f(A)}.
\]
Let $g\in \pmap(S)$. Since $\pmap(S)$ is a topological group, conjugation is continuous. It follows that
\[
g\overline{\pmap_\mathrm{c}(S)}g^{-1} \subseteq \overline{g\pmap_c(S)g^{-1}} = \overline{\pmap_\mathrm{c}(S)},
\]
where we used the fact that for normal subgroups are invariant under conjugation.

Observe that the homomorphism $g^{-1}fg$ is also continuous. Therefore,
\[
g^{-1}\overline{\pmap_\mathrm{c}(S)}g \subseteq \overline{g^{-1}\pmap_c(S)g} = \overline{\pmap_\mathrm{c}(S)},
\]
and
\[
\overline{\pmap_\mathrm{c}(S)} \subseteq g\overline{\pmap_\mathrm{c}(S)}g^{-1}.
\]
It follows that $\overline{\pmap_\mathrm{c}(S)} = g\overline{\pmap_\mathrm{c}(S)}g^{-1}$ and that $\overline{\pmap_\mathrm{c}(S)} \trianglelefteq \pmap(S)$.
\end{proof}

Since  $\overline{\pmap_\mathrm{c}(S)} \trianglelefteq \pmap(S)$, we can define the quotient group 
\[
A_S =  {\pmap(S)}/ \overline{\pmap_\mathrm{c}(S)}.
\] Note that a handle shift is mapped to a non-trivial element in $A_S$ with the quotient map while any mapping class approximated by Dehn twists is mapped to the identity in the quotient group. 

Recall that from Section~\ref{sec:spaceofends} that the Freudenthal compactification $\hat{S}$ of a surface $S$ is the union of $S$ and $\mathrm{Ends}(S)$. We can fill in any planar ends by considering the surface $\bar{S} = \hat{S}\setminus \mathrm{Ends}_{np}(S)$. Note that $\bar{S}$ only has ends accumulated by genus and no planar ends, since it is constructed by removing the non-planar ends from the Freudenthal compactification. The inclusion $\iota\colon S \hookrightarrow S$ induces an homomorphism $\iota_*\colon H^{sep}_1(S,\Z) \rightarrow H_1^{sep}(\hat{S},\Z)$ on the homology groups. 

We are now ready to state the main result of this chapter:

\begin{theorem}[{\cite[Theorem 3]{aramayonapatelvlamis}}]\label{thm:cohomology}
    Let $S$ be a surface with at least two ends accumulated by genus. There is an injection $\kappa : H^1_{sep}(\hat{S},\mathbb{Z}) \longrightarrow \mathrm{PMap}(S)$ such that $\pi \circ \kappa :H^1_{sep}(\hat{S},\mathbb{Z}) \longrightarrow A_S$ is an isomorphism, where $\pi: \mathrm{PMap}(S) \longrightarrow A_S$ is the natural projection. Moreover, there exists pairwise-commuting handle shifts $\{h_i\}^\infty_{i=1}$ such that
    \[
    \kappa(H^1_{sep}(\hat{S},\mathbb{Z})) = \prod^r_{i=1}\langle h_i\rangle
    \]
    where $r\in \mathbb{N} \cup \{\infty\}$ is the rank of $H_1^{sep}(\hat{S},\mathbb{Z})$.
\end{theorem}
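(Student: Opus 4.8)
The plan is to identify the quotient $A_S$ concretely via a ``genus flux'' homomorphism and to produce $\kappa$ as an explicit section of it built out of handle shifts; here $\hat S$ denotes the surface obtained from $S$ by filling in its planar ends, so that $\mathrm{Ends}(\hat S)=\mathrm{Ends}_{np}(S)$. First I would construct a homomorphism $\Phi\colon \pmap(S)\longrightarrow H^1_{sep}(\hat S,\Z)\cong\mathrm{Hom}(H_1^{sep}(\hat S,\Z),\Z)$. Given $f\in\pmap(S)$ and a separating simple closed curve $c$ on $\hat S$ (which we may isotope into $S$ off the planar ends), the class $[c]\in H_1^{sep}(\hat S,\Z)$ is determined by the clopen partition $\{E_-,E_+\}$ of $\mathrm{Ends}(\hat S)$ that $c$ induces; since $f$ is pure it preserves that partition, so $f(c)$ is homologous to $c$ in $\hat S$ and can be taken disjoint from $c$, the two curves co-bounding a subsurface $W$ which, inducing the same partition, contains no ends and hence has finite genus. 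I set $\Phi(f)([c])$ to be the genus of $W$, signed by the side of $c$ toward which the handles were pushed. The routine work here is to check independence of the representative and the isotopy, additivity in $[c]$ (so the value is a homomorphism $H_1^{sep}\to\Z$), and that $f\mapsto\Phi(f)$ is a homomorphism; this uses the Alexander method but presents no conceptual difficulty.

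Next I would build the section $\kappa$. Since a subgroup of a free abelian group is free abelian, $H_1^{sep}(\hat S,\Z)$ is free of some rank $r\in\N\cup\{\infty\}$; fix a basis represented by separating curves $c_1,c_2,\dots$. Using a compact exhaustion of $\hat S$ I would arrange these so that each $c_i$ lies in the neck $\R\times\{0\}$ of an embedded copy $\Sigma_i\subset S$ of the model surface of Section~\ref{sec:hshift}, with the $\Sigma_i$ pairwise disjoint. Let $h_i$ be the handle shift supported on $\Sigma_i$ with attracting and repelling ends on the two sides of $c_i$. Disjointness of supports makes the $h_i$ pairwise commute, and any formal product $\prod_i h_i^{n_i}$ converges in $\pmap(S)$ since only finitely many $h_i$ are nontrivial on a given compact set. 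I then set $\kappa\colon (n_i)_i\mapsto \prod_i h_i^{n_i}$ (so $\kappa(c_i^{*})=h_i$ on the dual basis); commutativity makes this a well-defined homomorphism whose image is exactly $\prod_{i=1}^{r}\langle h_i\rangle$, as required. Since $\mathrm{supp}(h_j)$ is disjoint from $c_i$ for $j\neq i$ and $h_i$ drags precisely one handle across $c_i$, the flux computation yields $\Phi(h_i)([c_j])=\delta_{ij}$, whence $\Phi\circ\kappa=\mathrm{id}$; in particular $\kappa$ is injective, recovering the injectivity statement of the theorem.

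The heart of the proof, and the main obstacle, is the identity $\ker\Phi=\overline{\pmap_{\mathrm c}(S)}$. The inclusion $\overline{\pmap_{\mathrm c}(S)}\subseteq\ker\Phi$ is easy: a compactly supported $f$ fixes every separating curve far from its support and co-bounds only genus-zero pieces near it (this is the computation behind the handle-shift example following Proposition~\ref{prop:dehngen}), so $\Phi$ vanishes on $\pmap_{\mathrm c}(S)$, and $\ker\Phi$ is closed because $\Phi$ is continuous -- indeed locally constant on the stabilizer neighbourhoods $U(A)$ of the permutation topology. The reverse inclusion $\ker\Phi\subseteq\overline{\pmap_{\mathrm c}(S)}$ is the real work: a pure mapping class of vanishing flux must be exhibited as a limit of compactly supported ones. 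I would argue by induction along an exhaustion $\{K_n\}$ of $S$ by finite-type subsurfaces with separating boundary: at stage $n$ one post-composes $f$ with a compactly supported correction so that the result agrees with $f$ on $K_n$ and is the identity outside $K_{n+1}$; the hypothesis $\Phi(f)=0$ is precisely what makes the genus counts on the two sides of each component of $\partial K_n$ match up so that such a correction exists (when the flux is nonzero this is obstructed, exactly as in the Jacob's ladder discussion). The resulting sequence of compactly supported maps converges to $f$.

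With these in hand the conclusion is formal. We have $A_S=\pmap(S)/\overline{\pmap_{\mathrm c}(S)}=\pmap(S)/\ker\Phi$, so $\Phi$ descends to an isomorphism $\bar\Phi\colon A_S\xrightarrow{\cong} H^1_{sep}(\hat S,\Z)$, and $\bar\Phi\circ(\pi\circ\kappa)=\Phi\circ\kappa=\mathrm{id}$. Hence $\pi\circ\kappa$ is injective; it is also surjective, since for $[f]\in A_S$ one has $f\,\kappa(\Phi(f))^{-1}\in\ker\Phi$ and therefore $[f]=\pi\bigl(\kappa(\Phi(f))\bigr)$. Thus $\pi\circ\kappa$ is an isomorphism, and the identification $\kappa(H^1_{sep}(\hat S,\Z))=\prod_{i=1}^{r}\langle h_i\rangle$ with $r$ the rank of $H_1^{sep}(\hat S,\Z)$ was built in at the second step. (As an alternative route to surjectivity one may invoke Patel--Vlamis's topological generation result, Proposition~\ref{prop:generate}, once one verifies that $\kappa(H^1_{sep}(\hat S,\Z))$ is closed in $\pmap(S)$.) The one genuinely hard ingredient is the constructive induction showing that vanishing flux forces approximability by Dehn twists; the flux homomorphism's bookkeeping and the free-abelian structure of $H_1^{sep}(\hat S,\Z)$ are by comparison routine.
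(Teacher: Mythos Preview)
Your approach is essentially the paper's: both construct a genus-flux homomorphism (your $\Phi(f)([c])$ is the paper's $\varphi_{[c]}(f)$), build pairwise-disjoint handle shifts $h_i$ dual to a basis of $H_1^{sep}(\hat S,\Z)$ so that $\Phi\circ\kappa=\mathrm{id}$, and isolate $\ker\Phi=\overline{\pmap_{\mathrm c}(S)}$ as the substantive step.

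One genuine technical gap: in defining $\Phi(f)([c])$ you assert that since $f(c)$ is homologous to $c$ it ``can be taken disjoint from $c$, the two curves co-bounding a subsurface $W$''. This is false in general---two separating simple closed curves inducing the same clopen partition of $\mathrm{Ends}(\hat S)$ may nonetheless have essential intersections, so no such $W$ exists. The paper avoids this by never requiring disjointness: it fixes a compact subsurface $R$ with separating boundary containing $c\cup f(c)$, picks the boundary component $\partial_0$ of $R$ on the $e_v$-side, and sets the flux to $\mathfrak g_R(f(c))-\mathfrak g_R(c)$, where $\mathfrak g_R(\cdot)$ is the genus of the component of $R\setminus(\cdot)$ containing $\partial_0$. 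This is well defined regardless of how $c$ and $f(c)$ sit relative to each other, and one checks independence of $R$ and of the representative of $[c]$. With this fix your outline goes through, and the remaining steps---the handle-shift section and the exhaustion argument for $\ker\Phi\subseteq\overline{\pmap_{\mathrm c}(S)}$---match the paper's line for line.
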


\begin{remark}
    If $S$ has one end accumulated by genus, then $\kappa (H^1_{sep}(\hat{S},\Z))$ is trivial, since by Remark~\ref{rmk:sep} any separating simple closed curve bounds a subsurface and therefore is trivial in $H_1^{sep}(\hat{S},\Z)$. Then $\pmap(S) = \overline{\pmap_\mathrm{c}(S)}$ and this proves Proposition~\ref{prop:dehngen}.
\end{remark}

\begin{remark}\label{rem:zn-1}
     If $S$ has $n$ ends accumulated by genus, then there are exactly $n-1$ homology classes of separating simple closed curves and $\prod^n_{i=1}\langle h_i\rangle = \Z^{n-1}$.
\end{remark}

We shall assume that $S$ has no boundary for simplicity and follow the proof of Theorem~\ref{thm:cohomology} given by Aramayona-Patel-Vlamis. In the presence of boundary components, the same arguments work with slight modifications.

\begin{definition}
    Let $\Gamma$ be a graph with vertex set $V$ and edge set $E$. For a set $U$, a map $c: V\longrightarrow U$ is called a \emph{$U$-coloring}. If for any two adjacent vertices $u,w$, $c(u) \not= c(w)$, then $c$ is a \emph{proper $U$-coloring}.
\end{definition}

In particular, we will construct a proper \emph{$\Z$-coloring}.

Any oriented separating simple closed curve $\gamma$ partitions $\mathrm{Ends}(S)$ into two disjoint open sets, where some ends are on the left of $\gamma$, say $v^-$, and some are on the right, say $v^+$. This means that the homology class $v$ of $\gamma$ is determined, up to orientation, by the partition $v^+ \cup v^- = \mathrm{Ends}(S)$. Observe that if either one of $v^\pm$ is empty, then $v$ is the boundary of a compact subset and therefore is trivial. If $v^+$ or $v^-$ consists of a single end, we say $v$ separates that end. We call a homology class simple if there exists a simple closed curve representing that homology class.

For a nonzero element $v\in H_1(S,\Z)$, we denote by $\mathcal{C}_v(S)$ the subgraph of the curve graph consisting of the isotopy classes of simple closed curves that can be oriented to represent $v$.

Let $S$ be an infinite-type surface and fix a simple nonzero separating homology class $v$, and let $\gamma$ be an oriented simple closed curve representing $v$. Pick an end $e_v$ in $v^-$.

For any $c\in \mathcal{C}_v(S)$, $c\cup \gamma$ is compact, and thus there exists a connected compact subsurface $R$ such that each component of $\p R$ is separating and $c\cup \gamma$ lies inside the interior of $R$. There is a boundary component $\p_0$ of $R$ such that $e_v$ and $\mathrm{int}(R)$ are on different sides of $\p_0$. Let $\mathfrak{g}_R(c)$ and $\mathfrak{g}_R(\gamma)$ be the number of genera of the subsurface $R\setminus c$ and $R\setminus \gamma$ containing $\p_0$ respectively.

\begin{lemma}
    The map
    \[
    \phi_\gamma(c) = \mathfrak{g}_R(c) - \mathfrak{g}_R(\gamma).
    \]
    is a proper $\Z$-coloring of $\mathcal{C}_v(S)$.
\end{lemma}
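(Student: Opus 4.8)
Throughout, write $A_c$ for the component of $S\setminus c$ containing the end $e_v$; since $c$ represents $v$, the ends of $A_c$ are exactly $v^-$, so $A_c$ does not depend on which end of $v^-$ we single out. The plan is to recast $\mathfrak{g}_R(c)$ invariantly, establish well-definedness, and then prove properness. For the first step: $c$ is separating in $S$ and lies in $\mathrm{int}(R)$ with $R$ connected, hence $c$ is separating in $R$, and being two-sided it cuts $R$ into exactly two components, one of which is the connected set $R\cap A_c$. If $\partial_0$ is any boundary circle of $R$ with $e_v$ and $\mathrm{int}(R)$ on opposite sides, then the component of $S\setminus\partial_0$ not containing $\mathrm{int}(R)$ is a neighborhood of $e_v$ and so meets $A_c$; being connected and disjoint from $c\subset\mathrm{int}(R)$, it lies in $A_c$, hence $\partial_0\subseteq\overline{A_c}$. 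Therefore the component of $R\setminus c$ meeting $\partial_0$ is $R\cap A_c$, so $\mathfrak{g}_R(c)=\mathrm{genus}(R\cap A_c)$, independently of the choice of $\partial_0$; in particular $\phi_\gamma(c)\in\Z$.

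Next, well-definedness: I claim $\mathfrak{g}_R(c)-\mathfrak{g}_R(\gamma)$ is independent of $R$. Since $R$ is compact it has finitely many complementary components, and absorbing the compact ones one at a time (each is bounded by separating circles of $\partial R$, so the result is again admissible and connected) yields an admissible $R^{+}\supseteq R$ no complementary component of which is compact; such surfaces containing any prescribed compact set exist and are cofinal, so it suffices, passing to a common admissible enlargement of the same kind, to treat the case $R\subseteq R'$ with both having no compact complementary component. Each component of $R'\setminus R$ lies in some component $Q$ of $S\setminus\mathrm{int}(R)$; $Q$ is disjoint from $c\cup\gamma$ and carries an end, which lies in $v^-$ or in $v^+$, and this alternative decides both whether $Q\subseteq A_c$ and whether $Q\subseteq A_\gamma$, in the same way, since $A_c$ and $A_\gamma$ have the same ends. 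Hence $(R'\setminus R)\cap A_c=(R'\setminus R)\cap A_\gamma$, and likewise the circles of $\partial R$ (resp. $\partial R'$) on the $e_v$-side of $c$ are exactly those on the $e_v$-side of $\gamma$. Since $\overline{R'\cap A_c}$ is obtained from $\overline{R\cap A_c}$ by gluing on these pieces along these circles, and Euler characteristic is additive under gluing along circles, a short computation with Euler characteristics shows $\mathrm{genus}(R'\cap A_c)-\mathrm{genus}(R\cap A_c)=\mathrm{genus}(R'\cap A_\gamma)-\mathrm{genus}(R\cap A_\gamma)$, i.e. $\mathfrak{g}_{R'}(c)-\mathfrak{g}_{R'}(\gamma)=\mathfrak{g}_R(c)-\mathfrak{g}_R(\gamma)$. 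An entirely similar comparison inside a common admissible $R$ shows the value depends only on the isotopy class of $c$, so $\phi_\gamma\colon\mathcal{C}_v(S)\to\Z$ is a well-defined $\Z$-coloring.

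For properness, let $c\neq c'$ be adjacent vertices of $\mathcal{C}_v(S)$, realized by disjoint simple closed curves oriented so that $[c]=[c']=v$. As $c'$ is disjoint from $c$ and $A_c$ is connected, disjoint from $c'$, and contains $e_v\in A_{c'}$, we have $\overline{A_c}\subseteq\overline{A_{c'}}$ or the reverse; assume the former. Then $M:=A_{c'}\setminus\overline{A_c}$ is open with $\partial M=c\sqcup c'$, and any end of $S$ contained in $M$ would lie in $v^+$ (as $M$ avoids $\overline{A_c}$) and in $v^-$ (as $M\subseteq A_{c'}$), hence in $v^+\cap v^-=\varnothing$; so $M$ contains no end of $S$, $\overline M$ is compact, and $\overline M$ has two boundary circles and no punctures. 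Thus $\overline M$ is an annulus or has genus $\geq1$; an annulus would make $c$ isotopic to $c'$, so $\mathrm{genus}(\overline M)\geq1$. Choosing an admissible $R$ with no compact complementary component containing $c\cup c'\cup\gamma$ and $\overline M$, the surface $\overline{R\cap A_{c'}}$ is $\overline{R\cap A_c}$ with $\overline M$ glued on along $c$, so $\mathrm{genus}(R\cap A_{c'})=\mathrm{genus}(R\cap A_c)+\mathrm{genus}(\overline M)$. By well-definedness, $\phi_\gamma(c)-\phi_\gamma(c')=\mathfrak{g}_R(c)-\mathfrak{g}_R(c')=-\mathrm{genus}(\overline M)\leq-1$, so $\phi_\gamma(c)\neq\phi_\gamma(c')$, and $\phi_\gamma$ is a proper $\Z$-coloring.

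I expect the well-definedness to be the delicate part: one must verify that admissible subsurfaces with no compact complementary component are cofinal, and that the genus change from $R$ to $R'$ is governed by combinatorial data (which complementary pieces and which boundary circles sit on the $e_v$-side) that is the same for $c$ and for $\gamma$ — this is exactly where the hypothesis that $\partial R$ consists of separating curves, and the freedom to enlarge $R$, are genuinely used. Properness, by contrast, is short once one observes that the region between two disjoint, non-isotopic, homologous separating curves is a compact surface of positive genus.
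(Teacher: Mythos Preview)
Your argument is correct and, for properness, follows the same line as the paper: two adjacent vertices of $\mathcal{C}_v(S)$ are disjoint non-isotopic curves in the same homology class, hence cobound a compact subsurface $F$ of positive genus, and $|\phi_\gamma(c)-\phi_\gamma(c')|=\mathrm{genus}(F)>0$. The paper states exactly this in one line.

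Where you differ is in the treatment of well-definedness. The paper's proof of the lemma addresses only properness; independence from the choice of $R$ (and of $e_v$) is relegated to a subsequent remark that appeals to the interpretation of $\phi_\gamma(c)$ as a signed genus count of the region between $\gamma$ and $c$. You instead give a direct argument: recast $\mathfrak{g}_R(c)$ as $\mathrm{genus}(R\cap A_c)$, reduce to admissible $R$ with no compact complementary component, and use that each such complementary piece lies on the $e_v$-side of $c$ if and only if it lies on the $e_v$-side of $\gamma$ (both being determined by which ends it carries). This is a genuine addition of rigor over the paper's treatment, and your identification of the key mechanism---that the boundary circles of $\partial R$ and the complementary pieces of $R'\setminus R$ are distributed identically between the two sides of $c$ and of $\gamma$---is exactly the point that makes the cofinal comparison go through. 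Your final paragraph correctly anticipates that properness is the easy part and well-definedness the subtle one; the paper's exposition takes the opposite emphasis.
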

\begin{proof}
    Observe that if $b,c \in \mathcal{C}_v(S)$ are adjacent, then they must cobound a compact surface $F$ with
    \[
    0 < \mathrm{genus}(F)  = |\phi_\gamma(b)- \phi_\gamma(c)|,
    \]
    which means that $\phi_\gamma$ is a proper $\Z$-coloring.
\end{proof}
 
\begin{remark}
    The map $\phi_\gamma(c)$ gives the sum of the signed number of genera of the subsurface(s) bounded by $\gamma$ and $c$, where the sign is determined by whether a genus is on the left of $\gamma$ or on its right. It follows that $\phi_\gamma$ does not depend on the choice $R$. Similarly, $\phi_\gamma$ does not depend on the choice of $e_v$ either.
\end{remark}

We will use the map $\phi_\gamma$ to construct a homomorphism from $\pmap(S)$ to $\Z$. The next lemma gives a few key properties of $\phi_\gamma$.

\begin{lemma}\label{lem:phi}
    If $v\in H_1^{sep}(S,\Z)$ is simple and nonzero, $\gamma$ and $\delta$ are oriented simple closed curves representing $v$ and $f\in \pmap(S)$, then
    \begin{enumerate}
        \item $(\phi_\gamma\circ f)(c) - \phi_\gamma(c) = \phi_\gamma(f(\gamma))$ for all $c\in \mathcal{C}_v(S)$.
        \item $\phi_\gamma(c) - \phi_\delta(c) = \phi_\gamma(\delta)=-\phi_\delta(\gamma)$ for all $c\in \mathcal{C}_v(S)$.
        \item $\phi_\gamma\circ f- \phi_\gamma = \phi_\gamma - \phi_{f(\gamma)}$.
    \end{enumerate}
\end{lemma}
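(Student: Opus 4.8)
The plan is to reduce all three identities to an elementary arithmetic of the counting function $\mathfrak{g}_R$ together with one equivariance statement for $\phi$. The first observation is that since $\gamma$, $\delta$, $c$ (and, when $f$ is in play, also $f(\gamma)$ and $f(c)$) are finitely many simple closed curves all representing $v$, their union is compact, so I can choose a single connected compact subsurface $R$ with separating boundary whose interior contains all of them, a single reference end $e_v \in v^-$, and the corresponding distinguished boundary component $\partial_0 \subset \p R$. By the Remark following the definition of $\phi_\gamma$, the value $\phi_\gamma(c)$ is independent of these choices, so throughout a given identity I may assume every $\phi_\bullet(\bullet)$ is evaluated using this common data, giving $\phi_\gamma(c) = \mathfrak{g}_R(c) - \mathfrak{g}_R(\gamma)$ for all curves at hand. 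From this I read off the three basic properties I will use: $\phi_\gamma(\gamma) = 0$; antisymmetry $\phi_\gamma(\delta) = -\phi_\delta(\gamma)$; and the cocycle identity $\phi_\gamma(b) = \phi_\gamma(c) + \phi_c(b)$ for $b,c \in \mathcal{C}_v(S)$ — each being a one-line cancellation of $\mathfrak{g}_R$-terms.

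\textbf{Equivariance.} Next I would prove: for $f \in \pmap(S)$, one has $f(\gamma), f(c) \in \mathcal{C}_v(S)$ and $\phi_{f(\gamma)}(f(c)) = \phi_\gamma(c)$. The membership holds because $f$ is orientation-preserving and fixes every end, so $f(\gamma)$ is a separating simple closed curve inducing the same oriented partition $v^+ \cup v^-$ of $\mathrm{Ends}(S)$ as $\gamma$, hence represents $v$; likewise for $c$. For the equality, push forward the auxiliary data: $f(R)$ is again a connected compact subsurface with separating boundary containing $f(\gamma) \cup f(c)$ in its interior, $f(e_v) = e_v \in v^-$, and $f(\partial_0)$ separates $e_v$ from $\mathrm{int}(f(R))$, so $f(\partial_0)$ is a legitimate distinguished boundary component for computing $\phi_{f(\gamma)}(f(c))$. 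Since $f$ maps the component of $R \setminus c$ meeting $\partial_0$ homeomorphically onto the component of $f(R) \setminus f(c)$ meeting $f(\partial_0)$ and preserves genus, $\mathfrak{g}_{f(R)}(f(c)) = \mathfrak{g}_R(c)$, and similarly $\mathfrak{g}_{f(R)}(f(\gamma)) = \mathfrak{g}_R(\gamma)$; subtracting gives the claim.

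\textbf{Deducing (1)--(3).} Now everything is formal. For (1): by the cocycle identity applied to the triple $\gamma, f(\gamma), f(c)$ one has $\phi_\gamma(f(c)) = \phi_\gamma(f(\gamma)) + \phi_{f(\gamma)}(f(c))$, and equivariance turns the last term into $\phi_\gamma(c)$; subtracting $\phi_\gamma(c)$ yields $(\phi_\gamma \circ f)(c) - \phi_\gamma(c) = \phi_\gamma(f(\gamma))$. For (2): with the common $R$, $\phi_\gamma(c) - \phi_\delta(c) = \bigl(\mathfrak{g}_R(c) - \mathfrak{g}_R(\gamma)\bigr) - \bigl(\mathfrak{g}_R(c) - \mathfrak{g}_R(\delta)\bigr) = \mathfrak{g}_R(\delta) - \mathfrak{g}_R(\gamma) = \phi_\gamma(\delta)$, and $\phi_\gamma(\delta) = -\phi_\delta(\gamma)$ is the antisymmetry property. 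For (3): evaluate both sides at an arbitrary $c \in \mathcal{C}_v(S)$; the left side equals $\phi_\gamma(f(c)) - \phi_\gamma(c) = \phi_\gamma(f(\gamma))$ by (1), and the right side equals $\phi_\gamma(c) - \phi_{f(\gamma)}(c) = \phi_\gamma(f(\gamma))$ by (2) with $\delta = f(\gamma)$; since the two constant values coincide, the functions are equal.

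\textbf{Main obstacle.} The arithmetic is trivial; the genuine content is the bookkeeping in the first two steps — justifying that one auxiliary region $R$, one reference end $e_v$, and its distinguished boundary component can be used coherently for all the curves (and their $f$-images) occurring in a given identity, and that the independence assertion of the Remark really licenses this substitution. The one spot where the hypothesis $f \in \pmap(S)$ is essential, beyond it being a homeomorphism, is in showing $f(\gamma) \in \mathcal{C}_v(S)$ with the correctly-oriented class $v$ rather than $-v$: this uses that $f$ preserves orientation and fixes each end individually, not merely the set of ends. Everything else is a formal consequence.
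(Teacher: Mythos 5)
Your proof is correct and follows essentially the same route as the paper: all three identities are reduced to telescoping cancellations of $\mathfrak{g}_R$-terms over a single common choice of $R$, with (3) obtained by combining (1) and (2) at $\delta = f(\gamma)$. The one difference is that you isolate and prove the equivariance $\phi_{f(\gamma)}(f(c)) = \phi_\gamma(c)$ (and the membership $f(\gamma) \in \mathcal{C}_v(S)$) explicitly, whereas the paper uses this silently in the step $\mathfrak{g}_R(f(c)) - \mathfrak{g}_R(f(\gamma)) = \phi_\gamma(c)$; your version is the more complete one.
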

\begin{proof}
    Choose an appropriate $R$. A direct computation gives the first property:
    \begin{align*}
        \phi_\gamma(f(c)) &= \mathfrak{g}_R(f(c)) - \mathfrak{g}_R(\gamma) \\
        &=\mathfrak{g}_R(f(c)) - \mathfrak{g}_R(f(\gamma)) + \mathfrak{g}_R(f(\gamma)) - \mathfrak{g}_R(\gamma) \\
        &= \phi_\gamma(c) + \mathfrak{g}_R(f(\gamma)) - \mathfrak{g}_R(\gamma) \\
        &= \phi_\gamma(c) + \phi_\gamma(f(\gamma)).
    \end{align*}
    Similarly, 
    \begin{align*}
        \phi_\gamma(c) - \phi_\delta(c) &= \mathfrak{g}_R(c) - \mathfrak{g}_R(\gamma) - \mathfrak{g}_R(c) + \mathfrak{g}_R(\delta) \\
        &= \mathfrak{g}_R(\delta)-\mathfrak{g}(\gamma) \\
        &= \phi_\gamma(\delta).
    \end{align*}
    Combining the first and second properties with $\delta = f(\gamma)$ gives the third property.
\end{proof}

Recall that a handle shift $h$ has an attracting end and a repelling end. For a simple $v\in H_1^{sep}(S,\Z)$, we say that $v$ \emph{cuts} $h$ if the attracting and repelling ends of $h$ are on the opposite sides of $v$. We are ready for the homomorphism from $\pmap(S)$ to $\Z$. 

\begin{proposition}\label{prop:properties}
    Let $S$ be an infinite genus surface. Let $v\in H_1^{sep}(S,\Z)$ be simple and nonzero. If $\gamma$ is any oriented simple closed curve representing $v$, then
    \[
    \varphi_v \colon \pmap(S) \longrightarrow \Z
    \]
    given by
    \[
    \varphi_v(f) = \phi_\gamma(f(\gamma)),
    \]
    is a wel-defined homomorphism. Moreover,
    \begin{enumerate}
        \item $\varphi_v(f) = 0$ for all $f\in \overline{\pmap_\mathrm{c}(S)}$.
        \item $\varphi_{-v}=-\varphi_v$.
        \item If $h\in \pmap(S)$ is a handle shift, then $\varphi_v(h) \not= 0$ if and only if $v$ cuts $h$.
        \item $\varphi_v \not= 0$.
    \end{enumerate}
\end{proposition}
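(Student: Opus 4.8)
The plan is to establish well-definedness and the homomorphism property first, and then verify the four listed properties in turn, relying heavily on Lemma~\ref{lem:phi}.

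For \emph{well-definedness}, the only choice made is that of the representative curve $\gamma$ of $v$; I would take another oriented representative $\delta$ of $v$ and show $\phi_\gamma(f(\gamma)) = \phi_\delta(f(\delta))$. Using property (2) of Lemma~\ref{lem:phi} to compare $\phi_\gamma$ and $\phi_\delta$ on $\mathcal C_v(S)$, together with the fact that $f$ is a pure mapping class (so $f(\gamma),f(\delta)\in\mathcal C_v(S)$ and $f(\delta)$ is isotopic to the image of $\delta$), the discrepancy terms $\phi_\gamma(\delta)$ and $\phi_\delta(\gamma)$ cancel after applying $f$, since $f$ preserves the cobounded-genus data. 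For the \emph{homomorphism property}, given $f,g\in\pmap(S)$ I would compute
\[
\varphi_v(fg) = \phi_\gamma(fg(\gamma)) = \phi_\gamma\bigl(f(g(\gamma))\bigr).
\]
Applying property (1) of Lemma~\ref{lem:phi} with $c = g(\gamma)$ gives $\phi_\gamma(f(g(\gamma))) = \phi_\gamma(g(\gamma)) + \phi_\gamma(f(\gamma)) = \varphi_v(g) + \varphi_v(f)$, which is the desired additivity (the group $\Z$ being abelian, the order is immaterial).

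For property (1), if $f\in\pmap_\mathrm{c}(S)$ is supported on a compact set $K$, choose the representative $\gamma$ disjoint from $K$; then $f(\gamma)=\gamma$ up to isotopy, so $\varphi_v(f)=\phi_\gamma(\gamma)=0$. Since $\varphi_v$ is a continuous homomorphism to the discrete group $\Z$, its kernel is closed, so it contains $\overline{\pmap_\mathrm{c}(S)}$; continuity follows because $\varphi_v$ is determined by the isotopy class of $f(\gamma)$, which is locally constant in the permutation/compact-open topology. Property (2) is immediate from property (2) of Lemma~\ref{lem:phi} (reversing the orientation of $\gamma$ negates the signed genus count), or directly from the sign conventions: $\phi_{\bar\gamma} = -\phi_\gamma$. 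For property (3), if $h$ is a handle shift with attracting end $e_+$ and repelling end $e_-$, and $v$ does not cut $h$, one can isotope a representative $\gamma$ of $v$ off the embedded model surface $\Sigma$ supporting $h$ — because $e_+$ and $e_-$ lie on the same side of $v$, the region between $\gamma$ and a neighbourhood of those ends contains the genus that $h$ moves, so $h(\gamma)=\gamma$ and $\varphi_v(h)=0$. Conversely, if $v$ cuts $h$, then $\gamma$ must cross $\Sigma$ essentially, and $h$ shifts one handle across $\gamma$, changing the signed genus count by exactly $\pm 1$; hence $\varphi_v(h) = \pm 1 \neq 0$. This is the computation I expect to need the most care, since it requires fixing a concrete $\gamma$ adapted to the embedding of $\Sigma$ and tracking how the single translated handle contributes to $\mathfrak g_R(h(\gamma)) - \mathfrak g_R(\gamma)$. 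Finally, property (4) follows from (3): since $S$ has infinite genus and $v$ is a nonzero separating class, one can find a handle shift $h$ whose attracting and repelling ends straddle $v$ (place the model surface $\Sigma$ so one end of it converges to an end on the $v^+$ side and the other to an end on the $v^-$ side), so $\varphi_v(h)\neq 0$ and in particular $\varphi_v\not\equiv 0$.

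The main obstacle is property (3): making rigorous the claim that $v$ not cutting $h$ lets one push $\gamma$ off $\Sigma$, and that $v$ cutting $h$ forces exactly a unit change in $\varphi_v$. The cleanest route is probably to choose $\gamma$ to be one of the separating curves built into the model surface picture (a vertical curve in $\R\times[-1,1]$ separating the handles indexed by negative integers from those indexed by nonnegative integers), compute $\phi_\gamma(h(\gamma))$ directly from the explicit formula for $h$ — which moves each handle one step to the right, so exactly one handle crosses $\gamma$ — and then invoke the orientation-independence of $\phi_\gamma$ noted in the remark after its definition to transfer the conclusion to an arbitrary representative of $v$. Everything else is bookkeeping with the identities already proved in Lemma~\ref{lem:phi}.
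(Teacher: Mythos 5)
Most of your proposal tracks the paper's proof closely: well-definedness and the homomorphism property via Lemma~\ref{lem:phi} (your one-line use of its first identity with $c=g(\gamma)$ is in fact cleaner than the paper's chain through all three identities), property (2) by the sign convention, property (3) by the proper-arc picture on the embedded $\Sigma$ in the cutting case and a representative disjoint from the support in the non-cutting case, and property (4) by exhibiting a handle shift cut by $v$. These are all sound and essentially the paper's route.

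The genuine gap is in your proof of property (1). You claim that for $f\in\pmap_\mathrm{c}(S)$ supported on a compact set $K$ one can ``choose the representative $\gamma$ disjoint from $K$.'' This is false in general: the class $v$ is determined by a partition $v^+\sqcup v^-$ of $\mathrm{Ends}(S)$, and a simple closed curve disjoint from $K$ lies in a single component of $S\setminus K$, hence can only induce a partition that isolates the ends of that one component. Concretely, take $S$ with four ends accumulated by genus meeting at a central four-holed sphere, let $v$ be the class of a curve separating ends $\{e_1,e_2\}$ from $\{e_3,e_4\}$ where $e_1,e_2$ lie in different components of $S\setminus K$, and let $K$ contain the central piece; then every representative of $v$ meets $K$. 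The correct argument — and the one the paper uses — does not move $\gamma$ off the support at all: since $f$ is the identity outside $K$, it maps the side of $c$ containing $\partial_0$ homeomorphically (within any sufficiently large $R$) onto the corresponding side of $f(c)$, so $\mathfrak{g}_R(c)=\mathfrak{g}_R(f(c))$ and hence $\phi_c(f(c))=0$ directly. Your passage to the closure $\overline{\pmap_\mathrm{c}(S)}$ via local constancy of $f\mapsto[f(\gamma)]$ is fine and matches the paper's device of replacing $\bar f$ by a compactly supported $f$ agreeing with it on $c$.
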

\begin{proof}
    Fix a simple $v \in H_1^{sep}(S,\Z)$. Let $\gamma$ and $\delta$ be two nonisotopic simple closed curves representing $v$. By the second property of Lemma~\ref{lem:phi},
    \[
    \phi_\gamma(f(\gamma)) = \phi_\delta(f(\gamma)) - \phi_\delta(\gamma) = \phi_\delta(f(\delta)),
    \]
    and thus $\varphi_v$ is well-defined.

    Let $f,g\in \pmap(S)$. Then,
    \begin{align*}
        \varphi_v(fg) &= \phi_\gamma(fg(\gamma)) \\
        &= -\phi_{(fg)(\gamma)}(\gamma) \text{ (by the second property of Lemma~\ref{lem:phi})} \\
        &= -\phi_{g(\gamma)}(\gamma) + \phi_{g(\gamma)}(\gamma) - \phi_{(fg)(\gamma)}(\gamma) \\
        &= -\phi_{g(\gamma)}(\gamma) + (\phi_{g(\gamma)}\circ f)(\gamma)-\phi_{g(\gamma)}(\gamma) \text{(by the third property of Lemma~\ref{lem:phi})} \\
        &= -\phi_{g(\gamma)}(\gamma) + \phi_{g(\gamma)}(f(g(\gamma))) \text{(by first property of Lemma~\ref{lem:phi})} \\
        &= \varphi_v(g) + \varphi_v(f)
    \end{align*}

Now let $\bar{f}\in \overline{\pmap_\mathrm{c}(S)}$, $c \in \mathcal{C}_v(S)$ and $f\in \pmap_\mathrm{c}(S)$ such that $f(c) = \bar{f}(c)$. If we orient $c$ to represent $v$, $\mathfrak{g}_R(c)= \mathfrak{g}_R(f(c))$ for any appropriate $R$. Therefore,
\[
\varphi_v(\bar{f}) = \phi_c(\bar{f}(c)) = \phi_c(f(c)) = 0,
\]
which proves the first property.

Observe that
\begin{align*}
    \varphi_{-v}(f) &= \phi_{-\gamma}(f(-\gamma)) \\
    &= -\phi_{f(-\gamma)}(-\gamma) \\
    &= - [\mathfrak{g}_R(-\gamma)- \mathfrak{g}_R(f(-\gamma))] \\
    &= - [\mathfrak{g}_R(\gamma)- \mathfrak{g}_R(f(\gamma))] \\
    &= - \phi_{f(\gamma)}(\gamma) \\
    &= -\varphi_v(f),
\end{align*}
which proves the second property.

To prove the third property, let $h\in \pmap(S)$ be a handle shift cut by $v$. Then $h$ is supported on an embedded copy of the model surface $\Sigma$. Let $\gamma$ be a representative of $v$ such that the restriction of $\gamma$ to $\Sigma$ is a proper arc. Observe that $h(\gamma)$ is disjoint from $h$ and thus they cobound a surface with positive genus in $S$, which implies that $\varphi_v(h)\not=0$. Conversely, if $v$ does not cut $h$, then there exists a representative $\gamma$ of $v$ disjoint from the support of $h$, and thus $\varphi_v(h) =0$. 

    Recall that any nonzero $v\in H_1^{sep}(S,\Z)$ partitions $\mathrm{Ends}_\mathrm{np}(S)$ into two disjoint sets, and there exists a handle shift cut by $v$, which by the third property implies that $\varphi_v\not=0$, and this proves the fourth property.
\end{proof}

\begin{remark}\label{rmk:varphi}
    The map $\varphi_v$ assigns to each element $f\in \pmap(S)$, the signed genus count between a representative $\gamma$ of $v$ and its image under $f$.
\end{remark}

The homomorphism $\varphi_v$ is defined for every nonzero simple $v\in H_1^{sep}(S,\Z)$, and is an element of $H^1(\pmap(S),\Z)$. The main goal of the proof is to show that $\varphi_v$ can be defined for arbitrary elements of $H_1^{sep}(S,\Z)$, thereby showing the existence of a homomorphism from it to $H^1(\pmap(S),\Z)$. 

\begin{lemma}\label{lem:sum}
    Let $v_1,\dots,v_n$ be simple elements of $H_1^{sep}(S,\Z)$ such that there exist pairwise-disjoint oriented simple closed curves $\gamma_1,\dots,\gamma_n$ such that $\gamma_i$ represents $v_i.$ If $v= \sum v_i$ is simple, then,
    \[
    \varphi_v = \sum^n_{i=1}  \varphi_{v_i}.
    \]
\end{lemma}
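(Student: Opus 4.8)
The plan is to evaluate both homomorphisms on an arbitrary $f\in\pmap(S)$ straight from the definition $\varphi_v(f)=\phi_\gamma(f(\gamma))$ in Proposition~\ref{prop:properties}, after first replacing $\gamma$ by a representative of $v$ adapted to the curves $\gamma_i$. Concretely, I would fix pairwise-disjoint oriented simple closed curves $\gamma_1,\dots,\gamma_n$ with $[\gamma_i]=v_i$ and connect them by pairwise-disjoint arcs, lying in the complement of $\gamma_1\cup\dots\cup\gamma_n$, so that the union of these arcs with the $\gamma_i$ is a connected, tree-like graph; let $P$ be a regular neighbourhood of this graph. Since $P$ deformation retracts onto a graph it is planar, and it is a compact connected surface whose boundary consists of $n+1$ pairwise-disjoint simple closed curves, of which $n$ are isotopic to $\gamma_1,\dots,\gamma_n$ (rename these $\gamma_i$) and the remaining one I call $\gamma$. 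Orienting these curves so that $[\gamma]=\sum_{i=1}^n[\gamma_i]$ — which forces each $\gamma_i$ to have $P$ on one side and $\gamma$ to have $P$ on the other — I obtain $[\gamma]=v$, so $\gamma$ represents $v$ and, by the well-definedness part of Proposition~\ref{prop:properties}, $\varphi_v(f)=\phi_\gamma(f(\gamma))$. Write $S\setminus\partial P=P\sqcup Q_0\sqcup Q_1\sqcup\dots\sqcup Q_n$, with $Q_0$ the complementary region on the far side of $\gamma$ from $P$ and $Q_i$ the one on the far side of $\gamma_i$.

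Next I would run the genus count. Since $f\in\pmap(S)$ fixes every end of $S$, the curve $f(\gamma)$ induces the same partition of $\mathrm{Ends}(S)$ as $\gamma$, so $f(Q_0)$ is the complementary region of $f(\gamma)$ on the $v^-$-side, and likewise $f(Q_i)$ lies on the $v_i^-$-side of $f(\gamma_i)$; moreover $f(P)$ is planar, being a homeomorphic image of $P$. Fix one compact connected subsurface $R$ with separating boundary, large enough to contain $P\cup f(P)$ and chosen so that each of $\gamma,\gamma_1,\dots,\gamma_n,f(\gamma),f(\gamma_1),\dots,f(\gamma_n)$ separates $R$ and meets each complementary region in a connected subsurface (this can be arranged by enlarging $R$, and $\phi_\gamma$ is independent of such choices). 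Putting $g_k=\mathrm{genus}(R\cap Q_k)$, $\tilde g_k=\mathrm{genus}(R\cap f(Q_k))$ and $G=\mathrm{genus}(R)$, additivity of genus over the complementary pieces together with $\mathrm{genus}(R\cap P)=\mathrm{genus}(R\cap f(P))=0$ yields $G=\sum_{k=0}^n g_k=\sum_{k=0}^n\tilde g_k$. Taking the reference end $e_v$ inside $Q_0$ (so it lies in $v^-$ and in every $v_i^-$), the definitions unwind to $\mathfrak g_R(\gamma)=g_0$, $\mathfrak g_R(f(\gamma))=\tilde g_0$, $\mathfrak g_R(\gamma_i)=G-g_i$ and $\mathfrak g_R(f(\gamma_i))=G-\tilde g_i$. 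Therefore $\varphi_v(f)=\phi_\gamma(f(\gamma))=\tilde g_0-g_0$, while
\[
\sum_{i=1}^n\varphi_{v_i}(f)=\sum_{i=1}^n\bigl(\mathfrak g_R(f(\gamma_i))-\mathfrak g_R(\gamma_i)\bigr)=\sum_{i=1}^n(g_i-\tilde g_i)=(G-g_0)-(G-\tilde g_0)=\tilde g_0-g_0,
\]
which is the claimed identity.

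Conceptually the content is just that $\phi_\gamma(f(\gamma))$ records the signed amount of genus displaced across $\gamma$ by $f$ (Remark~\ref{rmk:varphi}), and that since $\gamma$ and $\gamma_1\cup\dots\cup\gamma_n$ cobound a genus-free surface, this displacement splits as a sum over the $\gamma_i$. I expect the real work to lie in the bookkeeping of the second step: deciding which complementary region sits on the $v^-$- (respectively $v_i^-$-) side of each curve and keeping the orientations consistent with $[\gamma]=\sum v_i$, handling the degenerate cases in which some of the regions $Q_k$ coincide, and — the most technical point — securing a single subsurface $R$ that is cut cleanly by all $2n+2$ of the curves involved, so that the additivity of genus applies exactly as used above.
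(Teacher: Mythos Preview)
Your proposal is correct and follows essentially the same approach as the paper: both choose a large compact subsurface $R$ containing the cobounding surface and its $f$-image, then compare the genera of the complementary pieces to see that the difference $\mathfrak g_R(f(\gamma))-\mathfrak g_R(\gamma)$ splits as the sum of the corresponding differences for the $\gamma_i$. The only notable distinction is that you explicitly construct the cobounding surface $P$ to be planar (as a regular neighbourhood of the $\gamma_i$ together with connecting arcs), whereas the paper works with an arbitrary compact $F$ cobounded by $-\gamma,\gamma_1,\dots,\gamma_n$ and carries the term $\mathrm{genus}(F)=\mathrm{genus}(f(F))$ through the computation before it cancels; your choice makes the bookkeeping marginally cleaner, but the argument is the same.
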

\begin{proof}
    Since $v= \sum v_i$, the curves $-\gamma,\gamma_1,\dots,\gamma_n$ bound a compact subsurface $F$ in $S$. By the second property of Proposition~\ref{prop:properties}, we can assume without loss of generality that the $\gamma_i$ are on the right of $\gamma$.

    Let $f\in \pmap(S)$ and $e_v \in v^-$. We can choose a compact subsurface $R$ such that every component of $\p R$ is separating and $F\cup f(F) \subset \mathrm{int}(R)$. Let $\p_0$ be the component of $\p R$ that separates $e_v$ and $\gamma,\gamma_1,\dots,\gamma_n$lie on different sides of $\p_0$.

    Note that
    \[
    \mathfrak{g}_R(\gamma) = \left(\sum^n_{i=1}\mathfrak{g}(\gamma_i)\right) - (n-1)\mathrm{genus}(R) - \mathrm{genus}(F),
    \]
    and
    \[
    \mathfrak{g}_R(f(\gamma)) = \left(\sum^n_{i=1}\mathfrak{g}(f(\gamma_i))\right) - (n-1)\mathrm{genus}(R) - \mathrm{genus}(f(F)).
    \]
    Since $f$ is a homeomorphism, $\mathrm{genus}(F) = \mathrm{genus}(f(F))$. It follows that
    \begin{align*}
        \varphi_v(f) &= \phi_\gamma(f(\gamma)) \\
        &= \mathfrak{g}(f(\gamma)) - \mathfrak{g}(\gamma) \\
        &= \left(\sum^n_{i=1}\mathfrak{g}(f(\gamma_i))\right) - \left(\sum^n_{i=1}\mathfrak{g}(\gamma_i)\right) \\
        &= \sum^n_{i=1}(\mathfrak{g}(f(\gamma_i) - \mathfrak{g}(\gamma_i))\\
        &= \sum^n_{i=1} \varphi_{v_i}(f).
    \end{align*}
\end{proof}

An exhaustion $\set{K_n}_{n\in\N}$ of $S$ is called a \emph{principal exhaustion} if each component of $\p K_n$ is separating and each component of $S\setminus K_n$ is of infinite-type.

\begin{lemma}\label{lem:exhhom}
    If $\set{K_n}_{n\in\N}$ is a principal exhaustion, then
    \[
    H_1^{sep}(S,\Z) = \varinjlim H^{sep}_1(K_n,\Z),
    \]
    and every nonzero element $v\in H_1^{sep}(S,\Z)$ can be written as a linear combination of homology classes represented by peripheral curves on some $K_n$.
\end{lemma}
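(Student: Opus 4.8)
The plan is to realize both sides as subgroups of $H_1(S,\Z)$. First I would invoke the standard fact that singular homology commutes with the direct limit along an exhaustion (every compact subset of $S$ lies in some $K_n$, since $K_n\subset\mathrm{int}(K_{n+1})$), so that $H_1(S,\Z)=\varinjlim H_1(K_n,\Z)$ with structure maps induced by the inclusions $\iota_n\colon K_n\hookrightarrow S$. Since filtered colimits of abelian groups are exact, this identifies $\varinjlim H_1^{sep}(K_n,\Z)$ --- once the transition maps are seen to be well defined --- with the increasing union $\bigcup_n(\iota_n)_*\bigl(H_1^{sep}(K_n,\Z)\bigr)$ of subgroups of $H_1(S,\Z)$. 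The lemma then reduces to proving the equality $H_1^{sep}(S,\Z)=\bigcup_n(\iota_n)_*H_1^{sep}(K_n,\Z)$ and describing each $H_1^{sep}(K_n,\Z)$ explicitly.

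The proof rests on two elementary facts. The first: for a compact connected surface $K$ with nonempty boundary, $H_1^{sep}(K,\Z)$ is exactly the subgroup spanned by the classes of the components of $\partial K$. Here a collar push shows each boundary component is isotopic in $K$ to a separating curve (cutting off an annulus), and conversely, if a separating curve $c$ cuts $K$ into two pieces meeting along $c$, then one of those pieces is a compact subsurface whose oriented boundary is $c$ together with the components of $\partial K$ it contains, so $[c]$ is, up to sign, the sum of those boundary classes. The second: if $\gamma$ is a separating simple closed curve of $S$ lying in $\mathrm{int}(K_m)$, then $(\iota_m)_*[\gamma]$ lies in the span of the classes of $\partial K_m$. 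To see this, write $S=S^+\sqcup_\gamma S^-$; since $\gamma$ avoids $\partial K_m$, every component of $\partial K_m$ lies wholly in $S^+$ or wholly in $S^-$, and $K_m\cap S^+$ is a subsurface of $K_m$ whose full topological frontier inside $K_m$ is $\gamma$ together with the $\partial K_m$-components on the $S^+$-side; as a $2$-chain it then exhibits $[\gamma]$ as $\pm$ the sum of those boundary classes. Applying this second fact with $\gamma$ a component of $\partial K_n$ and $m=n+1$ --- this is the one place the principal-exhaustion hypothesis (that boundary components separate $S$) is used --- shows the transition maps $H_1^{sep}(K_n,\Z)\to H_1^{sep}(K_{n+1},\Z)$ are well defined, so the directed system makes sense.

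Granting these, the two inclusions are short. For $\bigcup_n(\iota_n)_*H_1^{sep}(K_n,\Z)\subseteq H_1^{sep}(S,\Z)$: by the first fact each $H_1^{sep}(K_n,\Z)$ is generated by classes of components of $\partial K_n$, and each such component is a separating simple closed curve of $S$, hence has its class in $H_1^{sep}(S,\Z)$. For the reverse inclusion: $H_1^{sep}(S,\Z)$ is generated by classes $[\gamma]$ of separating simple closed curves of $S$, and choosing $m$ with $\gamma\subset\mathrm{int}(K_m)$, the second fact together with the first puts $[\gamma]$ in $(\iota_m)_*H_1^{sep}(K_m,\Z)$. For the final clause of the lemma I would take nonzero $v\in H_1^{sep}(S,\Z)$, use the equality just proved to get $n$ with $v\in(\iota_n)_*H_1^{sep}(K_n,\Z)$, and then apply the first fact to write $v$ as an integral combination of classes of components of $\partial K_n$ --- equivalently of peripheral curves on $K_n$ --- which is exactly what is asserted.

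The only step demanding genuine care is the second fact: a curve separating $S$ need not separate the compact piece $K_m$, and each ``half'' $K_m\cap S^{\pm}$ can be disconnected, so one cannot reason at the level of curves and must instead verify that the topological frontier of $K_m\cap S^+$ in $K_m$ consists precisely of $\gamma$ and the $\partial K_m$-components on that side; this rests only on $\gamma\subset\mathrm{int}(K_m)$, after which the $2$-chain bookkeeping (summing over components if need be) goes through verbatim. Everything else --- the first fact, exactness of filtered colimits, and homology commuting with the exhaustion --- is standard.
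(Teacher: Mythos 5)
Your proof is correct and follows essentially the same route as the paper's: reduce everything to the fact that on a compact surface with boundary a separating curve is homologous to a sum of boundary components, use compactness to place any separating curve of $S$ in some $K_n$, and use the principal-exhaustion hypothesis to see that the boundary classes of $K_n$ land in $H_1^{sep}(S,\Z)$; you simply supply details (well-definedness of the transition maps, the frontier argument for your second fact) that the paper leaves implicit. One small inaccuracy in your closing remark: a curve $\gamma$ separating $S$ and contained in $\mathrm{int}(K_m)$ does in fact separate $K_m$ (both $K_m\cap S^{+}$ and $K_m\cap S^{-}$ are nonempty, open and closed in $K_m\setminus\gamma$), though this does not affect your argument since, as you correctly note, these pieces can be disconnected and one must work with frontiers of $2$-chains rather than with curves.
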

\begin{proof}
     Since $\set{K_n}_{n\in\N}$ exhausts $S$, any nonzero element $v\in H_1^{sep}(S,\Z)$ is represented by a curve $\gamma$. Being compact, $\gamma$ lies in some $K_n$, so its homology class lies in the subgroup $H_1^{sep}(K_n,\Z)$. This shows that 
     \[
     H_1^{sep}(S,\Z) \subseteq \varinjlim H_1^{sep}(K_n,\Z). 
     \] 
     For the reverse inclusion, observe that any homology class $v_n \in H_1^{sep}(K_n,\Z)$ represents a separating curve in $S$, and hence is an element of $H_1^{sep}(S,\Z)$, giving
     \[
     H_1^{sep}(K_n,\Z)\subseteq \varinjlim   H_1^{sep}(S,\Z) . 
     \]

     Note that for any $K_n$, the group $H_1^{sep}(K_n,\Z)$ is generated by peripheral curves. This follows from the fact that on a compact surface, every separating simple closed curve is homologous to a sum of boundary components. Since every class in $H_1^{sep}(S,\Z)$ lies in some $H_1^{sep}(K_n,\Z)$, every homology class can be written as a sum of peripheral curves in some $K_n$, which proves the second fact.
\end{proof}

\begin{remark}\label{rmk:freeabe}
    The group $H_1^{sep}(S,\Z)$ is free abelian, which follows from the fact that each $H_1^{sep}(K_n,\Z)$ is free abelian, as each $K_n$ is a compact surface with boundary, and that a basis of $H_1^{sep}(K_{n+1},\Z)$ is obtained by adding the homology class of some (if any) peripheral curves of $K_{n+1}$. Therefore, $H_1^{sep}(S,\Z)$ has a basis given by the union of these nested bases:
    \[
    \bigcup_{n=1}^\infty B_n,
    \]
    where $B_n$ is the basis of $H_1^{sep}(K_n,\Z)$.
\end{remark}

By combining Lemma~\ref{lem:sum} and Lemma~\ref{lem:exhhom}, we can define $\varphi_v$ for any nonzero element of $H_1^{sep}(S,\Z)$.

\begin{proposition}
    Let $\set{K_n}_{n\in\N}$ be a principal exhaustion and $v$ be an arbitrary nonzero element of $H_1^{sep}(S,\Z)$. The homomorphism
    \[
    \varphi_v \colon \pmap(S) \longrightarrow \Z
    \]
    given by
    \[
    \varphi_v = \sum_{i=1}^n a_k\varphi_{v_k},
    \]
    is well-defined and does not depend on the choice of exhaustion.
\end{proposition}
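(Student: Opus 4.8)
The plan is to reduce the entire question to the single homological relation among the peripheral classes of a compact surface, using as the only tools Lemma~\ref{lem:sum}, the identity $\varphi_{-v}=-\varphi_v$ of Proposition~\ref{prop:properties}, and the description of $H_1^{sep}(K_n,\Z)$ in the proof of Lemma~\ref{lem:exhhom}. Once I show that the value of the given formula $\sum_k a_k\varphi_{v_k}$ depends only on the class $v$ — and not on how $v$ is written as a $\Z$-combination of pairwise-disjoint simple classes, nor on the exhaustion level at which such an expression is found — both ``well-defined'' and ``independent of the exhaustion'' follow at once, since no feature of the exhaustion beyond ``its pieces eventually contain the curves in question'' will have been used. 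Linearity in $f\in\pmap(S)$ is automatic, being a $\Z$-combination of the homomorphisms $\varphi_{v_k}$.

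\textbf{Key local identity.} First I would record: if $K$ is a connected compact subsurface with boundary curves $\gamma_1,\dots,\gamma_b$ oriented coherently as $\partial K$, and $w_k=[\gamma_k]\in H_1^{sep}(S,\Z)$ (with the convention $\varphi_{w_k}:=0$ when $w_k=0$), then $\sum_{k=1}^b\varphi_{w_k}=0$. Indeed $\sum_k w_k=0$, so $w_b=\sum_{k<b}(-w_k)$, realized by the pairwise-disjoint curves $\bar\gamma_k$; Lemma~\ref{lem:sum} then gives $\varphi_{w_b}=\sum_{k<b}\varphi_{-w_k}=-\sum_{k<b}\varphi_{w_k}$ by Proposition~\ref{prop:properties}(2). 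Next, given $v\neq 0$ and an expression $v=\sum_i a_i v_i$ with the simple $v_i$ represented by pairwise-disjoint simple closed curves $\gamma_i$ (the kind furnished by Lemma~\ref{lem:exhhom}), fix a principal exhaustion $\{K_n\}$ and choose $N$ so large that every $\gamma_i$ lies in $\mathrm{int}(K_N)$. Each $\gamma_i$, being separating in $S$, is separating in $K_N$, hence cobounds in $K_N$ a compact piece whose remaining boundary is a subset of the peripheral curves $\gamma^{(N)}_1,\dots,\gamma^{(N)}_b$ of $K_N$; writing $w_k=[\gamma^{(N)}_k]$ this yields $v_i=\sum_k c_{ik}w_k$ with $c_{ik}\in\{0,\pm1\}$ and with all the curves involved pairwise disjoint, so Lemma~\ref{lem:sum} together with Proposition~\ref{prop:properties}(2) gives $\varphi_{v_i}=\sum_k c_{ik}\varphi_{w_k}$. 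Therefore $\sum_i a_i\varphi_{v_i}=\sum_k\bigl(\sum_i a_i c_{ik}\bigr)\varphi_{w_k}$ while $v=\sum_k\bigl(\sum_i a_i c_{ik}\bigr)w_k$. Any two coefficient vectors $(\sum_i a_i c_{ik})_k$ arising this way for the same $v$ differ by an element of the relation lattice of $w_1,\dots,w_b$ in $H_1(K_N)$, which is exactly $\Z\cdot(1,\dots,1)$; by the local identity $(1,\dots,1)$ contributes $0$ to $\sum_k(\cdot)\varphi_{w_k}$. Hence $\sum_i a_i\varphi_{v_i}$ is determined by $v$, which is precisely what is needed.

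\textbf{Expected obstacle.} The conceptual core is short; the work is in the orientation bookkeeping and in one classical input. Concretely, I would need to: coherently orient $\partial K_N$ and check that the ``cobounding'' relations $v_i=\sum_k c_{ik}w_k$ hold as \emph{oriented} identities in $H_1(K_N)$ (so that Proposition~\ref{prop:properties}(2) can be applied with the right signs), reduce to $K_N$ connected (which one may arrange by tubing components of a principal exhaustion together), and confirm the relation-lattice claim: that the subgroup of $H_1(K_N,\Z)$ generated by the peripheral classes is free of rank $b-1$ with the map $\Z^b\to H_1(K_N,\Z)$, $e_k\mapsto w_k$, having kernel exactly $\Z\cdot(1,\dots,1)$. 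None of this is deep, but it is where a sign error or an overlooked degenerate case could slip in, so I would carry out those verifications in full.
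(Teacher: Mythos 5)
Your proof is correct and rests on the same two pillars as the paper's — Lemma~\ref{lem:sum} together with the homological structure of the peripheral classes of a compact piece — but it is organized differently and in fact establishes something slightly stronger. The paper fixes two levels $K_n\subset K_m$ of one exhaustion, chooses a maximal linearly independent subset of the peripheral classes of $K_n$, decomposes each such class as a sum of peripheral classes of $K_m$ (the boundary curves of the intermediate cobordism), and applies Lemma~\ref{lem:sum} to match the two resulting expressions of $\varphi_v$; independence of the exhaustion is then handled by passing to a common larger compact piece, exactly as you anticipate. You instead push \emph{every} admissible expression of $v$ down to the peripheral classes of a single large $K_N$ and invoke the fact that the only relation among the coherently oriented boundary classes is $\sum_k w_k=0$, which your key local identity converts into $\sum_k\varphi_{w_k}=0$. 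This buys you well-definedness with respect to an \emph{arbitrary} expression of $v$ as a $\Z$-combination of disjointly realized simple classes, not just the two canonical expressions the paper compares, and it makes explicit the linear-algebra fact the paper leaves implicit in its choice of a maximal linearly independent subset.

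One point to add to the verifications you already flagged: the relation lattice you need is that of $w_1,\dots,w_b$ in $H_1(S,\Z)$, since two coefficient vectors represent the same $v$ as an element of $H_1^{sep}(S,\Z)\subseteq H_1(S,\Z)$, not merely in $H_1(K_N,\Z)$. These lattices coincide precisely because the exhaustion is principal: every component of $S\setminus K_N$ is of infinite type, so no component of $K_m\setminus \mathrm{int}(K_N)$ is a disk and the maps $H_1(K_N,\Z)\to H_1(K_m,\Z)\to H_1(S,\Z)$ are injective on the peripheral subgroup. Without that hypothesis extra relations could appear in the limit and the step ``the coefficient vector is determined by $v$ up to $\Z\cdot(1,\dots,1)$'' would fail; it is the only place the principality of the exhaustion enters your argument, so it deserves an explicit sentence.
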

\begin{proof}
    Let $\set{K_n}_{n\in \N}$ be a principal exhaustion and fix $n,m\in \N$ with $m>n$. The compact surfaces $K_n$ has $k_n$ boundary components and $K_m$ has $k_m$ boundary components. We can choose a maximal linearly independent set of simple peripheral homology classes $v_1,\dots,v_{k_n-1}$ so that $v_{k_n} = \sum^{k_n-1}_{i=1}v_i$. Then there exists integers $n_i \in \set{0,\dots,k_m}$ for $i\in \set{0,\dots,k_n}$ with $n_0=0$ and simple peripheral homology classes $w_1,\dots,w_{k_m}$ such that
    \[
    v_i = \sum^{n_i}_{j=n_{i-1}+1}w_j.
    \]
    An application of Lemma~\ref{lem:sum} yields
    \[
    \varphi_{v_i} = \sum^{n_i}_{j=n_{i-1}+1}\varphi_{w_j}.
    \]
    For any $v\in H_1^{sep}(S,\Z)$,
    \[
    v = \sum^{k_n}_{i=1}a_iv_i \quad \text{and}\quad v= \sum^{k_m}_{j=1}b_jw_j.
    \]
    By the above sum,
    \[
    \sum^{k_n}_{i=1}a_i\varphi_{v_i} = \sum^{k_m}_{j=1}b_j\varphi_{w_j}
    \]
    with $b_j = a_i$ for $n_{i-1}<j \leq n_i$. This shows that $\varphi_v$ is well-defined with respect to a choice of principal exhaustion.

    Now let $\set{K'_n}_{n\in\N}$ be another principal exhaustion different from $\set{K'_n}_{n\in\N}$. For any $v\in H_1^{sep}(S,\Z)$ contained in both $H_1^{sep}(K_n,\Z)$ and $H_1^{sep}(K'_m,\Z)$. By definition of exhaustions, we can choose $N\in \N$ such that $K_n$ and $K'_m$ are both contained in $K_N$. By applying a similar argument as above, it is clear that the $\varphi_v$ does not depend on the choice of exhaustion.
\end{proof}

A homomorphism $f\colon G \rightarrow H$ is said to \emph{factor through} the homomorphism $g\colon K \rightarrow H$ if there exists a homomorphism $\hat{f}\colon G\rightarrow K$ and such that $f = \hat{f}\circ g$. Similarly, a homomorphism $f\colon G \rightarrow H$ is said to \emph{factor through} $K$ if there exists homomorphisms $\hat{f}\colon G\rightarrow K$ and $\bar{f}\colon K\rightarrow H$ such that $f = \bar{f}\circ \hat{f}$.

\begin{proposition}\label{prop:homo}
    Let $S$ be an infinite-genus surface. The map
    \[
    \Phi \colon H^{sep}_1(S,\Z) \longrightarrow H^1(\pmap(S),\Z)
    \]
    given by $\Phi(v)=\varphi_v$ is a homomorphism. Moreover, $\Phi$ factors through the homomorphism $\iota_*\colon H_1^{sep}(S,\Z) \rightarrow H^{sep}_1(\hat{S},\Z)$ yielding a monomorphism 
    \[
    \hat{\Phi}\colon H_1^{sep}(\hat{S},\Z) \longrightarrow H^1(\pmap(S), \Z).
    \]
\end{proposition}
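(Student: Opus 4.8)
The plan is to establish the three assertions in turn: that $\Phi$ is a homomorphism, that it vanishes on $\ker\iota_*$ (so it descends to $\hat{S}$), and that the descended map $\hat{\Phi}$ is a monomorphism. Throughout I identify $H^1(\pmap(S),\Z)$ with $\mathrm{Hom}(\pmap(S),\Z)$, so that the homomorphisms $\varphi_v\colon\pmap(S)\to\Z$ — already constructed for arbitrary classes in the proposition above defining $\varphi_v$ — are genuine elements of the target. For additivity of $\Phi$, fix a principal exhaustion $\set{K_n}_{n\in\N}$. Given $v,w\in H_1^{sep}(S,\Z)$, Lemma~\ref{lem:exhhom} puts both in $H_1^{sep}(K_n,\Z)$ for a common $n$, so we may write $v=\sum_i a_iv_i$ and $w=\sum_i b_iv_i$ with the $v_i$ the peripheral classes of $K_n$, realized by its pairwise disjoint boundary curves. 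By the definition of $\varphi_v$ for arbitrary classes (whose consistency rests on Lemma~\ref{lem:sum}), $\varphi_v=\sum_i a_i\varphi_{v_i}$ and $\varphi_w=\sum_i b_i\varphi_{v_i}$, whence $\varphi_{v+w}=\sum_i(a_i+b_i)\varphi_{v_i}=\varphi_v+\varphi_w$; thus $\Phi$ is a homomorphism.

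For the factoring, I first claim that $\ker\iota_*$ is exactly the subgroup generated by those peripheral basis classes of the $K_n$ whose bounded complementary component is planar. Indeed, capping the planar ends makes such a component compact, so the class dies in $H_1^{sep}(\hat{S},\Z)$; conversely, filling the planar complementary components turns a principal exhaustion of $S$ into one of $\hat{S}$ and carries the remaining \emph{genus-type} peripheral basis classes to a basis of $H_1^{sep}(\hat{S},\Z)$, so these stay independent under $\iota_*$ and no combination involving them can lie in $\ker\iota_*$. Using this together with Remark~\ref{rmk:freeabe} and the additivity just established, it suffices to verify $\varphi_v=0$ for a single \emph{simple} class $v$ whose representative $\gamma$ has a planar side $S^+$, so $\mathrm{genus}(S^+)=0$. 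For any $f\in\pmap(S)$, purity forces $f(\gamma)$ to induce the same partition of $\mathrm{Ends}(S)$, and $f(S^+)$ is a genus-$0$ side of $f(\gamma)$; choosing a compact $R$ with separating boundary containing $\gamma\cup f(\gamma)$ as in the definition of $\phi_\gamma$, additivity of genus across the separating curves $\gamma$ and $f(\gamma)$ — whose positive sides in $R$ are planar — gives $\mathfrak{g}_R(\gamma)=\mathrm{genus}(R)=\mathfrak{g}_R(f(\gamma))$, hence $\varphi_v(f)=\phi_\gamma(f(\gamma))=0$. Since moreover $\iota_*$ is surjective onto $H_1^{sep}(\hat{S},\Z)$ (a separating curve on $\hat{S}$ isotopes off the filled points), $\Phi$ descends to a well-defined homomorphism $\hat{\Phi}\colon H_1^{sep}(\hat{S},\Z)\to H^1(\pmap(S),\Z)$ with $\Phi=\hat{\Phi}\circ\iota_*$.

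It remains to prove $\hat{\Phi}$ injective. Since $H_1^{sep}(\hat{S},\Z)$ is free abelian, this reduces to showing that the images of the genus-type basis $\set{w_j}$, with simple lifts $v_j$ on $S$, are linearly independent in $\mathrm{Hom}(\pmap(S),\Z)$; equivalently, $\sum_j c_j\varphi_{v_j}=0$ forces all $c_j=0$. The tool is the third property of Proposition~\ref{prop:properties}: $\varphi_{v_j}(h)\neq0$ precisely when $v_j$ cuts the handle shift $h$. So it is enough to produce, for each index $j_0$, a handle shift $h_{j_0}$ cut by $v_{j_0}$ but by none of the other $v_j$; evaluating the relation at $h_{j_0}$ then gives $c_{j_0}\varphi_{v_{j_0}}(h_{j_0})=0$, hence $c_{j_0}=0$. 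Such an $h_{j_0}$ is built by embedding the handle-shift model surface $\Sigma$ so that its core arc crosses $\gamma_{j_0}$ once and misses every other $\gamma_j$. Arranging this \emph{simultaneously} over the whole basis is the main obstacle: it requires choosing the principal exhaustion, and the placements of the model surfaces, compatibly, and is in effect the construction of the pairwise-commuting \emph{dual} handle shifts that reappears in Theorems~\ref{thm:bigmap} and~\ref{thm:cohomology}. Granting this, $\set{\hat{\Phi}(w_j)}$ is independent, so $\hat{\Phi}$ is a monomorphism.
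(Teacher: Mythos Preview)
Your additivity argument and the paper's are essentially the same. Where you diverge is in handling the kernel. The paper does both inclusions at once: it simply invokes the fourth property of Proposition~\ref{prop:properties} to claim $\ker\Phi=\ker\iota_*$, and deduces that $\hat\Phi$ is a well-defined monomorphism in one line. You instead treat the two inclusions separately. For $\ker\iota_*\subseteq\ker\Phi$ you give a direct genus-count argument (planar side $\Rightarrow\mathfrak g_R(\gamma)=\mathrm{genus}(R)=\mathfrak g_R(f(\gamma))$), which is correct and in fact more transparent than what the paper writes, since property~4 as stated does not obviously give this direction. For $\ker\Phi\subseteq\ker\iota_*$ you bring in the dual handle-shift construction from the proof of Theorem~\ref{thm:cohomology}; that works, but it is heavier machinery than needed here, and leaves you with a forward dependence you flag but do not discharge.

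The over-complication is in your last paragraph: you ask that $h_{j_0}$ be cut by $v_{j_0}$ and by \emph{none} of the other $v_j$, and call this simultaneous arrangement ``the main obstacle''. But any linear dependence $\sum_j c_j\varphi_{v_j}=0$ in $\mathrm{Hom}(\pmap(S),\Z)$ has finite support, so for each $j_0$ you only need $h_{j_0}$ to avoid the \emph{finitely many} other $\gamma_j$ appearing in that relation. Since the $\gamma_j$ are pairwise disjoint peripheral curves of a principal exhaustion and each genus-type $\gamma_{j_0}$ has non-planar ends on both sides, embedding a model $\Sigma$ crossing $\gamma_{j_0}$ and disjoint from a prescribed finite subcollection is routine. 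With this correction the forward reference disappears and your injectivity argument is self-contained; it then amounts to a spelled-out version of exactly what the paper's appeal to property~4 is hiding.
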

\begin{proof}
    Let $v_1,v_2$ be two homology classes in $H_1^{sep}(S,\Z)$. Then 
    \[
    v_1 = \sum^n_{i=1}a_iw_i \quad \text{and} \quad v_2=\sum^m_{j=1}b_jw_j
    \]
    for a choice of basis $\set{w_i}$.
    By definition,
    \begin{align*}
    \Phi(v_1+v_2) = \varphi_{v_1+v_2} = a_1\varphi_{w_1}+\dots+ b_m\varphi_{w_m} = \sum^n_{i=1}a_i\varphi_{w_i} +\sum^m_{j=1}b_j\varphi_{w_j}
    = \Phi(v_1)+\Phi(v_2),
    \end{align*}
    which shows that $\Phi$ is a homomorphism. By the fourth property of Proposition~\ref{prop:properties}, 
    \[
    \mathrm{ker}\Phi=\mathrm{ker}\iota_*,
    \]
    which implies that $\hat{\Phi}$ is a well-defined group monomorphism such that $\hat{\Phi}\circ\iota_*=\Phi$.
\end{proof}

\begin{remark}
    Aramayona-Patel-Vlamis showed that the homomorphism $\hat{\Phi}$ is an isomorphism, but we omit the proof for conciseness.
\end{remark}

Analogously to Remark~\ref{rmk:varphi}, the map $\varphi_v$ assigns to each $f\in \pmap(S)$, the sum of the signed genus counts between $\gamma_i$ and $f(\gamma_i)$, weighted by the corresponding coefficients, where $\gamma_i$ is a representative of the linear factor $v_i$. By Proposition~\ref{prop:properties}, it is clear that $\varphi_v(f) = 0$ if $f\in \overline{\pmap_\mathrm{c}(S)}$. Moreover, the converse is also true.

\begin{theorem}\label{thm:zero}
    Let $S$ be any surface and $f\in \pmap(S)$. Then $\varphi_v(f) = 0$ for all $v\in H_1^{sep}(S,\Z)$ if and only if $f\in \overline{\pmap_\mathrm{c}(S)}$.
\end{theorem}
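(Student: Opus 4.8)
The statement splits into two implications. One direction is immediate: if $f\in\overline{\pmap_\mathrm{c}(S)}$, then by the first property of Proposition~\ref{prop:properties} we get $\varphi_v(f)=0$ for every simple nonzero $v\in H_1^{sep}(S,\Z)$, and since $\varphi_v$ is by construction a $\Z$-linear combination of the $\varphi_{v_k}$ over a basis of simple peripheral classes, the same holds for all $v\in H_1^{sep}(S,\Z)$. Moreover, if $|\mathrm{Ends}_{\mathrm{np}}(S)|\le 1$ then $\pmap(S)=\overline{\pmap_\mathrm{c}(S)}$ by Proposition~\ref{prop:dehngen}, so both conditions hold for every $f\in\pmap(S)$ and the equivalence is trivial. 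We may therefore assume $S$ has at least two ends accumulated by genus, and the task is to prove the converse: if $\varphi_v(f)=0$ for all $v\in H_1^{sep}(S,\Z)$, then $f\in\overline{\pmap_\mathrm{c}(S)}$.

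The plan is a finite-stage approximation. Fix a principal exhaustion $\set{K_n}_{n\in\N}$, so that $\partial K_n$ consists of separating curves $\gamma_1,\dots,\gamma_k$ and each component of $S\setminus K_n$ is of infinite type; since $S$ has at least two ends, we may arrange the exhaustion so that each complementary component of $K_n$ carries a nonempty proper set of ends, whence its bounding curve $\gamma_i$ represents a nonzero class $v_i\in H_1^{sep}(S,\Z)$. Because $\map(S)$ carries the permutation topology (equivalently, by Proposition~\ref{prop:permutmap}, the quotient topology), it suffices to produce for each $n$ a mapping class $g_n\in\pmap_\mathrm{c}(S)$ with $g_n|_{K_n}=f|_{K_n}$ up to isotopy; then $g_n\to f$, as any finite set of curves lies in some $K_n$. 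To build $g_n$, after an isotopy of $f$ choose a large compact subsurface $K$ with separating boundary, containing $K_n\cup f(K_n)$ in its interior, and such that $K\setminus K_n$, respectively $K\setminus f(K_n)$, has exactly one component $W_i$, respectively $W_i'$, lying beyond $\gamma_i$, respectively beyond $f(\gamma_i)$. The goal is then a change-of-coordinates homeomorphism $\psi$ of $S$, supported on $K$, with $\psi|_{K_n}=f|_{K_n}$ and $\psi(W_i)=W_i'$; one sets $g_n=[\psi]$.

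The crux is matching the complementary pieces. Two compact orientable surfaces with boundary are homeomorphic iff they have the same genus and the same number of boundary components, and any homeomorphism of the boundary extends over the surface; so $\psi$ exists once, for each $i$, $W_i$ and $W_i'$ have equal genus and equal boundary count. Since $f$ is pure it fixes the ends, so the components of $\partial K$ lying beyond $\gamma_i$ are exactly those lying beyond $f(\gamma_i)$: this gives the boundary count and also lets $\psi$ be the identity on $\partial K$, so that extending by the identity on $S\setminus K$ is legitimate. For the genus, the homomorphism $\varphi_{v_i}$ is precisely what measures the relevant difference: unwinding its definition, using that $\phi_{\gamma_i}$ is independent of the auxiliary data (Remark~\ref{rmk:varphi}), one finds that $\varphi_{v_i}(f)$ equals, up to sign, $\mathrm{genus}(W_i)-\mathrm{genus}(W_i')$, so the hypothesis $\varphi_{v_i}(f)=0$ delivers the needed equality. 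Assembling $\psi$ from $f|_{K_n}$ on $K_n$, the homeomorphisms $W_i\to W_i'$ realizing the prescribed boundary behaviour, and the identity outside $K$, yields the desired $g_n\in\pmap_\mathrm{c}(S)$, and the proof is complete.

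I expect the delicate points to be, first, the combinatorial bookkeeping: arranging the exhaustion and the auxiliary surface $K$ so that the complementary pieces of $K_n$ and of $f(K_n)$ inside $K$ are honestly in bijection beyond matching boundary curves, and handling any null-homologous boundary components of $K_n$ (for which genus matching is automatic, since $\mathrm{genus}(K)-\mathrm{genus}(K_n)$ is a homeomorphism invariant); and second, pinning down rigorously the identity $\varphi_{v_i}(f)=\pm(\mathrm{genus}(W_i)-\mathrm{genus}(W_i'))$ from the definition of $\phi_{\gamma_i}$ and its independence of the chosen subsurface $R$ and end $e_{v_i}$. A shorter, but possibly circular, route also exists: by Theorem~\ref{thm:bigmap} write $f=g\cdot\prod_i h_i^{a_i}$ with $g\in\overline{\pmap_\mathrm{c}(S)}$; then $\varphi_v(f)=\sum_i a_i\varphi_v(h_i)$ by additivity, and choosing for each $j$ a separating class cutting $h_j$ but none of the other $h_i$ (third property of Proposition~\ref{prop:properties}) forces $a_j=0$, whence $f=g\in\overline{\pmap_\mathrm{c}(S)}$.
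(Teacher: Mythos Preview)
Your direct approximation argument is correct and is in fact the approach taken in the original Aramayona--Patel--Vlamis paper: fix a principal exhaustion, use the hypothesis $\varphi_{v_i}(f)=0$ at each boundary curve $\gamma_i$ of $K_n$ to match the genera of the complementary pieces $W_i$ and $W_i'$, and assemble a compactly supported $g_n$ agreeing with $f$ on $K_n$. The paper here takes a different and much terser route: it argues by contraposition, asserting that if $f\notin\overline{\pmap_\mathrm{c}(S)}$ then some separating curve $\gamma$ has $f(\gamma)$ disjoint from $\gamma$ and cobounding positive genus, whence $\varphi_{[\gamma]}(f)\neq 0$. Your approach is more work but is self-contained; the paper's contrapositive leans on an unproved structural claim (the existence of such a $\gamma$, and the phrase ``$f$ has a factor of $h^n$'') that is essentially equivalent to what is being shown.

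One small caution: your reduction of the case $|\mathrm{Ends}_{\mathrm{np}}(S)|\le 1$ to Proposition~\ref{prop:dehngen} is circular in this paper's logical order, since that proposition is deduced from Theorem~\ref{thm:cohomology}, which in turn invokes Theorem~\ref{thm:zero}. This is harmless, because your main argument already covers that case once you observe (as you do) that null-homologous boundary curves of $K_n$ have automatic genus matching; just drop the appeal to Proposition~\ref{prop:dehngen} and run the approximation uniformly. You are also right that the shortcut via Theorem~\ref{thm:bigmap} would be circular here.
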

\begin{proof}
    The backwards direction is the first property of Proposition~\ref{prop:properties}. 
    
    We will prove the forwards direction by contraposition. Let $f\in \pmap(S)$ and suppose that $f\notin \overline{\pmap_\mathrm{c}(S)}$. Since $f$ cannot be approximated by Dehn twists, there must be a separating simple closed curve $\gamma$ such that $f(\gamma)$ is disjoint from $\gamma$ and they cobound a compact subsurface with positive genus. This implies that $f$ has a factor of $h^n$ for some handle shift $h$ and $n\in \Z$. The homology class $v$ represented by $\gamma$ cuts $h$, and by the third property of Proposition~\ref{prop:properties}, $\varphi_v(f) \not= 0$, which proves the contrapositive.
\end{proof}

We now have everything we need to prove Theorem~\ref{thm:cohomology}.

\begin{proof}[Proof of Theorem~\ref{thm:cohomology}]
    For convenience, let $H_S= H_{sep}^1(\hat{S},\Z)$. Suppose $S = \hat{S}$. By Remark~\ref{rmk:freeabe}, $H_1^{sep}(S,\Z)$ is free abelian, let $r\in \N\cup \set{0,\infty}$ be its rank. Hence, there is a collection of simple separating homology classes $\set{v_n}_{n =1}^r$ such that
    \[
    H_1^{sep}(S,\Z) = \bigoplus_{i=1}^r \langle v_i\rangle 
    \]
    and there exists pairwise-disjoint collection of oriented simple closed curves $\set{\gamma_i}^r_{i=1}$ on $S$ such that $\gamma_i$ represents $v_i$ and any compact set of $S$ intersects at most a finite number of the $\gamma_i$.

    An element $f\colon H_1^{sep}(S,\Z)\rightarrow \Z$ of $H_S$ can be written as
    \[
    f(v) = f(\sum_{i=1}^r a_iv_i) = \sum_{i=1}^r a_if(v_i) = \sum_{i=1}^r \mathrm{pr_i}(v)f(v_i) = \sum_{i=1}^r b_i\mathrm{pr_i}(v),
    \]
    where $b_i = f(v_i)$, and $\mathrm{pr}_i \colon H_1^{sep}(S,\Z) \rightarrow \Z$ 
    is the projection map defined by 
    \[
    \mathrm{pr}_i(v_j) = \begin{cases}
        1 & \text{if } j=i,\\
        0 & \text{if } j\not=i.
    \end{cases}
    \]
    Therefore,
    \[
    H_S = \prod^r_{i=1} \langle \mathrm{pr}_i\rangle.
    \]

    We shall construct a handle shift $h_i$ that correspond to $\mathrm{pr}_i$.

    Remove pairwise-disjoint annular neighborhoods of each $\gamma_i$ from $S$. Call the resulting surface $S'$. Then each separating curve on $S'$ bounds a compact subsurface, otherwise $\set{v_i}_{i=1}^r$ would not be a basis. It is clear that $S'$ is a disjoint union of a surfaces with one end accumulated by genus. By Theorem~\ref{thm:classification}, such a surface is classified up to homeomorphism by the number of its boundary components. Denote by $Z_n$ the surface with $n$ boundary components and one end accumulated by genus. The surface $Z_n$ can be obtained by removing $n$ open disks from $\R^2$ along the line $y=0$, and by attaching handles periodically and vertically above those disks at each line $y= k$ for every $k\in \N$, similar to the construction of the model surface $\Sigma$ of a handle shift. Therefore, the surface $S'$ is a disjoint union of (possibly infinitely many) copies of $Z_{n_i}$ where $n_i \in \N \cup \set{\infty}$.

    Let $Y$ be the surface obtained from $[0,1] \times [0, \infty) \subset \R^2$ by attaching handles periodically at $y= k$ for every $k \in \N$. Observe that $Z_n$ contains $n$ disjoint embedded copies of $Y$, and for each embedded copy, there is a unique boundary component of $Z_n$ containing the image of $[0,1]\times \set{0}$ in $Y$.  

    Fix $i\in \N$ and choose two components of $\p S'$, say $b_1$ and $b_2$, each isotopic to $\gamma_i$. Denote the corresponding components of $S'$ by $X_1$ and $X_2$. Let $Y_1$ and $Y_2$ be the images of $Y$ in $X_1$ and $X_2$ respectively. Since $b_1$ and $b_2$ bound an annulus in $S$, we can connect $Y_1 \cap b_1$ and $Y_2 \cap b_2$ with a strip $T \cong [0,1]\times [0,1]$ contained in that annulus. Observe that the surface $\Sigma_i =Y_1\cup T \cup Y_2$ is homeomorphic to the model surface $\Sigma$, and is embedded in $S$. Let $h_i$ be a handle shift supported in $\Sigma_i$ so that $\varphi_{v_i}(h_i) =1$. Since the curve $\gamma_j$ is disjoint from $\Sigma_i$ for all $j\not=i$,
    \[
    h_ih_j = h_jh_i,
    \]
    and by the third property of Proposition~\ref{prop:properties}, $\varphi_{v_j}(h_i)=0$ for all $j\not=i$.
    The fact that $h_ih_j=h_jh_i$ implies that the injection map from $\prod h_i$ to $\pmap(S)$ is a group homomorphism, and thus $\prod^r_{i=1}\langle h_i\rangle < \pmap(S)$. Therefore, we can define a monomorphism
    \[
    \kappa \colon H_S \longrightarrow \pmap(S)
    \]
    by letting $\kappa(\mathrm{pr}_i) = h_i$.

    In the case that $S \not= \hat{S}$, the same process can be applied by choosing embeddings of $Y_1$ and $Y_2$ such that they avoid the planar ends, meaning that the corresponding $\Sigma_i$ also does.
    
    Take any homomorphism $\hat{\Phi}(v)\colon \pmap(S) \rightarrow \Z$ obtained by the monomorphism $\hat{\Phi}$ of Proposition~\ref{prop:properties}. Define $\bar{\Phi}(v)\colon A_S\rightarrow \Z$ by $\bar{\Phi}(v)(\pi(f))) = \varphi_v(f)$ where $\pi\colon \pmap(S) \rightarrow A_S$ is the natural projection. It is easy to check that this is a group homomorphism and by the first property of Proposition~\ref{prop:properties}, $\bar{\Phi}(v)$ is injective.
    Let $h\in \kappa(H_S)$ be nonzero. Then, $h= \prod h_i^{m_i}$, where some of the $m_i$ are nonzero. Note that $\hat{\Phi}(v_i)(h) = m_i$ and that $\bar{\Phi}(v_i)(\pi(h))=m_i$. It follows that $\pi(h) \not=0$, which implies that $\pi \circ \kappa$ is injective.

    For any $a\in A_S$, define $a^*\in H_S$ by setting $a^*(v) = \bar{\Phi}(v)(a)$. It is clear that the map $\Psi \colon A_S \rightarrow H_S$ given by $\Psi(a) = a^*$ is a homomorphism. Suppose $a^* = 0$. That means $\bar{\Phi}(v)(a)=0$ for all $v\in H_1^{sep}(\hat{S},\Z)$. By Theorem~\ref{thm:zero}, $a$ must be zero and thus $\Psi$ is injective. By construction, $\pi(h_i)^* = \mathrm{pr}_i$. As $\pi\circ \kappa$ is injective, the composition $\pi\circ\kappa\circ \Psi$ is the identity, which implies that $(\pi\circ\kappa)^{-1} =\Psi$. Therefore, $\pi\circ\kappa$ is an isomorphism.
    
\end{proof}

Consider the short exact sequence
\[
0 \longrightarrow \overline{\pmap_\mathrm{c}(S)}\xrightarrow{\hspace{0.45em} \iota \hspace{0.45em}} \pmap(S) \xrightarrow{\hspace{0.45em} \pi \hspace{0.45em}}  A_S \longrightarrow 0.
\]
Since $\pi\circ\kappa$ is an isomorphism and $\kappa$ is a monomorphism, the composition
\[
\kappa\circ \Psi\colon A_S\longrightarrow \pmap(S)
\]
is injective and thus $\pi\circ (\kappa \circ \Psi)$ is identity on $A_S$. It follows that the sequence splits and that
\[
\pmap(S) = \overline{\pmap_\mathrm{c}(S)} \rtimes A_S.
\]
Moreover, $\pi$ is an isomorphism on $\kappa(H^1_{sep}(\hat{S},\Z))$, which implies that $A_S \cong \prod^r_{i=1}\langle h_i\rangle$. It immediately follows that
\[
\pmap(S) = \overline{\pmap_\mathrm{c}(S)} \rtimes \prod^r_{i=1}\langle h_i\rangle
\]
and this proves Theorem~\ref{thm:bigmap}.
\chapter{Topological Generating Sets for Surfaces with Finitely Many Ends Accumulated by Genus}\label{chp:final}

In  Chapter~\ref{chp:gen}, we established that Dehn twists and handle shifts topologically generate the pure mapping class group. By adding mapping classes whose action on the space of ends topologically generate $\mathrm{Homeo}(\mathrm{Ends}(S),\mathrm{Ends}_\mathrm{np}(S))$ to a topological generating set of $\pmap(S)$, we can topologically generate the entire mapping class group. In this chapter, we show in detail that the mapping class group of a surface with $n$ ends accumulated by genus and no planar ends, denoted by $S(n)$, can be topologically generated by finitely many elements. Note that for $S(n)$, $\mathrm{Homeo}(\mathrm{Ends}(S),\mathrm{Ends}_\mathrm{np}(S))$ is $\sym_n$, the symmetric group on $n$ letters.

\section{Finite Topological Generation}\label{sec:fingene}

The generation of mapping class groups of finite-type surfaces by a small sets of generators and by involutions have been studied extensively over the years. Notably, Korkmaz showed in 2004 that $\map(S_g)$ with genus $g\geq 2$ is generated by two elements, one of which is a torsion element~\cite{korkmaz2004}. Brendle and Farb proved that $\map(S_g)$ can be generated by six involutions for $g\geq3$~\cite{Brendle2004}, which was improved for $g\geq8$ by Korkmaz to three involutions~\cite{Korkmaz2020}. For the infinite-type case, Huynh, proved in his doctoral thesis~\cite{huynh2022} that the mapping class groups of many infinite-type surfaces can be topologically generated by finitely many involutions. His result for surfaces with finitely many ends accumulated by genus was improved by Altunöz-Pamuk-Yıldız~\cite{apy}.

\begin{theorem}[{\cite[Theorem 1.1]{huynh2022}}]\label{thm:involgen}
    The mapping class group $\map(S(n))$ is topologically generated by at most $7$ involutions. More specifically, if $n\leq2$, then it requires $6$ involutions. For $n\geq3$, it requires $7$ involutions.
\end{theorem}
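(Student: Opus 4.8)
The plan is to read off an explicit small family of involutions from a highly symmetric model of $S(n)$, show that the subgroup they generate contains a Dehn twist about every essential simple closed curve together with the ``shift'' data controlling the non-compactly-supported part, and then conclude density from the generation results of Chapter~\ref{chp:gen}. I would begin by recording the reduction. Since every end of $S(n)$ is accumulated by genus, there is a short exact sequence $1 \to \pmap(S(n)) \to \map(S(n)) \to \sym_n \to 1$, and by Proposition~\ref{prop:generate}, Proposition~\ref{prop:dehngen}, Theorem~\ref{thm:bigmap} and Remark~\ref{rem:zn-1}, $\map(S(n))$ is topologically generated by: all Dehn twists (which topologically generate $\overline{\pmap_\mathrm{c}(S(n))}$), the $n-1$ handle shifts generating the $\Z^{n-1}$ factor of $\pmap(S(n))$, and a lift of a generating set of $\sym_n$. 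Moreover, by Theorem~\ref{thm:humphries} and Theorem~\ref{thm:bdarygen} it suffices, for the first family, that the generated subgroup contain the Humphries twists of each subsurface $X_g$ in a fixed exhaustion $X_1 \subset X_2 \subset \cdots$ of $S(n)$ (each essential curve lies in some $X_g$, and $\map(X_g)$ is generated by its Humphries twists), and by Proposition~\ref{prop:compactopenconvergence} together with the infinite Alexander method (Theorem~\ref{thm:infalex}) density then reduces to matching, on each $X_g$, every mapping class of $X_g$.

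The core of the construction exploits the self-similarity of $S(n)$ to make a genus-stacking ``shift'' visible. For $n=1$ I would realize the Loch Ness monster as $\R^2$ with a handle attached at $(0,k)$ for every $k\in\Z$, so that the translation $\phi\colon(x,y)\mapsto(x,y+1)$ is an infinite-order homeomorphism fixing the unique end and cyclically permuting the handles; for $n\ge 2$ I would take $n$ such chains arranged symmetrically and joined along a central piece, so that in addition one sees the $n-1$ handle shifts $h_1,\dots,h_{n-1}$ (each supported on a copy of the model surface $\Sigma$ of Section~\ref{sec:hshift}) and the obvious $\sym_n$-symmetry. The key point is that each of $\phi$ and $h_i$ is a product of two reflection-type involutions (reflections of $\R^2$ about horizontal lines composed with the induced handle permutation), so these shifts are accounted for by involutions. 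I would then choose: two ``reflection'' involutions $\rho,\rho'$ with $\rho\rho'$ the simultaneous shift; one involution $\tau$ producing a Dehn twist $T_d$ about a curve near a single handle; and one or two further involutions producing curves transverse to the $\phi$-orbit of $d$ and realizing the $\sym_n$-action — for $n\le 2$ these extra roles can be folded into $\rho,\rho',\tau$ plus a single involution, giving $6$, whereas for $n\ge 3$ the larger permutation group forces a seventh. Conjugating $T_d$ and its transverse partners by the powers $\phi^k=(\rho\rho')^k$ yields Dehn twists about curves that exit the surface in both directions, and boundedly many of them already contain a Humphries system for each $X_g$; hence the generated subgroup contains $\bigcup_g\map(X_g)=\pmap_\mathrm{c}(S(n))$ as well as the handle shifts and the lifts of $\sym_n$, so its closure is all of $\map(S(n))$.

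For the lower bounds implicit in the statement (``$n\le 2$ requires $6$, $n\ge 3$ requires $7$''), I would pass to an appropriate finite or abelian invariant of $\map(S(n))$ on which the images of any involution-generating family cannot be smaller than the claimed number, combining the known impossibility of generating high-genus finite-type mapping class groups with fewer involutions (the Brendle--Farb and Korkmaz bounds recalled in Section~\ref{sec:fingene}) with the additional constraint coming from the $\sym_n$-quotient once $n\ge 3$.

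The main obstacle is the bookkeeping in the middle step: one must select a single family of involutions so that, simultaneously, their pairwise products realize the shifts that spread one Dehn twist across a filling configuration in every $X_g$, other pairwise products realize the $n-1$ independent handle shifts between the correct pairs of ends, and the whole family surjects onto a generating set of $\sym_n$ — all while the total stays at $6$ for $n\le 2$ and $7$ for $n\ge 3$. Checking that the conjugates $\phi^k(d)$ and their transverse partners genuinely contain a Humphries system for every $X_g$, invoking the finite-type generation theorems of the previous chapter as black boxes, and matching the exact counts (together with the lower-bound argument) is where the delicate work lies.
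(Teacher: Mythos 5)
Your overall strategy — reduce via the exact sequence $1\to\pmap(S(n))\to\map(S(n))\to\sym_n\to 1$ and Theorem~\ref{thm:bigmap} to generating Dehn twists, handle shifts, and lifts of $\sym_n$, then realize shifts as products of two reflection-type involutions on a symmetric model — is exactly the shape of Huynh's argument as the paper sketches it, and of the detailed development in Sections~\ref{sec:fingene} and after (compare Theorem~\ref{thm:hshiftinvo}, where $h_{1,2}=\tau_1\tau_2$, and Theorem~\ref{thm:6invo}). However, there are two genuine problems. First, you have misread the statement: ``requires $6$/$7$ involutions'' here records the size of Huynh's generating set, not a minimality claim, and the lower bounds you propose to prove are in fact \emph{false} — the very next theorem in the chapter (due to Altun\"oz--Pamuk--Y{\i}ld{\i}z) topologically generates $\map(S(n))$ with six involutions for $n\ge 3$ and five for $n\ge 6$. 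Any argument purporting to show that seven are necessary for $n\ge 3$ cannot succeed, and the abelianization-type obstructions you gesture at do not transfer to big mapping class groups in the way the finite-type Brendle--Farb/Korkmaz bounds do. You should delete this part entirely.

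Second, the middle of your argument is a plan rather than a proof, and the plan elides the two points where all the work actually lies. (i) You need to verify that products of your chosen involutions produce a full Humphries system on every compact piece $X_g$; in the paper this is the content of Proposition~\ref{prop:countableset}, Lemma~\ref{lem:finiteset}, and the explicit braid-relation and lantern-relation computations of Lemmas~\ref{lem:lemma1}--\ref{lem:lemma3}, which recover individual twists $A_i^j$, $B_i^j$, $C_i^j$ from differences such as $A_1^1\overline{A_1^2}$. Declaring that ``boundedly many conjugates $\phi^k(d)$ already contain a Humphries system'' is precisely the claim that must be checked curve by curve. (ii) Your two reflections $\rho,\rho'$ produce a single ``simultaneous shift,'' not the $n-1$ independent handle shifts; you must still show that the images of the elements you generate span $A_S\cong\Z^{n-1}$. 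The paper handles this by taking one handle shift $h_{1,2}$ and conjugating by the rotation $R=\rho_1\rho_2$ to obtain every $h_{i,i+1}$, then composing to get all $h_{i,j}$; with a simultaneous shift this step is not automatic and needs an argument. Until both of these are supplied, the proposal establishes only the reduction, which the paper already provides.
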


Huynh begins his proof of Theorem~\ref{thm:involgen} by showing that Dehn twists, handle shifts, and the mapping classes whose action on the space of ends generate $\sym_n$, can all be expressed as a product of finitely many involutions. He then argues that a certain subgroup generated by a finite collection of Dehn twists, handle shifts, and a rotation contains the compactly supported mapping class group. Finally, by expressing the generators as products of involutions, then carefully removing redundant involutions, he finishes the proof. 

\begin{theorem}[\cite{apy}]
    For $n\geq 3$, the mapping class group $\map(S(n))$ is topologically generated by six involutions. For $n\geq 6$, it is topologically generated by five involutions.
\end{theorem}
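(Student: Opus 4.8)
The plan is to exhibit an explicit finite set of involutions whose topological closure is all of $\map(S(n))$, refining the generating set behind Huynh's Theorem~\ref{thm:involgen}. First I would fix a convenient model for $S(n)$: a central ``hub'' carrying a cyclic (and dihedral) symmetry that permutes $n$ half-infinite ``arms'', each arm being an infinite linear chain of genus accumulating to one of the $n$ non-planar ends. By Theorem~\ref{thm:bigmap} together with Remark~\ref{rem:zn-1}, $\pmap(S(n)) = \overline{\pmap_\mathrm{c}(S(n))} \rtimes \prod_{i=1}^{n-1}\langle h_i\rangle$ for $n-1$ independent handle shifts $h_i$, and adjoining one mapping class realizing a generating transposition of $\sym_n$ on $\mathrm{Ends}(S(n))$ gives, via Proposition~\ref{prop:generate} and the short exact sequence for the end action, a topological generating set of the whole group. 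Replacing the infinite chain of Dehn twists inside $\overline{\pmap_\mathrm{c}(S(n))}$ by a single twist $T_c$ together with a ``shift'' homeomorphism $\sigma$ that slides the chain one step along an arm (so $\sigma^k T_c \sigma^{-k} = T_{c_k}$, and, together with the arm-permuting symmetry and possibly one or two extra twists near the hub, the closure of $\langle T_c,\sigma\rangle$ contains $\overline{\pmap_\mathrm{c}(S(n))}$ by the Alexander method, Theorem~\ref{thm:infalex}) reduces everything to a finite list: shift maps, one twist, the $h_i$, and a $\sym_n$-realizing map.

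Next I would express each entry of this list as a short product of involutions. A shift $\sigma$ along an arm is a product of two orientation-preserving involutions, exactly as a translation of $\R$ factors into two point reflections: one takes two rotations-by-$\pi$ of the arm about parallel axes half a step apart. The same holds for each handle shift $h_i$: on the model surface $\Sigma$ the maps $(x,y)\mapsto(-x,-y)$ and $(x,y)\mapsto(1-x,-y)$ are orientation-preserving involutions whose composition is the handle shift, and they extend by the identity to $S(n)$. A Dehn twist $T_c$ about a nonseparating curve lying in a subsurface of genus at least $1$ is a product of two involutions by the standard trick: there is an involution $\iota$ with $\iota T_c\iota = T_c^{-1}$, whence $T_c = (T_c\iota)\iota$ with $(T_c\iota)^2 = 1$. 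Finally, since for $n\ge 3$ the group $\sym_n$ is generated by involutions, the end action is realized by rotation-type involutions of $S(n)$ permuting the arms in pairs.

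The essential point — and the source of the improvement over Theorem~\ref{thm:involgen} — is to arrange all these involutions to overlap. Because the $n$ arms and the handle shifts are interchangeable by the hub symmetry, a single arm-permuting involution conjugates a shift, a twist, or a handle shift configured on one arm to the corresponding map on another, so one does not spend separate involutions per arm; moreover one chooses the fixed loci of the permuting involutions to contain the reflection axes used in the shift factorizations, so a single involution serves simultaneously as a shift-reflection and as part of the $\sym_n$-realization. Carrying this optimization out, one checks that six involutions $\tau_1,\dots,\tau_6$ suffice for all $n\ge 3$: the subgroup they generate is shown to contain $T_c$ and $\sigma$ on every arm (hence $\overline{\pmap_\mathrm{c}(S(n))}$ in its closure), all $n-1$ handle shifts, and a surjection onto $\sym_n$, which by Theorem~\ref{thm:bigmap} forces the closure to be everything. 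For $n\ge 6$ there is enough room to let one involution carry a product of $\lfloor n/2\rfloor$ disjoint transpositions on the ends while still doing geometric work on the arms, which allows the deletion of one further involution, bringing the count down to five.

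The main obstacle is precisely this last, delicate bookkeeping: one must keep the generated group \emph{dense} — so enough Dehn twists and the full independent family of $n-1$ handle shifts survive in the closure, each cut by an independent separating class in the sense of Chapter 5 — while recycling the same few involutions across the three distinct tasks of generating $\overline{\pmap_\mathrm{c}}$, generating the handle shifts, and surjecting onto $\sym_n$. Verifying that a particular choice of six (respectively five) involutions accomplishes all three at once requires an explicit case analysis for small $n$ and repeated use of the Alexander method (Theorem~\ref{thm:infalex}) together with the braid and lantern relations to rewrite products; obtaining the count optimally, rather than merely finitely, is where the real work lies.
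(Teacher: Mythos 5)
Your high-level architecture matches the paper's: reduce to a finite topological generating set consisting of a few Dehn twists, one handle shift, and a rotation (cf.\ Lemma~\ref{lem:finiteset} and Theorem~\ref{thm:finset}), then factor everything into involutions and recycle the same involutions across tasks. However, two of your concrete factorization steps are wrong, and they are exactly where the content of the theorem lives.

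First, your claim that a single Dehn twist $T_c$ about a nonseparating curve is a product of two involutions because ``there is an involution $\iota$ with $\iota T_c \iota = T_c^{-1}$'' fails: for any orientation-preserving $\iota$ one has $\iota T_c \iota^{-1} = T_{\iota(c)}$, which is a \emph{positive} twist, never $T_c^{-1}$ (that would require $\iota$ to be orientation-reversing, hence not in $\map(S(n))$). This is precisely why the paper never works with single twists. It works with differences such as $A^1_1\overline{A_1^2}$ and the longer products $B^1_1C^1_0\overline{C_0^2}\,\overline{B^3_1}$: if $\rho$ is an orientation-preserving involution swapping the curve pairs, then $\rho X \rho = X^{-1}$ and $\rho X$ is an involution (Theorem~\ref{thm:6invo}). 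Recovering the individual twists $A_i^j, B_i^j, C_i^j$ from these differences is then nontrivial and occupies Lemmas~\ref{lem:lemma1}--\ref{lem:lemma3}, using the braid relation, the conjugation property, and the lantern relation. Your proposal skips this entirely. Second, your factorization of a handle shift into the two point-reflections $(x,y)\mapsto(-x,-y)$ and $(x,y)\mapsto(1-x,-y)$ of the model surface $\Sigma$, ``extended by the identity to $S(n)$,'' does not work: these maps do not restrict to the identity on $\partial\Sigma$ (they exchange the two boundary lines), so they cannot be extended by the identity. The paper instead builds two \emph{global} $\pi$-rotations $\tau_1,\tau_2$ of $S(n)$ from two slightly different models (Theorem~\ref{thm:hshiftinvo}), checks via the infinite Alexander method that $\tau_1\tau_2 = h_{1,2}$, and gets the $2$-cycle on ends for free since $\pi(\tau_1)=(12)$. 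Finally, the actual count of six (and five for $n\geq 6$) is exactly the ``delicate bookkeeping'' you defer; without the two corrected factorizations above, the bookkeeping cannot even begin.
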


We shall follow the approach given in~\cite{apy}, adding details to some of the arguments. We use the model depicted in Figure~\ref{fig:inf} for $S(n)$.

\begin{figure}[H]
      \centering
      \includegraphics[width=0.80\textwidth]{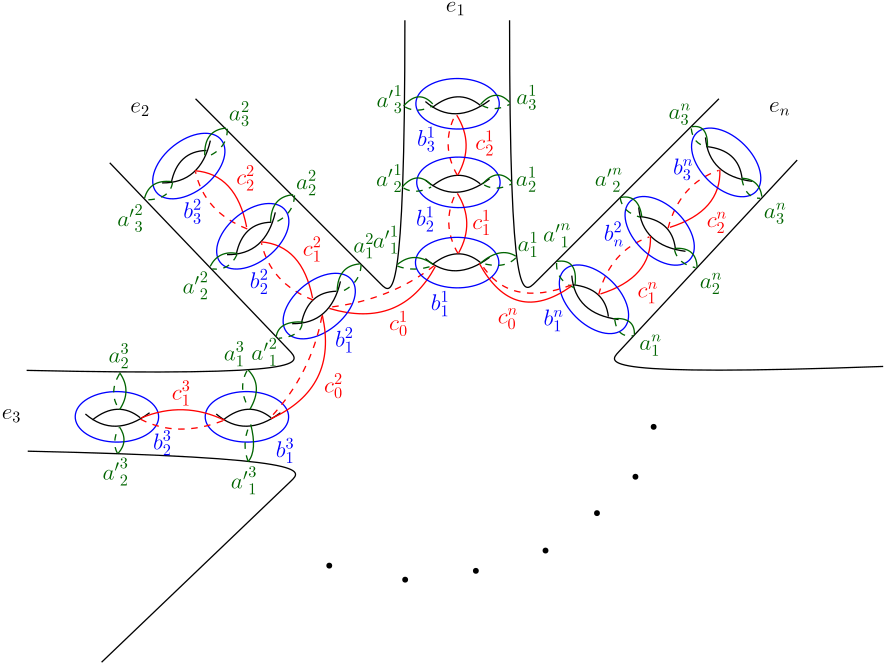}
      \caption{The model for $S(n)$}
      \label{fig:inf}
   \end{figure}

\begin{figure}[htbp]
    \centering

    \begin{subfigure}[t]{0.8\textwidth}
        \centering
        \includegraphics[width=\linewidth, height=4cm, keepaspectratio]{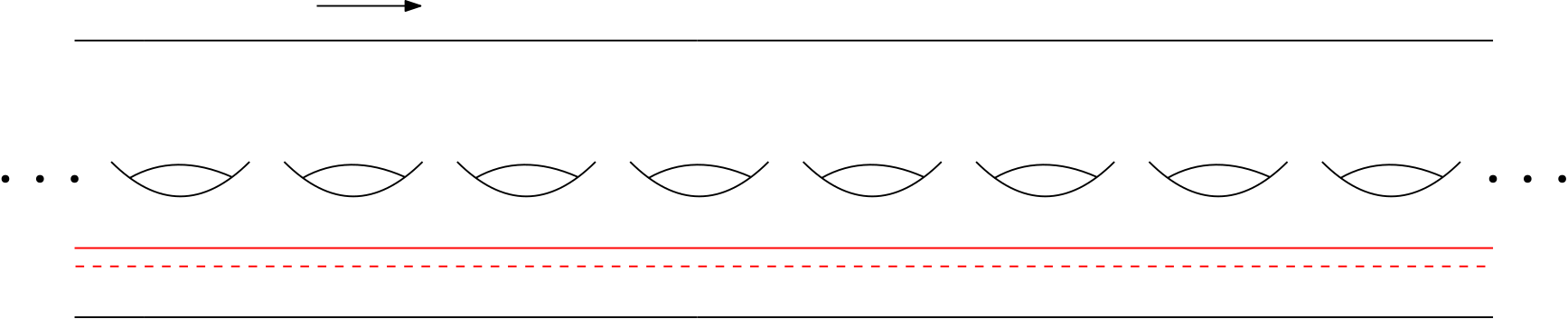}
        \caption{A Type I handle shift}
    \end{subfigure}

    \vspace{1em}

    \begin{subfigure}[t]{0.8\textwidth}
        \centering
        \includegraphics[width=\linewidth, height=4cm, keepaspectratio]{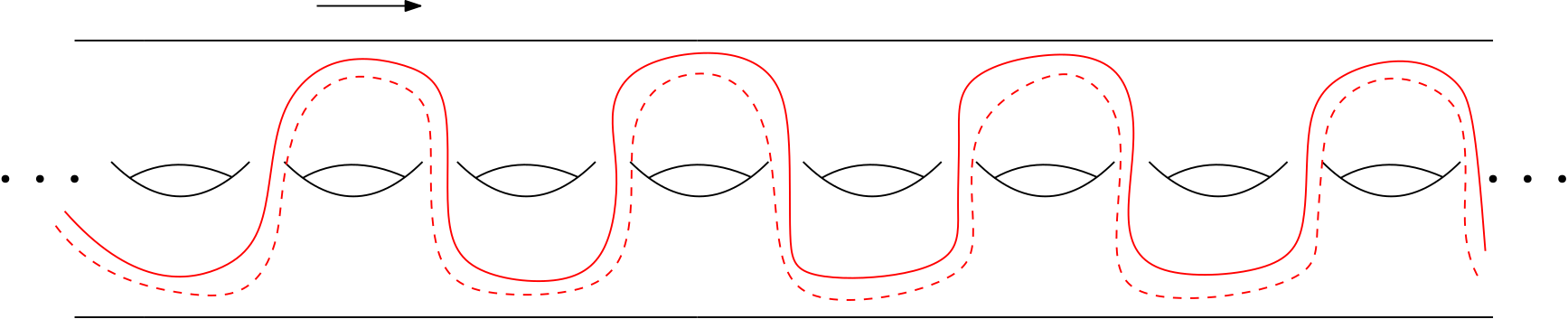}
        \caption{A Type II handle shift}
    \end{subfigure}

    \vspace{1em}

    \begin{subfigure}[t]{0.8\textwidth}
        \centering
        \includegraphics[width=\linewidth, height=4cm, keepaspectratio]{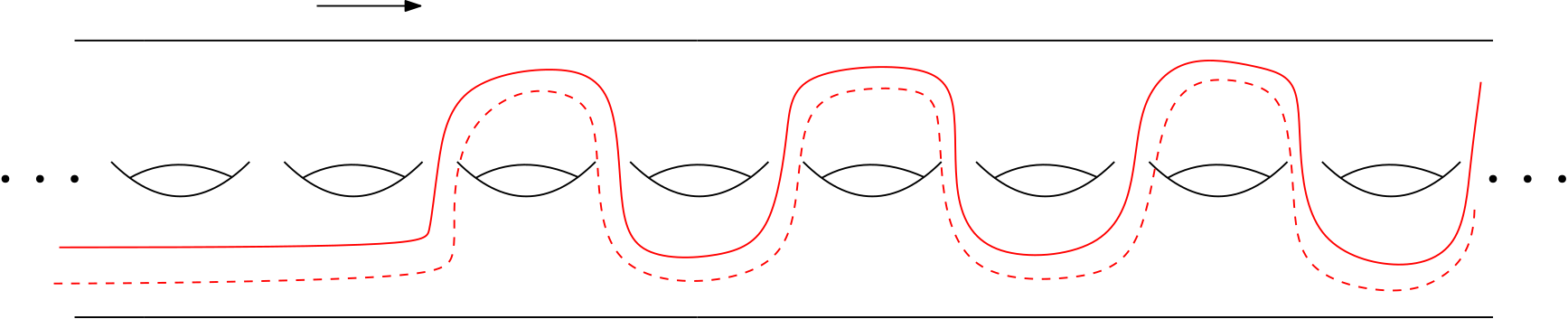}
        \caption{A Type III handle shift}
    \end{subfigure}

    \caption{Examples for every type of handle shift}
    \label{fig:hshifttypes}
\end{figure}

We use Huynh's convention of categorizing handle shifts into three types. Recall from Section~\ref{sec:hshift} that a handle shift $h$ has an attracting and a repelling end and that $h$ is induced by a handle shift $\bar{h}$ on the model surface $\Sigma$. Let $e_+= \set{E^+_i}_{i\in \N}$ be the attracting end and $e_- = \set{E^-_i}_{i\in \N}$ be the repelling end. Define the subsurfaces $\mathcal{E}^+_i = E^+_i \setminus (\Sigma \cap E^+_i)$ and $\mathcal{E}^-_i = E^-_i \setminus (\Sigma \cap E^-_i)$. We categorize handle shifts into three types:

\begin{itemize}
    \item {\textbf{Type I}}: A handle shift is of \emph{Type I} if there exists $N\in \N$ such that for all $n\geq N$, $\mathcal{E}^+_i$ and $\mathcal{E}^-_i$ has no genus.
    \item {\textbf{Type II}}: A handle shift is of \emph{Type II} if for all $n\in \N$, both $\mathcal{E}^+_i$ or $\mathcal{E}^-_i$ has infinitely many genera.
    \item {\textbf{Type III}}: A handle shift is of \emph{Type III} if for some $N\in \N$, either one of $\mathcal{E}^+_i$ or $\mathcal{E}^-_i$ has no genus and the other has infinitely many genera for all $n\geq N$.
\end{itemize}

\begin{remark}\label{rmk:typeremark}
    This classification is motivated by the observation that the model surface $\Sigma$ can be embedded in different ways inside $S(n)$, such that the handle shift $\bar{h}$ on $\Sigma$ induces different handle shifts on $S(n)$. That is, there are different handle shifts from $e_-$ to $e_+$ on $S(n)$, depending on how $\Sigma$ is embedded in $S(n)$. We can choose representatives $\set{U_n}_{n\in \N}$ and $\set{V_n}_{n\in \N}$ for $e_+$ and $e_-$ such that $S(n) \setminus (U_1 \cup V_1)$ is a surface with $2$ boundary components and $n-2$ ends accumulated by genus. Since $U_1\cup V_1$ contains countably many genera, we may index them by $\N$. Observe that $\Sigma$ can be embedded in $S(n)$ such that each genus identified with $n\in \N$ is either contained in the image of $\Sigma$ or not. It follows that each embedding of $\Sigma$ corresponds to an infinite sequence of the discrete binary set $\set{0,1}$, that is, each handle shift identifies to a point in the Cantor set. Therefore, $\Sigma$ can be embedded in $S(n)$ in uncountably many ways, which all induce nonisotopic handle shifts on $S(n)$. This implies that there are uncountably many handle shifts from $e_-$ to $e_+$.
\end{remark} 

Observe that for each $n\in \N$, the union $U_n\cup V_n$ contains countably many genera, where $U_n$ and $V_n$ are the $n$-th terms of $e_-$ and $e_+$, respectively.

Although there are many handle shifts from $e_-$ to $e_+$, we expect that only one would be enough by Remark~\ref{rem:zn-1}. This is indeed the case, as we shall soon see.

For any two simple closed curves $a$ and $b$, we denote the surface bounded by $a$ and $b$ by $S^{a,b}$. We also denote  the number of genera of the surface $S^{a,b}$ by $\mathrm{genus}(S^{a,b})$.

\begin{lemma}[{\cite[Lemma 3.11]{huynh2022}}]\label{lem:separate}
    Let $h$ be a handle shift from $e_-$ to $e_+$. Let $c$ be a separating simple closed curve that separates $e_-$. Then there exists a separating simple closed curve $\gamma$ such that $\gamma \cap h(c) = \emptyset$ and $\gamma \cap h^{-1}(c) = \emptyset$ and $\gamma$ satisfies:
    \begin{enumerate}[label=(\roman*)]
        \item $\mathrm{genus}(S^{\gamma,c}) + 1 = \mathrm{genus}(S^{\gamma,h(c)})$ and
        \item $\mathrm{genus}(S^{\gamma,c}) - 1 = \mathrm{genus}(S^{\gamma,h^{-1}(c)})$.
    \end{enumerate}
\end{lemma}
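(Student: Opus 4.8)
The plan is to read the behaviour of $h$ on $c$ off the model-surface picture of the handle shift, and then to take $\gamma$ to be a curve that cuts off a finite-genus neighbourhood of $h^{-1}(c)$ on the side of the repelling end $e_-$.

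First I would normalise $c$ with respect to the support of $h$. Since $h$ is a handle shift from $e_-$ to $e_+$, it is supported on an embedded copy of the model surface $\Sigma \subset S(n)$ whose two ends are identified with $e_-$ and $e_+$, and on which $h$ sends the $k$-th handle to the $(k+1)$-st (the one nearer $e_+$), fixing $\partial\Sigma$ pointwise. Because $c$ separates $e_-$, it separates the handles of $\Sigma$ accumulating at $e_-$ from those accumulating at $e_+$, and one can isotope $c$ so that $c\cap\Sigma$ is a single essential proper arc of $\Sigma$ separating its two ends, say running between the $k$-th and $(k+1)$-st handles. Then $h$ fixes $c$ off $\Sigma$, while inside $\Sigma$ it pushes the arc past one handle; so $h(c)\cap\Sigma$ runs between handles $k+1$ and $k+2$, and $h^{-1}(c)\cap\Sigma$ between handles $k-1$ and $k$. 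Consequently $h^{-1}(c)$, $c$, $h(c)$ are pairwise disjoint up to isotopy and nested --- in the order $h^{-1}(c),\,c,\,h(c)$ as one moves from $e_-$ towards $e_+$ --- and the subsurface caught between two consecutive ones has genus exactly $1$: it is obtained from the one-handled piece of $\Sigma$ between the two shifted arcs by gluing on an annulus contained in $S(n)\setminus\Sigma$. In particular $\mathrm{genus}(S^{h^{-1}(c),c}) = \mathrm{genus}(S^{c,h(c)}) = 1$.

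Next I would build $\gamma$. Let $W$ be the component of $S(n)\setminus h^{-1}(c)$ containing $e_-$. As $S(n)$ has no planar ends, $W$ has exactly one end, $e_-$, which is accumulated by genus, together with the single boundary circle $h^{-1}(c)$, so by Theorem~\ref{thm:classification} it is homeomorphic to $Z_1$. Exhausting $W$ by compact subsurfaces with connected separating frontier, each containing $\partial W$, I would let $\gamma$ be the frontier circle of one of them; then $\gamma$ is a separating curve lying in $\mathrm{int}(W)$ with $g_0 := \mathrm{genus}(S^{\gamma,h^{-1}(c)}) < \infty$. In particular $\gamma\cap h^{-1}(c)=\emptyset$, and since by the nesting from the previous step $c$ and $h(c)$ lie on the side of $h^{-1}(c)$ away from $e_-$ --- hence outside $\overline{W}$ --- we also get $\gamma\cap c=\emptyset$ and $\gamma\cap h(c)=\emptyset$, so $S^{\gamma,c}$, $S^{\gamma,h(c)}$ and $S^{\gamma,h^{-1}(c)}$ are all well defined and of finite genus.

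Finally, genus is additive under gluing surfaces along a single separating circle, so
\[
\mathrm{genus}(S^{\gamma,c}) = g_0 + \mathrm{genus}(S^{h^{-1}(c),c}) = g_0 + 1, \qquad \mathrm{genus}(S^{\gamma,h(c)}) = g_0 + \mathrm{genus}(S^{h^{-1}(c),c}) + \mathrm{genus}(S^{c,h(c)}) = g_0 + 2,
\]
while $\mathrm{genus}(S^{\gamma,h^{-1}(c)}) = g_0$. Hence $\mathrm{genus}(S^{\gamma,c}) + 1 = \mathrm{genus}(S^{\gamma,h(c)})$ and $\mathrm{genus}(S^{\gamma,c}) - 1 = \mathrm{genus}(S^{\gamma,h^{-1}(c)})$, which are precisely (i) and (ii). The main obstacle is the normalisation in the first step: showing that, up to isotopy, $c$ meets $\Sigma$ in a single separating arc --- so that the action of $h$ on $c$ really is ``shift past one handle'' --- requires the standard cleanup of the arcs and circles of $c\cap\Sigma$, using that $c$ is connected and must separate the $e_-$-handles of $\Sigma$ from its $e_+$-handles; once this is in place the rest is bookkeeping with genus counts.
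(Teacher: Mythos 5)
There is a genuine gap, and it is exactly the step you flag as ``the main obstacle'': the normalisation of $c$. You assert that $c$ can be isotoped so that $c\cap\Sigma$ is a single proper arc ``running between the $k$-th and $(k+1)$-st handles'', and everything downstream (the pairwise disjointness and nesting of $h^{-1}(c),c,h(c)$, and the identity $\mathrm{genus}(S^{c,h(c)})=\mathrm{genus}(S^{h^{-1}(c),c})=1$) depends on it. But this normalisation amounts to saying that $c$ is isotopic to a standard ``vertical'' curve of the model surface, which is false in general: a separating curve separating $e_-$ need only be \emph{homologous} to such a curve, not isotopic to one. For instance, if $\gamma_0$ is a vertical curve and $a$ is a nonseparating curve with $i(a,\gamma_0)=2$, then $c=T_a(\gamma_0)$ still separates $e_-$ but has $i(c,\gamma_0)=4$ by Proposition~\ref{prop:twistintersection}, so it cannot be pushed off the handles it winds over; such a $c$ can also intersect $h(c)$ essentially. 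So the nesting picture and the ``genus exactly $1$ between consecutive images'' computation are not available for arbitrary $c$, and ``standard cleanup of arcs and circles'' (which only removes inessential intersections) does not produce them. Your construction of $\gamma$ then loses its justification for $\gamma\cap c=\gamma\cap h(c)=\emptyset$, since that was derived from the nesting.

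The paper's proof is built precisely to avoid touching $c$. It looks at the finite set of arcs $\Sigma_h\cap c$, lets $L$ be the minimal $x$-coordinate occurring among them, and takes $\gamma$ to be (the closure in $S$ of) a genuinely vertical arc $\set{N+\tfrac12}\times[-1,1]$ with $N+\tfrac12<L-1$, closed up through the path-connected set $E_-\setminus\Sigma_h$. This $\gamma$ is automatically disjoint from $c$, $h(c)$ and $h^{-1}(c)$ no matter how wild $c$ is inside $\Sigma_h$. The genus identities are then obtained not by computing $\mathrm{genus}(S^{c,h(c)})$ but by splitting along $h(\gamma)$ and using that $h$ is a homeomorphism:
\[
\mathrm{genus}(S^{\gamma,h(c)})=\mathrm{genus}(S^{\gamma,h(\gamma)})+\mathrm{genus}(S^{h(\gamma),h(c)})=1+\mathrm{genus}(S^{\gamma,c}),
\]
where only $\mathrm{genus}(S^{\gamma,h(\gamma)})=1$ requires the explicit model, and $\gamma$ (not $c$) is the curve one controls. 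If you reroute your argument so that the ``shift past one handle'' computation is performed on your chosen $\gamma$ rather than on $c$, the rest of your bookkeeping goes through.
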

\begin{proof}
    Since $c$ is separating, $S \setminus c$ has two connected components, say $E_-$ and $E_+$. Suppose $E_-$ contains $e_-$ and $E_+$ contains $e_+$. Consider the embedded model surface $\Sigma_h$ in $S$ associated to $h$. We can identify this subsurface with an embedded copy of the model surface $\Sigma$ in $\R^3$.
    Consider $\Sigma_h \cap c$. Since $c$ is compact, $\Sigma_h \cap c$ is a finite disjoint union of separating arcs with endpoints on $\p \Sigma$. There exists an $L \in \R$ that is the minimum of all the $x$-coordinates of the elements of $\Sigma_h \cap c$ considered as a subspace of $\R^3$.

    For any $n\in \Z$, let $\gamma_n$ be the vertical line $\set{n+\frac{1}{2}} \times [-1,1]$. Pick an $N \in \Z$ such that $N + \frac{1}{2} < L -1$. Since $E_- \setminus (\Sigma_h \cap E_-)$ is path-connected, we can extend $\gamma_N$ to a separating simple closed curve on $S$ by connecting its endpoints on the complement of $\Sigma_h$. Call this curve $\gamma$.

    By construction, $\gamma \cap c = \emptyset$ and $\gamma \cap h(c) = \emptyset$. The fact that $N + \frac{1}{2} < L -1$ implies that $\gamma \cap h^{-1}(c)$. Since $h$ shifts every genus on $\Sigma_h$ by one, we have 
    \[
    \mathrm{genus}(S^{\gamma,c})+1 = \mathrm{genus}(S^{\gamma,h(c)}).
    \]
    Therefore,
    \begin{align*}
        \mathrm{genus}(S^{\gamma,h(c)}) &= \mathrm{genus}(S^{\gamma,h(\gamma)} \cup S^{h(\gamma),h(c)}) \\
        &=\mathrm{genus}(S^{\gamma,h(\gamma)}) + \mathrm{genus}(S^{h(\gamma),h(c)}) \\
        &=1 + \mathrm{genus}(S^{\gamma,c}),
    \end{align*}
    and
\begin{align*}
        \mathrm{genus}(S^{\gamma,c}) &= \mathrm{genus}(S^{h^{-1}(\gamma),h^{-1}(c)}) \\
        &=\mathrm{genus}(S^{h^{-1}(\gamma),\gamma}) + \mathrm{genus}(S^{\gamma,h^{-1}(c)}) \\
        &=1 + \mathrm{genus}(S^{\gamma,h^{-1}(c)}),
    \end{align*}
\end{proof}

\begin{remark}\label{rmk:separate}
    The curve $\gamma$ obtained by the construction in the proof above lies on $E_-$ because $L$ is chosen to be the minimum of the $x$-coordinates of the elements of $\Sigma_h \cap c$. By letting $L$ be the maximum of such $x$-coordinates and picking $N > L+1$, it is possible to construct a $\gamma$ lying in $E_+$ satisfying 
    \begin{enumerate}[label=(\roman*)]
        \item $\mathrm{genus}(S^{\gamma,c}) - 1 = \mathrm{genus}(S^{\gamma,h(c)})$ and
        \item $\mathrm{genus}(S^{\gamma,c}) + 1 = \mathrm{genus}(S^{\gamma,h^{-1}(c)})$.
    \end{enumerate}
\end{remark}

Lemma~\ref{lem:separate} shows us the existence of separating simple closed curves such that a handle shift either increases or decreases the number of genera bounded by it and a fixed separating simple closed curve. Our ultimate goal is to show that any two handle shifts with the same attracting and repelling ends can be approximated by elements of $\pmap_\mathrm{c}(S(n))$. To this end, we now consider a composition of two such handle shifts, $h_1$ and $h_2$, and analyze their effect on the genus count between separating curves.

\begin{lemma}[{\cite[Lemma 3.12]{huynh2022}}]\label{lem:separate2}
    Let $h_1$ and $h_2$ be two handle shifts from $e_-$ to $e_+$ and let $h$ be any one of $h_1\circ h_2^{-1}, h_2^{-1}\circ h_1, h_2 \circ h_1^{-1}$ or $h_1^{-1}\circ h_2 $. Let $c$ be a separating simple closed curve that separates $e_+$. Then there exists a separating simple closed curve $\gamma$ disjoint from $c$ and $h(c)$ such that $\mathrm{genus}(S^{\gamma,c}) = \mathrm{genus}(S^{\gamma,h(c)})$.
\end{lemma}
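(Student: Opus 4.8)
The plan is to route the whole argument through the homomorphism $\varphi_v\colon\pmap(S)\to\Z$ of Proposition~\ref{prop:properties}, taking $v=[c]\in H_1^{sep}(S,\Z)$. Since $c$ separates $e_+$ and $S(n)$ has at least two ends, the $e_+$-side of $c$ is a non-compact neighbourhood of an end accumulated by genus, so $v$ is a nonzero simple separating class; I orient $v$ so that $e_+$ lies on its negative side. The statement will then reduce to two steps: first, that $\varphi_v(h)=0$; second, a choice of $\gamma$ converting this algebraic identity into the asserted equality of genus counts.

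Step one. Both $h_1$ and $h_2$ are handle shifts from $e_-$ to $e_+$, and $e_+,e_-$ lie on opposite sides of $c$, so $v$ cuts $h_1$ and $h_2$; by Proposition~\ref{prop:properties}(3) the values $\varphi_v(h_1),\varphi_v(h_2)$ are nonzero. The point is that they are in fact equal. To see this I use that $\varphi_v$ does not depend on the chosen representative of $v$ and evaluate $\varphi_v(h_i)=\phi_{\delta_i}(h_i(\delta_i))$ on a representative $\delta_i$ of $v$ meeting the support $\Sigma_{h_i}$ in a single proper arc — such a representative exists precisely because $v$ cuts $h_i$, exactly as in the proof of Proposition~\ref{prop:properties}(3). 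Then $\delta_i$ and $h_i(\delta_i)$ are disjoint, cobounding a genus-one subsurface inside $\Sigma_{h_i}$, and reading off the model-surface picture shows $\phi_{\delta_i}(h_i(\delta_i))=-1$: the single swept handle is counted on the $e_+$-side of $\delta_i$ but not on that of $h_i(\delta_i)$. This value depends only on the orientation of $v$ and on the common direction ($e_-$ toward $e_+$) of the two shifts, so $\varphi_v(h_1)=\varphi_v(h_2)$. Since $\varphi_v$ is a homomorphism into an abelian group, each of the four possible $h$ satisfies $\varphi_v(h)=\pm(\varphi_v(h_1)-\varphi_v(h_2))=0$.

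Step two. The set $c\cup h(c)$ is compact, and since $S(n)$ has finitely many ends its space of ends is finite, hence discrete, so $e_+$ is isolated and admits a neighbourhood $W$ whose frontier is a single separating simple closed curve $\gamma:=\partial W$ with $W\cap(c\cup h(c))=\emptyset$; in particular $\gamma$ is disjoint from $c$ and from $h(c)$. Because handle shifts act trivially on ends, $h(c)$ also separates $e_+$, so both $c$ and $h(c)$ separate $e_+$ and both have $W$ on their $e_+$-side; hence $\gamma$ cobounds with $c$, respectively with $h(c)$, a compact subsurface containing no ends — namely $S^{\gamma,c}$, respectively $S^{\gamma,h(c)}$ — so these have finite genus. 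Now take a compact connected subsurface $R$ with separating boundary components, lying on the non-$W$ side of $\gamma$, having $\gamma$ as its boundary component facing $e_+$, and containing $c$, $h(c)$, $S^{\gamma,c}$ and $S^{\gamma,h(c)}$. For such an $R$, with $e_v=e_+$ and $\partial_0=\gamma$, the components of $R\setminus c$ and of $R\setminus h(c)$ meeting $\gamma$ are exactly $S^{\gamma,c}$ and $S^{\gamma,h(c)}$ (here one uses that $W$ is disjoint from $c\cup h(c)$, so the part of $h(c)$ lying on the far side of $c$ still lies inside $S^{\gamma,c}$, and vice versa). Hence $\mathfrak{g}_R(c)=\mathrm{genus}(S^{\gamma,c})$ and $\mathfrak{g}_R(h(c))=\mathrm{genus}(S^{\gamma,h(c)})$, and therefore
\[
\mathrm{genus}(S^{\gamma,h(c)})-\mathrm{genus}(S^{\gamma,c})=\mathfrak{g}_R(h(c))-\mathfrak{g}_R(c)=\phi_c(h(c))=\varphi_v(h)=0 .
\]

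I expect the main obstacle to be the bookkeeping in Step two: arranging $\gamma$ and especially $R$ so that $\mathfrak{g}_R$ literally computes $\mathrm{genus}(S^{\gamma,\cdot})$ demands tracking how $c$ and $h(c)$ sit relative to $\gamma$, even though $c$ and $h(c)$ need not be disjoint from each other. The reason I prefer this route over imitating the explicit $\Sigma_h$-coordinate proof of Lemma~\ref{lem:separate} is that a direct approach would force an analysis of how the two supports $\Sigma_{h_1}$ and $\Sigma_{h_2}$ overlap inside $S(n)$, whereas here the cancellation of $h_1$ against $h_2^{-1}$ is handled purely algebraically by the homomorphism property of $\varphi_v$, and Step two merely repackages that cancellation as a statement about embedded subsurfaces. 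A secondary point still needing justification is that the isolated end $e_+$ admits a neighbourhood bounded by a single separating simple closed curve disjoint from the compact set $c\cup h(c)$, which is standard for surfaces with finitely many ends.
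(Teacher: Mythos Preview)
Your argument is correct and takes a genuinely different route from the paper. The paper's proof is purely geometric and local: it applies Lemma~\ref{lem:separate} once to $c$ and $h_1$ to produce an intermediate curve $\delta$, then again to $\delta$ and $h_2$ to produce $\gamma$, and reads off the desired genus equality from the two $\pm 1$ contributions cancelling. Your proof instead invokes the homomorphism $\varphi_v$ of Proposition~\ref{prop:properties}: you observe that for any handle shift $h_i$ cut by $v$ one has $\varphi_v(h_i)=\pm 1$ with sign depending only on the direction of the shift relative to the orientation of $v$, so $\varphi_v(h_1)=\varphi_v(h_2)$ and hence $\varphi_v(h)=0$; you then unpack $\varphi_v(h)=\phi_c(h(c))$ as a genus difference against a suitably chosen $\gamma=\partial_0$. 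The paper's approach is more self-contained (it uses only the immediately preceding lemma), whereas yours imports the Chapter~5 machinery but is more conceptual: it makes transparent that the lemma is really the statement $\varphi_v(h_1)=\varphi_v(h_2)$, which is exactly what feeds into Theorem~\ref{thm:handledehn}. There is no circularity, since Proposition~\ref{prop:properties} is established independently of Lemma~\ref{lem:separate2}. One small over-specification: you appeal to discreteness of $\mathrm{Ends}(S(n))$ to get the neighbourhood $W$, but in fact the hypothesis that $c$ already separates $e_+$ alone gives you a one-ended subsurface on the $e_+$-side of $c$, inside which you can push $\gamma$ past the compact set $h(c)\cup c$ using any exhaustion; so your argument works verbatim for arbitrary surfaces with at least two ends accumulated by genus, matching the generality of the paper's proof.
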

\begin{proof}
    Without loss of generality, we prove the theorem for $h= h_2^{-1}\circ h_1$ and $h' = h_2\circ h_1^{-1}$, since the same argument is symmetric between $h_1$ and $h_2$. \\
    We use Lemma~\ref{lem:separate} on $c$ and $h_1$ to find a curve $\delta$ such that 
    \[
    \mathrm{genus}(S^{\delta,c}) + 1 = \mathrm{genus}(S^{\delta,h_1(c)}),
    \]
    and 
    \[
    \mathrm{genus}(S^{\delta,c}) - 1 = \mathrm{genus}(S^{\delta,h_1^{-1}(c)}).
    \]
    We then apply Lemma~\ref{lem:separate} on $\delta$ and $h_2$ to find a curve $\gamma$ with 
    \[
    \mathrm{genus}(S^{\gamma,\delta}) + 1 = \mathrm{genus}(S^{\gamma,h_2(\delta)})
    \]
    and 
    \[
    \mathrm{genus}(S^{\gamma,\delta}) - 1 = \mathrm{genus}(S^{\gamma,h_2^{-1}(\delta)}).
    \] 
    Note that 
    \begin{align*}
        \mathrm{genus}(S^{\gamma,h(c)}) &= \mathrm{genus}(S^{\gamma,h_2^{-1}(\delta)}) + \mathrm{genus}(S^{h_2^{-1}(\delta),h(c)}) \\
        &=\mathrm{genus}(S^{\gamma,\delta}) - 1 + \mathrm{genus}(S^{\delta,h_1(c)}) \\
        &=\mathrm{genus}(S^{\gamma,\delta}) - 1 + \mathrm{genus}(S^{\delta,c}) + 1\\
        &=\mathrm{genus}(S^{\gamma,c}).
    \end{align*}
    Similarly,
    \begin{align*}
        \mathrm{genus}(S^{\gamma,h'(c)}) &= \mathrm{genus}(S^{\gamma,h_2(\delta)}) + \mathrm{genus}(S^{h_2(\delta),h'(c)}) \\
        &= 1 + \mathrm{genus}(S^{\gamma,\delta}) + \mathrm{genus}(S^{\delta,h_1^{-1}(c)}) \\
        &=1+\mathrm{genus}(S^{\gamma,\delta}) + \mathrm{genus}(S^{\delta,c}) - 1\\
         &=\mathrm{genus}(S^{\gamma,c}).
    \end{align*}
\end{proof}

\begin{remark}
    Because the construction depends on Lemma~\ref{lem:separate}, $\gamma$ lies in $E_-$. However, by using the alternative construction outlined in Remark~\ref{rmk:separate}, it is also possible to construct a curve $\gamma$ lying in $E_+$. 
\end{remark}

We can finally show that any two handle shifts attracted and repelled by the same ends differ by a mapping class that can be approximated by Dehn twists.

\begin{theorem}[{\cite[Lemma 3.13]{huynh2022}}]\label{thm:handledehn}
 Let $h_1$ and $h_2$  be two handle shift from $e_-$ to $e_+$. Then any one of $h_1\circ h_2^{-1}, h_2^{-1}\circ h_1, h_2 \circ h_1^{-1}$ or $h_1^{-1}\circ h_2 $ can be approximated by Dehn twists.
\end{theorem}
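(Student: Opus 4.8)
The plan is to deduce the statement from the cohomological criterion of Theorem~\ref{thm:zero}. Write $h$ for whichever of the four compositions is under consideration. First I would observe that $h$ is a pure mapping class, being a product of handle shifts and their inverses, all of which lie in $\pmap(S)$; hence by Theorem~\ref{thm:zero} it is enough to show $\varphi_v(h)=0$ for every $v\in H_1^{sep}(S,\mathbb{Z})$. Two reductions come for free. Since $\overline{\pmap_\mathrm{c}(S)}$ is a normal subgroup that is closed under inversion (Lemma~\ref{thm:compurenormal}), and the four compositions $h_1h_2^{-1}$, $h_2^{-1}h_1$, $h_2h_1^{-1}$, $h_1^{-1}h_2$ are obtained from one another by inversion and conjugation, it suffices to treat a single one, say $h=h_1h_2^{-1}$ --- equivalently, each $\varphi_v$ being a homomorphism, $\varphi_v$ of all four equals $\pm(\varphi_v(h_1)-\varphi_v(h_2))$, so they vanish simultaneously. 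Moreover, by Proposition~\ref{prop:homo} the map $v\mapsto\varphi_v$ is $\mathbb{Z}$-linear, and $H_1^{sep}(S,\mathbb{Z})$ is generated by simple separating classes (Lemma~\ref{lem:exhhom}, Remark~\ref{rmk:freeabe}), so it is enough to prove $\varphi_v(h)=0$ for $v$ \emph{simple} and nonzero.

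Fix such a $v$ and let $e_+,e_-$ be the attracting and repelling ends shared by $h_1$ and $h_2$. If $e_+$ and $e_-$ lie on the same side of $v$, then $v$ cuts neither $h_1$ nor $h_2$, so part (3) of Proposition~\ref{prop:properties} gives $\varphi_v(h_1)=\varphi_v(h_2)=0$, whence $\varphi_v(h)=\varphi_v(h_1)-\varphi_v(h_2)=0$.

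In the remaining case $v$ separates $e_+$ from $e_-$. I would pick an oriented simple closed curve $c$ representing $v$; then $c$ is a separating curve separating $e_+$, so Lemma~\ref{lem:separate2} applies to $h$ and $c$, producing a separating curve $\gamma$, disjoint from both $c$ and $h(c)$, with $\mathrm{genus}(S^{\gamma,c})=\mathrm{genus}(S^{\gamma,h(c)})$. The curves $\gamma,c,h(c)$ are pairwise disjoint and, by the construction in Lemma~\ref{lem:separate}, nested with $\gamma$ on the $e_-$-side, so additivity of genus along separating curves yields $\mathrm{genus}(S^{c,h(c)})=\lvert\mathrm{genus}(S^{\gamma,h(c)})-\mathrm{genus}(S^{\gamma,c})\rvert=0$; that is, $c$ and $h(c)$ cobound a planar (genus-zero) subsurface. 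By Remark~\ref{rmk:varphi}, $\varphi_v(h)$ is precisely the signed genus count between $c$ and $h(c)$, hence $\varphi_v(h)=0$. Combining the two cases, $\varphi_v(h)=0$ for every $v\in H_1^{sep}(S,\mathbb{Z})$, and Theorem~\ref{thm:zero} gives $h\in\overline{\pmap_\mathrm{c}(S)}$, i.e.\ $h$ is approximated by products of Dehn twists.

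The hardest part will be the last step: turning the genus equality of Lemma~\ref{lem:separate2} into the vanishing of $\varphi_v(h)$. This requires verifying that the auxiliary curve $\gamma$ is positioned so that $\gamma,c,h(c)$ are genuinely nested (so that the complementary region $S^{c,h(c)}$ is a clean ``middle'' piece and genus is additive as claimed, rather than some ends leaking into $S^{c,h(c)}$), and identifying $\varphi_v(h)=\phi_c(h(c))=\mathfrak{g}_R(h(c))-\mathfrak{g}_R(c)$ with the signed genus count between $c$ and its image --- which is legitimate even though $c$ and $h(c)$ need not be isotopic, only genus-zero cobounding. A secondary, bookkeeping-level point is making the reduction to simple classes fully precise, i.e.\ using a principal exhaustion to write an arbitrary $v\in H_1^{sep}(S,\mathbb{Z})$ as an integral combination of disjoint simple peripheral classes and invoking linearity of $\Phi$.
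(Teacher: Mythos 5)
Your proof is essentially correct, but it takes a genuinely different route from the paper. The paper's proof (following Huynh) is constructive: it fixes an exhaustion $\{K_n\}$, uses Lemma~\ref{lem:separate2} and Remark~\ref{rmk:separate} to find auxiliary separating curves $\gamma_1^n,\gamma_2^n$ with matching genus counts on either side of $c_i^n$ and $h(c_i^n)$, and then explicitly extends $h|_{F_n}$ to a compactly supported homeomorphism $f_n$ of a finite-type subsurface $G_n$ (using that $U_i$ and $S^{\gamma_i^n,h(c_i^n)}$ are homeomorphic), so that $f_n\to h$ compactly. You instead deduce the statement from the cohomological criterion of Theorem~\ref{thm:zero}, reducing to simple classes $v$, splitting on whether $v$ cuts the handle shifts, and using Lemma~\ref{lem:separate2} only to compute $\varphi_v(h)=0$. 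Both arguments ultimately rest on the same genus-counting lemma, but they buy different things: your route is shorter and more conceptual, while the paper's route is self-contained and elementary --- indeed the remark following the theorem makes clear that the point of the direct proof is to verify \emph{independently} that two such handle shifts project to the same element of $A_S$, whereas your argument derives this from the harder direction of Theorem~\ref{thm:zero}, whose proof in the text is itself only sketched and effectively encodes the Aramayona--Patel--Vlamis structure theorem. There is no circularity (Theorem~\ref{thm:zero} does not depend on Theorem~\ref{thm:handledehn}), but your proof demotes the statement to a corollary of Chapter~5 rather than an independent check of it.

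One small inaccuracy: Lemma~\ref{lem:separate2} guarantees that $\gamma$ is disjoint from $c$ and from $h(c)$, but not that $c$ and $h(c)$ are disjoint from each other, so the claim that ``$\gamma,c,h(c)$ are pairwise disjoint'' and the intermediate assertion $\mathrm{genus}(S^{c,h(c)})=0$ are not justified as stated. This is harmless: by Lemma~\ref{lem:phi}(2) you have $\varphi_v(h)=\phi_c(h(c))=\phi_\gamma(h(c))-\phi_\gamma(c)$, and since $\gamma$ lies on the $e_-$-side of both $c$ and $h(c)$, these two terms equal $\mathrm{genus}(S^{\gamma,h(c)})$ and $\mathrm{genus}(S^{\gamma,c})$ with the same sign, so the genus equality of Lemma~\ref{lem:separate2} gives $\varphi_v(h)=0$ directly, without ever needing $c$ and $h(c)$ to cobound anything.
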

\begin{proof}
Without loss of generality, we prove the theorem for $h =h^{-1}_2\circ h_1$.

Let $\set{K_n}_{n\in \N}$ be an exhaustion of $S(n)$ and let $F_n$ be a surface containing $K_n$ and $h_2^{-1}\circ h_1(K_n)$ such that all of the boundary components of $F_n$ are separating. Let the boundary components of $F_n$ be $c_{1}^n,c^n_2,\dots, c^n_k$. For sufficiently large $n$, each $c^n_i$ separates one end and after relabeling, we may assume $c_1^n$ separates $e_-$ and $c^n_2$ separates $e_+$. Since the supports of $h_1$ and $h_2$ are on some subsurface containing only $e_-$ and $e_+$, we can also assume that $h$ is identity on $c^n_3,\dots,c^n_k$ for sufficiently large $n$.

By Theorem~\ref{lem:separate2} and Remark~\ref{rmk:separate}, there are separating simple closed curves $\gamma_1^n$ and $\gamma_2^n$ such that 
\[
\mathrm{genus}(S^{\gamma_1^n,c_1^n}) = \mathrm{genus}(S^{\gamma^n_1,h(c_1^n)}),
\]
and
\[
\mathrm{genus}(S^{\gamma^n_2,c_2^n}) = \mathrm{genus}(S^{\gamma^n_2,h(c_2^n)}),
\]
where $\gamma_1$ is on the component of $S\setminus c_1 \cup c_2$ containing $e_+$ and $\gamma_2$ is on the component containing $e_-$.

Let the surface $G_n = F_n \cup U_1 \cup U_2$, where $U_i$ is the surface bounded by $c^n_i$ and $\gamma^n_i$. Notice that $G_n$ is bounded by $\gamma^n_1,\gamma^n_2,c^n_3,\dots$ and $c^n_k$.

We shall construct $f_n\colon G_n \rightarrow G_n$. Define $f_n|_{F_n}$ to be $h|_{F_n}$. By construction, 
\[
\mathrm{genus}(U_i) = \mathrm{genus}(S^{\gamma^n_i,c^n_i}) = \mathrm{genus}(S^{\gamma^n_i,h(c^n_i)}).
\]
As they share both the number of genera and the number of boundary components, $U_i$ is homeomorphic to $S^{\gamma^n_i,h(c^n_i)}$. It follows that $f_n$ can be extended to $G_n$ by defining $f_n|_{\p G_n}$ to be the identity function.  Because $f_n|_{\p G_n}$ is identity, we can extend $f_n$ to be identity on $S(n) \setminus G_n$. Since $G_n$ is a finite-type surface, $f_n|_{G_n}$ is a product of Dehn twists, which implies that $\lim_{n\rightarrow\infty} f_n = h$ is approximated by Dehn twists.

\end{proof}

\begin{remark}
     Theorem~\ref{thm:handledehn} is a direct proof that any two handle shifts with the same attracting and repelling ends projects to the same element in the quotient group $A_S$ defined in Chapter~\ref{chp:gen}. 
\end{remark}

We now turn our attention to the compactly supported mapping class group. Recall from Section~\ref{sec:topogen} that the compactly supported mapping class group is the direct limit of the pure mapping class groups of essential subsurfaces. By Theorem~\ref{thm:humphries}, the pure mapping class group of a finite-type surface is generated by finitely many Dehn twists about non-separating simple closed curves. Therefore, it is reasonable to believe that the Dehn twists about all such non-separating simple closed curves generate the compactly supported pure mapping class group of infinite-type surfaces. However, we can do better by including elements outside the compactly supported mapping class group.

\subsection{More than Two Ends}

In this subsection we shall assume that $n\geq 3$, and treat the cases for $n=1$ (Loch Ness Monster) and $n=2$ (Jacob's Ladder) separately later. Consider the families of simple closed curves $a_i^j,{a'}_i^j,b_i^j$ and $c^j_{i-1}$ shown in Figure~\ref{fig:inf}. The upper index $j$ indicates the end on which the curve lies and the lower index $j$ specifies its location along that end. We denote by $h_{i,i+1}$ a Type I handle shift from the $i$-th end to the $(i+1)$-th end and by $R$ a counter-clockwise rotation of $S(n)$ by an angle of $\frac{2\pi}{n}$.

\begin{proposition}\label{prop:countableset}
    The subgroup generated by the set
    \[
    \set{T_{a_i^j},T_{{a'}_i^j},T_{b_i^j},T_{c^j_{i-1}}, h_{1,2},R}
    \]
    contains $\pmap_\mathrm{c}(S(n))$.
\end{proposition}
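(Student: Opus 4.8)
The plan is to use the description of $\pmap_\mathrm{c}(S(n))$ as the direct limit $\varinjlim \pmap(X)$ over finite-type essential subsurfaces $X \subset S(n)$, so that the statement reduces to a finite-type generation problem, and then to produce every Dehn twist we need by applying the conjugation property $T_{f(\gamma)} = f T_\gamma f^{-1}$ of Proposition~\ref{prop: dehnprop} to the listed generators. Write $G$ for the subgroup generated by the given set. Since $\pmap_\mathrm{c}(S(n)) = \bigcup_X \pmap(X)$ over a directed family, it suffices to show $\pmap(X) \leq G$ for every finite-type essential $X$. Any such $X$ can be isotoped into one of the standard subsurfaces $Y_N$, where $Y_N$ is the union of the central $n$-holed sphere in the model of Figure~\ref{fig:inf} with the first $N$ handles along each of the $n$ ends; as $X \hookrightarrow Y_N$ is $\pi_1$-injective, the image of $\pmap(X)$ in $\pmap(S(n))$ lies in that of $\pmap(Y_N)$, so it is enough to prove $\pmap(Y_N) \leq G$ for every $N$.

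For a fixed arm $j$, the curves $a_i^j, {a'}_i^j, b_i^j$ with $1 \le i \le N$ form a Humphries-type chain for the genus-$N$, one-holed subsurface $Z^j_N \subset Y_N$ spanned by the first $N$ handles of that arm; together with the separating twists $T_{c^j_{i-1}}$ already in our set, Theorem~\ref{thm:humphries} and Theorem~\ref{thm:bdarygen} give $\pmap(Z^j_N) \le G$. Conjugation by $R$ is compatible with this, since $R$ cyclically carries the arm-$1$ configuration to the arm-$(1+k)$ configuration (e.g. $R T_{a_i^1} R^{-1} = T_{a_i^2}$, and so on), so the per-arm subgroups lie in $G$ coherently for all $j$.

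The remaining generators of $\pmap(Y_N)$ are Dehn twists about curves that pass through the central sphere and link handles lying on different arms; these are exactly where the handle shift $h_{1,2}$ is used. Applying $h_{1,2}$ to an arm-$1$ chain curve, say $b_i^1$, yields a curve $h_{1,2}(b_i^1)$ running from arm $1$ through the core into arm $2$, whose twist $h_{1,2} T_{b_i^1} h_{1,2}^{-1}$ lies in $G$; conjugating by powers of $R$ (noting that $R^k h_{1,2} R^{-k}$ is a handle shift from end $1+k$ to end $2+k$) produces the analogous connector curves between every pair of adjacent ends. One then checks that the per-arm Humphries generators, these connector twists, and the boundary twists $T_{c^j_{i-1}}$ contain the Dehn twists along a single chain of simple closed curves traversing all $nN$ handles of $Y_N$ (running out along arm $1$, jumping to arm $2$ via a connector, running out along arm $2$, and so on), so Theorem~\ref{thm:humphries} and Theorem~\ref{thm:bdarygen} applied to $Y_N$ yield $\pmap(Y_N) \le G$, completing the argument.

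The routine portions are the direct-limit reduction and the per-arm step. The main obstacle is the inter-arm step: one must check that the specific curves produced by $h_{1,2}$ and its $R$-conjugates are nonseparating and sit, in minimal position, inside a boundary-generator configuration for $Y_N$ as in Theorem~\ref{thm:bdarygen}, and that no additional twist — for instance about a curve separating two clusters of arms — is needed. Discharging this cleanly requires a model-specific choice of the exhausting family $\{Y_N\}$ adapted to Figure~\ref{fig:inf} together with careful bookkeeping of how the handle shift moves the standard arcs and curves; I would pin down an explicit spanning chain in $Y_N$ and verify the chain and lantern relations needed to recover the full finite generating set of $\pmap(Y_N)$.
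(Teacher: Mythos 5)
Your overall strategy matches the paper's: reduce to the direct limit $\pmap_\mathrm{c}(S(n)) = \varinjlim \map(K_n)$ over a compact exhaustion, obtain the per-arm Humphries-type twists by conjugating the listed generators by $R$ and $h_{1,2}$, and then recover the remaining generators of each finite-type piece. However, the step you yourself flag as ``the main obstacle'' --- producing the twists about curves that run through the central sphere and link handles on different arms --- is precisely the content of the proposition, and you leave it as a promissory note (``I would pin down an explicit spanning chain \dots and verify the chain and lantern relations needed''). The paper closes this gap without any lantern or chain relations: after changing the model of $K_n$ to the standard one of Theorem~\ref{thm:bdarygen} by a homeomorphism $\iota_n$ that is isotopic to the identity, every generator $\iota_n(\gamma_i)$ or $\iota_n(\alpha_i)$ that is not already one of the listed curves is observed to be the image of a listed curve ($c_0^k$ or $a_1^k$) under a power or product of the handle shifts $h_{i,i+1} = R^{i-1} h_{1,2} R^{-(i-1)}$, all of which lie in $G$; the conjugation property $T_{f(\gamma)} = f T_\gamma f^{-1}$ then puts each such twist in $G$ directly. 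So the missing idea in your write-up is not extra relations but the explicit identification of the nonstandard generators as handle-shift images of the given curves. Two smaller points: the curves $c^j_{i-1}$ are nonseparating (they are the extra boundary-type generators of Theorem~\ref{thm:bdarygen}), not separating as you call them; and your appeal to $\pi_1$-injectivity to compare $\pmap(X)$ with $\pmap(Y_N)$ is unnecessary once one fixes a single exhaustion, as the paper does.
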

\begin{proof}
Let $G$ be the subgroup generated by the above set. Consider the sequence of compact subsets $\set{K_n}_{n\in\N}$ where $K_n= S^n_{kn}$ for every $n\in\N$, as shown in Figure~\ref{fig:exhaus}. It is clear that $S(n)=\bigcup_{i=1}^\infty K_i$. Therefore, $\set{K_n}$ exhausts $S(n)$ and thus
\[
\pmap_\mathrm{c}(S(n)) = \varinjlim \map(K_n).
\]

\begin{figure}[htbp]
      \centering
      \includegraphics[width=0.8\textwidth]{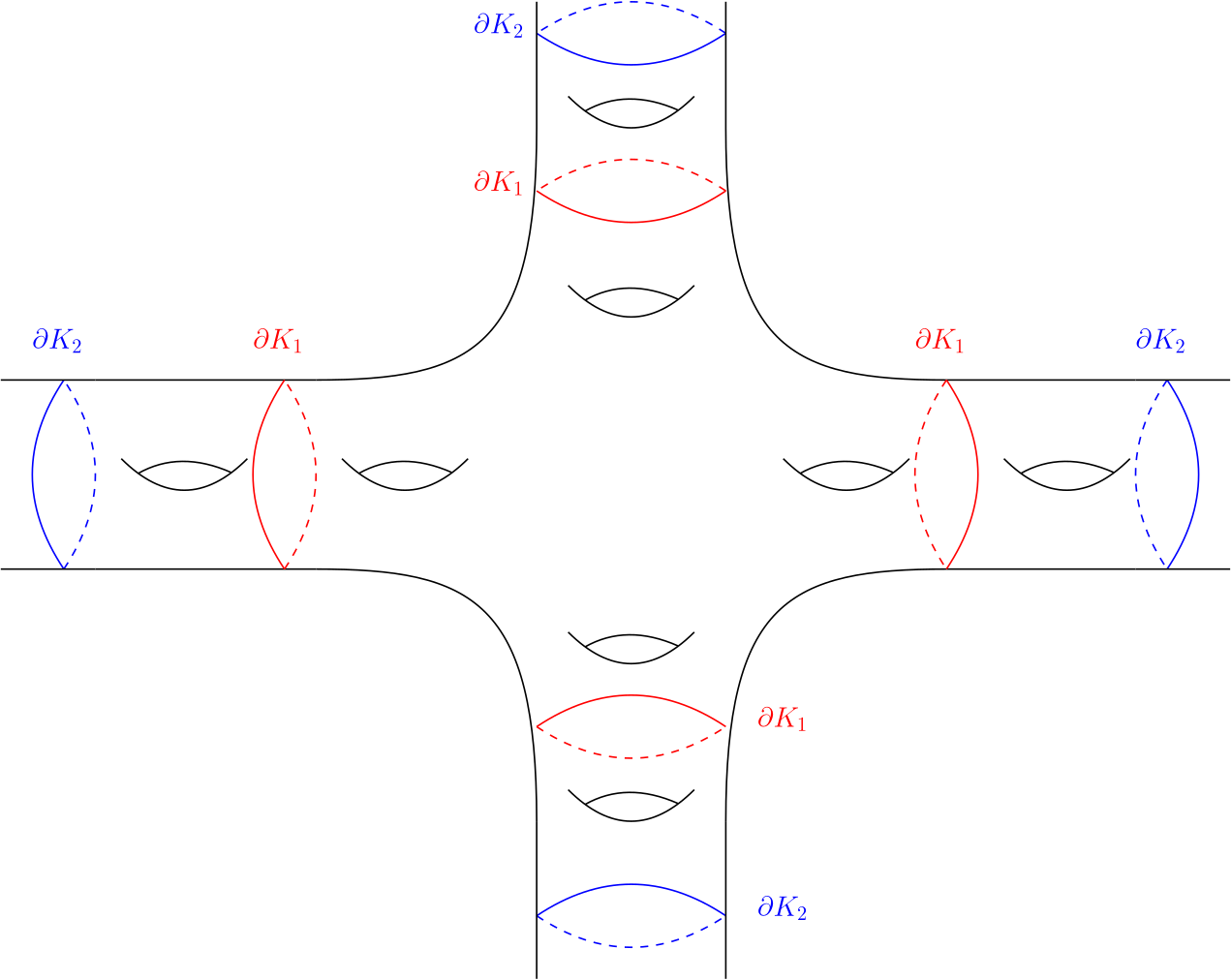}
      \caption{The sequence $K_n$ for $S(4)$}
      \label{fig:exhaus}
   \end{figure}    

We will show that for all $n\in \N$, $G$ includes the generators of $\map(K_n)$. Since a finite-type surface is determined up to homeomorphism by the number of boundary components and genera. Therefore, we can modify the model shown in Figure~\ref{fig:bdrygen} by a homeomorphism $\iota_n$ to the model of $K_n$. Note that $\iota_n$ is trivial as a mapping class, since it is just an embedding change. We can think of $\iota_n$ as sliding the position of the boundary components and the genera. 

\begin{figure}[htbp]
      \centering
      \includegraphics[width=\textwidth]{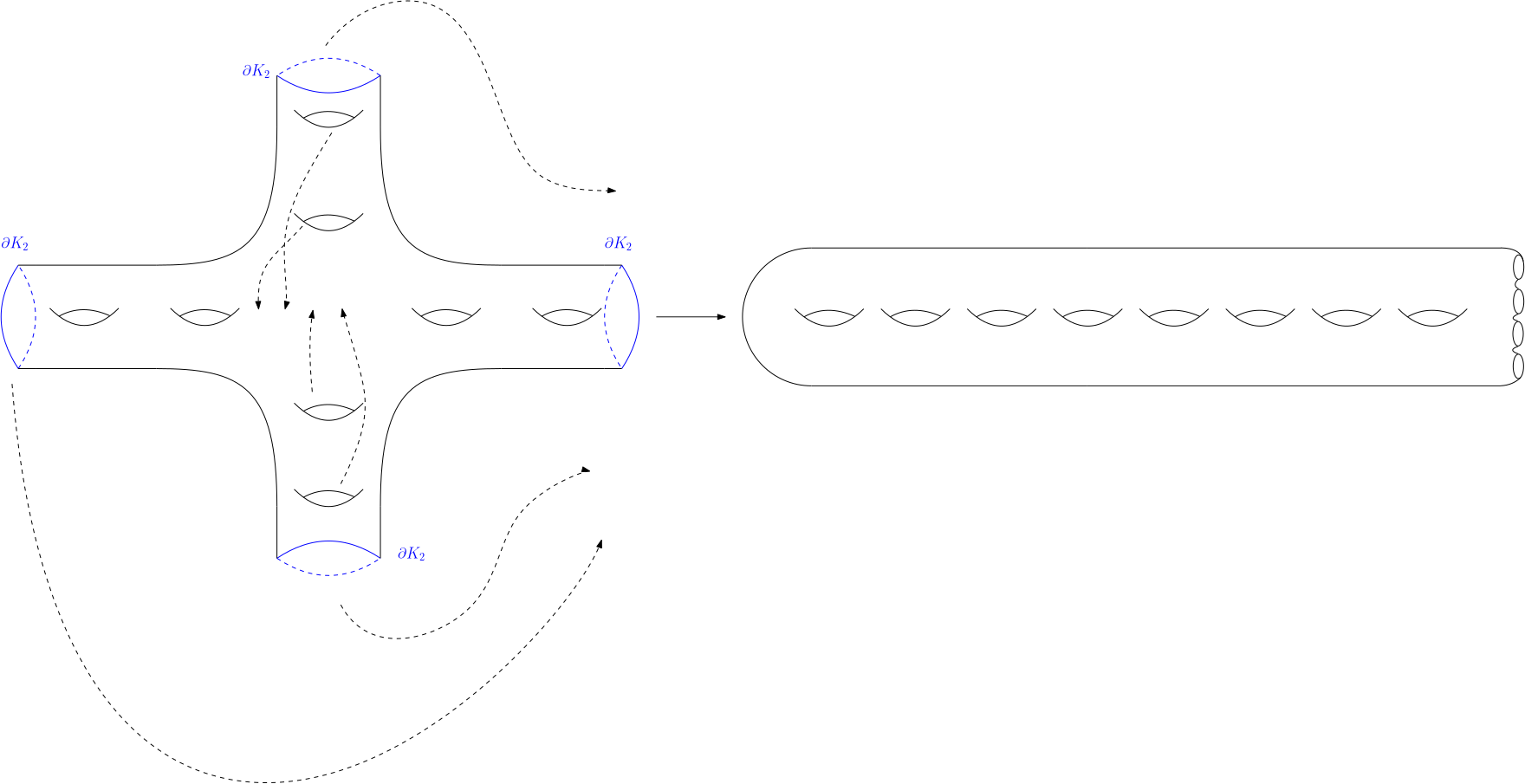}
      \caption{The homeomorphism $\iota_2^{-1}\colon K_2 \rightarrow K_2$ for $S(4)$}
      \label{fig:iota2}
   \end{figure}    

Recall that the handle shift $h_{1,2}$ is supported on an embedded copy of the model surface $\Sigma$ between the first and second ends. This copy of $\Sigma$ rotates alongside $S(n)$ under the rotation $R$ so that it lies between the second and third ends. Therefore,
\[
h_{2,3} = Rh_{1,2}R^{-1} \in G.
\]
Repeating the same argument, we can see that $G$ contains all handle shifts of the form $h_{i,i+1}$. It is clear that
\[
h_{i,k} = h_{i+(k-1),i+k}h_{i+1,i+2}h_{i,i+1},
\]
and thus every handle shift of the form $h_{i,j}$ is in $G$.

Observe that the images of $\beta_i$ curves in Figure~\ref{fig:bdrygen} under $\iota_n$ are the $b_j$ curves on $K_n$. Any $\gamma_i$ under $\iota_n$ is either a $c_i^j$ curve, or of the form as shown in Figure~\ref{fig:diff}. Observe in that case that there exists a handle shift $h_{k,k+1}$ and a curve $c_0^k$ such that $\iota_n(\gamma_i) = h_{k,k+1}^{-n}(c_0^k)$, so that
\[
T_{\iota_n(\gamma_i)} = T_{h_{k,k+1}^{-n}(c_0^k)} = h_{k,k+1}^{-n} T_{c^k_0}h_{k,k+1}^{n}
\]
by the conjugation property of Dehn twists, meaning that $G$ contains generators corresponding to each $\gamma_i$. Finally, observe that the images of $\alpha_i$ under $\iota_n$ either $a_j$ or $a'_j$ curves, or of the form shown as Figure~\ref{fig:diff}. This time, there exists a product of handle shifts $H$ and a curve $a^k_1$ such that $\iota_n(\alpha_i) = H(a^k_1)$ and
\[
T_{\iota_n(\alpha_i)} = T_{H(a_1^k)} = HT_{a^k_1}H^{-1} \in G.
\]
We have shown that for all $n\in \N$, the generators for $\map(K_n)$ are inside $G$, which mean that $G$ contains $\pmap_\mathrm{c}(S(n))$.

\begin{figure}[htbp]
      \centering
      \includegraphics[width=0.8\textwidth]{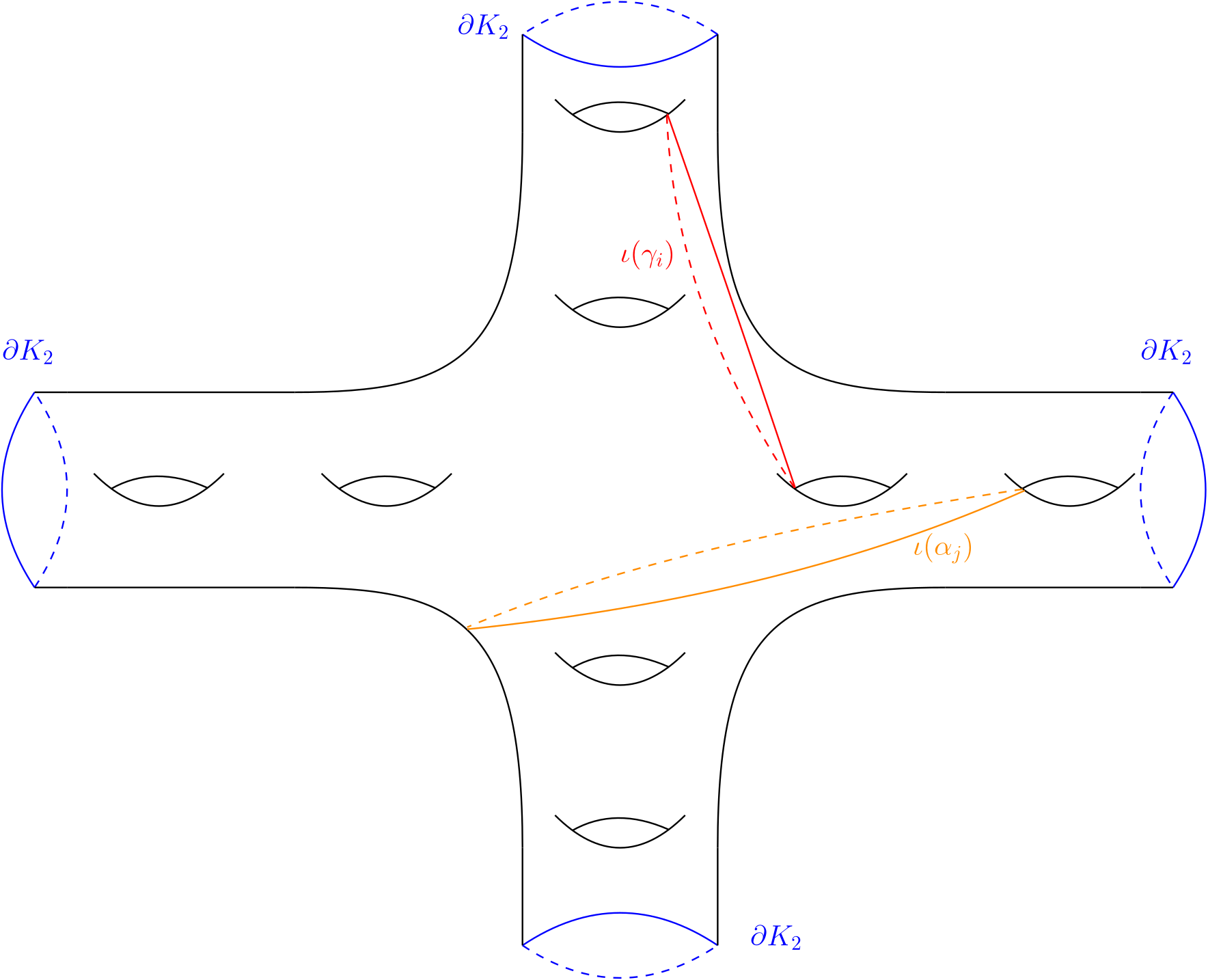}
      \caption{The curves $\iota_2(\gamma_i)$ and $\iota_n(\alpha_j)$ for some $i,j$}
      \label{fig:diff}
   \end{figure}    

\end{proof}

We now have a countable generating set for a subgroup containing $\pmap_\mathrm{c}(S(n))$. We can simplify this set further to obtain a finite generating set.

Observe that
\[
h_{i,i+1}(b^i_1) = b^{i+1}_1, \quad h_{i,i+1}(c_0^i)=c^{i+1}_1, \quad \overline{h_{i,i+1}}(a^{i+1}_1) = {a'}^{i}_1,
\]
for all $1\leq i \leq n$, and
\[
h_{i,i+1}(b^i_j) = b^i_{j-1}, \quad h_{i,i+1}(c_{j-1}^i)=c^i_{j-2}, \quad h_{i,i+1}(a^i_j) = a^i_{j-1}, \quad h_{i,i+1}({a'}^i_j) = {a'}^i_{j-1}
\]
for all $1\leq i \leq n$ and $j> 1$. 
Using this observation along with the fact that all four families of curves are preserved as families under the rotation $R$, we can find a finite subset of the set in Proposition~\ref{prop:countableset} that topologically generates the same subgroup. In particular, there is a finite set that topologically generates a subgroup containing the compactly supported pure mapping class group. 

\begin{lemma}\label{lem:finiteset}
    The subgroup generated by the set
    \[
    \set{T_{a_1^1},T_{b_1^1},T_{c^1_0}, h_{1,2},R}
    \]
    contains $\pmap_\mathrm{c}(S(n))$.
\end{lemma}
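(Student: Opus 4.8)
The plan is to bootstrap from Proposition~\ref{prop:countableset}. That result already shows the subgroup $G$ generated by the countable set $\set{T_{a_i^j},T_{{a'}_i^j},T_{b_i^j},T_{c^j_{i-1}}, h_{1,2},R}$ contains $\pmap_\mathrm{c}(S(n))$, so it suffices to prove that the subgroup $H$ generated by the finite set $\set{T_{a_1^1},T_{b_1^1},T_{c^1_0}, h_{1,2},R}$ contains all of those countably many generators. The handle shift $h_{1,2}$ and the rotation $R$ already lie in $H$, so the entire task is to manufacture the Dehn twists $T_{a_i^j}, T_{{a'}_i^j}, T_{b_i^j}, T_{c^j_{i-1}}$ inside $H$ starting from the three Dehn twists we are handed.

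First I would fix the first end and move along it. By the conjugation property of Dehn twists (Proposition~\ref{prop: dehnprop}), $f T_\alpha f^{-1} = T_{f(\alpha)}$ for any mapping class $f$. Using the identities displayed just before the statement, namely $h_{1,2}(a^1_j)=a^1_{j-1}$, $h_{1,2}(b^1_j)=b^1_{j-1}$, and $h_{1,2}(c^1_{j-1})=c^1_{j-2}$ for $j>1$, the inverse handle shift $h_{1,2}^{-1}$ carries each of $a_1^1$, $b_1^1$, $c_0^1$ one step up in its own family. An induction on $j$ then gives $T_{a_j^1}=h_{1,2}^{-(j-1)}T_{a_1^1}h_{1,2}^{\,j-1}\in H$, and likewise $T_{b_j^1},T_{c_{j-1}^1}\in H$, for every $j\ge 1$.

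Next I would produce the $a'$ family on the first end. Since $R$ rotates $S(n)$ by $2\pi/n$, it sends the curve families of the $i$-th end to those of the $(i+1)$-th; in particular $R(a_1^1)=a_1^2$, so $T_{a_1^2}=R\,T_{a_1^1}R^{-1}\in H$. The identity $\overline{h_{1,2}}(a_1^2)={a'}_1^1$ (reading $\overline{h_{1,2}}$ as the mapping class of the handle shift) then yields $T_{{a'}_1^1}=h_{1,2}T_{a_1^2}h_{1,2}^{-1}\in H$, and because $h_{1,2}({a'}_j^1)={a'}_{j-1}^1$ for $j>1$, the same inductive argument gives $T_{{a'}_j^1}\in H$ for all $j$. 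At this point $H$ contains every Dehn twist about a curve lying on the first end.

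Finally I would spread everything over the remaining ends by conjugating with powers of $R$: since $R$ cyclically permutes the ends and preserves each of the four families of curves, $T_{x^i}=R^{\,i-1}T_{x^1}R^{-(i-1)}\in H$ whenever $x^1\in\set{a_j^1,{a'}_j^1,b_j^1,c_{j-1}^1}$ and $x^i$ is the corresponding curve on the $i$-th end. Thus $H$ contains the full generating set of Proposition~\ref{prop:countableset}, so $H\supseteq G\supseteq \pmap_\mathrm{c}(S(n))$, which is the claim. I expect the only genuine care needed is the index and orientation bookkeeping — in particular checking that it is $h_{1,2}^{-1}$ rather than $h_{1,2}$ that raises the lower index, and interpreting $\overline{h_{1,2}}$ correctly — since everything else is a mechanical application of the conjugation property together with Proposition~\ref{prop:countableset}.
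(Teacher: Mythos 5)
Your proposal is correct and follows essentially the same route as the paper: reduce to Proposition~\ref{prop:countableset} and then manufacture all the countably many Dehn twist generators from the three given ones via the conjugation property, the displayed identities for how $h_{i,i+1}$ acts on the curve families, and conjugation by powers of $R$. The only quibble is a harmless direction slip in one conjugation (you write $T_{{a'}_1^1}=h_{1,2}T_{a_1^2}h_{1,2}^{-1}$ where the identity $\overline{h_{1,2}}(a_1^2)={a'}_1^1$ gives $T_{{a'}_1^1}=h_{1,2}^{-1}T_{a_1^2}h_{1,2}$), which does not affect membership in $H$ since $H$ contains $h_{1,2}^{\pm 1}$.
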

\begin{proof}
    Let $G$ be the subgroup generated by the set
    \[
    \set{T_{a_1^1},T_{b_1^1},T_{c^1_0}, h_{1,2},R}.
    \]
    By Proposition~\ref{prop:countableset}, it is enough to show that $T_{a_i^j},T_{{a'}_i^j},T_{b_i^j}$ and $T_{c_{i-1}^{j}}$ are in $G$ for all $i$ and $j$. Recall that since $R$ and $h_{1,2}$ are both in $G$, the handle shift $h_{i,i+1}$ is in $G$ for all $i$. \\
    Using the conjugation property of Dehn twists and the observations above, 
    \[
    T_{{a'}_1^{i}} = \overline{(h_{i,i+1})}T_{a^{i+1}_1}(h_{i,i+1}) \in G.
    \]
    Observe that,
    \[
    T_{b_1^{i+1}} = RT_{b^i_1}\overline{R},
    \]
    and 
    \[
    T_{b^i_j} = \overline{(h_{i,i+1})^{j-1}}T_{b^i_1}(h_{i,i+1})^{j-1}
    \]
    so that $T_{b^j_i}$ is in $G$ for all $i$ and $j$. By similar arguments, every $T_{a_i^j},T_{{a'}^j_i}$ and $T_{c_{i-1}^j}$ is also in $G$, and we are done.
    
\end{proof}

To topologically generate the entire mapping class group, we need mapping classes whose images under $\pi$ generates the symmetric group $\sym_n$. Luckily, it is a known fact that $\sym_n$ is generated by two elements. It is an exercise in abstract algebra to prove the following theorem so we skip the proof, see [\citenum{lang}, Exercise I.38] for an outline for the proof.

\begin{lemma}\label{lem:symgen}
    For $n\geq 3$, the symmetric group $\sym_n$ is generated by an $n$-cycle and a $2$-cycle.
\end{lemma}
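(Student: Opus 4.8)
The plan is to prove the existence statement by exhibiting one explicit compatible pair: the $n$-cycle $\sigma = (1\,2\,\dots\,n)$ and the transposition $\tau = (1\,2)$. The key background fact I would invoke first is that $\sym_n$ is generated by the $n-1$ \emph{adjacent} transpositions $\tau_k = (k\ \ k{+}1)$, $k = 1,\dots,n-1$. This reduces to the two elementary observations that every permutation is a product of transpositions, and that an arbitrary transposition $(i\,j)$ with $i<j$ can be expanded as $(i\,j) = \tau_i\tau_{i+1}\cdots\tau_{j-2}\,\tau_{j-1}\,\tau_{j-2}\cdots\tau_{i+1}\tau_i$; both are standard and can be quoted or verified in a line.

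Next I would record the conjugation rule for cycles: for any $\rho\in\sym_n$ and any cycle $(a_1\,\dots\,a_m)$ one has $\rho\,(a_1\,\dots\,a_m)\,\rho^{-1} = (\rho(a_1)\,\dots\,\rho(a_m))$. Since $\sigma^k(j) \equiv j+k \pmod n$, applying this with $\rho = \sigma^k$ to $\tau = (1\,2)$ gives, for each $0 \le k \le n-2$,
\[
\sigma^{k}\,\tau\,\sigma^{-k} = (\sigma^k(1)\ \ \sigma^k(2)) = (k{+}1\ \ k{+}2) = \tau_{k+1},
\]
where the values $k+1,k+2$ stay in $\{1,\dots,n\}$ because $k\le n-2$. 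Hence the subgroup $\langle\sigma,\tau\rangle$ contains all of $\tau_1,\dots,\tau_{n-1}$.

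Combining the two steps, $\langle\sigma,\tau\rangle$ contains a generating set of $\sym_n$, so $\langle\sigma,\tau\rangle = \sym_n$; as $\sigma$ is an $n$-cycle and $\tau$ a $2$-cycle, the lemma follows (the hypothesis $n\ge 3$ is more than enough, the argument working for all $n\ge 2$). There is no genuine obstacle here — the proof is essentially a conjugation computation — but the one point worth flagging is that the statement is existential rather than universal: not every $n$-cycle paired with every $2$-cycle generates $\sym_n$ (for instance $(1\,2\,3\,4)$ together with $(1\,3)$ generates only a dihedral group of order $8$), so one must commit to a pair in which the transposition moves two points that are consecutive in the cyclic order determined by the chosen $n$-cycle, exactly as in the pair $\bigl((1\,2\,\dots\,n),(1\,2)\bigr)$ above.
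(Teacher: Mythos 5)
Your proof is correct and is exactly the standard argument that the paper defers to (it cites Lang's exercise and omits the proof entirely): conjugating $(1\,2)$ by powers of $(1\,2\,\dots\,n)$ produces all adjacent transpositions, which generate $\sym_n$. Your remark that the statement is existential — not every $n$-cycle/$2$-cycle pair generates, e.g.\ $(1\,2\,3\,4)$ and $(1\,3)$ give only a dihedral group — is a worthwhile precision, and it matches how the lemma is actually used later in the paper, where $R$ projects to $(1\,2\,\dots\,n)$ and $\tau_1$ to $(1\,2)$.
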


\begin{remark}\label{rem:ncycle}
    Since the rotation $R$ permutes the ends by moving each end to the next one, it projects onto the $n$-cycle $(12\dots n)$ in $\sym_n$ under the homomorphism $\pi$ of Section~\ref{sec:topogen}.
\end{remark}

By Remark~\ref{rem:ncycle}, we already have an $n$-cycle that we use to obtain every compactly supported mapping class. By adding a mapping class that projects to a $2$-cycle to the set in Proposition~\ref{prop:countableset}, we can generate the entire mapping class group. For convenience later, we shall add the involution $\tau_1$ shown in Figure~\ref{fig:tau1}.

We have shown the following:

\begin{theorem}\label{thm:finset}
    The set
    \[
    \set{T_{a_1^1},T_{b_1^1},T_{c^1_0}, h_{1,2},R,\tau_1}
    \]
    topologically generates $\map(S(n))$.
\end{theorem}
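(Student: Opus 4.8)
The plan is to let $G \leq \map(S(n))$ be the subgroup generated by the six listed elements and to prove that $G$ is dense, i.e. that its closure $\overline{G}$ --- which is automatically a closed subgroup, since the closure of a subgroup of a topological group is a subgroup --- equals $\map(S(n))$. Since $G$ is finitely generated it is countable, so this is precisely the theorem's assertion. The argument splits into two halves: first capture all of $\pmap(S(n))$ inside $\overline{G}$, then lift the remaining symmetric-group quotient.

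For the first half, Lemma~\ref{lem:finiteset} already gives $\pmap_\mathrm{c}(S(n)) \leq G$, hence $\overline{\pmap_\mathrm{c}(S(n))} \leq \overline{G}$; in particular every Dehn twist lies in $\overline{G}$. Next I would observe that $h_{i,i+1} = R^{\,i-1} h_{1,2} R^{\,1-i} \in G$ for every $i$ (indices cyclic), and that suitable compositions of these yield, for every ordered pair of distinct ends $(e_i,e_j)$, a handle shift $h_{i,j}$ from $e_i$ to $e_j$ lying in $G$ (up to a compactly supported correction, as in the proof of Proposition~\ref{prop:countableset}). Now for an \emph{arbitrary} handle shift $h$ --- recall from Remark~\ref{rmk:typeremark} that there are uncountably many of these between a fixed pair of ends --- with repelling end $e_i$ and attracting end $e_j$, Theorem~\ref{thm:handledehn} tells us $h \circ h_{i,j}^{-1}$ is approximable by Dehn twists, hence lies in $\overline{\pmap_\mathrm{c}(S(n))} \leq \overline{G}$, so $h = \bigl(h \circ h_{i,j}^{-1}\bigr)\circ h_{i,j} \in \overline{G}$. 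Since $n \geq 3$, the surface $S(n)$ has at least two ends accumulated by genus, so Proposition~\ref{prop:generate} applies: Dehn twists and handle shifts topologically generate $\pmap(S(n))$, and as $\overline{G}$ is closed and contains all of them, $\pmap(S(n)) \leq \overline{G}$. (Equivalently one could invoke Theorem~\ref{thm:bigmap} with Remark~\ref{rem:zn-1}, checking that $h_{1,2},\dots,h_{n-1,n}$ descend to a basis of $A_{S(n)} \cong \mathbb{Z}^{n-1}$.)

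For the second half I would use the short exact sequence
\[
1 \longrightarrow \pmap(S(n)) \longrightarrow \map(S(n)) \xrightarrow{\ \pi\ } \sym_n \longrightarrow 1,
\]
valid because $\mathrm{Ends}(S(n))$ is a discrete $n$-point space, so $\homeo(\mathrm{Ends}(S(n)),\mathrm{Ends}_{\mathrm{np}}(S(n))) = \sym_n$ is discrete and $\pi$ is a continuous (hence open) surjection with open kernel $\pmap(S(n))$. By Remark~\ref{rem:ncycle} the class $\pi(R)$ is the $n$-cycle $(1\,2\,\cdots\,n)$, and $\pi(\tau_1)$ is a transposition by construction of $\tau_1$; by Lemma~\ref{lem:symgen} these generate $\sym_n$, so $\pi(G) = \sym_n$. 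Hence for any $g \in \map(S(n))$ there is $g' \in G$ with $\pi(g') = \pi(g)$, and then $g(g')^{-1} \in \ker\pi = \pmap(S(n)) \leq \overline{G}$, so $g = \bigl(g(g')^{-1}\bigr)g' \in \overline{G}$. Therefore $\overline{G} = \map(S(n))$, which is the claim.

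The main obstacle is the handle-shift step. Although $S(n)$ has only $n$ ends, there are uncountably many pairwise non-isotopic handle shifts between any two of them (parametrized by a Cantor set's worth of embeddings of the model surface $\Sigma$), so the finitely many handle shifts one can write down explicitly --- $R$-conjugates of $h_{1,2}$ and their compositions --- cannot literally exhaust all handle shifts; they do so only modulo $\overline{\pmap_\mathrm{c}(S(n))}$, which is exactly what Theorem~\ref{thm:handledehn} supplies. Some care is also needed to make precise that a composition $h_{i,j}$ of consecutive Type I handle shifts agrees, up to a compactly supported map, with a genuine handle shift from $e_i$ to $e_j$, so that Theorem~\ref{thm:handledehn} may legitimately be applied; everything else is bookkeeping with the short exact sequence above.
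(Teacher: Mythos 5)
Your proof is correct and follows essentially the same route as the paper: Lemma~\ref{lem:finiteset} supplies $\pmap_\mathrm{c}(S(n))$, the $R$-conjugates of $h_{1,2}$ supply the handle shifts needed (via Theorem~\ref{thm:bigmap} or Proposition~\ref{prop:generate}) to capture all of $\pmap(S(n))$ in the closure, and $\pi(R)$ together with $\pi(\tau_1)$ generate $\sym_n$ by Lemma~\ref{lem:symgen}. If anything you are more careful than the paper, which leaves the passage from ``contains $\pmap_\mathrm{c}$ and the $h_{i,i+1}$'' to ``topologically generates $\pmap$'' implicit; your parenthetical alternative (checking that the $h_{i,i+1}$ descend to a basis of $A_{S(n)}\cong\Z^{n-1}$) is the cleanest way to close the technical point you flag about compositions of handle shifts not literally being handle shifts.
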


\subsection{Loch Ness Monster}\label{sec:lochness}

For the Loch Ness Monster surface, we shall use the model shown in Figure~\ref{fig:lochness} for convenience, instead of the one shown in Figure~\ref{fig:infsurfaces}. Throughout this subsection, let $S$ denote the Loch Ness Monster surface. The lower indices denote the location of a curve on $S$ where a positive index means a curve is on the lower row and a negative index means it is on the top row. Note that the families $a_i,b_i$ have no curves indexed $0$. This is for symmetry purposes, as will be clear soon.

\begin{figure}[htbp]
      \centering
      \includegraphics[width=0.8\textwidth]{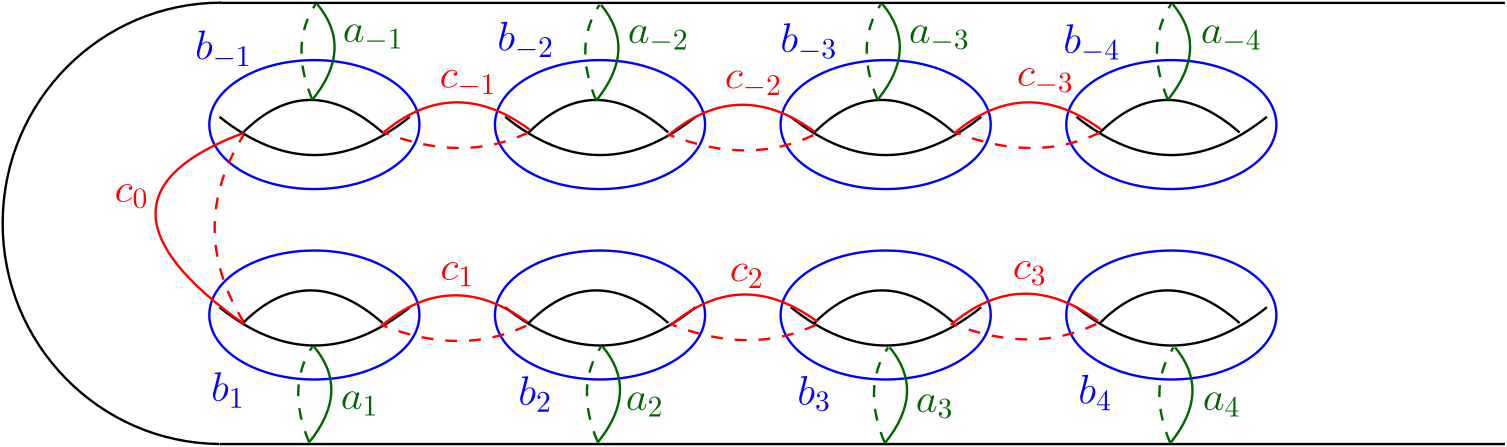}
      \caption{The model for the Loch Ness Monster surface $S$}
      \label{fig:lochness}
   \end{figure}

\begin{proposition}\label{prop:n1gen}
     The pure mapping class group $\pmap(S)$ is topologically generated by the set
     \[
     \set{T_{a_i}, T_{b_i}, T_{c_j} \mid i \in \mathbb{Z}\setminus\{0\}, j \in \mathbb{Z}}
     \] 
     as shown in Figure~\ref{fig:lochness}.
\end{proposition}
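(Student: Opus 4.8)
The plan is to prove that the subgroup $G := \langle T_{a_i}, T_{b_i}, T_{c_j} \rangle$ generated by the displayed Dehn twists equals the compactly supported mapping class group $\pmap_{\mathrm{c}}(S)$, and then to conclude via the dichotomy of Proposition~\ref{prop:dehngen}. Indeed, the Loch Ness Monster surface has exactly one end, and that end is accumulated by genus, so $|\mathrm{Ends}_{\mathrm{np}}(S)| = 1$; hence Proposition~\ref{prop:dehngen} (equivalently Remark~\ref{thm:dehngen2}) gives $\pmap(S) = \overline{\pmap_{\mathrm{c}}(S)}$. Once $G = \pmap_{\mathrm{c}}(S)$ is known, it follows that $\overline{G} = \overline{\pmap_{\mathrm{c}}(S)} = \pmap(S)$, and since the indexing sets are countable, $G$ is a countable dense subgroup, i.e.\ the given set topologically generates $\pmap(S)$. (Since $S$ has a single end, $\mathrm{Homeo}(\mathrm{Ends}(S), \mathrm{Ends}_{\mathrm{np}}(S))$ is trivial, so $\pmap(S) = \map(S)$ and this simultaneously describes a topological generating set for the full mapping class group.)

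One inclusion is easy: each $T_{a_i}, T_{b_i}, T_{c_j}$ is supported on an annular neighbourhood of its defining curve, hence is compactly supported, so $G \subseteq \pmap_{\mathrm{c}}(S)$. For the reverse inclusion I would use the description $\pmap_{\mathrm{c}}(S) = \varinjlim \map(K_n)$ from Section~\ref{sec:topogen}, the direct limit running over an exhaustion of $S$ by essential compact subsurfaces. Choose such an exhaustion $\{K_n\}_{n\in\N}$ adapted to the model of Figure~\ref{fig:lochness}: let $K_n$ be the compact subsurface whose boundary curves lie among the $c_j$ and which contains the handles carrying the curves $a_i, b_i, c_j$ of index at most $n$ in absolute value, arranged so that $K_n$ is homeomorphic to a surface $S^{b}_{g,0}$ with $\bigcup_n K_n = S$; passing to a cofinal subsequence we may assume $g \ge 2$ throughout. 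Then $\pmap_{\mathrm{c}}(S)$ is the increasing union $\bigcup_n \map(K_n)$, so it suffices to show $\map(K_n) \subseteq G$ for each $n$.

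To see this, invoke Theorem~\ref{thm:bdarygen}: $\map(K_n) = \pmap(K_n)$ is generated by the $2g + b$ Dehn twists about the non-separating simple closed curves of the standard model of $S^b_{g,0}$ shown in Figure~\ref{fig:bdrygen}. As in the proof of Proposition~\ref{prop:countableset}, fix a homeomorphism $\iota_n$ from that standard model onto $K_n$; $\iota_n$ is trivial as a mapping class, merely repositioning boundary components and handles. The families $\{a_i\}, \{b_i\}, \{c_j\}$ are drawn in Figure~\ref{fig:lochness} precisely so that $\iota_n$ sends each generating curve of Figure~\ref{fig:bdrygen}, up to isotopy rel $\p K_n$, to a curve of one of the three families (with index in the finite range determined by $n$). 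Here the fact that $S$ has a \emph{single} end accumulated by genus, rather than several, is what keeps the argument clean: the boundary-parallel generators do not spiral, so --- unlike in the $S(n)$ case of Proposition~\ref{prop:countableset} --- no handle shift is needed to straighten them. Consequently every Theorem~\ref{thm:bdarygen} generator of $\map(K_n)$ equals some $T_{a_i}, T_{b_i}$ or $T_{c_j}$, whence $\map(K_n) \subseteq G$; letting $n \to \infty$ gives $\pmap_{\mathrm{c}}(S) \subseteq G$, and hence $G = \pmap_{\mathrm{c}}(S)$ as desired.

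The main obstacle is exactly this last verification: checking that the exhaustion can be chosen so that, after the embedding change $\iota_n$, every generator in Figure~\ref{fig:bdrygen} is literally isotopic rel boundary to one of the displayed curves. This is a concrete inspection of the model in Figure~\ref{fig:lochness}. If some generator of $\map(K_n)$ failed to appear directly on the list, one would fall back on the conjugation property $T_{f(\gamma)} = f T_\gamma f^{-1}$ of Proposition~\ref{prop: dehnprop}(iii), taking $f$ to be a product of $c_j$-twists realising the appropriate handle slide inside $K_n$ --- such $f$ already lies in $G$, so this causes no harm --- but I expect the model of Figure~\ref{fig:lochness} to be arranged so that no such correction is necessary.
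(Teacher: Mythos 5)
Your proposal is correct and follows essentially the same route as the paper: exhaust $S$ by compact subsurfaces $K_n = S^1_{2n}$ adapted to the model of Figure~\ref{fig:lochness}, use the embedding-change homeomorphism $\iota_n$ to identify the Theorem~\ref{thm:bdarygen} generators of $\map(K_n)$ with curves from the families $\{a_i\},\{b_i\},\{c_j\}$, and conclude via $\pmap_{\mathrm{c}}(S)=\varinjlim\map(K_n)$ together with Proposition~\ref{prop:dehngen}. Your explicit appeal to Proposition~\ref{prop:dehngen} to pass from $\pmap_{\mathrm{c}}(S)$ to its closure, and your observation that the single end spares you any handle-shift corrections, are points the paper leaves implicit but match its argument exactly.
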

\begin{proof}
    Consider the sequence of surfaces $\set{K_n}_{n\in\N}$ of $S$ where $K_n = S^1_{2n}$. It is clear that $\set{K_n}$ exhausts $S$ and that any essential subsurface lies inside some $K_n$. By Definition~\ref{def:compmap},
    \[
    \pmap_\mathrm{c}(S) = \varinjlim \map(K_n).
    \]
    Therefore, we shall show that the generators for each $K_n$ is in our generating set.

    For each $n\in \N$, there is a homeomorphism $\iota_n\colon K_n \rightarrow K_n$ isotopic to the identity changing the model of $K_n$ from the one depicted in Figure~\ref{fig:iotaloch} to the one in Figure~\ref{fig:bdrygen}. The homeomorphism $\iota_n$ can be described as moving the position of the genera. The action of $\iota_n$ is shown in Figure~\ref{fig:iotaloch}

\begin{figure}[htbp]
      \centering
      \includegraphics[width=0.8\textwidth]{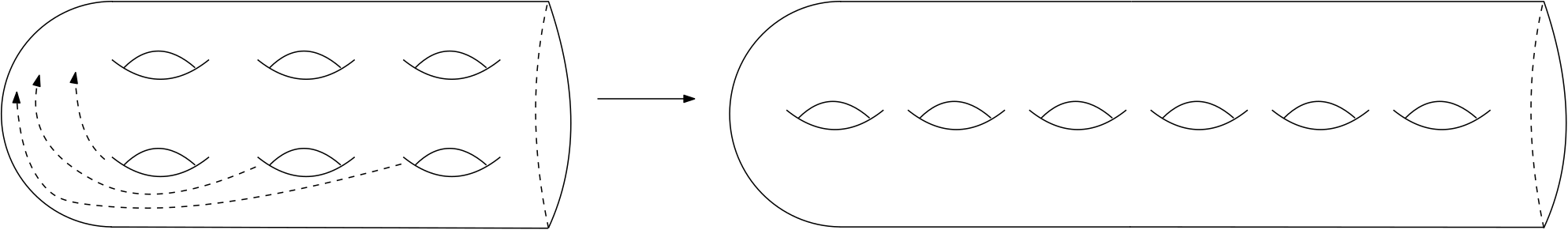}
      \caption{The homeomorphism $\iota_3\colon K_3 \longrightarrow K_3$}
      \label{fig:iotaloch}
   \end{figure}    
    It is clear that for every $n\in \N$, the generators $\alpha_i,\beta_j$ and $\gamma_k$ in Figure~\ref{fig:bdrygen}, are the images of some $a_m,b_l$ and $c_o$, respectively, under $\iota_n$. It follows that the Dehn twists
    \[
    \set{T_{a_i},T_{b_i},T_{c_j} \mid i \in \set{-n,\dots,-1,-1,\dots n}, j\in \set{-n+1,\dots,-1,0,1,\dots, n-1}}
    \]
    generate $\map(K_n)$, which implies that 
    \[
     \{T_{a_i}, T_{b_i}, T_{c_j} \mid i \in \mathbb{Z}\setminus\{0\}, j \in \mathbb{Z}\}
     \] 
     topologically generates $\map(S)$.
\end{proof}

Since the Loch Ness monster surface has a single end, $\pmap(S) = \map(S)$, and thus we have a countable generating set for $\map(S)$. We can do better by introducing a handle shift. We can embed the model surface $\Sigma$ in $S$ by "bending" $\Sigma$ in $\R^3$ so that the end lying on the left side of $\R^2$ is moved to the right side and on above the other end. The resulting embedding is shown in Figure~\ref{fig:n1shift}.

\begin{figure}[htbp]
\centering
      \includegraphics[width=0.8\textwidth]{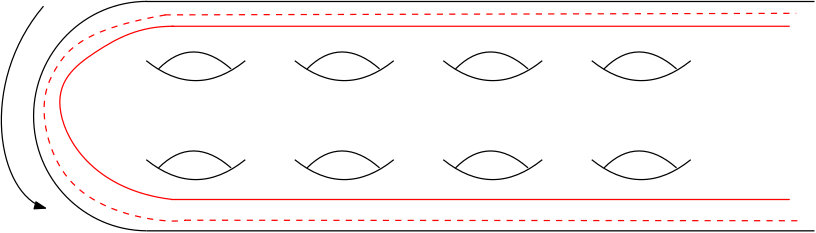}
      \caption{An embedding of $\Sigma$ in the Loch Ness monster surface}
      \label{fig:n1shift}
   \end{figure}    

\begin{remark}
    Since $S$ has a single end, the attracting and repelling ends of any handle shift on $S$ are the same. This is yet another example of ends not being preserved under embeddings, since $\Sigma$ has $2$ ends.
\end{remark}

Even though Dehn twists are enough to topologically generate $\map(S)$ by Proposition~\ref{prop:dehngen}, introducing a handle shift enables us to go from a countable generating set to a finite one. 

\begin{theorem}\label{thm:n1gen}
    The mapping class group $\map(S)$ is topologically generated by the set
     \[
     \set{T_{a_1}, T_{b_1}, T_{c_0},H}
     \] 
     where $H$ is the handle shift depicted in Figure~\ref{fig:n1shift}.
\end{theorem}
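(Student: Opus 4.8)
The plan is to reduce the statement to Proposition~\ref{prop:n1gen}: I will show that the subgroup
\[
G := \langle\, T_{a_1},\, T_{b_1},\, T_{c_0},\, H \,\rangle \leq \map(S)
\]
contains every Dehn twist in the countable set $\{T_{a_i}, T_{b_i}, T_{c_j} \mid i \in \Z\setminus\{0\},\ j\in\Z\}$. Since that set topologically generates $\map(S)$ by Proposition~\ref{prop:n1gen}, it will follow that $\overline{G} = \map(S)$, i.e.\ that $\{T_{a_1}, T_{b_1}, T_{c_0}, H\}$ topologically generates $\map(S)$ (note that $\map(S) = \pmap(S)$ here since $S$ has a single end, so there is no further ``ends'' factor to account for).

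The first step is to read off, from the bent embedding of the model surface $\Sigma$ depicted in Figure~\ref{fig:n1shift}, how $H$ acts on the three families of curves of Figure~\ref{fig:lochness}. The handle shift $H$ translates each handle of $S$ to the position of the next handle, and because $S$ has only one end the ``last'' handle of the top row is carried around to the first handle of the bottom row; that is, if we list the handles in the cyclic-looking order $\dots, -2, -1, 1, 2, \dots$, then $H$ moves the handle in slot $k$ to the handle in slot $k+1$. Consequently $H(a_i) = a_{i'}$ and $H(b_i) = b_{i'}$, where $i'$ denotes the successor of $i$ in $\Z\setminus\{0\}$, while $H(c_j) = c_{j+1}$ for all $j\in\Z$. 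In particular the $\langle H\rangle$-orbits satisfy $\{H^m(a_1)\mid m\in\Z\} = \{a_i\mid i\in\Z\setminus\{0\}\}$, and similarly $\{H^m(b_1)\mid m\in\Z\} = \{b_i\mid i\in\Z\setminus\{0\}\}$ and $\{H^m(c_0)\mid m\in\Z\} = \{c_j\mid j\in\Z\}$.

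The second step is then immediate from the conjugation property of Dehn twists, Proposition~\ref{prop: dehnprop}(iii): for every $m\in\Z$ we have $H^m T_{a_1} H^{-m} = T_{H^m(a_1)}$, so all the $T_{a_i}$ lie in $G$; running the same argument with $T_{b_1}$ and $T_{c_0}$ in place of $T_{a_1}$ puts all the $T_{b_i}$ and all the $T_{c_j}$ in $G$ as well. Hence $G$ contains the entire generating set of Proposition~\ref{prop:n1gen}, which finishes the proof. The one genuinely delicate point is the first step: correctly extracting the action of $H$ on isotopy classes of the $a_i$, $b_i$, $c_j$ from the bent picture, and in particular verifying that the orbit of each base curve really exhausts its family despite the index $0$ being omitted from the $a$- and $b$-families — so that $H$ must behave as a single bi-infinite shift ``through'' the unique end rather than as two independent half-shifts. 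Once the figure is read correctly this is routine, but it is where essentially all the content of the theorem lies.
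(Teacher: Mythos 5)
Your proposal is correct and follows essentially the same route as the paper: read off the action of $H^m$ on the curve families $\set{a_i}$, $\set{b_i}$, $\set{c_j}$ (accounting for the skipped index $0$), then apply the conjugation property $T_{H^m(\gamma)} = H^m T_\gamma H^{-m}$ to recover the full countable generating set of Proposition~\ref{prop:n1gen}. Your explicit remark that the orbit of each base curve must exhaust its family through the single end is a fair identification of where the real content lies, and matches the paper's (terser) observation.
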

\begin{proof}
    Let $G$ be the subgroup topologically generated by the above set. Observe that $H^m$ sends the curves $a_1,b_1$ to $a_{1+m},b_{1+m}$ respectively for positive $m$ and to $a_{-m},b_{-m}$ for negative $m$ while it sends $c_{0}$ to $c_{m}$ for all $m$. By the conjugation property of Dehn twists, it is clear that every Dehn twist $T_{a_i},T_{b_j}$ and $T_{c_k}$ is contained in $G$, which implies by Proposition~\ref{prop:n1gen} that $G= \map(S)$.
\end{proof}

\subsection{Jacob's Ladder}

Let $S$ denote the Jacob's Ladder surface throughout this subsection. The lower indices denote the location of a curve on $S$ where a positive index means a curve is on the right to the curve $c_0$ and a negative index means it is on the left. Note that the families $a_i,{a'}_i$ and $b_i$ have no curves indexed $0$ like the Loch Ness Monster surface. This is again for symmetry purposes.

\begin{figure}[htbp]
\centering
      \includegraphics[width=0.8\textwidth]{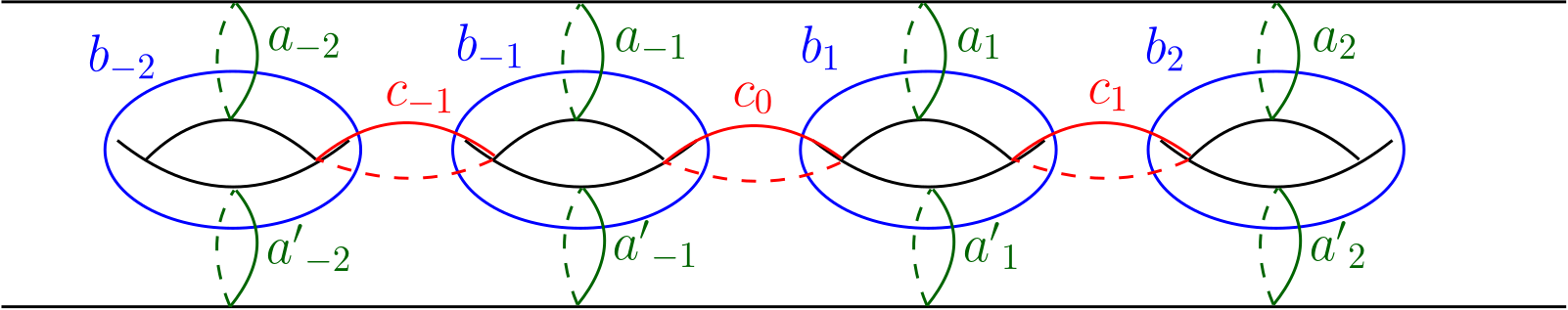}
      \caption{The model we use for the Jacob's Ladder surface}
      \label{fig:jacobs}
   \end{figure}    

\begin{proposition}
    The pure mapping class group $\pmap(S)$ is topologically generated by the set
    \[
     \set{T_{a_i},T_{{a'}_i}, T_{b_i}, T_{c_j} \mid i \in \mathbb{Z}\setminus\{0\}, j \in \mathbb{Z}}
     \] 
\end{proposition}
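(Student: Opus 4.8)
The plan is to mirror the argument given for the Loch Ness Monster in Proposition~\ref{prop:n1gen}: realize the compactly supported subgroup as a direct limit of mapping class groups of finite-type subsurfaces, and show that the standard generators at each finite stage already lie among the listed Dehn twists. First I would fix the exhaustion $\set{K_n}_{n\in\N}$ with $K_n = S^2_{2n}$, the compact subsurface of genus $2n$ with two boundary components, one pushing out toward the end at $+\infty$ and one toward the end at $-\infty$. From the model in Figure~\ref{fig:jacobs} it is clear that $\set{K_n}$ exhausts $S$ and that every essential subsurface lies in some $K_n$, so by Definition~\ref{def:compmap} we have $\pmap_\mathrm{c}(S) = \varinjlim \map(K_n)$. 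It therefore suffices to exhibit, for every $n$, a generating set of $\map(K_n)$ inside the subgroup generated by $\set{T_{a_i},T_{{a'}_i},T_{b_i},T_{c_j}}$.

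The main engine is Theorem~\ref{thm:bdarygen}. Since $K_n = S^2_{2n}$ has $b=2$ boundary components and no punctures, $\map(K_n)=\pmap(K_n)$ is generated by $2(2n)+2 = 4n+2$ Dehn twists about nonseparating simple closed curves arranged as in Figure~\ref{fig:bdrygen}. The key step is to construct, for each $n$, a homeomorphism $\iota_n\colon K_n \to K_n$ isotopic to the identity that carries the standard model of Figure~\ref{fig:bdrygen} onto the model of $K_n$ sitting inside $S$, exactly as in the Loch Ness case (compare Figure~\ref{fig:iotaloch}). Because $\iota_n$ is trivial as a mapping class, $T_{\iota_n(\delta)}=T_\delta$ for every simple closed curve $\delta$, so I only need to verify that $\iota_n$ sends each standard generator onto one of the curves $a_i,{a'}_i,b_i$ or $c_j$ of the model: the two chains of nonseparating generators straighten to the $a_i,{a'}_i$ and $b_i$ families running along the handles on either side of the central curve $c_0$, while the boundary-adjacent generators straighten to the $c_j$ family threading between the two ends. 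The absence of an index $0$ in the $a,{a'},b$ families and its presence in the $c_j$ family is precisely what the reflection symmetry of the model dictates, and it matches the generator count in Theorem~\ref{thm:bdarygen}.

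I expect the main obstacle to be the bookkeeping of this correspondence: confirming that the $4n+2$ generators of $\map(K_n)$ are all realized by curves of the four families with bounded index, and that no generator is dropped as $n$ increases, so that the nested generating sets genuinely exhaust $\varinjlim \map(K_n)$. This is the same verification carried out in Proposition~\ref{prop:countableset} for the multi-end surfaces and in Proposition~\ref{prop:n1gen} for the Loch Ness surface, and it succeeds here because the two boundary components of $K_n$ are exactly the curves separating it from the two ends, so the boundary generators of Theorem~\ref{thm:bdarygen} are accounted for by the $c_j$ curves. Having placed every $\map(K_n)$ inside the subgroup generated by the listed twists, passing to the direct limit and then to the closure shows that $\set{T_{a_i},T_{{a'}_i},T_{b_i},T_{c_j}}$ topologically generates $\pmap(S)$, completing the proof.
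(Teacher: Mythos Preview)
Your approach is exactly what the paper intends (it declares the proof ``analogous to that of Proposition~\ref{prop:n1gen}'' and omits it): exhaust $S$ by the compact subsurfaces $K_n = S^2_{2n}$, identify the generators from Theorem~\ref{thm:bdarygen} for each $\map(K_n)$ with members of the four curve families via a model-change homeomorphism $\iota_n$, and conclude that these Dehn twists generate $\pmap_\mathrm{c}(S)$.

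The gap is in your last sentence. Passing to the direct limit and taking the closure yields $\overline{\pmap_\mathrm{c}(S)}$, not $\pmap(S)$. In Proposition~\ref{prop:n1gen} this was harmless because the Loch Ness Monster has a single end accumulated by genus, so Proposition~\ref{prop:dehngen} gives $\pmap(S)=\overline{\pmap_\mathrm{c}(S)}$. The Jacob's Ladder has \emph{two} ends accumulated by genus, and by that same proposition---and by the explicit handle shift of Figure~\ref{fig:hshift}, which the paper shows cannot be approximated by compactly supported classes---$\overline{\pmap_\mathrm{c}(S)}$ is a proper subgroup of $\pmap(S)$. No set of Dehn twists can topologically generate $\pmap(S)$ here. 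The statement as printed is a slip; what your argument (and the paper's omitted one) actually establishes is that the listed twists topologically generate $\overline{\pmap_\mathrm{c}(S)}$, and the handle shift $H$ is adjoined in the subsequent theorem precisely to recover all of $\pmap(S)$.
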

The proof is analogous to that of Proposition~\ref{prop:n1gen} and we omit it. 

Similar to the Loch Ness Monster surface, we can introduce a handle shift to obtain a finite topological generating set for $\pmap(S)$. To topologically generate the entire mapping class group, we need to add a mapping class that permutes its end, since the Jacob's Ladder surface has $2$ ends. A rotation by $\pi$ radians around the axis passing through the center of the curve $c_0$ is such a mapping class. 

\begin{theorem}
    The set
    \[
    \set{T_{a_1},T_{{a'}_1}, T_{b_1}, T_{c_0}, H, R}
    \]
    where $H$ is a handle shift from the left end to the right end and $R$ is the rotation described above topologically generates $\map(S)$.
\end{theorem}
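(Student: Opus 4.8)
The plan is to follow the template of the Loch Ness Monster case (Theorem~\ref{thm:n1gen}) and the case $n\geq 3$ of $S(n)$ (Theorem~\ref{thm:finset}), with the two adjustments forced by Jacob's Ladder: it has two ends, both accumulated by genus, so $\pmap(S)$ is a proper (index-two) subgroup of $\map(S)$, and the quotient $\sym_2$ must be realized by one of the listed generators. Write $G=\langle T_{a_1},T_{{a'}_1},T_{b_1},T_{c_0},H,R\rangle$; the goal is $\overline G=\map(S)$. The first step is to show $\pmap_\mathrm{c}(S)\subseteq G$. By the preceding proposition, $\pmap_\mathrm{c}(S)$ is generated by the Dehn twists about $a_i,{a'}_i,b_i$ ($i\in\Z\setminus\{0\}$) and $c_j$ ($j\in\Z$), so it suffices to place all of these in $G$. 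I would use that $H$ moves every handle one step toward the right end: conjugating $T_{a_1},T_{{a'}_1},T_{b_1},T_{c_0}$ by powers of $H$ (via the conjugation property, Proposition~\ref{prop: dehnprop}(iii)) produces the Dehn twists about all curves in the four families reachable by sliding along the right end, and conjugating further by the rotation $R$, which interchanges the two ends and hence the left and right halves of each family, sweeps out the remaining indices. Matching this against the model of Figure~\ref{fig:jacobs} gives $\pmap_\mathrm{c}(S)\subseteq G$, hence $\overline{\pmap_\mathrm{c}(S)}\subseteq\overline G$.

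The second step is to get $\pmap(S)\subseteq\overline G$. Since $S$ has $n=2$ ends accumulated by genus, $H_1^{sep}(\hat S,\Z)\cong\Z$, and by Theorem~\ref{thm:bigmap} together with Remark~\ref{rem:zn-1} one has $\pmap(S)=\overline{\pmap_\mathrm{c}(S)}\rtimes\langle h\rangle$ for a single handle shift $h$, the $\Z$-factor being detected by the homomorphism $\varphi_v$ of Proposition~\ref{prop:properties} for the generator $v$ of $H_1^{sep}(\hat S,\Z)$. As $H$ runs from one end of $S$ to the other, $v$ cuts $H$, and since a handle shift shifts by exactly one handle, $\varphi_v(H)=\pm1$; by Theorem~\ref{thm:zero} this forces $H$ to equal $h^{\pm1}$ modulo $\overline{\pmap_\mathrm{c}(S)}$, whence $\langle\overline{\pmap_\mathrm{c}(S)},H\rangle=\pmap(S)$. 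Since $\overline G$ is a subgroup containing both $\overline{\pmap_\mathrm{c}(S)}$ (Step~1) and $H$, we conclude $\pmap(S)\subseteq\overline G$. (Alternatively, one may quote Proposition~\ref{prop:generate} directly, noting that every handle shift of $S$ differs from a power of $H$ by an element of $\overline{\pmap_\mathrm{c}(S)}$ by Theorem~\ref{thm:handledehn}.)

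Finally, for $S$ the short exact sequence of Section~\ref{sec:topogen} reads $0\to\pmap(S)\to\map(S)\xrightarrow{\pi}\sym_2\to0$, so $\pmap(S)$ has index two in $\map(S)$. The rotation $R$ by $\pi$ is orientation-preserving and interchanges the two ends, so $\pi(R)$ is the nontrivial element of $\sym_2$; thus $R\in\overline G$ but $R\notin\pmap(S)$. A subgroup of $\map(S)$ that contains the index-two subgroup $\pmap(S)$ together with an element outside it must be all of $\map(S)$, so $\overline G=\map(S)$. The only laborious part is the first step; the rest is an assembly of structural results established earlier in this chapter and in Chapter~\ref{chp:gen}. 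Step~1 requires a careful, figure-by-figure verification that the $\langle H,R\rangle$-orbit of $\{a_1,{a'}_1,b_1,c_0\}$ really meets every curve in the four families of the preceding proposition — routine, but bookkeeping-heavy, exactly as in the analogous steps of the proofs of Theorems~\ref{thm:n1gen} and~\ref{thm:finset}.
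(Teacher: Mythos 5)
Your proof is correct, and its skeleton --- conjugate the four base twists by powers of $H$ (and $R$) to sweep out the countable family of Dehn twists in Figure~\ref{fig:jacobs}, then use $R$ to realize the nontrivial element of $\sym_2$ --- is exactly the argument the paper intends when it declares the proof ``analogous to the proof of Theorem~\ref{thm:n1gen}'' and omits it. Where you genuinely diverge, and improve on the paper, is the middle step. The paper's route would cite its preceding proposition verbatim, namely that the Dehn twists $T_{a_i},T_{{a'}_i},T_{b_i},T_{c_j}$ already \emph{topologically generate} $\pmap(S)$; but that statement cannot be right as written, since the Jacob's Ladder has two ends accumulated by genus and Proposition~\ref{prop:dehngen} says Dehn twists topologically generate $\pmap(S)$ only when $|\mathrm{Ends}_{\mathrm{np}}(S)|\leq 1$. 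You instead extract only $\pmap_\mathrm{c}(S)\subseteq G$ from the twists, and then use $H$ together with the structural results of Chapter~\ref{chp:gen} (Theorem~\ref{thm:bigmap} with $r=1$ from Remark~\ref{rem:zn-1}, the homomorphism $\varphi_v$, and Theorem~\ref{thm:zero}, or equivalently Theorem~\ref{thm:handledehn}) to show that the closure of $\langle\overline{\pmap_\mathrm{c}(S)},H\rangle$ is $\pmap(S)$. This buys a proof that actually closes the gap the paper leaves open: for this surface the handle shift $H$ is not a redundant generator but a necessary one, and your argument is the one that makes its role precise. The only thing left implicit on your side is the curve-by-curve verification that the $\langle H,R\rangle$-conjugates of the four base twists hit every curve in the four families, which you correctly flag as routine bookkeeping of the same kind as in the proof of Theorem~\ref{thm:n1gen}.
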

The proof is analogous to the proof of Theorem~\ref{thm:n1gen}, and we skip it.

\section{Topological Generation by Involutions}

In this section, we focus on involution generators. Our strategy is to construct a finite set of involutions capable of generating the specific Dehn twists described in Section~\ref{sec:fingene}; generating these twists is sufficient to generate the entire group $\map(S(n))$.

For notational convenience, we adopt the conventions established in~\cite{apy} from now on. We denote the Dehn twists about the curves $a_i^j$, $b_i^j$, and $c_{i-1}^j$ by $A_i^j$, $B_i^j$, and $C_{i-1}^j$, respectively. We will use an overline to denote the inverse of an element in $\map(S(n))$. Furthermore, we employ exponential notation to represent conjugation, defined as $X^Y = YX\overline{Y}$ (for example, $A^{h_{1,2}} = h_{1,2}A\overline{h_{1,2}}$).

\subsection{More than Two Ends}

For the rest of this subsection, we assume $n \geq 3$. We start by expressing the rotation $R$ as a product of two involutions. Call the $\pi$ radians rotation around the first end $\rho_1$ and call the $\pi$ radian rotation between the first and $n$-th ends $\rho_2$. Observe that 
\[
\pi(\rho_1) = (1)(2n)(3(n-1))\dots((\frac{n}{2}+1))
\]
for even $n$, and 
\[
\pi(\rho_1) = (1)(2n)(3(n-1))\dots((\frac{n+1}{2})(\frac{n+1}{2}+1))
\]
for odd $n$. Similarly,
\[
\pi(\rho_2) = (1n)(2(n-1))\dots((\frac{n}{2})(\frac{n}{2}+1))
\]
for even $n$, and 
\[
\pi(\rho_2) = (1n)(2(n-1))\dots((\frac{n+1}{2}))
\]
for odd $n$. It is clear that 
\[
\pi(\rho_1\rho_2) = (12\dots n)
\]
in both cases, and we expressed $R$ as a product of two involutions.

\begin{lemma}\label{lem:lemma1}
   The group generated by the set
   \[
   \set{\rho_1,\rho_2,A^1_1\overline{A_1^2},B^1_1\overline{B_1^2},C^1_0\overline{C_0^2},h_{1,2}}
   \]
   contains the Dehn twists $A^2_1\overline{A_2^2},B^2_1\overline{B_2^2}$ and $C^2_1\overline{C_2^2}$.
\end{lemma}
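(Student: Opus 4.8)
The plan is to write each of the three target difference twists $A_1^2\overline{A_2^2}$, $B_1^2\overline{B_2^2}$ and $C_1^2\overline{C_2^2}$ as an explicit word in the six generators, using $h_{1,2}$ (together with the rotation $R=\rho_1\rho_2$) to transport the given difference twists $A_1^1\overline{A_1^2}$, $B_1^1\overline{B_1^2}$, $C_0^1\overline{C_0^2}$, which straddle the first and second ends, to difference twists supported entirely along the second end. First I would record that $R=\rho_1\rho_2$ lies in the generated subgroup $G$, so that every handle shift $h_{k,k+1}=R^{k-1}h_{1,2}R^{-(k-1)}$ lies in $G$, exactly as in the proof of Lemma~\ref{lem:finiteset}; in particular $h_{1,2}^{\pm1}\in G$, and we are free to conjugate the given generators by these handle shifts and by $R$.

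Next I would carry out the conjugation $h_{1,2}(X_1^1\overline{X_1^2})\overline{h_{1,2}}$ for each family $X\in\{A,B,C\}$. By the conjugation property of Dehn twists (Proposition~\ref{prop: dehnprop}(iii)) this equals $T_{h_{1,2}(x_1^1)}\,\overline{T_{h_{1,2}(x_1^2)}}$, and the images are read off from the model of Figure~\ref{fig:inf}: on the repelling first end $h_{1,2}$ shifts curves inward, so $h_{1,2}(b_1^1)=b_1^2$, $h_{1,2}(c_0^1)=c_1^2$, $\overline{h_{1,2}}(a_1^2)={a'}^1_1$, and $h_{1,2}(x_j^1)=x_{j-1}^1$ for $j>1$, while on the attracting second end it shifts curves outward. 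For the $b$-family this already gives $h_{1,2}(B_1^1\overline{B_1^2})\overline{h_{1,2}}=B_1^2\overline{B_2^2}$ on the nose, and for the $c$-family the same computation yields $C_1^2\overline{C_2^2}$, in each case up to at most one correction term that is itself a conjugate of one of the three given generators, by a word in $R$ and the $h_{k,k+1}$, and so can be cancelled inside $G$. The commutativity of Dehn twists about disjoint curves (Proposition~\ref{prop: dehnprop}(i)) is used freely so that the relevant products telescope.

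The $a$-family is the delicate case, since the second-end curve $a_1^2$ equals $h_{1,2}({a'}^1_1)$ and not $h_{1,2}(a_1^1)$, so the naive conjugate carries a stray twist about an $a'$-curve. Here I would either first apply $\rho_1$, the $\pi$-rotation fixing the first end, which interchanges the $a$- and $a'$-families along that end, to replace $A_1^1\overline{A_1^2}$ by an element involving $T_{{a'}^1_1}$ before conjugating by $h_{1,2}$, or else form the auxiliary element $\overline{h_{1,2}}(A_1^1\overline{A_1^2})h_{1,2}$ and multiply it by suitable conjugates of the given generators so that the $a'$-twists cancel, leaving precisely $A_1^2\overline{A_2^2}$. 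The main obstacle is exactly this bookkeeping: pinning down the images $h_{1,2}(a_1^1)$, $h_{1,2}(a_1^2)$, $h_{1,2}(b_1^2)$ and $h_{1,2}(c_0^2)$ in the model of Figure~\ref{fig:inf} — whether $c_0^2$ is fixed or shifted by $h_{1,2}$, and how the $a/a'$ dichotomy forces a correction — and then verifying that every correction term that appears really is a conjugate of one of the three given difference twists, hence lies in $G$. Once Figure~\ref{fig:inf} is consulted this collapses to a short finite manipulation of commuting Dehn twists, but that is where all the content lies.
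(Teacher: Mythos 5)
Your overall strategy---transport the given difference twists by $h_{1,2}$, $R=\rho_1\rho_2$ and $\rho_1$, then cancel auxiliary factors---is the right one, and your treatment of the $b$-family coincides with the paper's: $h_{1,2}(b_1^1)=b_1^2$ and $h_{1,2}(b_1^2)=b_2^2$, so a single conjugation gives $B_1^2\overline{B_2^2}$. Your diagnosis of the $a$-family is also essentially the paper's fix, but the point you should make explicit is not only that $\rho_1$ swaps the $a$- and $a'$-families on the first end, but that $\rho_1(a_1^2)={a'}_1^n$, a curve on the $n$-th end that is \emph{fixed} by $h_{1,2}$; conjugating ${A'}_1^1\overline{{A'}_1^n}$ by $h_{1,2}$ once and then twice yields $A_1^2\overline{{A'}_1^n}$ and $A_2^2\overline{{A'}_1^n}$ with a common, cancellable auxiliary factor, whence $A_1^2\overline{A_2^2}\in G$.

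The genuine gap is the $c$-family. You assert that conjugating $C_0^1\overline{C_0^2}$ directly by $h_{1,2}$ gives $C_1^2\overline{C_2^2}$ up to a correction term that is itself a conjugate of one of the three given generators. That is not justified, and it is exactly where the content lies: $h_{1,2}(c_0^1)=c_1^2$, so $h_{1,2}(c_0^2)$ cannot also be $c_1^2$, and it is not a curve of the standard families in the model; the naive conjugate is therefore $C_1^2\,\overline{T_{h_{1,2}(c_0^2)}}$, and the error term $T_{h_{1,2}(c_0^2)}\overline{C_2^2}=(C_0^2\overline{C_1^2})^{h_{1,2}}$ involves $C_0^1\overline{C_1^2}=C_0^1h_{1,2}\overline{C_0^1}\,\overline{h_{1,2}}$, a commutator that is not visibly a conjugate of a given generator by a word in $R$ and the $h_{k,k+1}$---you would be assuming membership in $G$ of precisely the kind of element you are trying to produce. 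The mechanism you need is the same one you used for the $a$'s: first replace the auxiliary curve by one far from the support of $h_{1,2}$. Concretely, $(C_0^1\overline{C_0^2})^{R}=C_0^2\overline{C_0^3}$, hence $C_0^1\overline{C_0^3}\in G$; since $c_0^3$ is fixed by $h_{1,2}$ when $n\neq 3$ (the case $n=3$ needs an extra factor of $C_0^2\overline{C_0^3}$), conjugating by $h_{1,2}$ gives $C_1^2\overline{C_0^3}$, so $C_0^1\overline{C_1^2}\in G$, and one more conjugation by $h_{1,2}$ gives $C_1^2\overline{C_2^2}$. Without this detour through a curve that $h_{1,2}$ fixes, the cancellation you rely on does not go through; your proposal also omits the $n=3$ special case entirely.
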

\begin{proof}
    Let $G$ be the set generated by the above set. The general strategy in this proof as well as the following ones is to abuse the power of the conjugation property and the braid relation of Dehn twists to obtain new elements in $G$.

   The images of the curves $a_1^1$ and $a^2_1$ under the rotation $\rho_1$ are ${a'}^1_1$ and ${a'}^n_1$ respectively, so
   \[
   {A'}^1_1\overline{{A'}^n_1} = (A^1_1\overline{A^2_1})^{\rho_1} \in G.
   \]
   By the same argument,
   \[
   A^2_1\overline{{A'}^n_1} = ({A'}^1_1\overline{{A'}^n_1})^{h_{1,2}} \in G,
   \]
   and
   \[
   A^2_2\overline{{A'}^n_1} = (A^2_1\overline{{A'}^n_1})^{h_{1,2}} \in G.
   \]
   Therefore,
   \[
   (A^2_1\overline{{A'}^n_1})({A'}^n_1\overline{A^2_2}) = A^2_1\overline{A^2_2} \in G.
   \]

   Since the images of the curves $b^1_1$ and $b^2_1$ under the handle shift $h_{1,2}$ are $b^2_1$ and $b^2_2$, respectively, hence
   \[
   B^2_1\overline{B^2_2} = (B^1_1\overline{B_1^2})^{h_{1,2}} \in G.
   \]

   Recall that the product $\rho_1\rho_2$ is the rotation $R$. The images of the curves $c_0^1$ and $c^2_0$ under $R$ are $c^2_0$ and $c^3_0$, respectively, which gives us
   \[
   C^2_0\overline{C^3_0} = (C^1_0\overline{C_0^2})^R \in G.
   \]
   From this,
   \[
   (C^1_0\overline{C_0^2})(C^2_0\overline{C^3_0}) = C^1_0\overline{C_0^3} \in G.
   \]
   It can be checked that
   \[
   C^2_1\overline{C^3_0} = (C^1_0\overline{C_0^3})^{h_{1,2}} \text{ if } n\not= 3
   \]
   \[
   C^2_1\overline{C^3_0} = (C^1_0\overline{C_0^3})^{h_{1,2}}(C^2_0\overline{C^3_0}) \text{ if } n= 3
   \]
   is in $G$.
   Finally,
   \[
   (C^1_0\overline{C_0^3})(C^3_0\overline{C^2_1}) = C^1_0\overline{C^2_1} \in G,
   \]
   and
   \[
   C^2_1\overline{C^2_2} = (C^1_0\overline{C^2_1})^{h_{1,2}} \in G,
   \]
   and we are done.
   
\end{proof}

We now demonstrate that the elements derived in the previous lemma are sufficient to obtain every Dehn twist. Since the compactly supported pure mapping class group is generated by these Dehn twists, the following results establish that our set constitutes a generating set for the entire group.

\begin{lemma}\label{lem:lemma2}
   The group generated by the set
   \[
    \set{\rho_1,\rho_2,A^2_1\overline{A_2^2},B^2_1\overline{B_2^2},C^2_1\overline{C_2^2},h_{1,2}}
   \]
   contains the Dehn twists $A^j_j,B^j_i$ and $C_{i-1}^j$ for all $i\geq 1$ and $j \in \set{1,2,\dots,n}$.
\end{lemma}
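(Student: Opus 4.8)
The plan is to reduce the statement to manufacturing finitely many \emph{genuine} Dehn twists by means of Lemma~\ref{lem:finiteset}, and then to produce those from the given ``difference'' elements by exploiting the lantern relation. Write $G$ for the group generated by $\{\rho_1,\rho_2,A^2_1\overline{A_2^2},B^2_1\overline{B_2^2},C^2_1\overline{C_2^2},h_{1,2}\}$. Since $R=\rho_1\rho_2$ and $h_{1,2}$ both lie in $G$, Lemma~\ref{lem:finiteset} shows it suffices to prove that the three honest Dehn twists $A^1_1=T_{a^1_1}$, $B^1_1=T_{b^1_1}$, $C^1_0=T_{c^1_0}$ lie in $G$: then $\langle T_{a^1_1},T_{b^1_1},T_{c^1_0},h_{1,2},R\rangle\subseteq G$ contains $\pmap_\mathrm{c}(S(n))$, which by Definition~\ref{def:compmap} contains every Dehn twist, in particular all the $A^j_i,B^j_i,C^j_{i-1}$. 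So the whole problem is to break the ``difference'' symmetry once.

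First I would propagate the given elements. Using the conjugation property (Proposition~\ref{prop: dehnprop}(iii)) together with the explicit actions of $\rho_1,\rho_2,R$ and $h_{1,2}$ on the families $\{a^j_i\},\{{a'}^j_i\},\{b^j_i\},\{c^j_{i-1}\}$ recorded just before Lemma~\ref{lem:finiteset}, one checks that $G$ contains all adjacent same-family differences $A^j_i\overline{A^j_{i+1}}$, $B^j_i\overline{B^j_{i+1}}$, $C^j_{i-1}\overline{C^j_i}$ over every end $j$ and index $i$ (conjugate an end-$2$ difference by powers of $h_{1,2}$ to slide the index, by $R$ to change the end, by $\rho_1$ to reach the ${a'}$-family); telescoping then gives $A^j_i\overline{A^j_k}$ and the like for all $i<k$, and further conjugation enlarges the available stock of differences. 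Crucially, every element produced this way is a difference of \emph{commuting} Dehn twists, so no such word can equal a single twist: the total twist--exponent is an obstruction.

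The crux is the lantern relation. Since $S(n)$ has infinite genus, Proposition~\ref{prop:lantern} provides an embedded copy of $S^4_0$; choosing it carefully we may take its four mutually disjoint boundary curves to be $\delta_1,\delta_2,\delta_3,\delta_4$ with $\delta_1=a^1_1$, its three interior curves $x,y,z$, and we arrange the embedding so that each pair $(x,\delta_2),(y,\delta_3),(z,\delta_4)$ is carried by an element of $G$ to a pair of curves adjacent in one of the families, so that $T_x\overline{T_{\delta_2}},\,T_y\overline{T_{\delta_3}},\,T_z\overline{T_{\delta_4}}\in G$ by the previous step. The relation $T_{\delta_1}T_{\delta_2}T_{\delta_3}T_{\delta_4}=T_xT_yT_z$, combined with the fact that the $\delta_k$ commute with one another and with $x,y,z$ (interior curves of the lantern are disjoint from its boundary), rearranges to
\[
T_{a^1_1}=T_{\delta_1}=(T_x\overline{T_{\delta_2}})(T_y\overline{T_{\delta_3}})(T_z\overline{T_{\delta_4}}),
\]
exhibiting a genuine Dehn twist in $G$. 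Repeating this with $\delta_1=b^1_1$ and $\delta_1=c^1_0$ (or, once one honest twist is in hand, walking to the rest via $T_u,\,T_u\overline{T_v}\in G\Rightarrow T_v\in G$ and conjugation by $R$ and $h_{1,2}$) puts $T_{b^1_1}$ and $T_{c^1_0}$ in $G$, and the reduction above completes the proof.

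The main obstacle is the third step: producing the embedded lantern whose seven curves mesh with the prescribed families and with the $h_{1,2}$- and $\rho_i$-orbits well enough that the three regrouped factors are actually among the differences already known to lie in $G$. Everything else is bookkeeping with the conjugation property; but without the built-in imbalance of the lantern (four twists equal to three) there is no escaping the exponent-balanced words, so this relation is unavoidable, and pinning down the precise configuration is where the real work lies.
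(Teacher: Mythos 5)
Your strategy is the same as the paper's: conjugate the given balanced ``difference'' elements around to build a stock of differences, then use the lantern relation to convert a product of three such differences into a single honest Dehn twist. The regrouping identity $T_{\delta_1}=(T_x\overline{T_{\delta_2}})(T_y\overline{T_{\delta_3}})(T_z\overline{T_{\delta_4}})$ is exactly the paper's $A_3=(A_2\overline{C_2})(D_1\overline{A_1})(D_2\overline{C_1})$, and your reduction to the three twists $A^1_1,B^1_1,C^1_0$ via Lemma~\ref{lem:finiteset} is a harmless repackaging of the paper's appeal to Proposition~\ref{prop:countableset}.

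However, there is a genuine gap, and you have located it yourself: you never produce the lantern or show that the three regrouped factors lie in $G$. You posit that one can ``arrange the embedding so that each pair $(x,\delta_2),(y,\delta_3),(z,\delta_4)$ is carried by an element of $G$ to a pair of curves adjacent in one of the families,'' but this cannot simply be arranged: for the lantern the paper uses (boundary $a_1,a_3,c_1,c_2$), two of the three interior curves ($d_1,d_2$ in Figure~\ref{fig:lemmalantern}) belong to none of the families $a,a',b,c$, and getting $D_1\overline{A_1}$ and $D_2\overline{A_1}$ into $G$ is the bulk of the paper's proof. It is done by repeatedly exploiting the braid relation to manufacture elements of $G$ that carry a known curve onto $d_1$ or $d_2$ while fixing a reference curve --- e.g.\ $(B_2\overline{A_1})(C_1\overline{A_1})(A_1\overline{A_2})(C_1\overline{A_2})$ takes $b_2$ to $d_1$ and fixes $a_1$, so conjugating $B_2\overline{A_1}$ yields $D_1\overline{A_1}$ --- after first deriving mixed differences such as $B_1\overline{A_3}$, $C_1\overline{A_1}$, $A_2\overline{C_2}$ from the given same-family ones. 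None of this is ``bookkeeping''; it is the content of the lemma, and your proposal defers it. A secondary slip: your claim that ``no such word can equal a single twist: the total twist--exponent is an obstruction'' is literally false --- the lantern relation kills this exponent in the abelianization, and indeed the final identity exhibits a single twist as a product of three balanced differences. The correct statement is only that one must invoke the lantern relation to break the balance, which is the heuristic you intended.
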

\begin{proof}
    We will work with curves on the second end throughout this proof, so we drop the upper index throughout this proof, for the sake of simplicity. 
    Let $G$ be the group generated by the above set. Since $h_{1,2}(a_1) = a_2$ and $h_{1,2}(a_2) = a_3$
    \[
    A_1\overline{A_3} = (A_1\overline{A_2})(A_1\overline{A_2})^{h_{1,2}} \in G.
    \]
    The curves $a_1$ and $b_1$ intersect once. By the braid relation, the product $(A_1\overline{A_2})(B_1\overline{B_2})$ takes the curves $a_1$ to the curve $b_1$ and fixes $a_3$ since it is disjoint from $a_1$,$a_2$,$b_1$ and $b_2$. By the conjugation property,
    \begin{align*}
    (A_1\overline{A_3})^{(A_1\overline{A_2})(B_1\overline{B_2})} &= [(A_1\overline{A_2})(B_1\overline{B_2})][A_1\overline{A_3}][(B_2\overline{B_1})(A_2\overline{A_1})] \\
    &= (\overline{A_2}\mkern3mu\overline{B_2})[(A_1B_1)(A_1\overline{A_3})(\overline{(A_1B_1)}](B_2A_2) \\
    &= (\overline{A_2}\mkern3mu\overline{B_2})(B_1\overline{A_3})(B_2A_2) \\
    &= B_1\overline{A_3} \in G.
    \end{align*}
    Applying the same argument to $B_1\overline{A_3}$ and $B_1\overline{B_2}C_1\overline{C_2}$, $C_1\overline{A_3}$ is in $G$. Using the elements we have obtained thus far, it can be checked that $B_2\overline{A_1},C_1\overline{A_1},C_2\overline{A_1}$ and $A_2\overline{C_2}$ are all in $G$.  
    The curves $a_1,a_3,c_1$ and $c_2$ bound an embedded lantern, shown in Figure~\ref{fig:lemmalantern}. The product $(B_2\overline{A_1})(C_1\overline{A_1})(A_1\overline{A_2})(C_1\overline{A_2})$ takes the curve $b_2$ to $d_1$ and fixes $a_1$. By the conjugation property, $D_1\overline{A_1}$ is in $G$. It is clear that $B_2\overline{B_3}$ and $B_2\overline{A_1}$ are in $G$, so $B_3\overline{A_1}$ is also in $G$. The product $(B_3\overline{A_1})(C_2\overline{A_1})(A_3\overline{A_1})(B_3\overline{A_1})$ takes $d_1$ to $d_2$ and fixes $a_1$, so $D_2\overline{A_1}$ is in $G$. It follows that
    \[
    D_2\overline{C_1} = (D_2\overline{A_1})(A_1\overline{C_1}) \in G.
    \]
    By the lantern relation, 
    \[
    A_1C_1C_2A_3 = A_2D_1D_2,
    \]
    and hence,
    \[
    A_3 = (A_2\overline{C_2})(D_1\overline{A_1})(D_2\overline{C_1}) \in G.
    \]
    Conjugating $A_3$ with powers of $h_{1,2}$ and $R$, gives us that $A^j_i$ is in $G$ for all $i\geq1$ and $j \in \set{1,2,\dots,n}$. It immediately follows that
    \[
    C_1 = (C_1\overline{A_1})A_1
    \]
    and
    \[
    B_1 = (B_1\overline{A_3})A_3
    \]
    are in $G$. Hence, $B_i^j$ and $C_i^j$ are in $G$ for all $i\geq1$ and $j \in \set{1,2,\dots,n}$. Since $\overline{h_{1,2}}(c_1^2) = c^1_0$,
    \[
    C^1_0 = \overline{h_{1,2}}C_1^2h_{1,2} \in G,
    \]
    therefore $C^j_0$ is in $G$ for all $j \in \set{1,2,\dots,n}$, and we are done.

\begin{figure}[htbp]
      \centering
      \includegraphics[width=0.8\textwidth]{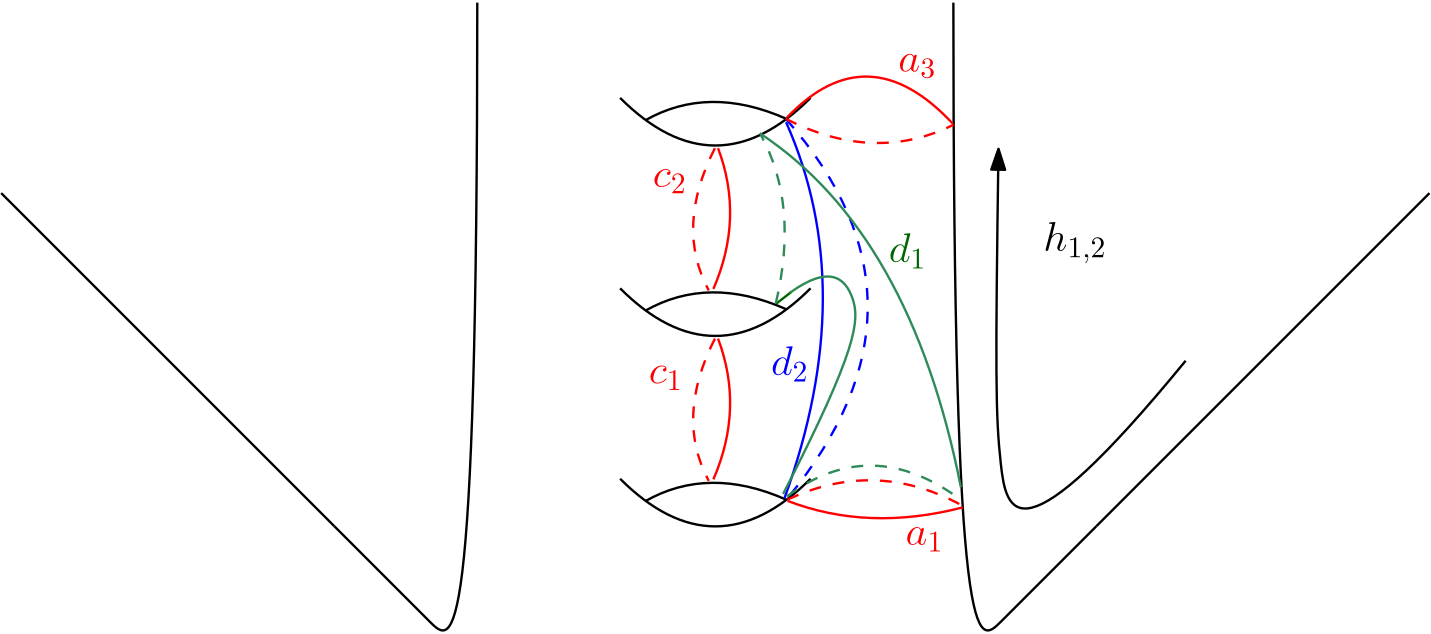}
      \caption{The embedded lantern bounded by the curves $a_1,a_3,c_1$ and $c_2$}
      \label{fig:lemmalantern}
   \end{figure} 
    
\end{proof}

By combining Lemma~\ref{lem:lemma1}, Lemma~\ref{lem:lemma2} and Proposition~\ref{prop:countableset}, we get the following theorem, which gives us a new topological generating set to work with.

\begin{theorem}\label{thm:mainlemma}
    The group generated by the set
    \[
   \set{\rho_1,\rho_2,A^1_1\overline{A_1^2},B^1_1\overline{B_1^2},C^1_0\overline{C_0^2},h_{1,2}}
    \]
    contains $\pmap_\mathrm{c}(S(n))$.
\end{theorem}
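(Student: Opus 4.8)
The plan is to assemble Theorem~\ref{thm:mainlemma} directly from the three results that immediately precede it: the substantive work has already been carried out in Lemma~\ref{lem:lemma1}, Lemma~\ref{lem:lemma2}, and Proposition~\ref{prop:countableset} (together with Lemma~\ref{lem:finiteset}), so what remains is a chaining argument. Write $G$ for the subgroup of $\map(S(n))$ generated by $\set{\rho_1,\rho_2,A^1_1\overline{A_1^2},B^1_1\overline{B_1^2},C^1_0\overline{C_0^2},h_{1,2}}$; note that this is exactly the generating set appearing in Lemma~\ref{lem:lemma1}, so the group there is literally $G$.

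First I would invoke Lemma~\ref{lem:lemma1} to conclude that $G$ contains the three elements $A^2_1\overline{A_2^2}$, $B^2_1\overline{B_2^2}$, and $C^2_1\overline{C_2^2}$. Since $\rho_1,\rho_2,h_{1,2}$ already lie in $G$ by definition, $G$ then contains the full generating set of Lemma~\ref{lem:lemma2}, and applying that lemma shows $G$ contains every Dehn twist $A^j_i$, $B^j_i$, and $C^j_{i-1}$ for $i\ge 1$ and $j\in\set{1,\dots,n}$.

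Next I would pass to Lemma~\ref{lem:finiteset} (or, if one prefers, Proposition~\ref{prop:countableset}). From the previous paragraph $T_{a_1^1}=A_1^1$, $T_{b_1^1}=B_1^1$, and $T_{c_0^1}=C_0^1$ all lie in $G$; moreover $h_{1,2}\in G$ by hypothesis and $R=\rho_1\rho_2\in G$ since the rotation $R$ is precisely this product (as used in the proof of Lemma~\ref{lem:lemma1}). Hence $G$ contains the set $\set{T_{a_1^1},T_{b_1^1},T_{c^1_0},h_{1,2},R}$, and Lemma~\ref{lem:finiteset} gives $\pmap_\mathrm{c}(S(n))\subseteq G$. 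If one instead cites Proposition~\ref{prop:countableset} directly, one additionally needs the twists $T_{{a'}_i^j}$; these are recovered as $\overline{h_{j,j+1}}\,A_1^{j+1}\,h_{j,j+1}$ with $h_{j,j+1}=R^{j-1}h_{1,2}R^{-(j-1)}\in G$, exactly as in that proof.

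There is no real obstacle here — once Lemmas~\ref{lem:lemma1} and~\ref{lem:lemma2} are in place, the statement is bookkeeping. The only points that deserve a moment's attention are confirming $R=\rho_1\rho_2$ so that the rotation genuinely sits inside $G$, and observing that no topological closure is required: $\pmap_\mathrm{c}(S(n))$ is a countable direct limit of the groups $\map(K_n)$, so it can be contained outright in a finitely generated subgroup of $\map(S(n))$.
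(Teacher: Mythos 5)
Your proposal is correct and matches the paper's own (very terse) justification, which simply states that the theorem follows by combining Lemma~\ref{lem:lemma1}, Lemma~\ref{lem:lemma2}, and Proposition~\ref{prop:countableset}. Your chaining argument fills in exactly that outline, and your care in routing through Lemma~\ref{lem:finiteset} (or recovering the $T_{{a'}_i^j}$ via conjugation by handle shifts) and in noting $R=\rho_1\rho_2$ makes the bookkeeping explicit where the paper leaves it implicit.
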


We can use this set to begin constructing our involutions.

\begin{lemma}\label{lem:lemma3}
    The group generated by the set
    \[
   \set{\rho_1,\rho_2,B^1_1C^1_0\overline{C_0^2}\mkern3mu\overline{B^3_1},A^1_1\overline{{A'}_1^2},h_{1,2}}
    \]
    contains $\pmap_\mathrm{c}(S(n))$.
\end{lemma}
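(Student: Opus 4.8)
The plan is as follows. Write $G$ for the subgroup generated by $\set{\rho_1,\rho_2,B^1_1C^1_0\overline{C^2_0}\,\overline{B^3_1},A^1_1\overline{{A'}^2_1},h_{1,2}}$, and let $X$ denote the element $B^1_1C^1_0\overline{C^2_0}\,\overline{B^3_1}$. I will show that $G$ contains the set $\set{\rho_1,\rho_2,A^2_1\overline{A^2_2},B^2_1\overline{B^2_2},C^2_1\overline{C^2_2},h_{1,2}}$ from the hypothesis of Lemma~\ref{lem:lemma2}; since by that lemma together with Proposition~\ref{prop:countableset} the subgroup generated by this set contains $\pmap_\mathrm{c}(S(n))$, the statement follows. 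As $\rho_1$, $\rho_2$ and $h_{1,2}$ lie in $G$ by construction, so does $R=\rho_1\rho_2$, and hence so does every handle shift $h_{i,i+1}=R^{i-1}h_{1,2}R^{1-i}$ (hence every $h_{i,j}$) and every $\pi$-rotation about an end (a conjugate of $\rho_1$ or $\rho_2$ by a power of $R$). From here the argument is a sequence of conjugations carried out with the conjugation property of Dehn twists and the known action of these elements on the curve families of Figure~\ref{fig:inf}.

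First I would handle the $B$- and $C$-coordinates. The curves $b^1_1$ and $b^3_1$ are disjoint from each other and from $c^1_0$ and $c^2_0$, so $X$ equals the product $(B^1_1\overline{B^3_1})(C^1_0\overline{C^2_0})$ of two commuting elements. Conjugating $X$ by a handle shift supported away from the first two ends that moves $b^3_1$ --- take $h_{3,4}$ when $n\ge 4$, which fixes $c^1_0$, $c^2_0$ and $b^1_1$ and sends $b^3_1$ to $b^4_1$ --- and then multiplying by $X^{-1}$, the $C$-factor cancels and one is left with the pure $B$-type element $B^3_1\overline{B^4_1}\in G$. Conjugating by powers of $R$ gives $B^j_1\overline{B^{j+1}_1}\in G$ for every $j$; in particular $B^1_1\overline{B^2_1}\in G$, whence $B^2_1\overline{B^2_2}=(B^1_1\overline{B^2_1})^{h_{1,2}}\in G$, and also $B^1_1\overline{B^3_1}=(B^1_1\overline{B^2_1})(B^2_1\overline{B^3_1})\in G$, so that $C^1_0\overline{C^2_0}=X(B^1_1\overline{B^3_1})^{-1}\in G$. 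Finally $C^2_1\overline{C^2_2}$ is obtained from $C^1_0\overline{C^2_0}$, $R$ and $h_{1,2}$ by the same chain of identities used for the $C$-part in the proof of Lemma~\ref{lem:lemma1}. For $n=3$ the shift $h_{3,4}$ is unavailable, and one instead conjugates $X$ by a different element of $G$ that moves $b^3_1$ while fixing $c^1_0$ and $c^2_0$ (for instance one of the $A$-type elements supported on the third end produced in the next step), running the same cancellation; this is the analogue of the $n=3$ case in Lemma~\ref{lem:lemma1}.

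Next I would recover the $A$-coordinate. Recall that ${a'}^i_1=h_{i,i+1}^{-1}(a^{i+1}_1)$, that $h_{i,i+1}$ fixes every curve lying on an end other than $i$ or $i+1$, and that a $\pi$-rotation about an end interchanges the families $\set{a^j_i}$ and $\set{{a'}^j_i}$ on that end. Conjugating $A^1_1\overline{{A'}^2_1}$ by $h_{2,3}$ --- which fixes $a^1_1$ and sends ${a'}^2_1$ to $a^3_1$ --- gives $A^1_1\overline{A^3_1}\in G$, and multiplying this by $A^1_1\overline{{A'}^2_1}$ gives ${A'}^2_1\overline{A^3_1}\in G$. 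Conjugating by the $\pi$-rotation about the second end (which sends ${a'}^2_1$ to $a^2_1$ and $a^3_1$ to ${a'}^1_1$) produces $A^2_1\overline{{A'}^1_1}\in G$, and one last conjugation by $h_{1,2}$ (which sends ${a'}^1_1$ to $a^2_1$ and $a^2_1$ to $a^2_2$) gives $A^2_2\overline{A^2_1}\in G$, whose inverse is $A^2_1\overline{A^2_2}$. This places the full hypothesis set of Lemma~\ref{lem:lemma2} inside $G$ and completes the argument.

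The hard part is not any single step but the combinatorial bookkeeping: one must track precisely how each of the four families $\set{a^j_i}$, $\set{{a'}^j_i}$, $\set{b^j_i}$, $\set{c^j_{i-1}}$ is permuted by the comparatively large collection of available conjugators (the handle shifts $h_{i,j}$, the rotation $R$, and the $\pi$-rotations), and --- in the $B/C$-step --- one must choose the conjugator so that the unwanted two-curve factor of $X$ cancels exactly and does not leave a residual Dehn twist behind. The small cases, chiefly $n=3$ (and a minor parity check in the $A$-step when descending from index $3$ to index $2$), have to be treated separately, but by essentially the same manipulations used for $n\ge 4$.
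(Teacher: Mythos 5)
Your overall strategy is genuinely different from the paper's and, for $n\ge 4$, it is arguably cleaner. The paper extracts everything from the two composite generators by repeatedly invoking the braid relation: since $a^1_1$ meets $b^1_1$ once, the product $L_1F_1$ carries $a^1_1$ to $b^1_1$, so conjugating $A^1_1\overline{{A'}^2_1}$ by $L_1F_1$ converts an $A$-type difference into $B^1_1\overline{{A'}^2_1}$, and a chain of seven such elements eventually isolates $A^1_1\overline{A^2_1}$, $B^1_1\overline{B^2_1}$, $C^1_0\overline{C^2_0}$. You avoid the braid relation entirely: your $A$-step uses only the conjugation property together with the known action of $h_{i,i+1}$ and the end-rotations on the $a$, $a'$ families (and it checks out against the paper's stated identities $\overline{h_{i,i+1}}(a^{i+1}_1)={a'}^i_1$, $h_{1,2}(a^2_1)=a^2_2$, $\rho_1(a^2_1)={a'}^n_1$), and your $B/C$-step isolates $B^3_1\overline{B^4_1}$ by the cancellation $X^{h_{3,4}}X^{-1}$. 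I verified that this cancellation does go through, and that $(B^1_1\overline{B^3_1})^{-1}X=C^1_0\overline{C^2_0}$ (note the order: you wrote $X(B^1_1\overline{B^3_1})^{-1}$, which is only a conjugate of $C^1_0\overline{C^2_0}$, not the element itself).

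There are, however, two genuine problems. First, your claim that $X$ is a product of the \emph{commuting} elements $B^1_1\overline{B^3_1}$ and $C^1_0\overline{C^2_0}$ is false: $b^1_1$ and $c^1_0$ intersect once (the paper's own proof depends on exactly this intersection to run the braid relation, e.g.\ in producing $L_5=C_1\overline{{A'}_2}$ from $L_2$), so $B^1_1$ and $C^1_0$ do not commute. Your subsequent computation happens not to use this commutation, so it survives, but the assertion should be deleted. Second, and more seriously, the $n=3$ case is a real gap, not a routine modification. Your method needs a conjugator in $G$ that moves $b^3_1$ to another curve in the $b$-family while fixing $b^1_1$, $c^1_0$ and $c^2_0$; for $n=3$ no handle shift does this ($h_{2,3}$ and $h_{3,1}$ each disturb one of $c^1_0$, $c^2_0$, $b^1_1$), and your proposed substitute $A^3_1\overline{A^3_2}$ sends $b^3_1$ to a curve that is \emph{not} any $b^j_i$, so $X^{A^3_1\overline{A^3_2}}X^{-1}$ is not a $B$-type difference element and the argument does not close. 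Since the lemma is asserted for all $n\ge 3$, you either need a separate argument for $n=3$ (the paper's braid-relation route works uniformly there) or must restrict your claim to $n\ge 4$.
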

\begin{proof}
    Let $G$ be the group generated by the above set and let $F_1$ denote $B^1_1C^1_0\overline{C_0^2}\mkern3mu\overline{B^3_1}$ and $L_1$ denote $A^1_1\overline{{A'}_1^2}$. As before, $R$ is in $G$ since it is the product $\rho_1\rho_2$. We shall change the notation once again by letting the lower index denote the end on which a twist or curve is, and always assume the position of the curves to be the first one on that end. That is, $A^i_1,{A'}^i_1,B_1^i$ and $C_0^i$ will be denoted by $A_i$,${A'}_i,B_i$ and $C_i$ respectively.

    Since the curve $a_1$ intersects $b_1$ once, the braid relation gives that the product $L_1F_1 = (A_1\overline{{A'}_2})(B_1C_1\overline{C_2}\mkern3mu\overline{B_3})$ sends the curve $a_1$ to $b_1$ while keeping ${a'}_2$ fixed. This follows from the fact that $a_1$ is disjoint from the supports of all the twists in the product except $B_1$, while ${a'}_2$ is disjoint from all of them. By the conjugation property,
    \begin{align*}
    L_2 = L_1^{L_1F_1} &= (L_1F_1)L_1(\overline{F_1}\mkern3mu \overline{L_1})\\
    &= [(A_1\overline{{A'}_2})(B_1C_1\overline{C_2}\mkern3mu\overline{B_3})](A_1\overline{{A'}_2})[(B_3C_2\overline{C_1}\mkern3mu\overline{B_1})({A'}_2\overline{A_1})]\\
    &= B_1\overline{{A'}_2} \in G.
    \end{align*}
    The element
    \[
    L_3 = \overline{L_1^{R}}\overline{F_1} = ({A'}_3\overline{A_2})(B_3C_2\overline{C_1}\mkern3mu\overline{B_1})
    \]
    is in $G$ and takes ${a'}_3$ to $b_3$ while fixing $a_2$. This implies that
    \[
    L_4 = \overline{L_1^{R}}^{\overline{L_1^{R}}L_3} = B_3\overline{A_2} \in G.
    \]
    It is easy to see that $F_1$ takes $b_1$ to $c_2$ and fixes ${a'}_2$. It follows that 
    \[
    L_5 = L_2^{F_1} =(B_1C_1\overline{C_2}\mkern3mu\overline{B_3})B_1\overline{{A'}_2}(B_3C_2\overline{C_1}\mkern3mu\overline{B_1}) = C_1\overline{{A'}_2} \in G.
    \]
    Moreover, 
    \[
    L_6 = L_4^{\overline{R}}L_1 = (B_2\overline{A_1})((A_1\overline{{A'}_2}) = B_2\overline{{A'}_2} \in G,
    \]
    therefore
    \[
    B_1\overline{B_2} = L_2\overline{L_6} = (B_1\overline{{A'}_2})({A'}_2\overline{B_2}) \in G.
    \]
    We also have that
    \[
    L_7 = \overline{L_2}(B_1\overline{B_2})L_2^R = ({A'}_2\overline{B_1})(B_1\overline{B_2})(B_2\overline{{A'}_3}) ={A'}_2\overline{{A'}_3} \in G,
    \]
    and
    \[
    C_1\overline{C_2} = L_5L_7\overline{L_5^R} = (C_1\overline{{A'}_2})({A'}_2\overline{{A'}_3})({A'}_3\overline{C_2}) \in G.
    \]
    Finally,
    \[
    A_1\overline{A_2} = \overline{L_4^{\overline{R}}}(B_1\overline{B_2})^RL_4 = (A_1\overline{B_2})(B_2\overline{B_3})(B_3\overline{A_2}) \in G.
    \]
    Since $G$ contains $A_1^1\overline{A^2_1}, B^1_1\overline{B_1^2}$ and $C_0^1\overline{C_0^2}$, by Theorem~\ref{thm:mainlemma}, it contains $\pmap_\mathrm{c}(S(n))$.
    
\end{proof}

To finish constructing the generating set consisting of involutions, we need to show that the handle shift $h_{1,2}$ can be written as a product of involutions.

\begin{theorem}\label{thm:hshiftinvo}
    The handle shift $h_{1,2}$ can be written as a product of two involutions.
\end{theorem}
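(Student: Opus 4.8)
The plan is to realize the handle shift $h_{1,2}$ as a product $h_{1,2} = \sigma_1 \sigma_2$ of two involutions by exploiting the symmetry of the model surface $S(n)$ near the first and second ends. The key geometric picture: the embedded copy of the model surface $\Sigma$ supporting $h_{1,2}$ is a bi-infinite strip of handles running from the first end to the second end; $h_{1,2}$ translates this strip by one handle. A translation on a bi-infinite strip is, classically, a product of two reflections — the composition of a reflection through a ``vertical'' axis and a reflection through a nearby parallel vertical axis is a translation by twice the distance between them. So the heart of the argument is to identify two orientation-preserving involutions $\sigma_1, \sigma_2$ of $S(n)$, each acting as a hyperelliptic-type reflection on the relevant chain of handles (and extended suitably over the rest of $S(n)$), whose product performs exactly the one-handle shift.

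Concretely, I would proceed as follows. First, fix the standard picture of $\Sigma \hookrightarrow S(n)$ between ends $1$ and $2$, with handles indexed by $\mathbb{Z}$. Define $\sigma_1$ to be the involution that rotates the strip by $\pi$ about a horizontal axis through the ``centers'' of the handles, reversing the indexing $k \mapsto -k$; this is an orientation-preserving involution of the strip fixing the two ends setwise but swapping them, and it can be extended to an involution of all of $S(n)$ (using that $S(n)$ is symmetric under the reflection exchanging ends $1$ and $2$, analogous to the $\rho$-type involutions already constructed in the excerpt). Then define $\sigma_2$ to be the ``shifted'' version of the same involution, conjugated so that it reverses the indexing about the axis through handle $\tfrac12$, i.e. $k \mapsto 1-k$. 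Both $\sigma_1$ and $\sigma_2$ are involutions. Their composition $\sigma_2\sigma_1$ sends $k \mapsto -k \mapsto 1-(-k) = k+1$ on the handle indices, which is precisely the combinatorial effect of $h_{1,2}$; one then checks that $\sigma_2\sigma_1$ agrees with $h_{1,2}$ up to a mapping class supported away from $\Sigma$ — and by choosing the extensions of $\sigma_1,\sigma_2$ off of $\Sigma$ to be inverse to one another, this correction is trivial, so $h_{1,2} = \sigma_2\sigma_1$ in $\map(S(n))$.

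The main obstacle I anticipate is not the combinatorics on the strip — that is the easy, classical part — but verifying that the two candidate reflections genuinely extend to \emph{orientation-preserving} involutions of the \emph{whole} surface $S(n)$ and that the extensions can be chosen to cancel off $\Sigma$. A $\pi$-rotation about a horizontal axis swaps the two ends $e_1, e_2$ of $\Sigma$; inside $S(n)$ these are distinct ends accumulated by genus, so $\sigma_1$ must be extended to a homeomorphism of $S(n)$ that permutes ends $1$ and $2$ (and fixes the rest), which is available precisely because of the symmetric model of $S(n)$ in Figure~\ref{fig:inf}. One must confirm: (i) such an extension exists and is an involution; (ii) the same can be arranged for $\sigma_2$ with an extension that is the inverse permutation off $\Sigma$; and (iii) that no parity obstruction arises, i.e. the involutions are orientation-preserving (a $\pi$-rotation about an axis in $\mathbb{R}^3$ is orientation-preserving on the surface, so this is fine, but it should be stated). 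Once these extension issues are handled, the identity $h_{1,2} = \sigma_2\sigma_1$ follows from the Alexander method: both sides act identically on a filling collection of curves and arcs adapted to $\Sigma$.
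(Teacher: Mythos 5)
Your proposal is correct and is essentially the paper's own argument: the paper realizes $h_{1,2}$ as the product of two $\pi$-rotations $\tau_1,\tau_2$ of $S(n)$ about two axes offset by one handle (each rotation swapping ends $1$ and $2$ and fixing the remaining $n-2$ ends, which lie on the rotation axis in a suitably chosen model), and then verifies $h_{1,2}=\tau_1\tau_2$ via the infinite Alexander method, exactly as you suggest. The extension issue you flag is resolved in the paper precisely by building the symmetric models so that the other ends sit on the axis of rotation.
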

\begin{proof}
    We construct a new model for $S(n)$ as follows. Start with a model of $S(2)$, in which $n-2$ of the genera lie along on a common line and the remaining genera are arranged perpendicularly to it. Remove $n-2$ closed disks $D_i$ from the portion of $S(2)$ containing the $n-2$ aligned genera such that their centers are on the aligning line, and each $\p D_i$ is looking towards the next one along the line. Attach $n-2$ surfaces, each having one end accumulated by genus and one boundary component, by gluing each surface along $\p D_i$. This new surface has $n$ ends accumulated by genus, and no boundary components, so it is homeomorphic to $S(n)$. This model is shown in Figure~\ref{fig:tau1}.

\begin{figure}[htbp]
      \centering
      \includegraphics[width=0.8\textwidth]{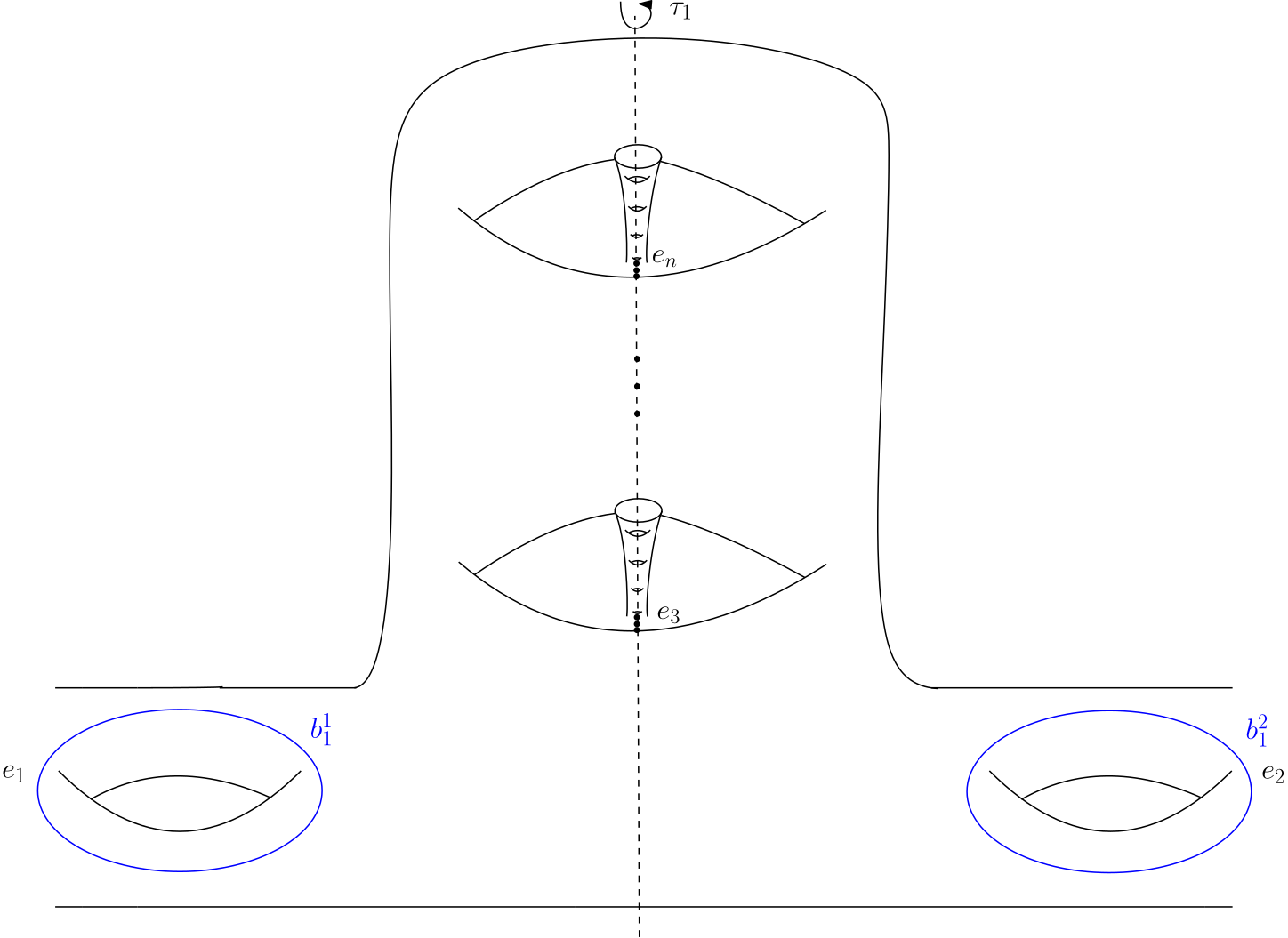}
      \caption{The new model for $S(n)$, and the rotation $\tau_1$}
      \label{fig:tau1}
   \end{figure}   

   Let $\tau_1$ be the rotation of $\pi$ radians about the line along which the $n-2$ genera are aligned. Because the rotation axis passes through the center of each the ends corresponding to the surfaces attached along the boundaries $\p D_i$, these ends are fixed by $\tau_1$. The remaining two ends, lying perpendicular to this axis, are interchanged by rotation, so we have $\pi(\tau_1) = (12)$. Since it is a rotation by $\pi$ radians, $\tau_1$ is an involution.

   The construction for the new model can be slightly modified to obtain yet another model, where $n-3$ genera lie along a common line and the remaining genera are perpendicular to the $(n-3)$-th genus. This yields the model shown in Figure~\ref{fig:tau2}.

   Let $\tau_2$ be the rotation of $\pi$ radians about the line along which the $n-3$ genera are aligned. Because the rotation axis passes through the center of each the ends corresponding to the surfaces attached along the boundaries $\p D_i$, these ends are fixed by $\tau_2$. The remaining two ends, lying perpendicular to this axis, are interchanged by rotation, so we have $\pi(\tau_2) = (12)$. Since it is a rotation by $\pi$ radians, $\tau_2$ is an involution.

\begin{figure}[htbp]
      \centering
      \includegraphics[width=0.8\textwidth]{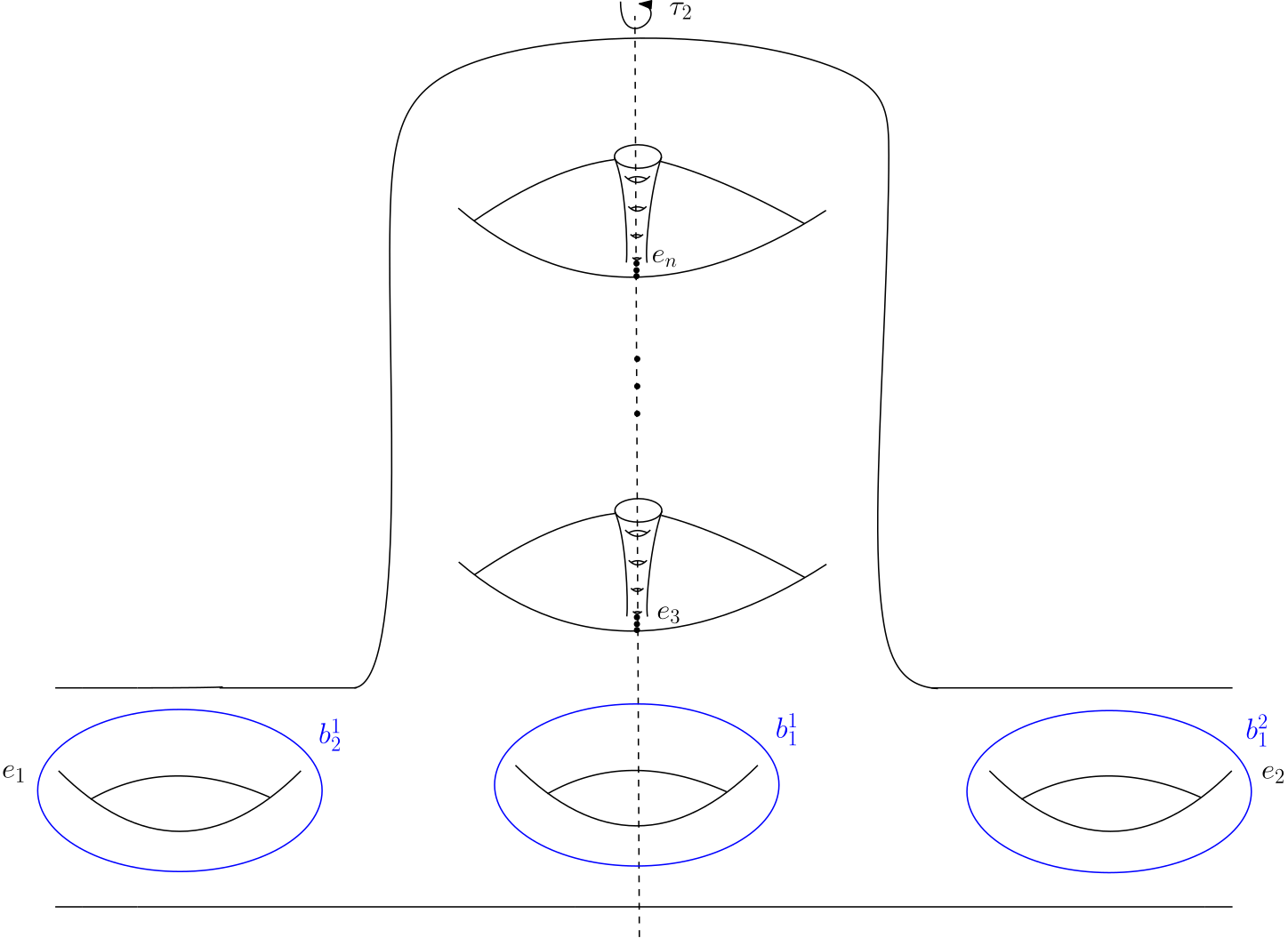}
      \caption{The new model for $S(n)$, and the rotation $\tau_2$}
      \label{fig:tau2}
   \end{figure}   
   Consider the composition $h_{1,2}\overline{(\tau_1\tau_2)}$. It can easily be checked that it fixes every isotopy class of simple closed curves. By Theorem~\ref{thm:infalex}, $h_{1,2}\overline{(\tau_1\tau_2)}$ is isotopic to the identity and $\tau_1\tau_2 = h_{1,2}$, and we are done.
    
\end{proof}

\begin{remark}
    While $\tau_1$ and $\tau_2$ fix the $n-2$ ends pointwise, they do not act as the identity on any neighborhood of those ends, since every neighborhood undergoes rotation as well.
\end{remark}

Combining Lemma~\ref{lem:lemma3} and Theorem~\ref{thm:hshiftinvo}, we immediately obtain the following:

\begin{theorem}\label{thm:6invo}
    The mapping class group $\map(S(n))$ is topologically generated by six involutions.
\end{theorem}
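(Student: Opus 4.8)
The plan is to build the six involutions directly out of the apparatus assembled in this subsection, for $n\geq 3$. By Lemma~\ref{lem:lemma3}, the group generated by
$\{\rho_1,\rho_2,\,F_1,\,L_1,\,h_{1,2}\}$, where $F_1:=B^1_1C^1_0\overline{C_0^2}\,\overline{B^3_1}$ and $L_1:=A^1_1\overline{{A'}_1^2}$, contains $\pmap_\mathrm{c}(S(n))$, and by Theorem~\ref{thm:hshiftinvo} the handle shift $h_{1,2}$ factors as $\tau_1\tau_2$ with $\tau_1,\tau_2$ involutions. Since $\rho_1$ and $\rho_2$ are already involutions (rotations by $\pi$ radians of the model of Figure~\ref{fig:inf}), the only generators that are not visibly involutions are $F_1$ and $L_1$, and neither is an involution (each is a nontrivial product of Dehn twists, of infinite order). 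So the whole task reduces to replacing $F_1$ and $L_1$ by involutions while keeping the total count at six: concretely, I would exhibit two further involutions $\sigma_1,\sigma_2$ of $S(n)$ such that $F_1$ and $L_1$ both lie in $\langle\rho_1,\rho_2,\tau_1,\tau_2,\sigma_1,\sigma_2\rangle$. The forms chosen for $F_1$ and $L_1$ in Lemma~\ref{lem:lemma3} were engineered precisely so that this can be done with only two new involutions.

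To produce $\sigma_1$ and $\sigma_2$ I would work with the symmetric models of $S(n)$ appearing in Figures~\ref{fig:inf}, \ref{fig:tau1} and \ref{fig:tau2}, and take each $\sigma_i$ to be a rotation by $\pi$ of an appropriately chosen model, picked so that it interchanges exactly the pairs of curves occurring in $F_1$, respectively $L_1$ — for instance an involution swapping $a^1_1$ with ${a'}^2_1$, and one interchanging the pair $\{b^1_1,c^1_0\}$ with the pair $\{b^3_1,c^2_0\}$ — arranged so that a product of two of the six involutions equals $F_1$, and another product equals $L_1$. Each such identity would be verified by the Infinite Alexander Method (Theorem~\ref{thm:infalex}): it suffices to check that the proposed product acts on a filling family of essential simple closed curves and arcs for $S(n)$ exactly as $F_1$ (resp. $L_1$) does, which is a finite computation on the model, carried out with the conjugation property and braid relation of Dehn twists exactly as in the proofs of Lemmas~\ref{lem:lemma1}--\ref{lem:lemma3}.

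Once $\sigma_1,\sigma_2$ are in hand, the conclusion follows quickly. The group topologically generated by $\{\rho_1,\rho_2,\tau_1,\tau_2,\sigma_1,\sigma_2\}$ then contains $\rho_1,\rho_2,F_1,L_1$ and $h_{1,2}=\tau_1\tau_2$, hence contains $\pmap_\mathrm{c}(S(n))$ by Lemma~\ref{lem:lemma3}; it contains $R=\rho_1\rho_2$; and its image under $\pi\colon\map(S(n))\to\sym_n$ contains the $n$-cycle $\pi(R)$ and the transposition $\pi(\tau_1)=(1\,2)$, which generate $\sym_n$ by Lemma~\ref{lem:symgen}. Thus the set $\{\rho_1,\rho_2,\tau_1,\tau_2,\sigma_1,\sigma_2\}$ contains the topological generating set $\{T_{a_1^1},T_{b_1^1},T_{c^1_0},h_{1,2},R,\tau_1\}$ of $\map(S(n))$ from Theorem~\ref{thm:finset} (the three Dehn twists lie in $\pmap_\mathrm{c}(S(n))$), so its closure is all of $\map(S(n))$, and $\map(S(n))$ is topologically generated by the six involutions $\rho_1,\rho_2,\tau_1,\tau_2,\sigma_1,\sigma_2$.

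The main obstacle is the construction and verification of $\sigma_1$ and $\sigma_2$: one must realize the infinite-order elements $F_1$ and $L_1$ as products of only two involutions drawn from a pool of six, which forces a careful choice of the symmetric models of $S(n)$ and of the auxiliary rotations so that the curve pairings match exactly and the Alexander-method check closes; keeping the bookkeeping honest — that no more than two new involutions are introduced, and that their presence does not interfere with the surjectivity onto $\sym_n$ — is the delicate part of the argument.
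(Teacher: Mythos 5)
Your overall architecture is the paper's: reduce to Lemma~\ref{lem:lemma3} and Theorem~\ref{thm:hshiftinvo}, keep $\rho_1,\rho_2,\tau_1,\tau_2$, supply two further involutions that recover $F_1=B^1_1C^1_0\overline{C_0^2}\,\overline{B^3_1}$ and $L_1=A^1_1\overline{{A'}_1^2}$, and finish with $R=\rho_1\rho_2$ and $\pi(\tau_1)=(1\,2)$ generating $\sym_n$. That closing part of your argument is correct. The gap is in the one step you flag as the ``main obstacle'': your proposed $\sigma_1,\sigma_2$ cannot both be rotations of models. If $F_1=\sigma\sigma'$ with $\sigma,\sigma'$ involutions, then necessarily $\sigma'=\sigma F_1$ and $\sigma F_1\sigma=\overline{F_1}$; so one of the two factors is forced to be a rotation composed with the product of Dehn twists $F_1$, not a rotation. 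With six pure rotations (of however many distorted models) there is no mechanism in your argument producing the compactly supported Dehn-twist content of $F_1$ and $L_1$, and the literal claim that ``a product of two of the six involutions equals $F_1$'' cannot hold for your list. Worse, if you repair this by adding the two reversing rotations \emph{and} the two composites, you are at eight involutions, not six.

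The paper's resolution is exactly the missing bookkeeping. Set $\rho_3=R\rho_1\overline{R}$ and $\rho_4=R\rho_2\overline{R}$; these swap $b^1_1\leftrightarrow b^3_1$, $c^1_0\leftrightarrow c^2_0$ and $a^1_1\leftrightarrow {a'}^2_1$ respectively, and since the curves appearing in $F_1$ (resp.\ $L_1$) are pairwise disjoint, their twists commute and $\rho_3F_1\rho_3=\overline{F_1}$, $\rho_4L_1\rho_4=\overline{L_1}$. Hence the two \emph{new} involutions are taken to be the composites $\rho_3F_1$ and $\rho_4L_1$ (each squares to $\overline{F_1}F_1=\mathds{1}$, resp.\ $\overline{L_1}L_1=\mathds{1}$). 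The reason the count stays at six is that the reversing rotations cost nothing: $\rho_3,\rho_4\in\langle\rho_1,\rho_2\rangle$ because $R=\rho_1\rho_2$, so inside $\langle\rho_1,\rho_2,\rho_3F_1,\rho_4L_1,\tau_1,\tau_2\rangle$ one recovers $F_1=\rho_3\cdot(\rho_3F_1)$ and $L_1=\rho_4\cdot(\rho_4L_1)$, and Lemma~\ref{lem:lemma3} then gives $\pmap_\mathrm{c}(S(n))$. No Alexander-method verification of a curve-by-curve identity is needed for this step; the involution property is a two-line computation with the conjugation property and commuting twists. You had all the ingredients in view, but identifying the two extra involutions as the composites $\rho_3F_1$, $\rho_4L_1$ --- rather than as two more rotations --- is the content of the theorem.
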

\begin{proof}
    Let $G$ be the subgroup of $\map(S(n))$ topologically generated by the set
    \[\set{\rho_1,\rho_2,\rho_3B^1_1C^1_0\overline{C_0^2}\mkern3mu\overline{B^3_1},\rho_4A^1_1\overline{{A'}_1^2},\tau_1,\tau_2},
    \]
    where $\rho_3 = R\rho_1\overline{R}$ and $\rho_4 = R\rho_2\overline{R}$. Since $\rho_1$ and $\rho_2$ are in $G$, $R$, $\rho_3$ and $\rho_4$ are in $G$, which implies that $B^1_1C^1_0\overline{C_0^2}\mkern3mu\overline{B^3_1}$ and $A^1_1\overline{{A'}_1^2}$ are in $G$. By Theorem~\ref{thm:hshiftinvo}, $h_{1,2}=\tau_1\tau_2$ is also in $G$. By Lemma~\ref{lem:lemma3}, $G$ contains $\pmap_{c}(S(n))$. \\
    Recall that the action of $R$ on $\sym_n$ is an $n$-cycle and the action of $\tau_1$ is a $2$-cycle, which by Theorem~\ref{lem:symgen} implies that $G$ contains mapping classes whose action generate on the space of ends generate $\sym_n$. It follows that $G=\map(S(n))$. \\
    We only need to show that $\rho_3B^1_1C^1_0\overline{C_0^2}\mkern3mu\overline{B^3_1}$ and $\rho_4A^1_1\overline{{A'}_1^2}$ are involutions.
    Observe that the image of the curves $b^1_1$ and $c^1_0$ under $\rho_3$ are the curves $b^3_1$ and $c^2_0$, by the conjugation property,
    \[
    \rho_3(B^1_1C^1_0\overline{C_0^2}\mkern3mu\overline{B^3_1})\rho_3 = B^3_1C^2_0\overline{C_0^1}\mkern3mu\overline{B^1_1},
    \]
    so
    \[
    (\rho_3B^1_1C^1_0\overline{C_0^2}\mkern3mu\overline{B^3_1})^2 = (\rho_3B^1_1C^1_0\overline{C_0^2}\mkern3mu\overline{B^3_1}\rho_3)B^1_1C^1_0\overline{C_0^2}\mkern3mu\overline{B^3_1} = B^1_1C^1_0\overline{C_0^2}\mkern3mu\overline{B^3_1}B^3_1C^2_0\overline{C_0^1}\mkern3mu\overline{B^1_1} = \mathds{1}_{\map(S(n))}.
    \]
    Applying the same argument for $A^1_1\overline{{A'}_1^2}$ and $\rho_4$, we see that $$\rho_4A^1_1\overline{{A'}_1^2}$$ is also an involution, and we are done.
\end{proof}

Using similar methods, Altunöz, Pamuk, and Yıldız~\cite{apy} demonstrated that when the number of ends is at least six, the number of involution generators can be reduced to five.  The existence of additional ends allows for a single generator comprising twists $A_i,B_i$ and $C_i$ rather than requiring two separate generators for Dehn twists. This approach hinges on the fact that a twist can be interchanged with another when their corresponding curves intersect exactly once and are disjoint from all others.  For $n<6$ however, this reduction is impossible because the elements $A_1\overline{A_2},B_1\overline{B_2}$ and $C_1\overline{C_2}$ cannot be isolated by interchanging single twists.

\bibliographystyle{abbrv}
\bibliography{references}

\end{document}